\numberwithin{equation}{section}
\newtheorem{theorem}{Theorem}[section]
\newtheorem{proposition}[theorem]{Proposition}
\newtheorem{lemma}[theorem]{Lemma}
\newtheorem{corollary}[theorem]{Corollary}
\newtheorem{definition}[theorem]{Definition}
\newcommand\R{{\mathbb{R}}}
\newcommand\N{\mathbb{N}}
\newcommand\E{\mathcal{E}}
\newcommand\td{\mathrm{d}}
\newcommand\dx{\mathop{}\!\mathrm{d}x}
\newcommand\dy{\mathop{}\!\mathrm{d}y}
\newcommand\dr{\mathop{}\!\mathrm{d}r}
\newcommand\dmu{\mathop{}\!\mathrm{d}\mu}
\newcommand\ds{\mathop{}\!\mathrm{d}s}
\newcommand\dt{\mathop{}\!\mathrm{d}t}
\newcommand\dtau{\mathop{}\!\mathrm{d}\tau}
\DeclareMathOperator{\loc}{loc}
\DeclareMathOperator{\Rg}{Rg}
\def\1{\raisebox{2pt}{\rm{$\chi$}}}
\DeclareMathOperator*{\sign}{sign}
\newcommand\abs[1]{\lvert#1\rvert}
\newcommand\norm[1]{\lVert#1\rVert}
\newcommand\lnorm[1]{\left\lVert#1\right\rVert}
\definecolor{darkred}{rgb}{0.7,0.1,0.1}
\begin{document}
	
\begin{frontmatter}
	\title{A doubly nonlinear evolution problem involving the fractional $p$-Laplacian}
	
	\author{Timothy A. Collier\corref{mycorrespondingauthor}}
	\ead{timothyc@maths.usyd.edu.au}
	\cortext[mycorrespondingauthor]{Corresponding author}
	
	\author{Daniel Hauer\fnref{ARC}}
	\ead{daniel.hauer@sydney.edu.au}
	
	\address{The University of Sydney, School of Mathematics and Statistics, NSW 2006, Australia}

	\begin{abstract}
		In this article we focus on a doubly nonlinear nonlocal parabolic initial boundary value problem 
		driven by the fractional $p$-Laplacian
		equip\-ped with homogeneous Dirichlet boundary conditions on
		a domain in $\R^{d}$ and composed with a continuous,
		strictly increasing function.
		We establish well-posedness in $L^1$ in the sense of mild solutions, a comparison principle, and for restricted initial data we obtain that mild solutions of the inhom\-ogeneous evolution problem are strong. 
        We obtain $L^{q}-L^{\infty}$ regularity estimates for mild solutions, implying decay estimates and extending the property of strong solutions for more initial data.
        More\-over, we prove local and global H\"older continuity results as well as a comparison principle that yields extinction in finite time of mild solutions to the homogeneous evolution equation.
	\end{abstract}
	
	\begin{keyword}
	doubly nonlinear\sep porous media\sep fractional p-Laplacian\sep mild solutions\sep accretive operators in $L^1$\sep nonlinear semigroups
	\MSC[2010] 35R11\sep 35B40\sep 35K55\sep 35B65\sep 47H06\sep 35K10
	\end{keyword}
	
\end{frontmatter}

	%%%%%%%%%%%%%%%%%%%%%%%%%%%%%%%%%%%%%%%%%%%%%%%%%%%%%
	%
	%
	%                           Introduction and main results
	%
	%
	%%%%%%%%%%%%%%%%%%%%%%%%%%%%%%%%%%%%%%%%%%%%%%%%%%%%%

%\comment{Check consistency of set notation}
 
\section{Introduction and main results}	

\subsection{Introduction}
\label{sec:introduction}
	Let $\Omega$ be an open set in $\mathbb{R}^{d}$, $d \ge 1$,
    $0 < T <\infty$, $1 < p < \infty$ and $0 < s< 1$. Then our
    main focus in this article is the following initial boundary value
    problem
	\begin{equation}
	\label{eq:1}
	\begin{cases}
    \begin{alignedat}{2}
	u_{t}(t)+(-\Delta_{p})^{s} \varphi(u(t))+f(\cdot,u(t))&=g(\cdot,t)\quad && 
	\text{in $\Omega\times(0,T),$}\\
	u(t)&=0\quad && \text{in $\R^{d}\setminus\Omega\times
		(0,T),$}\\
	u(0)&=u_{0}\quad && \text{on $\Omega$,}
    \end{alignedat}
	\end{cases}
	\end{equation}
	for given $u_{0}\in L^{1}$ and $g \in L^{1}(0,T;L^{1})$, where we abbreviate the Lebesgue space $L^q(\Omega)$ by $L^{q}$, $1 \le q \le \infty$, and impose the following conditions on
	$\varphi$ and $f$:
	\begin{align}
	\label{hyp:1} & \text{$\varphi \in C(\R)$ is a strictly increasing
		function satisfying $\varphi(0)=0$,}
	\end{align}
	and $f:\Omega\times\mathbb{R}\rightarrow\mathbb{R}$ admits the following properties:
	\begin{align}
	\label{hyp:2} \tag{1.3a}
	&\text{$f$ is a \emph{Lipschitz-continuous Carath\'eodory} function;
		that is,}\\ \notag
	& \left\{
	\begin{array}[c]{l}
	\text{for every $u\in \R$, $x\mapsto f(x,u)$ is measurable on
		$\Omega$, and there is}\\ 
	\text{an $\omega > 0$ such that}\\[5pt]
	\hspace{1cm}|f(x,u_{1})-f(x,u_{2})| 
	\le \omega|u_{1}-u_{2}|\qquad \text{for all $u_{1},u_{2} \in \mathbb{R}$,}\\[5pt]
	\text{uniformly for a.e.~$x\in\Omega$,}
	\end{array}
	\right.\\ 
	\label{hyp:3} \tag{1.3b}
	&\text{and $f(x,0) = 0$ for a.e.~$x \in \Omega$.}
	\end{align}
	\stepcounter{equation} 
	The doubly nonlinear nonlocal operator $(-\Delta_{p})^{s}\varphi$
    in the evolution problem~\eqref{eq:1} models a (singular or
    degenerated) nonlocal diffusion and is the composi\-tion of
    the \emph{(variational) Dirichlet fractional $p$-Laplacian}
    (see Section~\ref{subsec:doubly-nonlinear-operator})
	\begin{equation}
	\label{eq:11}
	\langle
	(-\Delta_{p})^{s}u,v\rangle:=\int_{\R^{2d}}\frac{|u(x)-u(y)|^{p-2}(u(x)-u(y))
		(v(x)-v(y))}{|x-y|^{d+ps}}
	\dx\dy
	\end{equation}
	for every $u$, $v\in W^{s,p}_{0}$ where we write $W^{s,p}_{0}$ for the fractional Sobolev space $W^{s,p}_{0}(\Omega)$ (see
        Section~\ref{sec:frac-sobolev-spaces}) and a function
        $\varphi$ satisfying~\eqref{hyp:1}.\medskip

        In this paper, we present well-posedness of~\eqref{eq:1} in the
        sense of \emph{mild} solutions $u : [0,T]\to L^{1}$, that is, $u(t)$ is the limit in $L^1$ of step-functions $u_{n}$ with coefficients solving the corresponding time discretized problem (for a precise definition, we refer to Definition~\ref{def:mild-solution}), and a comparison
        result for such solutions (see Theorem~\ref{thm:1} below). Moreover, we establish global regularity estimates implying an immediate smoothing effect, and finite time of extinction under minimal natural hypotheses
        on the given initial data $u_{0}$ and the forcing term $g$. 
        Further, we give sufficient conditions implying that mild solutions $u$ of~\eqref{eq:1} are \emph{strong distributional solutions} in $L^1$, that is, for a.e. $t\in (0,T)$, $u$ is differentiable at $t$, $\varphi(u(t))\in W^{s,p}_{0}$ and satisfies $(-\Delta_{p})^{s}\varphi(u(t))=g(\cdot,t)-f(\cdot,u(t))-u_{t}(t)$ in $L^{1}$ (see
        Definition~\ref{def:distributional-solution}).
        %an immediate smoothing effect, H\"older regularity for $\varphi=\text{id}_\R$,  
        
        We focus, in particular, on the case $\varphi(u) = |u|^{m-1}u$ (which we denote by $u^m$) for $m > 0$. Then the equation in~\eqref{eq:1} reduces to
        \begin{equation}
        \label{eq:doublynonlinearporouseqn}
        u_{t}(t)+(-\Delta_{p})^{s}u^m(t)+f(\cdot,u(t))=g(\cdot,t) \qquad
        \text{in $\Omega\times(0,T)$.}
        \end{equation}
        For mild solutions of the initial boundary value problem associated to~\eqref{eq:doublynonlinearporouseqn} with $m \ge 1$, we prove global $L^{\ell}-L^{\infty}$ regularity estimates, $1 \le \ell \le m+1$, implying an immed\-iate \emph{smooth\-ing effect} (see Theorem~\ref{thm:LellLinfty}). This also allows us to improve the integrability of the time derivative of solutions as presented in Theorem~\ref{thm:derivativeEstimates}.
    	% which extends to $L^1-L^\infty$ regularity for $m$ sufficiently large (see Section~\ref{sec:applicationToFractionalPLaplacian}).
    	%This regularity result follows from an abstract functional analytical method develop\-ed in Theorem~\ref{thm:LqLinfty-deGiorgi} of this paper and can be applied to other nonlinear parabolic boundary value problems. Hence it might be of independent interest. 
    	%We show that under
        %certain restrictions on $g$ and $\varphi$, a mild solution $u$
        %of~\eqref{eq:1} is \emph{strong solution}.
    	For this case, we establish in Theorem~\ref{thm:localHolder} local H\"older continuity of the solutions to~\eqref{eq:doublynonlinearporouseqn} provided $p \ge 2$ and $sp \ge d$ and global continuity in Theorem~\ref{thm:globalHolder} for the case $m = 1$ and $1 < p < \infty$. We prove these in Section~\ref{sec:holderRegularity}. In
        Section~\ref{sec:comparisonPrinciple}, we provide a
        comparison principle for the solutions of evolution
        problem~\eqref{eq:1} when the homogeneous boundary data is
        replaced by time-dependent, inhomogeneous data. With
        this tool, we can establish finite time of extinction of the
        solutions to~\eqref{eq:1}. \medskip
	
	Nonlinear integro-differential operators such as the doubly
        nonlinear non\-local operator $(-\Delta_{p})^{s}\varphi$ have
        received recent interest for their role in the mathematical
        analysis of anomalous diffusion, as well as their applications
        in such fields as statistical mechanics, physics, finance,
        fluid dynamics and image processing. We refer the interested
        reader, for example,
        to~\cite{MR1406564,MR2042661,MR2480109,MR3821542,MR2763032}.
        
        In recent years, the first-order evolution problem for the
        fractional $p$-Lapla\-cian has been studied by many authors,
        including
        \cite{MR3394266,MR3491533,MR3456825,MR3781159,MR3950701,MR4114983,vazquez2021fractional,vazquez2021growing}. This problem corresponds to that of~\eqref{eq:doublynonlinearporouseqn} with $m = 1$ so that
         the equation in~\eqref{eq:1} reduces to
        \begin{equation}
        \label{eq:3}
        u_{t}(t)+(-\Delta_{p})^{s}u(t)+f(\cdot,u(t))=g(\cdot,t) \qquad
        \text{in $\Omega\times(0,T)$.}
        \end{equation}
         In particular, Maz\'on et al.~\cite{MR3491533} (see also,
        \cite{MR3456825}) obtained existence and unique\-ness of strong
        solutions to equation~\eqref{eq:3} for $f\equiv g\equiv 0$
        equipped with either homogeneous Dirichlet or Neumann boundary
        conditions. Giacomoni and Tiwari~\cite{MR3781159} obtained
        well-posedness in $L^{\infty}$ of strong solutions to
        equa\-tion~\eqref{eq:3} equipped with homogeneous Dirichlet
        boundary data. Global $L^{q}-L^{\infty}$ regularity
        estimates, $1\le q<\infty$, for solutions to the parabolic
        equation \eqref{eq:3} for $g\equiv 0$ and equipped with
        homogeneous Dirichlet boundary conditions have been obtained in the
        monograph~\cite{CoulHau2016} of the second author.
	
	It is worth noting that the operator
		$(-\Delta_{p})^{s}\varphi$ is the \emph{nonlocal} counterpart of
		the \emph{local} doubly nonlinear operator
		$(-\Delta_{p})\varphi$ (see, for
		example~\cite{MR1218742,MR2135744,CoulHau2016}). In addition, the fractional $p$-Laplacian
		$(-\Delta_{p})^{s}$ is a natural generalizat\-ion of the
		well-known linear \emph{fractional Laplacian}
		$(-\Delta)^{s}$. Hence, an important special case of this doubly nonlinear nonlocal
        evolution problem~\eqref{eq:1} is given by the celebrated
        \emph{fraction\-al porous medium equation}
        \begin{equation}
          \label{eq:7}
           u_{t}(t)+(-\Delta)^{s}u^{m}(t)=g(\cdot,t) \qquad
	\text{in $\Omega\times(0,T)$,}
        \end{equation}
        for $m > 0$. For initial boundary-value problems associated with~\eqref{eq:7},
        V\'{a}zquez and his
        collaborators~\cite{MR2737788,MR2954615,MR3393253} have developed a
        theory to obtain existence, uniqueness, regularity, and decay
        estimates. Their methods were partially based on classical
        nonlinear semigroup theory
        (cf.~\cite{MR0348562,BenilanNonlinearEvolutionEqns,MR2582280}),
        the extension technique~\cite{MR2354493} (when
        $\Omega=\R^{d}$) and other classical PDE methods. Finite time of extinction of classical solutions to~\eqref{eq:7} satisfying homogeneous Dirichlet boundary conditions was proved in~\cite{KLkiahmholderregularitypaper}. Here we refine the method in~\cite{KLkiahmholderregularitypaper} and make it available for mild solutions of the more general boundary value problem~\eqref{eq:1}.
	
	The parabolic boundary-value problem~\eqref{eq:1} studied in
        this paper was recently studied by Giacomoni
        et al.~\cite{MR4230963} in the case $\varphi(r) = r^m$,
        $r \in \R$, given by~\eqref{eq:doublynonlinearporouseqn}, for $\frac{1}{2p-1} \le m < 1$, $p < \frac{d}{s}$, and
        $\Omega$ a bounded domain with boundary $\partial\Omega$
        of class $C^{1,1}$. By using similar techniques as B\'enilan
        and Gariepy \cite{BenilanGariepy}, they obtained existence and
        uniqueness of positive strong solutions under the assumption
        that the forcing term $g\equiv 0$ and the lower-order perturbation
        $f(\cdot,u)$ satisfies a specific growth condition related to
        $u^{q-1}$, $1<q\le p<\infty$. Our results presented here
        complement and improve those in~\cite{MR4230963} in various
        directions as specified below.

	\subsection{Main results}
	\label{sec:results}
	
	We begin by stating our well-posedness result in the
	sense of \emph{mild solutions} in $L^{1}$ and a comparison principle. We denote the intersection space $L^1\cap L^\infty$ by $L^{1\cap\infty}$ and restrict the fractional p-Laplacian to $L^{1\cap\infty}$ with $(-\Delta_p)^s_{\vert L^{1\cap \infty}}$ (see Section~\ref{sec:subgradientSection} and, in particular, Definition~\ref{def:sub-diff-in-X}). The composition operator $(-\Delta_p)^s_{\vert L^{1\cap \infty}}\circ\varphi$ is defined in $L^1$ (see Definition~\ref{def:compositionOperator}). Here we also use the notation $[u]^{+}$ to denote $\max\{u,0\}$, the positive part of $u$, and $[u]^1= u$. We also use the notation of $q$-brackets defined by Definition~\ref{def:q-bracket}.

	\begin{theorem}[{Well-posedness \& comparison principle in $L^1$}]
		\label{thm:1}
		Let $\Omega$ be an open domain in $\R^d$, $d \ge 1$. Let $1< p< \infty$, $0<s<1$, and suppose that $\varphi$, $f$
		satisfy~\eqref{hyp:1} and~\eqref{hyp:2}-\eqref{hyp:3}, respectively. Then the following statements hold.
		\begin{enumerate}[(1)]
			\item If either $\varphi \in W^{1,\infty}_{loc}(\R)$ or $\varphi\in W^{1,q}_{\loc}(\R)$ for $q > \frac{1}{1-s}$, %\text{AC}_{\loc}(\R)$  
            then one has that
            \begin{displaymath}
                \overline{D((-\Delta_p)^s_{\vert L^{1\cap \infty}}\varphi)}^{\mbox{}_{L^1}} = L^1.
            \end{displaymath}
			\item For every initial value $u_{0}\in \overline{D((-\Delta_p)^s_{\vert L^{1\cap \infty}}\varphi)}^{\mbox{}_{L^1}}$ and
			$g\in L^{1}(0,T;L^{1})$, there is a unique mild solution
			$u\in C([0,T];L^{1})$ of the initial boundary value
			problem~\eqref{eq:1}.  Moreover, for all $1\le q\le \infty$, one
			has that
			\begin{equation}
			\label{eq:theorem1.1estimate1}
			\norm{u(t)}_{q}\le e^{\omega t}\norm{u_{0}}_{q} 
			+ \int_{0}^{t}e^{\omega (t-r)}\norm{g(r)}_{q}\dr
			\end{equation}
			for every $u_{0}\in \overline{D((-\Delta_p)^s_{\vert L^{1\cap \infty}}\circ\varphi)}^{\mbox{}_{L^1}}\cap
			L^{q}$ and $g\in L^{1}(0,T;L^1\cap L^{q})$.
			
			\item For every $y_1$, $y_2\in
			L^{1}$, $g_{1}$, $g_{2}\in
			L^{1}(0,T;L^{1})$, and corresponding mild solutions
			$u_{1}$, $u_{2}$ of~\eqref{eq:1} with initial data $y_1$, $y_2$ respectively, one has
			\begin{equation}
			\label{eq:theorem1.1estimate2}
			\begin{split}
			&\norm{[u_{1}(t)-u_{2}(t)]^{\nu}}_{1}\le e^{\omega
				t}\norm{[y_1-y_2]^{\nu}}_{1}\\
			&\hspace{3.5cm} + \int_{0}^{t}e^{\omega (t-r)}[u_{1}(r)-u_{2}(r),g_{1}(r)-g_{2}(r)]_{\nu}\dr
			\end{split}
			\end{equation}
			for all $0\le t\le T$, and $\nu\in \{+,1\}$.
		\end{enumerate}
	\end{theorem}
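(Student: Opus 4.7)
The plan is to apply the Crandall--Liggett generation theorem for $\omega$-$m$-accretive operators in $L^1$. Let $\mathcal{A}:=(-\Delta_p)^s_{\vert L^{1\cap\infty}}\circ\varphi+f(\cdot,\cdot)$ on $L^1$; the key claim is that $\mathcal{A}+\omega I$ is $m$-accretive in $L^1$, where $\omega$ is the Lipschitz constant from~\eqref{hyp:2}. Once this is established, the Crandall--Liggett exponential formula produces, for every $u_0\in\overline{D(\mathcal{A})}^{L^1}$ and $g\in L^1(0,T;L^1)$, a unique mild solution $u\in C([0,T];L^1)$, and the $T$-contraction of the resolvent propagates through the discretization to give the comparison~\eqref{eq:theorem1.1estimate2} with $\nu\in\{+,1\}$. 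Thus all three conclusions reduce to verifying the accretivity/range condition for $\mathcal{A}$ plus the $L^q$-bound, which is an auxiliary estimate on the resolvent.

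The main technical work is to solve, for $0<\lambda<1/\omega$, the resolvent equation
\begin{equation*}
u+\lambda(-\Delta_p)^s\varphi(u)+\lambda f(\cdot,u)=h
\end{equation*}
for every $h\in L^1$ and to establish the $T$-contraction $\|[u_1-u_2]^\nu\|_1\le\tfrac{1}{1-\lambda\omega}\|[h_1-h_2]^\nu\|_1$. For existence, I first take $h\in L^1\cap L^\infty$ and argue variationally: substituting $v=\varphi(u)$ turns the equation into the Euler--Lagrange condition of the strictly convex, coercive functional
\begin{equation*}
J(v)=\tfrac{\lambda}{p}[v]_{s,p}^p+\int_\Omega\Phi(v)\dx+\lambda\int_\Omega F(x,\varphi^{-1}(v))\dx-\int_\Omega hv\dx
\end{equation*}
on $W^{s,p}_0\cap L^\infty$, where $\Phi'=\varphi^{-1}$ and $F(x,\cdot)$ is a primitive of $f(x,\cdot)$; its unique minimizer $v$ yields $u=\varphi^{-1}(v)$, and extension to general $h\in L^1$ follows by density once the contraction is proved. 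For $T$-accretivity I would test the difference of two resolvent equations against the regularized sign $\sign_\varepsilon^+(\varphi(u_1)-\varphi(u_2))$; strict monotonicity of $\varphi$ makes this coincide with $\sign_\varepsilon^+(u_1-u_2)$, so the standard pointwise inequality underlying the complete accretivity of $(-\Delta_p)^s$, combined with the Lipschitz bound on $f$, delivers the contraction in the limit $\varepsilon\to 0$.

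For part~(1), it suffices to show $C^\infty_c(\Omega)\subset D(\mathcal{A})$, which reduces to checking $\varphi(v)\in W^{s,p}_0$ for $v\in C^\infty_c(\Omega)$. Under $\varphi\in W^{1,\infty}_\loc(\R)$, $\varphi(v)$ is Lipschitz with compact support and so lies in $W^{s,p}_0$. Under $\varphi\in W^{1,q}_\loc(\R)$ with $q>1/(1-s)$, H\"older's inequality on the interval $I$ spanned by $v(x),v(y)$ gives $|\varphi(v(x))-\varphi(v(y))|\le\|\varphi'\|_{L^q(I)}|v(x)-v(y)|^{1/q'}$, which inserted into the Gagliardo seminorm yields finiteness precisely because the chosen range of $q$ forces $1/q'>s$. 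The $L^q$-bound~\eqref{eq:theorem1.1estimate1} is obtained at the resolvent level by testing with $|u|^{q-2}u$ (or $\sign(u)$ when $q=\infty$) and invoking complete accretivity of $(-\Delta_p)^s$; it then passes to mild solutions through the Crandall--Liggett discretization. The principal obstacle is the low regularity of $\varphi$: being merely continuous and strictly increasing, composition with $\varphi$ does not automatically preserve $W^{s,p}_0$, so every test-function manipulation must be validated by truncation of $u$, mollification of $\varphi$, and a careful identification-of-limits step to reconcile the variational theory on $W^{s,p}_0$ with the $L^1$ accretivity framework encoded by the composition operator of Definition~\ref{def:compositionOperator}.
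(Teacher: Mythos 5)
Your overall strategy for parts (2) and (3) — prove that the composed operator plus the Lipschitz perturbation is $\omega$-quasi $m$-$T$-accretive in $L^1$ and then invoke Crandall--Liggett/B\'enilan theory, with the $L^q$-bound coming from a complete-resolvent type estimate — is the same as the paper's; the only real difference is that you propose to solve the resolvent equation by hand via a variational argument, whereas the paper cites the abstract composition results of Coulhon--Hauer. That part is workable up to fixable slips: the perturbation term in your functional should be a primitive of $w\mapsto f(x,\varphi^{-1}(w))$ rather than $F(x,\varphi^{-1}(v))$ with $F$ a primitive of $f$ in its second argument (otherwise the Euler--Lagrange equation carries a spurious factor $(\varphi^{-1})'(v)$), convexity then holds only for $\lambda\omega<1$, one must treat $\varphi^{-1}$ as a maximal monotone graph when $\varphi(\R)\neq\R$, and in the $T$-accretivity step $\sign^+_\varepsilon(\varphi(u_1)-\varphi(u_2))$ is not pointwise equal to $\sign^+_\varepsilon(u_1-u_2)$; only their limits (the indicator of $\{u_1>u_2\}$) agree.

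The genuine gap is in part (1). You claim that density reduces to checking $\varphi(v)\in W^{s,p}_0$ for $v\in C^\infty_c(\Omega)$, but that only shows $v$ belongs to the domain of the \emph{energy} functional $\E_{\vert L^{1\cap\infty}}\circ\varphi$ (the form domain), not to the domain of the \emph{operator} $(-\Delta_p)^s_{\vert L^{1\cap\infty}}\varphi$, which requires the fractional $p$-Laplacian of $\varphi(v)$ to be represented by a function in $L^{1\cap\infty}$. Under the stated hypotheses $\varphi$ is merely $W^{1,\infty}_{\loc}$ or $W^{1,q}_{\loc}$, so $\varphi(v)$ is only Lipschitz (or H\"older) with possible corners, and for a Lipschitz function the singular integral defining $(-\Delta_p)^s\varphi(v)(x)$ already fails to converge absolutely once $s\ge 1-\tfrac1p$; no second-order cancellation is available, so there is no reason for $(-\Delta_p)^s\varphi(v)$ to be a bounded, or even integrable, function. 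In general, for $w\in W^{s,p}_0$ one only knows $(-\Delta_p)^s w\in (W^{s,p}_0)'$. This is exactly the point the paper bridges with Theorem~\ref{prop:L1densityofE}: the domain $D\big((\partial\E)_{\vert_{L^{1\cap\infty}}}\varphi\big)$ is dense in the $L^1$-closure of $D(\E_{\vert L^{1\cap\infty}}\varphi)$, proved via the regularized resolvent problem $u_\lambda+\lambda\big(\varepsilon\varphi(u_\lambda)+\partial\E_{\vert L^{1\cap\infty}}\varphi(u_\lambda)\big)\ni u$, a monotonicity estimate giving $(u_\lambda-u,\varphi(u_\lambda)-\varphi(u))_{L^2}\to 0$, pointwise convergence, and a sandwich/dominated-convergence argument to upgrade to $L^1$-convergence. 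Your proposal contains no substitute for this step, so as written part (1) does not follow; either supply such a density argument (or an equivalent one, e.g.\ showing that resolvents of smooth data converge in $L^1$ to the data), or restrict the H\"older/Gagliardo computation to its actual role, namely verifying $C^\infty_c(\Omega)\subseteq D(\E_{\vert L^{1\cap\infty}}\varphi)$, and then invoke the density theorem.
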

	
	The statements of Theorem~\ref{thm:1} follow as an application of the
	existence theory developed by \cite{BenilanGariepy}, in the
	monograph~\cite{CoulHau2016} by Coulhon and the second author, and by classical nonlinear semigroup theory (cf.~\cite{BenilanNonlinearEvolutionEqns}). We give the details of the proof in Section~\ref{sec:well-posedness}. Our well-posedness result presented here complements and generalizes the recent result by Giacomoni et al.~\cite{MR4230963} by allowing general, strictly increasing functions $\varphi$ and by considering solutions which may change sign. In contrast to~\cite{MR4230963}, Theor\-em~\ref{thm:1} provides well-posedness for all initial data $u_0$ in $L^1$.
	\medskip
	
	Our next result is concerned with global $L^{\ell}-L^{\infty}$ regularity estimates, $1\le \ell <\infty$, for mild solutions $u$ of the initial boundary value problem~\eqref{eq:1}, implying an immediate smoothing effect. For these regularity estimates, the Sobolev embedding (see, for instance,~\cite{MR2527916,MR2944369})
	\begin{equation}
	\label{eq:6}
	W^{s,p}\hookrightarrow
	L^{p_{s}}\quad\text{with}\quad
	p_{s} = \begin{cases}
	\left(\frac{1}{p}-\frac{s}{d}\right)^{-1}& \text{ if } p < \frac{d}{s},\\
	\tilde{p}& \text{ if } p = \frac{d}{s},\\
	\infty & \text{ if } p > \frac{d}{s},
	\end{cases}
	\end{equation}
	and $\tilde{p} \in [p,\infty)$, is crucial, where we write $W^{s,p}$ for $W^{s,p}(\Omega)$. Theorem~\ref{thm:LellLinfty} is a special case of Theorem~\ref{thm:LqLinfty-deGiorgi} in Section~\ref{sec:extrapolation-to-infty} as illustrated in Section~\ref{sec:applicationToFractionalPLaplacian}. Theorem~\ref{thm:LqLinfty-deGiorgi} applies to abstract operators $A$ acting on $L^q$ and satisfying an abstract Sobolev inequality (as introduced in~\cite{CoulHau2016}). In particular we apply a De Georgi iteration (cf.~\cite{MR2680400}) to obtain an $L^{m+1}-L^\infty$ estimate which is then extrapolated to $L^\ell-L^\infty$. Here we refine the methods in~\cite{MR2680400} by Caffarelli and Vasseur,~\cite{MR2529737} by Porzio and~\cite{CoulHau2016} by Coulhon and the second author.
	
	\begin{theorem}[{Global $L^{\ell}-L^{\infty}$ estimates}]\label{thm:LellLinfty}
		Let $\Omega$ be an open domain in $\R^d$, $d \ge 1$. Let $p > 1$, $0 < s< 1$, and $m \ge 1$ such that 
		\begin{equation}
		\label{hyp:16}
        m(p-1)+(m+1)\frac{sp}{d} > 1.
		\end{equation}
		Further, let $\varphi(r)=r^m$ for $r\in \R$, and $f(\cdot,u)$ satisfy~\eqref{hyp:2}-\eqref{hyp:3}. Take $q_{s} = p_s$ if $p \neq \frac{d}{s}$ and $q_{s} > \max(p,1+\frac{1}{m})$ if $p = \frac{d}{s}$. For $\rho \ge m+1$ and $\psi > 1$ satisfying
		\begin{equation}
		\label{eq:psirhohyp1}
		\begin{cases}
        \frac{1}{\rho} < \left(1-\frac{1}{\psi}\right)p\left(\frac{1}{p}-\frac{1}{q_{s}}\right)& \text{if } m(p-1) \ge 1,\\
	 	\frac{1}{\rho}\le \left(1-\frac{1}{\psi}\right)p\left(\frac{m}{m+1}-\frac{1}{q_{s}}\right)& \text{if } m(p-1) < 1,
	 	\end{cases}
		\end{equation}
		and
		\begin{equation}
		\label{eq:psirhohyp2}
		\rho \ge \frac{1-m(p-1)}{1-\frac{p}{q_{s}}},
		\end{equation}
		let $g \in L^{\psi}(0,T;L^{\rho})\cap L^{1}(0,T;L^{1}\cap L^{1+m+\varepsilon})$ for some $\varepsilon > 0$. Let $1 \le \ell< m+1$ satisfy
		\begin{equation}
		\label{eq:45}
		\ell > \frac{1-m(p-1)}{1-\frac{p}{q_{s}}}.
		\end{equation}
		Then for every $u_{0} \in L^{\ell}\cap L^1$ the mild solution $u$ of~\eqref{eq:1} in $L^1$ satisfies
        \begin{equation}
		\label{eq:43}
		\begin{split}
		\norm{u(t)}_{\infty}& \le C\max \left(e^{\omega\beta_1 t}\left(\tfrac{1}{t}+\omega\right)^{\alpha},\, e^{\omega \beta_{2}t}\norm{g}_{L^{\psi}(0,t;L^{\rho})}^{\eta}\right)^{\frac{1}{\theta}}\left(1+N(t)^\gamma\right)\times\\
		&\hspace{3cm}\times\left(e^{\omega t}\norm{u_0}_\ell+\int_0^t e^{\omega(t-\tau)}\norm{g(\tau)}_\ell \dtau\right)^{\frac{\ell \gamma}{(m+1)\theta}}
		\end{split}
		\end{equation}
		for all $t \in (0,T]$, where we set
		\begin{equation}
		\label{eq:thm12MFs}
		\begin{split}
		N(t)& = \sup\limits_{s \in (0,t]}\tfrac{M(\frac{s}{2})\norm{g}_{L^1(\frac{s}{2},s;L^{m+1})}+e^{\frac{\omega \beta_{2}s}{2\gamma}}\, \norm{g}_{L^{\psi}(0,\frac{s}{2};L^{\rho})}^{\frac{\eta}{\gamma}}}
		{M(s)^{\frac{1}{\theta}}\left(e^{\omega s}\norm{u_0}_{\ell}+\int_{0}^{s}e^{\omega(s-\tau)}\norm{g(\tau)}_{\ell}\dtau\right)^{\frac{\ell}{(m+1)\theta}}},\\
        M(t)& = \max\left(e^{\omega\beta_1 t}\left(\tfrac{1}{t}+\omega\right)^{\alpha},e^{\omega \beta_{2}t}\, \norm{g}_{L^{\psi}(0,t;L^{\rho})}^{\eta}\right)^{\frac{1}{\gamma}},
		\end{split}
		\end{equation}
		for the constants given by
        \begin{equation}
		\label{eq:2exponent}
		\begin{split}
		\alpha& = \frac{1}{(m+1)p\left(\frac{m}{m+1}-\frac{1}{q_{s}}\right)},\quad \gamma = \frac{\frac{1}{p}-\frac{1}{q_{s}}}{\frac{m}{m+1}-\frac{1}{q_{s}}},\\
		\theta& = 1-\gamma\left(1-\frac{\ell}{m+1}\right),\\
		\eta& = \frac{1}{1-\frac{m+1}{\rho}+mp \left(1-\frac{1}{\psi}\right)\left(1-\frac{m+1}{mq_{s}}\right)},\\
		\beta_{1}& = \begin{cases}
		\frac{\frac{1}{mp}-\frac{1}{m+1}}{\frac{1}{m+1}-\frac{1}{mq_{s}}}& \text{if } m(p-1) < 1,\\
		0& \text{if } m(p-1) \ge 1,
		\end{cases}\\
        \beta_2& = \begin{cases}
		\eta(1-m(p-1))\left(1-\frac{1}{\psi}\right)& \text{if } m(p-1) < 1,\\
		0& \text{if } m(p-1) \ge 1,
        \end{cases}
		\end{split}
		\end{equation}
    and where $C > 0$ depends on $m$, $p$, $s$, $d$, $q_s$, $\ell$, $\rho$ and $\psi$.
	\end{theorem}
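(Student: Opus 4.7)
The author's remark that this is a special case of the abstract Theorem~\ref{thm:LqLinfty-deGiorgi} suggests the proof should proceed by reduction to that framework, so my plan has two clearly separated stages: first obtain an $L^{m+1}\to L^\infty$ estimate by a De~Giorgi-type argument tailored to the doubly nonlinear structure, then bootstrap down to $L^\ell$ by Porzio-style extrapolation. The essential algebraic input is an inequality of the form
\begin{equation*}
\bigl(|a|^{m-1}a-|b|^{m-1}b\bigr)\bigl(|a|^{q-2}a-|b|^{q-2}b\bigr) \;\ge\; c_{m,q}\,\bigl|\,|a|^{\frac{m+q-1}{2}}\sign a-|b|^{\frac{m+q-1}{2}}\sign b\,\bigr|^{2},
\end{equation*}
which, chained with the fractional Sobolev embedding~\eqref{eq:6}, converts the nonlocal bilinear form $\langle(-\Delta_p)^s u^m,|u|^{q-2}u\rangle$ into a coercive quantity controlling $\|\,|u|^{(m+q-1)/2}\,\|_{p_s}^p$. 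This is precisely the abstract Sobolev inequality invoked in \cite{CoulHau2016} and should give the hypotheses of Theorem~\ref{thm:LqLinfty-deGiorgi}.

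For the $L^{m+1}\to L^\infty$ step I would work with the truncations $w_n=[u-k_n]^+$ at levels $k_n=k(1-2^{-n})$ and the cylinders $Q_n=(T/2-T/2^{n+1},T)\times\Omega$, following the Caffarelli--Vasseur--Porzio scheme refined in~\cite{MR2680400,MR2529737,CoulHau2016}. Testing (in the mild-solution/Yosida-approximation sense made available by Theorem~\ref{thm:1}) against a suitable power of $w_n$ and exploiting the Lipschitz bound on $f$ and hypothesis~\eqref{hyp:2}--\eqref{hyp:3} gives an energy inequality whose right-hand side splits into a diffusion term (controlled by the Sobolev inequality above) and a forcing contribution estimated via Hölder with exponents matching~\eqref{eq:psirhohyp1}. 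The precise choice of exponents in \eqref{eq:psirhohyp1}--\eqref{eq:psirhohyp2} and the additional integrability $g\in L^1(0,T;L^{m+1+\varepsilon})$ is exactly what is needed to obtain a nonlinear recursion $A_{n+1}\le C^n A_n^{1+\delta}$ with $\delta>0$; the geometric-sequence lemma then forces $A_\infty=0$, yielding $\|u(t)\|_\infty\le M(t)$ with $M$ as in~\eqref{eq:thm12MFs}, and with the $e^{\omega t}$ factors and $\beta_1,\beta_2$ accounting for the Lipschitz perturbation and for the singular regime $m(p-1)<1$.

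The second stage, extrapolating from $L^{m+1}$ to $L^\ell$ with $\ell$ in the range~\eqref{eq:45}, follows the device of Coulhon--Hauer \cite{CoulHau2016}: iterate the smoothing estimate on dyadic time intervals using the $L^\ell$-contraction~\eqref{eq:theorem1.1estimate1} supplied by Theorem~\ref{thm:1} to control the $L^{m+1}$ norm on each step, and interpolate. This produces the exponent $\theta=1-\gamma(1-\ell/(m+1))$ and the quantity $N(t)$ that collects the ratio of forcing and smoothing contributions appearing in~\eqref{eq:43}. I expect the main obstacle to be organizing the case split $m(p-1)\gtrless 1$: in the singular regime $m(p-1)<1$ the natural a priori bound is too weak to close the De~Giorgi recursion on its own, and one must compensate by inserting the lower bound~\eqref{eq:psirhohyp2} on $\rho$ (and the corresponding lower bound~\eqref{eq:45} on $\ell$) so that a nontrivial $\beta_1,\beta_2>0$ absorbs the loss; tracking the resulting scaling exponents through both stages is the delicate bookkeeping that produces the explicit constants in~\eqref{eq:2exponent}.
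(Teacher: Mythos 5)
Your overall architecture is the same as the paper's: a De Giorgi iteration giving an $L^{m+1}$--$L^\infty$ bound (the paper's Theorem~\ref{thm:DeGeorgiFractionalPLaplacian}, obtained from the abstract Theorem~\ref{thm:LqLinfty-deGiorgi}), followed by a Coulhon--Hauer-type extrapolation down to $L^\ell$ on shifted time intervals (the paper's Theorem~\ref{thm:CH5.2} applied to $\tilde u_0=u(t/2)$, $\tilde g(s)=g(s+t/2)$), with \eqref{eq:psirhohyp2} ensuring $\gamma_\psi\le\gamma$ and \eqref{eq:45} ensuring $\theta>0$ in that second stage. However, your ``essential algebraic input'' has a genuine gap. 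The inequality you state, $\bigl(|a|^{m-1}a-|b|^{m-1}b\bigr)\bigl(|a|^{q-2}a-|b|^{q-2}b\bigr)\ge c\,\bigl||a|^{\frac{m+q-1}{2}}\sign a-|b|^{\frac{m+q-1}{2}}\sign b\bigr|^{2}$, is the Stroock--Varopoulos inequality for the \emph{linear} fractional Laplacian ($p=2$): it contains no factor $|a^m-b^m|^{p-2}$ and produces a square, so when inserted into the form $\langle(-\Delta_p)^su^m,\cdot\rangle$ it cannot be ``chained with'' the embedding \eqref{eq:6}, which requires control of a $W^{s,p}$ Gagliardo seminorm raised to the $p$-th power; for $p\neq2$ the exponents simply do not line up, and the quantity it would control is not $\||u|^{(m+q-1)/2}\|_{p_s}^p$.

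More importantly, the De Giorgi scheme does not test with pure powers but with (powers of) \emph{truncations} at the levels $k_n$, so the pointwise inequality you actually need must be compatible with truncation composed with the power $m$ under the $p$-Laplacian form. This is precisely the paper's Lemma~\ref{le:GlambdaPointwiseEstimateCases}: $(a^{m}-b^{m})^{p-1}\bigl(G_\lambda(a)^{m}-G_\lambda(b)^{m}\bigr)\ge |G_\lambda(a)^{m}-G_\lambda(b)^{m}|^{p}$, which is where the hypothesis $m\ge1$ enters and which then yields the one-parameter Sobolev inequality of Lemma~\ref{lem:42} with $q=m+1$, $\sigma=mp$, $r=mq_s$. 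Your proposal never supplies this truncation-compatible inequality (nor a correct $p$-version of the power inequality), so the coercive term in your level-set energy inequality is unjustified and the exponents $\alpha,\gamma,\eta$ in \eqref{eq:2exponent} could not be recovered. A secondary, smaller gap: ``testing in the mild-solution/Yosida sense'' is doing real work here --- the paper needs the time-discretization argument of Proposition~\ref{lem:discreteToSmoothEstimateProof} together with the complete-resolvent growth estimate of Lemma~\ref{le:GrowthConditionCompleteResolvent} (and the Lipschitz-perturbation Lemma~\ref{le:Lipschitz-perturbation}) to pass the level-set inequality to mild solutions, and your sketch only gestures at this; also note that the forcing term produces a three-term recursion (handled by the modified iteration Lemma~\ref{lem:ladyMod}), not the single-term recursion $A_{n+1}\le C^nA_n^{1+\delta}$ you describe, and that $\beta_1,\beta_2$ arise from the $e^{-\omega(\sigma-q)s}$ factors when $m(p-1)<1$ rather than from \eqref{eq:psirhohyp2}--\eqref{eq:45}.
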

	
	In the case  of no forcing term, $g \equiv 0$, this simplifies to the following $L^\ell-L^\infty$ estimate.
	\begin{corollary}
		\label{cor:simpleLlLinf}
		Let $\Omega$ be an open domain in $\R^d$, $d \ge 1$. Let $p > 1$, $0 < s< 1$, and $m \ge 1$ such that~\eqref{hyp:16} holds. Let $q_{s} = p_s$ if $p \neq \frac{d}{s}$ and $q_{s} > \max(p,1+\frac{1}{m})$ if $p = \frac{d}{s}$. Let $\varphi(r)=r^m$ for $r\in \R$. Further, suppose that $f(\cdot,u)$ satisfies~\eqref{hyp:2}-\eqref{hyp:3} and $g \equiv 0$. Let $1 \le \ell< m+1$ and suppose $\ell$ satisfies~\eqref{eq:45}. Then for every $u_{0} \in L^{\ell} \cap L^1$ the mild solution $u$ of~\eqref{eq:1} in $L^1$ satisfies
		\begin{equation}
		\label{eq:43corg0}
		\begin{split}
		\norm{u(t)}_{\infty}& \le Ce^{\omega\beta t}t^{-\alpha}\norm{u_0}_\ell^{\gamma}
		\end{split}
		\end{equation}
		for all $t \in (0,T]$, where
		\begin{displaymath}
		\begin{split}
        \alpha& = \frac{1}{m(p-1)-1+\ell(1-\frac{p}{q_{s}})},\quad
		\gamma = \frac{\ell(1-\frac{p}{q_{s}})}{m(p-1)-1+\ell(1-\frac{p}{q_{s}})},\\
		\beta& = \begin{cases}
		\frac{\frac{1}{p}-\frac{m}{m+1}}{\frac{m}{m+1}-\frac{1}{p}+\frac{\ell}{m+1}\left(\frac{1}{p}-\frac{1}{q_{s}}\right)}& \text{if } m(p-1) < 1,\\
		0& \text{if } m(p-1) \ge 1,
		\end{cases}
		\end{split}
  		\end{displaymath}
		and $C > 0$ depends on $m$, $p$, $s$, $d$, $q_s$ and $\ell$.
	\end{corollary}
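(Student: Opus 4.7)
The plan is to derive this corollary as a direct specialization of Theorem \ref{thm:LellLinfty} to the case $g \equiv 0$. With $g \equiv 0$, the integrability hypothesis on $g$ is vacuously satisfied for any admissible $(\rho,\psi,\varepsilon)$, so Theorem \ref{thm:LellLinfty} applies under the corollary's assumptions upon an arbitrary choice of $\rho,\psi$ meeting \eqref{eq:psirhohyp1}--\eqref{eq:psirhohyp2}.

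Substituting $g \equiv 0$ into the estimate \eqref{eq:43}, three simplifications occur simultaneously: $\norm{g}_{L^{\psi}(0,t;L^{\rho})} = 0$, so the maximum inside the first bracket collapses to $e^{\omega\beta_1 t}(\tfrac{1}{t}+\omega)^{\alpha}$; the quantity $N(t)$ defined in \eqref{eq:thm12MFs} vanishes identically since both summands in its numerator are norms of $g$; and the integral $\int_{0}^{t}e^{\omega(t-\tau)}\norm{g(\tau)}_{\ell}\dtau$ vanishes. Thus \eqref{eq:43} reduces to
\begin{displaymath}
\norm{u(t)}_{\infty} \le C \bigl(e^{\omega\beta_1 t}(\tfrac{1}{t}+\omega)^{\alpha}\bigr)^{1/\theta}\bigl(e^{\omega t}\norm{u_0}_{\ell}\bigr)^{\ell\gamma/((m+1)\theta)}.
\end{displaymath}

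The remaining task is purely algebraic: identifying the new exponents with those stated in the corollary. Rewriting $\theta = 1-\gamma(1-\ell/(m+1))$ as a single fraction using $\gamma = \frac{(q_{s}-p)(m+1)}{p(mq_{s}-m-1)}$, one finds a common factor $\frac{q_{s}}{p(mq_{s}-m-1)}$ in both $\alpha$ and $\theta$ which cancels in the ratios, giving
\begin{displaymath}
\frac{\alpha}{\theta} = \frac{1}{m(p-1)-1+\ell(1-p/q_{s})},\qquad \frac{\ell\gamma}{(m+1)\theta} = \frac{\ell(1-p/q_{s})}{m(p-1)-1+\ell(1-p/q_{s})},
\end{displaymath}
which match the corollary's $\alpha$ and $\gamma$. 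The same cancellation converts $\beta_1/\theta$ into the stated $\beta$, splitting naturally into the two cases $m(p-1)\ge 1$ (where $\beta_1=0$ so $\beta=0$) and $m(p-1)<1$. Finally, bounding $(\tfrac{1}{t}+\omega)^{\alpha/\theta}$ by $t^{-\alpha/\theta}(1+\omega t)^{\alpha/\theta}$ and collecting the exponential factors yields the form \eqref{eq:43corg0}.

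The main obstacle is solely this algebraic bookkeeping: carefully unfolding the definitions in \eqref{eq:2exponent} and verifying that the three ratios $\alpha/\theta$, $\ell\gamma/((m+1)\theta)$, and $\beta_1/\theta$ reduce to the closed-form expressions in the corollary. No additional analytic input beyond Theorem \ref{thm:LellLinfty} is required.
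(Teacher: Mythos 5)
Your derivation is exactly the paper's intended route: the corollary is presented as the specialization of Theorem~\ref{thm:LellLinfty} to $g\equiv 0$, and your algebraic identifications are correct --- writing $\gamma=\frac{(m+1)(q_s-p)}{p(mq_s-m-1)}$ and $\theta=\frac{q_s(m(p-1)-1)+\ell(q_s-p)}{p(mq_s-m-1)}$ one checks that $\alpha/\theta$, $\ell\gamma/((m+1)\theta)$ and $\beta_1/\theta$ reduce to the corollary's $\alpha$, $\gamma$ and $\beta$. The only bookkeeping caveat (inherited from the paper's own statement rather than introduced by you) is that the residual factors $\left(\tfrac1t+\omega\right)^{\alpha/\theta}$ and $\left(e^{\omega t}\right)^{\ell\gamma/((m+1)\theta)}$ contribute extra $e^{\omega t}$-type growth, so a literal ``collection of exponential factors'' yields \eqref{eq:43corg0} with an exponent of the form $\beta_1/\theta+\alpha/\theta+\gamma$ rather than $\beta_1/\theta$ alone (the discrepancy disappears when $\omega=0$, the case emphasised in the paper).
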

	
	Such regularity and decay estimates are common for these diffusion problems, see for example~\cite{MR2529737} and the references therein. In particular, with $f \equiv 0$, $g \equiv 0$ we have Corollary~\ref{cor:simpleLlLinf} with $\omega = 0$ and hence a decay in time. Similar estimates have been found for related problems, including for a class of local doubly nonlinear problems~\cite{MR2529737} related to a doubly nonlinear $p$-Laplacian evolution equation. In~\cite{MR2680400}, an $L^1-L^\infty$ estimate is found corresponding to the fractional Laplacian with $s = \frac{1}{2}$ and in~\cite{MR2737788} for the fractional porous medium equation with $s = \frac{1}{2}$. In the case of the fractional porous medium equation on $\R^d$ ($d \ge 1$), the same authors in~\cite{MR2954615} find such an $L^\ell-L^\infty$ regularizing effect for all $\ell \ge 1$. We can also compare this to the Barenblatt solutions found for related evolution problems in~\cite{MR4114983,vazquez2021fractional,MR4219198}, noting the convergence in $L^1(\R^d)$ as $t \rightarrow \infty$. We emphasise that in all cases the exponents given by~\eqref{eq:2exponent} in Corollary~\ref{cor:simpleLlLinf} agree with those found in these papers; in the case of~\cite{MR2529737}, by taking $s = 1$. We note that compared to~\cite{MR4230963}, we restrict to $m \ge 1$ rather than $0 < m < 1$ due to the limitation of Lemma~\ref{le:GlambdaPointwiseEstimateCases}. Such regularizing effects require separate consideration in this case.
	
	\medskip
	
	Now, for $1< p <\infty$ and $0 < s < 1$, set 
    \begin{displaymath}
		\tilde{D}((-\Delta_p)^s_{\vert_{L^{1\cap\infty}}}) = \left\lbrace u \in L^{1\cap\infty}\left|
		\begin{aligned}
		& \exists\, (u_{n},h_n)_{n\ge 1} \subseteq (-\Delta_p)^s_{\vert_{L^{1\cap\infty}}} \text{s.t.}\\
		& u_{n} \rightarrow u \text{ in } L^{1} \text{ and }\\
		&(u_n,h_n)_{n \ge 1} \text{ is bounded in } L^{\infty}\times L^1.
		\end{aligned}\right.
		\right\rbrace
  \end{displaymath}
  and for every continuous $\varphi$, let 
  \begin{displaymath}
      \hat{D}((-\Delta_{p})^{s}_{\vert L^{1\cap\infty}}\varphi)=\Big\{u\in L^{1}\,\Big\vert\,\varphi(u)\in \tilde{D}((-\Delta_p)^s_{\vert_{L^{1\cap\infty}}})\Big\}.
  \end{displaymath}
  Then, by taking advantage of \cite[Theorem 4.1]{BenilanGariepy}, we show in our next theorem that for every $u_0\in \hat{D}((-\Delta_{p})^{s}_{\vert L^{1\cap\infty}}\varphi)$, the corresponding mild solutions $u$ of the initial boundary value problem~\eqref{eq:1} is strong and distributional. In this theorem, the term $[\,\cdot\,]_{s,p}$ denotes the Gagliardo semi-norm~\eqref{eq:8}. We prove this result in Section~\ref{sec:strongSols}.
	
	\begin{theorem}[{Mild solutions are strong and distributional}]
		\label{th:MildToStrongLipschitzFracPLap}
		Let $\Omega$ be an open domain in $\R^d$, $d \ge 1$, of finite Lebesgue measure. 
        Let $1< p <\infty$, $0 < s < 1$, $\varphi\in C(\mathbb{R})$ be a strictly increasing function such that $\varphi^{-1} \in AC_{\loc}(\mathbb{R})$. 
        Suppose $f(\cdot,u)$ satisfies~\eqref{hyp:2}-\eqref{hyp:3} and let $F$ be the Nemystkii operator of $f$. 
        Further suppose that $g \in BV(0,T;L^{1})\cap L^{1}(0,T;L^{\infty})$. 
        Then for every $u_0\in \hat{D}((-\Delta_{p})^{s}_{\vert L^{1\cap\infty}}\varphi)$, the mild solution $u$ of~\eqref{eq:1} is a strong distributional solution in $L^{1}$ having the regularity
        \begin{displaymath}
        u \in W^{1,\infty}((0,T);L^{1})\cap L^{\infty}([0,T];L^{\infty})\cap C([0,T];L^{q})
        \end{displaymath}
        for every $1 \le q < \infty$. Moreover, $\varphi(u)$ has a weak derivative given by
        \begin{equation}
        \label{eq:ChainRuleForPhiFracPLap}
        \frac{\td}{\td t}\varphi(u(t)) = \varphi'(u(t))\frac{\td u}{\td t}\qquad \text{ in } L^{2} \text{ for a.e.~} t > 0
        \end{equation}
        and the function $t \mapsto \left[\varphi(u(t))\right]^{p}_{s,p}$ has derivative given by
        \begin{equation}
        \label{eq:DerivativeOfPsiPhiUFracPLap}
        \begin{split}
        \frac{\td }{\td t}\left[\varphi(u(t))\right]^{p}_{s,p}& = -\norm{\sqrt{\varphi'(u(t))}\frac{\td u}{\td t}(t)}_2^2\\
        &\qquad-\braket{F(u(t))-g(t),\frac{\td u}{\td t}(t)\varphi'(u(t))} 
        \end{split}
        \end{equation}
        for a.e.~$t > 0$.
	\end{theorem}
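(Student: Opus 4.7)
My plan has three stages: extract the strong $L^{1}$ structure of $u$ from B\'enilan--Gariepy, upgrade its regularity to $L^{\infty}$ and to $C([0,T];L^{q})$ via the comparison principle and interpolation, and finally establish the chain rule~\eqref{eq:ChainRuleForPhiFracPLap} and the energy identity~\eqref{eq:DerivativeOfPsiPhiUFracPLap} from the $AC_{\loc}$ structure of $\varphi^{-1}$ combined with the subdifferential interpretation of $(-\Delta_{p})^{s}$.

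Theorem~\ref{thm:1} already provides that the operator $A:=(-\Delta_{p})^{s}_{\vert L^{1\cap\infty}}\circ\varphi+F$ is $\omega$-quasi-$m$-accretive in $L^{1}$, and by definition of $\hat{D}((-\Delta_{p})^{s}_{\vert L^{1\cap\infty}}\varphi)$ there is a sequence of pairs in the graph of $(-\Delta_{p})^{s}_{\vert L^{1\cap\infty}}$ that approximates $\varphi(u_{0})$ in $L^{1}$ while remaining bounded in $L^{\infty}\times L^{1}$. Together with $g\in BV(0,T;L^{1})$, this is exactly the setting of~\cite[Theorem~4.1]{BenilanGariepy}, whose conclusion yields $u\in W^{1,\infty}(0,T;L^{1})$ together with the pointwise identity $u_{t}(t)+(-\Delta_{p})^{s}\varphi(u(t))+f(\cdot,u(t))=g(\cdot,t)$ in $L^{1}$ for a.e.\ $t$; the membership $(-\Delta_{p})^{s}\varphi(u(t))\in L^{1}$ then forces $\varphi(u(t))\in W^{s,p}_{0}$ by the very definition of the fractional $p$-Laplacian, which is precisely the distributional structure. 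Applying estimate~\eqref{eq:theorem1.1estimate1} with $q=\infty$ to $u_{0}\in L^{\infty}$ and $g\in L^{1}(0,T;L^{\infty})$ delivers $u\in L^{\infty}([0,T];L^{\infty})$, and the elementary interpolation $\norm{u(t)-u(\tau)}_{q}\le\norm{u(t)-u(\tau)}_{1}^{1/q}\norm{u(t)-u(\tau)}_{\infty}^{1-1/q}$ combined with $u\in C([0,T];L^{1})$ upgrades this to $u\in C([0,T];L^{q})$ for every $1\le q<\infty$.

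For the third stage I set $R:=\norm{u}_{L^{\infty}([0,T]\times\Omega)}$, so $u$ takes values in $[-R,R]$ and $\varphi(u)$ in the compact interval $[\varphi(-R),\varphi(R)]$. On that interval $\varphi^{-1}$ is absolutely continuous, so $\varphi$ is differentiable $\mathcal{H}^{1}$-a.e.\ on $[-R,R]$ with $\varphi'=1/(\varphi^{-1})'\in(0,\infty]$. Because $u\in W^{1,\infty}(0,T;L^{1})\cap L^{\infty}(L^{\infty})\hookrightarrow W^{1,1}((0,T)\times\Omega)$, Fubini gives that for a.e.\ $x$ the map $t\mapsto u(t,x)$ is absolutely continuous with classical derivative $u_{t}(t,x)$, so the classical AC chain rule yields $\frac{\td}{\td t}\varphi(u(t,x))=\varphi'(u(t,x))\,u_{t}(t,x)$ for a.e.\ $(t,x)$, which is promoted to~\eqref{eq:ChainRuleForPhiFracPLap} in $L^{2}$ by dominated convergence once integrability of $\varphi'(u)|u_{t}|^{2}$ is secured from pairing the strong equation with $u_{t}$. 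For~\eqref{eq:DerivativeOfPsiPhiUFracPLap} I exploit that $(-\Delta_{p})^{s}$ is the subgradient of $\frac{1}{p}[\,\cdot\,]^{p}_{s,p}$ on $W^{s,p}_{0}$, so that
\begin{displaymath}
\frac{1}{p}\frac{\td}{\td t}[\varphi(u(t))]^{p}_{s,p}=\braket{(-\Delta_{p})^{s}\varphi(u(t)),\tfrac{\td}{\td t}\varphi(u(t))},
\end{displaymath}
and then substitute $(-\Delta_{p})^{s}\varphi(u)=g-F(u)-u_{t}$ from the strong equation together with~\eqref{eq:ChainRuleForPhiFracPLap}.

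The main obstacle is stage three: $\varphi$ itself is not assumed absolutely continuous and $\varphi'(u)$ may well blow up on the orbit of $u$, so $\varphi'(u)\,u_{t}\in L^{2}$ has to be extracted from dissipation in the equation rather than from an a priori bound on $\varphi'$; the uniform $L^{\infty}$ control on $u$ is what makes the $AC_{\loc}$ hypothesis on $\varphi^{-1}$ usable at all. A safe fallback, if the direct Fubini argument becomes delicate because $\varphi'$ is only defined $\mathcal{H}^{1}$-a.e., is to approximate $\varphi$ by $C^{1}$ strictly increasing $\varphi_{\varepsilon}$ obtained by inverting mollifications of $\varphi^{-1}$, derive~\eqref{eq:ChainRuleForPhiFracPLap}--\eqref{eq:DerivativeOfPsiPhiUFracPLap} for $\varphi_{\varepsilon}$-solutions, and pass to the limit using the uniform $L^{\infty}$ bound.
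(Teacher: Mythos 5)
The main gap is in your first stage. Theorem~\ref{th:BGStrongExistence} (\cite[Theorem 4.1]{BenilanGariepy}) applies to the \emph{unperturbed} doubly nonlinear equation $u'+\left(\partial_{L^1}\E\right)_{\vert_{L^{1\cap\infty}}}v\ni f$, $u=\beta(v)$; it does not cover the Lipschitz perturbation $F$, so the setting is not ``exactly'' that theorem for the operator $(-\Delta_p)^s_{\vert L^{1\cap\infty}}\varphi+F$. The way the paper proceeds is to absorb the perturbation into the forcing term $\tilde g=g-F(u)$, but this is only admissible once one knows a priori that $t\mapsto F(u(t))$ lies in $BV(0,T;L^1)\cap L^1(0,T;L^\infty)$, i.e.\ that the mild solution $u$ itself is Lipschitz in time into $L^1$ and bounded in $L^\infty$. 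That is the substantive step your proposal skips: one approximates $u_0$ by $\varphi^{-1}(v_n)$ with $(v_n,w_n)$ in the graph of $(-\Delta_p)^s$ bounded in $L^\infty\times L^1$, invokes the Lipschitz regularity of mild solutions with such initial data (Lemma 7.8 of B\'enilan--Crandall--Pazy), observes that the Lipschitz constants are uniform in $n$ because $(w_n)$ is bounded in $L^1$, and then transfers the time-Lipschitz bound to $u$ via the comparison estimate~\eqref{eq:theorem1.1estimate2} and Fatou's lemma. Only after this bootstrapping can Theorem~\ref{th:BGStrongExistence} be applied with forcing $g-F(u)$ to produce the strong distributional solution. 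Your $L^\infty$ bound (from the $q=\infty$ growth estimate) and the interpolation giving $C([0,T];L^q)$ are fine, but they complement rather than replace this missing argument.

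Stage three is also not sound as written, for the reason you yourself flag: $\varphi^{-1}\in AC_{\loc}$ does not make $\varphi$ absolutely continuous (it may map null sets onto sets of positive measure), so the ``classical AC chain rule'' for $t\mapsto\varphi(u(t,x))$ is not available, and the claimed embedding into $W^{1,1}((0,T)\times\Omega)$ in any case controls only the time derivative. The paper does not argue this way: identity~\eqref{eq:ChainRuleForPhiFracPLap} is extracted from the \emph{proof} of \cite[Theorem 4.1]{BenilanGariepy}, where the dedicated chain rule (their Theorem 1.1, with $p=(\varphi^{-1})'$ applied to $v=\varphi(u)$) is used under hypotheses that are actually verified in that framework, and the $L^2$ bound on $\sqrt{\varphi'(u)}\,u_t$ comes out of that construction. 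Your mollification fallback could in principle be made to work, but it is only announced: the regularized problems have different solutions $u_\varepsilon$, and the compactness/stability needed to pass to the limit in~\eqref{eq:ChainRuleForPhiFracPLap}--\eqref{eq:DerivativeOfPsiPhiUFracPLap} is not provided. The final step --- testing the strong equation with $\tfrac{\td}{\td t}\varphi(u)$ and using the subdifferential structure of $\E$ to get~\eqref{eq:DerivativeOfPsiPhiUFracPLap} --- does coincide with the paper's argument, but it rests on the chain rule and on the strong solvability that remain unestablished in your outline.
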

	
	Strong solutions are naturally a standard aim of results in such a theory and so have been investigated for many related problems. For the fractional $p$-Laplacian evolution equation, these have been found in~\cite{MR3491533} and for the fractional porous medium equation on $\R^d$, in~\cite{MR2954615}. Comparing this with \cite{MR4230963}, we note that a different regime is studied with $\frac{1}{2p-1} \le m < 1$ compared to $m > 0$ for non-negative solutions.
	
	In the case $\varphi(u) = u^m$, the operator is homogeneous and we have the following Lipschitz estimate via the regularising effect of homogeneous operators in~\cite{BenilanRegularizingMR648452}. Here we define, for $g \in L^1_{\loc}(0,T;L^1)$ and $0 \le t \le T \le \infty$,
	\begin{equation}
	\label{eq:VLipdef}
	V(t,g) = \limsup_{\xi\rightarrow 0^+}\int_0^{t/(1+\xi)}\frac{\norm{g(\tau(1+\xi))-g(\tau)}_1}{\xi}\dtau.
	\end{equation}
	Note that $V(T,g) < \infty$ is equivalent to $t \rightarrow tg(t)$ having (essentially) finite variation on $[0,T]$.
	
	\begin{theorem}[Derivative estimates for $\varphi(u) = u^m$]
	\label{thm:derivativeEstimates}
		Let $\Omega$ be an open domain in $\R^d$, $d \ge 1$. Let $p > 1$, $0 < s < 1$ and $f(\cdot,u)$ satisfy~\eqref{hyp:2}-\eqref{hyp:3}. 
        Suppose $\varphi(r) = r^m$, $r \in \R$ for $m > 0$. Then we have the following regularity estimates.
		\begin{enumerate}[(1)]
			\item Suppose $m(p-1) \neq 1$, $g \in L^1(0,T;L^1)$ and $V(T,g) < \infty$. Then for every $u_{0} \in L^{1}$, the unique mild solution $u$ to~\eqref{eq:1} is Lipschitz continuous on each compact subset of $(0,T]$ satisfying
			\begin{equation}
			\label{eq:lipschitzContinuity}
			\begin{split}
			\limsup_{h\rightarrow 0^+}\frac{\norm{u(t+h)-u(t)}_{1}}{h}&\le \frac{Ce^{2\omega t}}{t}\left(\norm{u_0}_{1}+\int_0^t
			\norm{g(\tau)}_1 \dtau\right)\\
			&\hspace{3cm}+\frac{e^{\omega t}}{t}V(t,g)
			\end{split}
			\end{equation}
            where $C = \frac{m(p-1)+2}{|m(p-1)-1|}$.
			\item Let $g \in BV(0,T;L^{1})\cap L^{1}(0,T;L^{\infty})$. Further, suppose that $m \ge 1$ satisfies
            \begin{equation}
    		\tag{\ref{hyp:16}}
                m(p-1)+(m+1)\frac{sp}{d} > 1.
    		\end{equation}
			Take $q_s = p_s$ if $p \neq \frac{d}{s}$ and $q_s > \max(p,1+\frac{1}{m})$ if $p = \frac{d}{s}$, and suppose that
			\begin{equation}
			\label{eq:651}
			m\left(p-1\right) > \frac{p}{q_s}.
			\end{equation}
			If, in addition, for $\rho \ge m+1$ and $\psi > 1$ satisfying
			\begin{displaymath}
			\rho \ge \frac{m+1-mp}{1-\frac{p}{q_s}} \quad \text{ and }\quad \frac{1}{\rho} < \left(1-\frac{1}{\psi}\right)\left(1-\frac{p}{q_s}\right),
			\end{displaymath}
			$g$ belongs to $L^{\psi}(0,T;L^{\rho})\cap L^{m+1}(0,T;L^{\infty})$, then for every $u_{0} \in L^{1}$, the mild solution $u$ to~\eqref{eq:1} is a strong solution in $L^1$ and satisfies
			\begin{equation}
			\label{eq:W12estimate}
			\begin{split}
		      \int_{0}^{t}s^{\tilde{\alpha}}
            &\int_{\Omega}u^{m-1}(s)\left|\frac{\td u}{\td s}\right|^{2}\dmu \ds+t^{\tilde{\alpha}}\E(\varphi(u(t)))\\
			& \le Ct\left(1+t^2\right)\max \left(e^{\omega\beta_1 t}, t^{\alpha}e^{\omega \beta_{2}t}\norm{g}_{L^{\psi}(0,t;L^{\rho})}^{\eta}\right)^{\frac{m}{\theta}}\times\\
			&\hspace{1cm}\left(e^{\omega t}\norm{u_0}_1+\int_0^t e^{\omega(t-\tau)}\norm{g(\tau)}_1 \dtau\right)^{\frac{\delta m}{\theta(m+1)}+1}\left(1+F(t)^{\gamma}\right)\\
			&\hspace{3cm}+\int_{0}^{t}\left(\tilde{\alpha}+m s\right)s^{\tilde{\alpha}-1}\norm{g(s)}_{m+1}^{m+1} \ds
			\end{split}
			\end{equation}
			for all $t \in (0,T]$ with $\tilde{\alpha} = \frac{\alpha m}{\theta}+2$, $F(s)$ given by~\eqref{eq:thm12MFs}, constants given by~\eqref{eq:2exponent} with $\ell = 1$ and where $C > 0$ depends on $m$, $p$, $s$, $d$, $q_s$, $\psi$, $\rho$ and $\omega$.
		\end{enumerate}
	\end{theorem}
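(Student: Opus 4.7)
\noindent\emph{Proof proposal.}
For part~(1), the plan is to exploit the homogeneity of the principal operator. With $\varphi(r) = r^m$, the operator $A_0 u := (-\Delta_{p})^{s} u^m$ is positively homogeneous of degree $k := m(p-1) \neq 1$, in the sense that $A_0(\lambda u) = \lambda^{k} A_0 u$ for every $\lambda > 0$. I would invoke B\'enilan's regularizing effect for $m$-accretive homogeneous operators on $L^1$~\cite{BenilanRegularizingMR648452}: for any mild solution of $u' + A_0 u \ni g$ with $V(T,g) < \infty$, one has
\begin{equation*}
\limsup_{h \to 0^+} \frac{\|u(t+h) - u(t)\|_1}{h} \le \frac{k+2}{|k-1|\, t}\Big(\|u_0\|_1 + \int_0^t \|g(\tau)\|_1\,\dtau\Big) + \frac{V(t,g)}{t}.
\end{equation*}
The Lipschitz lower-order term $f(\cdot,u)$ is absorbed as an $\omega$-accretive perturbation via the rescaling $v(t) := e^{-\omega t} u(t)$ (or, equivalently, by treating $g - f(\cdot,u)$ as effective forcing and invoking the a priori $L^1$-bound~\eqref{eq:theorem1.1estimate1}), which produces the $e^{2\omega t}$ factor of~\eqref{eq:lipschitzContinuity}. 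Finally, density of the generator's domain in $L^1$ (Theorem~\ref{thm:1}\,(1)) together with $L^1$-continuous dependence on initial data extends the estimate from data in the domain to arbitrary $u_0 \in L^1$; the sharp constant $C = (m(p-1)+2)/|m(p-1)-1|$ traces directly to the homogeneity degree in B\'enilan's formula.

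For part~(2), I would first observe that the hypotheses on $g$ and on $u_0$ allow (after a density/approximation step in the generator domain) an appeal to Theorem~\ref{th:MildToStrongLipschitzFracPLap}, so the mild solution $u$ is in fact strong and satisfies~\eqref{eq:DerivativeOfPsiPhiUFracPLap}. Specializing to $\varphi(r) = r^m$, $\varphi'(r) = m r^{m-1}$, this identity reads
\begin{equation*}
\frac{\td}{\td t}[u^m(t)]^p_{s,p} = -m \int_\Omega u^{m-1}\Big|\tfrac{\td u}{\td t}\Big|^2 \dx - m\Big\langle F(u) - g,\; u^{m-1}\tfrac{\td u}{\td t}\Big\rangle.
\end{equation*}
Multiplying by the time-weight $s^{\tilde{\alpha}}$ with $\tilde{\alpha} = \alpha m/\theta + 2$, integrating over $(0,t)$, and performing an integration by parts in the weight, I obtain
\begin{equation*}
t^{\tilde{\alpha}} \E(u^m(t)) + m\int_0^t s^{\tilde{\alpha}}\int_\Omega u^{m-1}|u_s|^2 \dx\ds = \tilde{\alpha}\int_0^t s^{\tilde{\alpha}-1}\E(u^m(s))\,\ds - m\int_0^t s^{\tilde{\alpha}}\big\langle F(u) - g,\, u^{m-1} u_s\big\rangle \ds.
\end{equation*}
Young's inequality in the form $|u^{m-1} u_s\, h| \le \tfrac{1}{2} u^{m-1}|u_s|^2 + \tfrac{1}{2} u^{m-1} h^2$ absorbs half of the dissipation on the right, leaving residual integrals of the type $\|u(s)\|_\infty^{m-1} (\omega\|u(s)\|_2 + \|g(s)\|_2)^2$. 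The $\|u(s)\|_\infty^{m-1}$ factor is then bounded via the $L^1$--$L^\infty$ smoothing estimate~\eqref{eq:43} with $\ell = 1$, producing the $\max(e^{\omega\beta_1 t}, t^{\alpha} e^{\omega\beta_2 t}\|g\|_{L^\psi(0,t;L^\rho)}^\eta)^{m/\theta}$ prefactor, the $(\|u_0\|_1 + \int \|g\|_1)^{\delta m/(\theta(m+1))+1}$ factor and the $(1+F(t)^{\gamma})$ term in~\eqref{eq:W12estimate}.

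The residual term $\tilde{\alpha}\int_0^t s^{\tilde{\alpha}-1}\E(u^m(s))\,\ds$ is handled by a second, unweighted energy identity obtained from testing the PDE against $u^m$ itself and invoking~\eqref{eq:43} once more, while the final boundary-type term $\int_0^t(\tilde{\alpha}+ms)s^{\tilde{\alpha}-1}\|g(s)\|_{m+1}^{m+1}\,\ds$ arises naturally from the interaction of the $g$-part of the cross term with the time weight. The principal obstacle is the delicate calibration of $\tilde{\alpha}$: it must simultaneously (i) neutralize the $t^{-\alpha}$ singularity of the smoothing estimate so that $\int_0^t s^{\tilde{\alpha}-1}\E(u^m)\,\ds$ and the boundary contribution at $s = 0$ are finite, (ii) dominate the further $t^{-\alpha(m-1)/\theta}$ singularity introduced through Young's absorption of $\|u\|_\infty^{m-1}$, and (iii) keep the forcing term $\int_0^t s^{\tilde{\alpha}-1}\|g(s)\|_{m+1}^{m+1}\,\ds$ integrable. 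The choice $\tilde{\alpha} = \alpha m/\theta + 2$ is the minimal exponent making all three balances hold, and tracking it carefully through the density approximation (needed to pass from $u_0 \in \hat{D}((-\Delta_p)^s_{\vert L^{1\cap\infty}}\varphi)$ to arbitrary $u_0 \in L^1$) is where the main technical effort lies.
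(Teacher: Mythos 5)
Your proposal follows essentially the same route as the paper: part (1) is exactly the B\'enilan--Crandall regularizing effect for the homogeneous operator with $-F(u)+g$ as effective forcing (the paper closes the resulting self-referential perturbation term with Gr\"onwall's inequality, which is the one detail you should make explicit, since the rescaling $v=e^{-\omega t}u$ alone destroys homogeneity), and part (2) is the paper's weighted energy identity, with the term $\tilde\alpha\int_0^t s^{\tilde\alpha-1}\E(u^m(s))\ds$ controlled by the convexity/subgradient inequality obtained from testing against $u^m$ itself, then Young's inequality, the $L^1$--$L^\infty$ smoothing estimate with $\ell=1$ (which is where hypothesis~\eqref{eq:651} enters), and a final approximation argument to reach all $u_0\in L^1$. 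The only discrepancies are bookkeeping rather than substance: the paper organizes the residual terms as $\norm{u}_{m+1}^{m+1}\le \norm{u}_1\norm{u}_\infty^{m}$ together with $\norm{g}_{m+1}^{m+1}$ contributions (matching the stated right-hand side of~\eqref{eq:W12estimate}), rather than your $\norm{u}_\infty^{m-1}$ and $L^2$-norms of $g$.
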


    In the case  of no forcing term, $g \equiv 0$ and taking $\ell = 1$, this simplifies to the following $L^1-L^\infty$ estimate.
    \begin{corollary}
        Let $\Omega$ be an open domain in $\R^d$, $d \ge 1$. Let $p > 1$, $0 < s < 1$, $f(\cdot,u)$ satisfy~\eqref{hyp:2}-\eqref{hyp:3} and $g \equiv 0$. 
        Suppose $\varphi(r) = r^m$, $r \in \R$ for $m \ge 1$ satisfying
        \begin{equation}
        \tag{\ref{hyp:16}}
            m(p-1)+(m+1)\frac{sp}{d} > 1.
        \end{equation}
        Take $q_s = p_s$ if $p \neq \frac{d}{s}$ and $q_s > \max(p,1+\frac{1}{m})$ if $p = \frac{d}{s}$, and suppose that
        \begin{equation}
        \tag{\ref{eq:651}}
        m\left(p-1\right) > \frac{p}{q_s}.
        \end{equation}
        Then for every $u_{0} \in L^{1}$, the mild solution $u$ to~\eqref{eq:1} satisfies
        \begin{equation}
        \label{eq:W12estimateNoForcing}
        \begin{split}
        \int_{0}^{t}s^{\tilde{\alpha}}\int_{\Omega}u^{m-1}(s)&\left|\frac{\td u}{\td s}\right|^{2}\dmu \ds+t^{\tilde{\alpha}}\E(\varphi(u(t)))\\
        & \le Ct\left(1+t^{2}\right)e^{\omega\beta t}\norm{u_0}_1^{\frac{\gamma m}{\theta(m+1)}+1}
        \end{split}
        \end{equation}
        for all $t \in (0,T]$ with
        \begin{displaymath}
        \begin{split}
            \tilde{\alpha}& = \frac{\alpha m}{\theta}+2,\\
            \beta& = \frac{m\beta_1}{\theta}+\frac{\gamma m}{\theta(m+1)}+1,
        \end{split}
        \end{displaymath}
        constants given by~\eqref{eq:2exponent} with $\ell = 1$ and where $C > 0$ depends on $m$, $p$, $s$, $d$, $q_s$ and $\omega$.
    \end{corollary}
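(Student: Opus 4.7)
The plan is to derive the corollary as an immediate specialization of Theorem~\ref{thm:derivativeEstimates}(2) with $g \equiv 0$ and $\ell = 1$. First I verify the hypotheses of that theorem: conditions~\eqref{hyp:16} and~\eqref{eq:651} are already assumed in the corollary, while the integrability requirements on $g$ (namely $g \in BV(0,T;L^1)\cap L^1(0,T;L^\infty)$ and $g \in L^\psi(0,T;L^\rho)\cap L^{m+1}(0,T;L^\infty)$ for any admissible $\rho$, $\psi$) hold trivially for the zero function. Likewise, the choice $\ell = 1$ satisfies~\eqref{eq:45}: since $q_s > p$ gives $1 - p/q_s > 0$, the inequality $1 > (1-m(p-1))/(1-p/q_s)$ is equivalent to $m(p-1) > p/q_s$, which is precisely~\eqref{eq:651}.

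Next I would substitute $g \equiv 0$ into the right-hand side of~\eqref{eq:W12estimate} and simplify term by term. The last integral $\int_0^t (\tilde\alpha + ms)s^{\tilde\alpha-1}\norm{g(s)}_{m+1}^{m+1}\ds$ drops out, and inside the maximum the contribution $t^\alpha e^{\omega\beta_2 t}\norm{g}_{L^\psi(0,t;L^\rho)}^\eta$ vanishes, so the maximum reduces to $e^{\omega \beta_1 t}$ and contributes a factor $e^{\omega \beta_1 m t/\theta}$ to the estimate. The auxiliary quantity $F(t)$ (defined via~\eqref{eq:thm12MFs}) has a numerator consisting entirely of norms of $g$, hence $F(t) = 0$ and the factor $(1 + F(t)^{\gamma})$ collapses to $1$. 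Finally the Duhamel-type factor reduces to
\begin{equation*}
\left(e^{\omega t}\norm{u_0}_1 + \int_0^t e^{\omega (t-\tau)}\norm{g(\tau)}_1 \dtau\right)^{\frac{\gamma m}{\theta(m+1)}+1} = e^{\omega t\left(\frac{\gamma m}{\theta(m+1)}+1\right)}\norm{u_0}_1^{\frac{\gamma m}{\theta(m+1)}+1}.
\end{equation*}

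Combining the two exponential factors then produces the total exponent $\omega t\bigl(\tfrac{m\beta_1}{\theta} + \tfrac{\gamma m}{\theta(m+1)} + 1\bigr) = \omega \beta t$, which exactly matches the $\beta$ in the statement, while the $t(1+t^2)$ prefactor is unchanged. The main technical care needed is tracking the exponents under the $\ell = 1$ specialization and confirming that the parameter labeled $\delta$ in~\eqref{eq:W12estimate} specializes to $\gamma$ in this regime; this is read off directly from the formulas for the constants in~\eqref{eq:2exponent}. Beyond this bookkeeping, no new analytic ideas are required, since all the regularising effect has already been packaged into Theorem~\ref{thm:derivativeEstimates}(2), and the corollary is essentially a clean statement of what that theorem says in the absence of a forcing term.
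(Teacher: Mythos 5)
Your proposal is correct and follows exactly the route the paper intends: the corollary is stated as the immediate specialization of Theorem~\ref{thm:derivativeEstimates}(2) to $g \equiv 0$ with $\ell = 1$, and your verification of the hypotheses (including that \eqref{eq:45} with $\ell=1$ reduces to \eqref{eq:651}), the vanishing of the $g$-terms and of $F(t)$, and the combination of the exponential factors into $e^{\omega\beta t}$ is precisely the intended bookkeeping, with the exponent $\delta$ in \eqref{eq:W12estimate} indeed being $\gamma$.
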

    
	In this case we also have local H\"older regularity. Here $V(T,g)$ is defined as in~\eqref{eq:VLipdef}.
	
	\begin{theorem}[Local H\"older continuity]
		\label{thm:localHolder}
		Let $\Omega$ be a bounded domain in $\R^d$, $d \ge 1$. 
        Assume $2 \le p < \infty$, $0 < s < 1$ such that $sp \ge d$ and $\varphi(u) = u^m$, $m > 0$. Suppose $f$ satisfies~\eqref{hyp:2}-\eqref{hyp:3} with $F$ the Nemytskii operator of $f$, $g \in L^{1}(0,T;L^1)$ and $V(T,g) < \infty$. 
        Let $u(t)$ be the mild solution to~\eqref{eq:1} for $u_0 \in L^1$. 
        
        Then $u^m(t) \in C_{\loc}^{\delta}(\Omega)$ for every $0 < \delta < \min(\frac{sp-d}{p-1},1)$ and a.e.~$t \in (0,T)$. 
        In particular, for $m \ge 1$, $u(t) \in C_{\loc}^{\delta}(\Omega)$ for every $0 < \delta < \min(\frac{sp-d}{(p-1)m},1)$ and a.e.~$t \in (0,T)$.
	\end{theorem}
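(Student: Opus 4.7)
The plan is to freeze time and reduce the local Hölder continuity of $u^m(t,\cdot)$ to a known elliptic interior Hölder regularity result for the fractional $p$-Laplacian.

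\emph{Step 1 (Reduce to a strong, pointwise-in-$t$ equation).} The hypotheses on $g$ and $u_0$ match those of Theorem~\ref{thm:derivativeEstimates}(1), so (in the generic case $m(p-1)\neq 1$; the critical value can be treated by approximating $m$) the mild solution $u$ is Lipschitz continuous on every compact subinterval of $(0,T]$ with values in $L^1(\Omega)$. Consequently, $\frac{du}{dt}(t)$ exists in $L^1(\Omega)$ for a.e.\ $t\in(0,T)$; combining this with the mild-to-strong argument underlying Theorem~\ref{th:MildToStrongLipschitzFracPLap} (which only uses accretivity of $(-\Delta_p)^s_{\vert L^{1\cap\infty}}\circ\varphi$ in $L^1$) promotes $u$ to a strong distributional solution on compact time subintervals of $(0,T]$. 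Thus for a.e.\ $t$,
\begin{equation*}
v(t):=u^m(t)\in W^{s,p}_0(\Omega),\qquad (-\Delta_p)^s v(t)=g(\cdot,t)-f(\cdot,u(t))-\tfrac{du}{dt}(t)=:h(t)\in L^1(\Omega).
\end{equation*}

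\emph{Step 2 (Interior Hölder regularity of $v(t)$).} Fixing any admissible $t$, we apply an interior Hölder regularity theorem for weak solutions $v\in W^{s,p}_0(\Omega)$ of the stationary equation $(-\Delta_p)^s v=h$ in the superquadratic regime $p\ge 2$ and the Morrey/supercritical range $sp\ge d$, with $h\in L^1_{\loc}(\Omega)$. Results in the spirit of Brasco–Lindgren–Schikorra and Iannizzotto–Mosconi–Squassina yield
\begin{equation*}
v(t)\in C^{0,\delta}_{\loc}(\Omega)\qquad\text{for every }0<\delta<\min\!\Bigl(\tfrac{sp-d}{p-1},\,1\Bigr),
\end{equation*}
with the $C^{0,\delta}(K)$–seminorm on any $K\Subset\Omega$ controlled by $[v(t)]_{s,p}$ and $\|h(t)\|_{L^1(K')}$ for a slightly larger $K'\Subset\Omega$. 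This gives the first assertion.

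\emph{Step 3 (Transfer from $v(t)$ to $u(t)$ for $m\ge 1$).} For $m\ge 1$, the inverse map $\sigma:\R\to\R$, $\sigma(w)=|w|^{1/m}\sign(w)$, is Hölder continuous with exponent $1/m$ on every bounded interval. Since $v(t)=u^m(t)$ is locally bounded by Step~2, composition gives $u(t)=\sigma(v(t))\in C^{0,\delta/m}_{\loc}(\Omega)$; varying $\delta$ in the range from Step~2 produces exactly the claimed range $0<\delta'<\min\!\bigl(\tfrac{sp-d}{(p-1)m},1\bigr)$ for $u(t)$.

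\emph{Main obstacle.} The analytic heart is Step~2: one needs an interior Hölder estimate for $(-\Delta_p)^s v=h$ that (i) accepts merely $L^1$ right-hand side and (ii) actually achieves the exponent $\tfrac{sp-d}{p-1}$ rather than the weaker Morrey exponent $s-d/p$ coming from the naive fractional Sobolev embedding $W^{s,p}\hookrightarrow C^{0,s-d/p}$. Locating a reference that combines both features in the stated generality (and checking its hypotheses against the $L^1$–regularity of $h(t)$ produced in Step~1) is the delicate point; the hypothesis $p\ge 2$ reflects the use of superquadratic regularity theory. The endpoint $sp=d$ is vacuous since the upper bound on $\delta$ degenerates to $0$, so the substantive content is $sp>d$.
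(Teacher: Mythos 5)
Your plan is correct and follows essentially the same route as the paper: freeze time, use the B\'enilan--Crandall regularizing effect for homogeneous operators (as in Theorem~\ref{thm:derivativeEstimates}(1)) to get $u_t(t)\in L^1$ and a strong pointwise-in-$t$ equation for $v(t)=u^m(t)$, apply an interior elliptic H\"older estimate with $L^1$ right-hand side, and then compose with the $1/m$-power for $m\ge 1$. The elliptic ingredient you flag as the main obstacle is exactly \cite[Theorem~1.4]{MR3861716} (Theorem~\ref{thm:localEllipticHolder} in the paper) applied with $q=1$, which is admissible precisely because $sp>d$ and yields the exponent $\min\bigl(\tfrac{sp-d}{p-1},1\bigr)$; its local boundedness hypothesis on $v(t)$, which your sketch takes for granted, is supplied in the paper by \cite[Theorem~3.2]{MR3861716}.
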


	Local H\"older regularity has been established for the fractional porous medium equation in~\cite{KLkiahmholderregularitypaper} for $\frac{n-2s}{n+2s} < m < 1$ via an oscillation lemma and in~\cite{MR2954615} for the fractional porous medium equation on $\R^d$ for $m \ge 1$. This result applies the elliptic local regularity proved in~\cite{MR3861716} and extends the work in~\cite{brasco2019continuity}, which considers H\"older regularity in space and time for a weak formulation of the fractional $p$-Laplacian evolution problem.
	
	\medskip
	
	Furthermore, for $\varphi$ given by the identity, we have continuity in time and global H\"older continuity in space for a bounded domain. Both H\"older regularity results are proved in Section \ref{sec:holderRegularity}.
	\begin{theorem}[{Global H\"older regularity for $\varphi(r) = r$}]
		\label{thm:globalHolder}
		Let $\Omega$ be a bounded\\ domain in $\R^{d}$, $d\ge 2$, with a
		boundary $\partial\Omega$ of the class $C^{1,1}$, $1<p<\infty$, $0<s<1-\frac{1}{p}$, and suppose $f$ satisfies~\eqref{hyp:2}-\eqref{hyp:3} with $F$ the Nemytskii operator of $f$ on $C_0(\Omega)$. 
        Then $-((-\Delta_p)^s_{\vert_{C_0}}+F)$ generates a strongly continuous semigroup of quasi contractions on $C_{0}(\Omega)$. 
        In particular, for $\varphi(r) = r$, $r \in \R$, $u_{0}\in C_{0}(\Omega)$ and $g\in L^{\infty}(0,T;L^{\infty})\cap BV(0,T;L^{\infty})$, let $u$ be the unique mild solution of the initial boundary value problem~\eqref{eq:1}. 
        Then $u \in C^{lip}([\delta,T];C_0(\Omega))$ for every $0 < \delta < T$ and for some $\alpha \in (0,s]$, $u(t) \in C^{\alpha}(\overline{\Omega})$ for all $t \in (0,T)$.
	\end{theorem}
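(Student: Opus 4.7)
The plan is to split the proof into three stages: (a) quasi-m-accretivity of $(-\Delta_p)^s_{\vert_{C_0}}+F$ on $C_0(\Omega)$ together with density of its domain, which through the Crandall--Liggett theorem produces the semigroup; (b) Lipschitz-in-time regularity of the trajectories on $[\delta,T]$; and (c) global spatial H\"older regularity. For (a), define $A:=(-\Delta_p)^s_{\vert_{C_0}}$ on $D(A)=\{u\in C_0(\Omega):(-\Delta_p)^s u\in C_0(\Omega)\}$. T-accretivity in the sup-norm follows from the positive-maximum-principle argument for fractional operators: if $u-v$ attains a strictly positive maximum at $x_0\in\Omega$, then the singular integral defining $(-\Delta_p)^s u(x_0)-(-\Delta_p)^s v(x_0)$ is non-negative by sign inspection. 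The range condition $R(I+\lambda A)=C_0(\Omega)$ requires, for each $f\in C_0(\Omega)$ and $\lambda>0$, producing $u\in C_0(\overline{\Omega})$ with $u\vert_{\partial\Omega}=0$ solving $u+\lambda(-\Delta_p)^s u=f$. A weak $W^{s,p}_0$-solution exists by direct minimization of the associated strictly convex energy, and the $C_0(\overline{\Omega})$-regularity is supplied by the global H\"older estimates for the fractional $p$-Laplacian on $C^{1,1}$ domains (Iannizzotto--Mosconi--Squassina and successors), which the hypothesis $s<1-\tfrac{1}{p}$ is tailored to unlock. Density of $D(A)$ then follows from $J_\lambda u\to u$ in sup-norm as $\lambda\to 0^+$. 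The Lipschitz perturbation $F$ preserves $\omega$-m-accretivity, and Crandall--Liggett yields the quasi-contractive strongly continuous semigroup.

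For stage (b), I would exploit that, when $\varphi$ is the identity, $(-\Delta_p)^s$ is positively homogeneous of degree $p-1$, and invoke B\'enilan's regularizing theorem for homogeneous $m$-accretive operators. For $p\ne 2$ this yields $u\in C^{\mathrm{lip}}_{\mathrm{loc}}((0,T];C_0(\Omega))$ with an estimate of the same shape as~\eqref{eq:lipschitzContinuity} but in sup-norm, the $BV$ hypothesis on $g$ controlling the forcing contribution through $V(T,g)<\infty$. For $p=2$, the operator is linear and self-adjoint on $L^2$, its $C_0$-realization generates an analytic semigroup, and standard analytic-semigroup theory combined with $g\in BV(0,T;L^\infty)$ delivers the same Lipschitz-in-time property on each $[\delta,T]$.

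For stage (c), step (b) upgrades $u$ to a strong solution on $[\delta,T]$, so $u_t(t)\in C_0(\Omega)$ for a.e.~$t$ and the equation gives $(-\Delta_p)^s u(t)=g(t)-f(\cdot,u(t))-u_t(t)\in L^\infty(\Omega)$ with a uniform bound on $[\delta,T]$ coming from the Lipschitz-in-time estimate and the hypotheses on $g$ and $f$. Re-invoking the elliptic global H\"older estimate for the fractional $p$-Laplacian on the $C^{1,1}$ domain $\Omega$ produces $u(t)\in C^\alpha(\overline{\Omega})$ for some $\alpha\in(0,s]$ depending only on $p,s,d,\Omega$ and on the uniform $L^\infty$ bound on the right-hand side.

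The central difficulty is stage (a): producing resolvent solutions that are continuous up to $\partial\Omega$ and vanish there forces one to lean on sharp global boundary regularity for the fractional $p$-Laplacian on $C^{1,1}$ domains, and explains the restriction $s<1-\tfrac{1}{p}$. A secondary but necessary check is identifying the $C_0$-mild solution constructed here with the $L^1$-mild solution of Theorem~\ref{thm:1}: on a bounded $\Omega$ one has $C_0(\Omega)\hookrightarrow L^1$ and both resolvents coincide on $C_0(\Omega)$, so by density and uniqueness of mild solutions the two trajectories agree.
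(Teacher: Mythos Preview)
Your stages (b) and (c) match the paper's argument: homogeneity of degree $p-1$ plus the regularizing effect for homogeneous accretive operators (the paper cites \cite{MR4200826}, which absorbs the Lipschitz perturbation) gives $u\in W^{1,\infty}(\delta,T;C_0(\Omega))$, and then the elliptic H\"older estimate of Iannizzotto--Mosconi--Squassina applied to $(-\Delta_p)^s u(t)=g(t)-F(u(t))-u_t(t)\in L^\infty$ yields $u(t)\in C^\alpha(\overline{\Omega})$.

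The gap is in stage (a), specifically in the density argument. Your claim that density of $D(A)$ follows from $J_\lambda u\to u$ in sup-norm is circular: for an $m$-accretive operator that convergence holds precisely for $u\in\overline{D(A)}$, which is what you are trying to identify with $C_0(\Omega)$. A direct attempt to prove $J_\lambda u\to u$ uniformly via the H\"older estimate \eqref{eq:ellipticHolderEstimate} also fails, since the right-hand side $(u-J_\lambda u)/\lambda$ has $L^\infty$-norm of order $\lambda^{-1}$ and the resulting $C^\alpha$ bound blows up. Relatedly, you misplace the role of the hypothesis $s<1-\tfrac{1}{p}$: Theorem~\ref{thm:IannizzottoHolder} holds for the full range $0<s<1$ and is \emph{not} what that restriction unlocks.

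In the paper, $s<1-\tfrac{1}{p}$ enters exactly in the density proof (Proposition~\ref{prop:C0density}): for $u\in C_c^\infty(\Omega)$ one shows $(-\Delta_p)^s u\in L^\infty$ by splitting the singular integral into a near part (controlled by $\int_0^\varepsilon r^{(1-s)p-2}\,dr$, finite iff $s<1-\tfrac{1}{p}$) and a far part. Then one sets $f:=u+(-\Delta_p)^s u\in L^\infty$, approximates $f$ by $f_n\in C_c^\infty(\Omega)$, solves $u_n+(-\Delta_p)^s u_n=f_n$ so that $u_n\in D(A)$, obtains a uniform $C^\alpha$ bound from \eqref{eq:ellipticHolderEstimate}, and identifies the limit with $u$ via a Minty-type argument in $W^{s,p}$. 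This is the missing ingredient in your outline.
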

	
	This result extends~\cite{MR3781159}, wherein Giacomoni and Tiwari obtain continuity up to the boundary in space, uniform in time. Here we refine the estimate, proving that the resolvent is $m$-accretive in $C_0$. We use the global H\"older regularity of the elliptic problem given by~\cite{IannizzottoHolderRegularity} with semigroup theory for the operator restricted to the space of continuous functions. H\"older regularity has been considered for the elliptic problem, for example, in~\cite{MR3861716}.
	
	\medskip
	
	By obtaining a comparison theorem for solutions with inhomogeneous boundary data and considering the fundamental solution we can prove finite time of extinction of solutions to~\eqref{eq:1}. This is proved in Section \ref{sec:comparisonPrinciple}.
 
	\begin{theorem}[\bf{Finite time of extinction}]
		\label{thm:extinction}
		Let $\Omega$ be a bounded domain in $\R^d$, $d \ge 1$. Let $u$ be a strong distributional solution to~\eqref{eq:1} with $u_0 \in \tilde{D}((-\Delta_p)^s)$ where $\varphi$ is strictly increasing, $\varphi(\mathbb{R}) = \mathbb{R}$, $\varphi(0) = 0$ and $\frac{1}{\varphi} \in L^{1}(0,\norm{u_{0}}_{\infty})$, $f\equiv0$ and $g \equiv 0$. Then $u(t,\cdot) = 0$  for all $t \ge T^{*}$ where $T^*$ is given by
		\begin{equation}
		\label{eq:Tstar}
		T^* = \frac{1}{\tilde{C}R^{d-ps}}\int_{0}^{\norm{u_0}_\infty}\frac{1}{\varphi(s)}\ds
		\end{equation}
		where $\tilde{C}$ is given by~\eqref{eq:tildeCforTstar} and depends on $R$, $p$, $s$ and $d$.
	\end{theorem}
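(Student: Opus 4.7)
The plan is to combine the comparison principle for solutions of~\eqref{eq:1} with inhomogeneous Dirichlet exterior data (established in Section~\ref{sec:comparisonPrinciple}) with an explicit separable supersolution built on a fundamental-type solution of $(-\Delta_{p})^{s}$ on a large ball. Since $u_{0}$ is bounded and $\varphi$ is a strictly increasing bijection of $\R$ with $\varphi(0)=0$, the construction applies symmetrically to $-u$, so I focus on bounding $u$ from above; the lower bound follows by the same argument applied to $-u$.

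Fix $R>0$ so that $\Omega\Subset B_{R}(0)$ and let $W_{R}$ be a normalised, nonnegative profile supported in $\overline{B_{R}}$ whose fractional $p$-Laplacian on $\Omega$ is pointwise bounded below by a dimensional constant times $R^{-ps}$. A convenient first candidate is the characteristic function $\chi_{B_{R}}$, for which
\begin{displaymath}
  (-\Delta_{p})^{s}\chi_{B_{R}}(x)\;=\;\int_{\R^{d}\setminus B_{R}}\frac{\dy}{\abs{x-y}^{d+ps}}\qquad(x\in\Omega),
\end{displaymath}
but the final formula~\eqref{eq:Tstar} will require a rescaled fundamental solution chosen so that the change of variables $\varphi^{-1}$ below absorbs the $(p-1)$-th power cleanly. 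Set
\begin{displaymath}
  \bar U(x,t)\;=\;\varphi^{-1}\!\bigl(a(t)\,W_{R}(x)\bigr),
\end{displaymath}
for a non-increasing $a\colon[0,T^{*}]\to[0,\infty)$ with $a(0)W_{R}(x)\ge\varphi(\norm{u_{0}}_{\infty})$ on $\Omega$. Because $\varphi(\bar U)=a(t)W_{R}(x)$, the $(p-1)$-homogeneity of $(-\Delta_{p})^{s}$ yields, pointwise on $\Omega$,
\begin{displaymath}
  (-\Delta_{p})^{s}\varphi(\bar U)(x,t)\;=\;\abs{a(t)}^{p-2}a(t)\,(-\Delta_{p})^{s}W_{R}(x).
\end{displaymath}

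Using $\varphi^{-1}\in AC_{\loc}(\R)$ (a consequence of strict monotonicity together with $\varphi(\R)=\R$) and the chain rule, I would compute $\bar U_{t}$ and impose the supersolution inequality $\bar U_{t}+(-\Delta_{p})^{s}\varphi(\bar U)\ge 0$ pointwise on $\Omega\times(0,T^{*})$. Passing to $h(t):=\norm{\bar U(\cdot,t)}_{\infty}$ and absorbing the geometric constant $c_{d,p,s}R^{-ps}$, the normalisation $\norm{W_{R}}_{\infty}$ and the accompanying rescaling of $a$ into the single constant $\tilde C$ defined in~\eqref{eq:tildeCforTstar}, this inequality reduces to the scalar autonomous ODE
\begin{displaymath}
  h'(t)\;\le\;-\tilde C\,R^{d-ps}\,\varphi\bigl(h(t)\bigr),\qquad h(0)=\norm{u_{0}}_{\infty}.
\end{displaymath}
Separation of variables, together with the integrability hypothesis $1/\varphi\in L^{1}(0,\norm{u_{0}}_{\infty})$, forces $h(T^{*})=0$ at exactly the value of $T^{*}$ stated in~\eqref{eq:Tstar}.

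Since the trace of $\bar U$ on $B_{R}\setminus\Omega$ is nonnegative and hence admissible as inhomogeneous Dirichlet data, I then invoke the comparison principle of Section~\ref{sec:comparisonPrinciple} between $u$ (with zero exterior data) and $\bar U$ (whose initial datum dominates $u_{0}$) to conclude $u(x,t)\le\bar U(x,t)$ on $\Omega\times[0,T^{*}]$, and in particular $u(\cdot,T^{*})\le 0$; the symmetric argument applied to $-u$ and $-\bar U$ yields $u(\cdot,T^{*})\ge 0$, so $u(\cdot,T^{*})\equiv 0$, and the $L^{1}$-contraction embedded in Theorem~\ref{thm:1} extends the vanishing to all $t\ge T^{*}$. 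The main obstacle will be the bookkeeping in the reduction to the ODE: the naive ansatz $W_{R}=\chi_{B_{R}}$ produces $\varphi(h)^{p-1}$ rather than $\varphi(h)$ on the right-hand side, so the precise choice of $W_{R}$ (dictated by the scaling of the fundamental solution of $(-\Delta_{p})^{s}$ on $B_{R}$) and the accompanying rescaling of $a$ must be calibrated so that both the exponent $R^{d-ps}$ and the integrand $1/\varphi$ emerge correctly in~\eqref{eq:Tstar}.
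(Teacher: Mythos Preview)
Your strategy is essentially the paper's: a separable supersolution $\bar U=\varphi^{-1}(a(t)W_R(x))$, a pointwise lower bound on $(-\Delta_p)^s W_R$ over $\Omega$, an ODE for the temporal factor, the comparison principle of Section~\ref{sec:comparisonPrinciple}, and the symmetric argument for $-u$. The paper does not use $\chi_{B_R}$ but rather the explicit profile
\begin{displaymath}
\mu(x)=\begin{cases}R^{-d-ps}&|x|\le R,\\ |x|^{-d-ps}&R<|x|<3R,\\ 0&|x|\ge 3R,\end{cases}
\end{displaymath}
so that $\mu$ is constant on $\Omega\subset B_R$ and the lower bound $(-\Delta_p)^s\mu(x)\ge C_R\,\mu(x)^{p-1}$ on $\Omega$ can be read off directly from the contribution of the annulus $B_{3R}\setminus B_{2R}$.

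Your final worry is well placed, but your proposed resolution will not work. No choice of $W_R$ can remove the $(p-1)$-th power: the nonlinearity of $(-\Delta_p)^s$ forces $(-\Delta_p)^s(a\,W_R)=a^{p-1}(-\Delta_p)^sW_R$, and after the change of variable $h=\varphi^{-1}(a\,\|W_R\|_\infty)$ the ODE is necessarily $h'+\text{const}\cdot\varphi(h)^{p-1}=0$, not $h'+\text{const}\cdot\varphi(h)=0$. Indeed, the paper's own proof arrives at $\frac{d\sigma}{dt}=C_R\,\varphi(\sigma)^{p-1}$ and defines the extinction time via $\int_0^{\|u_0\|_\infty}\varphi(\tau)^{1-p}\,d\tau$, which disagrees with the integrand $1/\varphi$ in the statement~\eqref{eq:Tstar}. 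So the discrepancy you flagged is an inconsistency internal to the paper, not a gap in your argument; the correct integrability hypothesis and extinction-time formula should involve $\varphi^{p-1}$, and with that correction your outline matches the paper's proof.
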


    We have the following Corollary in the case $\varphi(r) = r^m$, $r \in R$.
 
	\begin{corollary}
		\label{thm:extinctionrm}
		Let $\Omega$ be a bounded domain in $\R^d$, $d \ge 1$. Let $u$ be a strong distributional solution to~\eqref{eq:1} with $u_0 \in L^\infty$  where $\varphi(r) = r^m$, $0 < m < 1$, $f\equiv 0$ and $g \equiv 0$. 
        Then $u(t,\cdot) = 0$  for all $t \ge T^{*}$ where $T^*$ is given by~\eqref{eq:Tstar}.
	\end{corollary}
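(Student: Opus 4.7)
My plan is to apply Theorem~\ref{thm:extinction} after verifying the structural hypotheses on $\varphi(r) = r^m$ and bridging the gap between the assumption $u_0 \in L^\infty$ in the corollary and the initial-data assumption in Theorem~\ref{thm:extinction} via a resolvent approximation argument.

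First, interpreting $r^m := |r|^{m-1}r$ for $r \in \R$, the function $\varphi$ is a continuous, strictly increasing bijection from $\R$ to $\R$ with $\varphi(0) = 0$, and the integrability condition
\begin{equation*}
\int_0^{\norm{u_0}_\infty}\frac{1}{\varphi(s)}\ds = \int_0^{\norm{u_0}_\infty}s^{-m}\ds = \frac{\norm{u_0}_\infty^{1-m}}{1-m}
\end{equation*}
is finite precisely because $0 < m < 1$. Next, for $\lambda > 0$ I set $v_\lambda := (I + \lambda (-\Delta_p)^s_{\vert L^{1\cap\infty}}\varphi)^{-1}u_0$. By the accretivity of $(-\Delta_p)^s_{\vert L^{1\cap\infty}}\varphi$ developed in the proof of Theorem~\ref{thm:1}, this resolvent is both an $L^1$- and an $L^\infty$-contraction, so $v_\lambda \in \hat{D}((-\Delta_p)^s_{\vert L^{1\cap\infty}}\varphi)$, $\norm{v_\lambda}_\infty \le \norm{u_0}_\infty$, and Theorem~\ref{thm:1}(1) combined with the exponential formula underlying the mild-solution construction forces $v_\lambda \to u_0$ in $L^1$ as $\lambda \to 0^+$.

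Let $u_\lambda$ denote the mild solution of~\eqref{eq:1} with initial data $v_\lambda$; since $\varphi^{-1}(r) = |r|^{\frac{1}{m}-1}r \in AC_{\loc}(\R)$ and $g \equiv 0 \in BV(0,T;L^1)\cap L^1(0,T;L^\infty)$, Theorem~\ref{th:MildToStrongLipschitzFracPLap} guarantees that $u_\lambda$ is a strong distributional solution fitting the framework of Theorem~\ref{thm:extinction}. Applying Theorem~\ref{thm:extinction} to $u_\lambda$ yields
\begin{equation*}
u_\lambda(t,\cdot) = 0 \qquad \text{for all } t \ge T_\lambda^* := \frac{1}{\tilde{C}R^{d-ps}}\int_0^{\norm{v_\lambda}_\infty}s^{-m}\ds \le T^*.
\end{equation*}
Finally, invoking~\eqref{eq:theorem1.1estimate2} from Theorem~\ref{thm:1}(3) with $\omega = 0$ (since $f \equiv 0$), one obtains $\norm{u_\lambda(t) - u(t)}_1 \le \norm{v_\lambda - u_0}_1 \to 0$ as $\lambda \to 0^+$, so $u(t,\cdot) = 0$ a.e.\ on $\Omega$ for every $t \ge T^*$.

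The main obstacle I anticipate is the approximation step: both the $L^\infty$-contractivity of the resolvent (ensuring the crucial monotonicity $T_\lambda^* \le T^*$) and its $L^1$-convergence to the identity (ensuring the conclusion transfers from $u_\lambda$ to $u$) must be extracted carefully from the accretivity set-up in Section~\ref{sec:well-posedness}, taking into account that $\varphi'(r) = m|r|^{m-1}$ is unbounded near $0$ when $m < 1$ so the Lipschitz branch of Theorem~\ref{thm:1}(1) is unavailable.
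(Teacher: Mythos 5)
Your overall strategy---approximate $u_0$ by initial data to which Theorem~\ref{thm:extinction} applies, keep an $L^\infty$ bound so the extinction times do not exceed $T^*$, and pass to the limit via the $L^1$-comparison estimate \eqref{eq:theorem1.1estimate2}---is precisely the approximation route the paper indicates after the proof of Theorem~\ref{thm:extinction}. However, two of your intermediate steps are not justified as written. First, you invoke Theorem~\ref{thm:1}(1) to get $v_\lambda \to u_0$ in $L^1$. For $\varphi(r)=|r|^{m-1}r$ with $0<m<1$ one has $\varphi'(r)=m|r|^{m-1}\in L^q_{\loc}(\R)$ only for $q<\tfrac{1}{1-m}$, so the hypothesis ``$\varphi\in W^{1,q}_{\loc}(\R)$ for some $q>\tfrac{1}{1-s}$'' is available only when $m>s$; for $m\le s$ neither branch of Theorem~\ref{thm:1}(1) applies, and the exponential formula is irrelevant to resolvent convergence. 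What you actually need is that $u_0$ lies in the $L^1$-closure of $D((-\Delta_p)^s_{\vert L^{1\cap\infty}}\varphi)$ (then $J_\lambda u_0\to u_0$ is standard); in the range $m\le s$ this has to be argued differently, e.g.\ by observing that it is already implicit in the existence of the solution with data $u_0$ that you compare against, or by choosing a different approximating family.

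Second, the assertion $v_\lambda\in\hat{D}((-\Delta_p)^s_{\vert L^{1\cap\infty}}\varphi)$ is a genuine gap: the resolvent furnished by Theorem~\ref{thm:1} and Corollary~\ref{cor:doublynonlinearAccretivity} is that of the closure $\overline{(\partial\E)_{\vert_{L^{1\cap\infty}}}\varphi}^{\mbox{}_{L^1}}$, so a priori $v_\lambda$ lies only in the domain of that closure. Membership of $\varphi(v_\lambda)$ in $\tilde{D}((-\Delta_p)^s_{\vert L^{1\cap\infty}})$ requires an approximating sequence in the graph of $(-\Delta_p)^s_{\vert L^{1\cap\infty}}$ that is bounded in $L^\infty\times L^1$; convergence in $L^1\times L^1$ plus the complete-resolvent bound $\norm{v_\lambda}_\infty\le\norm{u_0}_\infty$ does not produce such a sequence. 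To repair this you should construct the resolvent solution for $L^{1\cap\infty}$ data inside the unclosed operator (for instance by solving the equation in $L^2$ with $\beta=\varphi^{-1}$ and using the complete-accretivity bounds, in the spirit of the proof of Theorem~\ref{prop:L1densityofE}), or replace $v_\lambda$ by approximants that visibly belong to $\hat{D}$. A smaller point: \eqref{eq:theorem1.1estimate2} compares mild solutions, so you are tacitly identifying the given strong distributional solution $u$ with the mild solution for data $u_0$; either justify that identification or compare $u$ and $u_\lambda$ directly through Theorem~\ref{thm:fractionalPLaplacianComparisonPrinciple}, which is stated for strong distributional solutions and yields the same $L^1$ bound.
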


	Finite time of extinction of solutions to~\eqref{eq:1} was also proved for the fractional porous medium equation ($p=2$) in~\cite{KLkiahmholderregularitypaper} in the Dirichlet case and~\cite{MR2954615} for the Cauchy problem. See also~\cite{vazquez2021fractional,vazquez2021growing} for discussion of extinction for the fractional $p$-Laplacian evolution problem on $\R^d$.

	%%%%%%%%%%%%%%%%%%%%%%%%%%%%%%%%%%%%%%%%%%%%%%%%%%%%%
	%%%%%%%%%%%%%%%%%%%%%%%%%%%%%%%%%%%%%%%%%%%%%%%%%%%%%
	%%%%%%%%%%%%%%%%%%%%%%%%%%%%%%%%%%%%%%%%%%%%%%%%%%%%%
	%%%%%%%%%%%%%%%%%%%%%%%%%%%%%%%%%%%%%%%%%%%%%%%%%%%%%
	%
	%
	%      S E C T I O N:               Preliminaries
	%
	%
	%%%%%%%%%%%%%%%%%%%%%%%%%%%%%%%%%%%%%%%%%%%%%%%%%%%%%
	%%%%%%%%%%%%%%%%%%%%%%%%%%%%%%%%%%%%%%%%%%%%%%%%%%%%%
	%%%%%%%%%%%%%%%%%%%%%%%%%%%%%%%%%%%%%%%%%%%%%%%%%%%%%
	
	\section{Preliminaries}
	\label{sec:pre}
	In this section, we introduce some basic definitions and recall known
	results used throughout this paper.

	%%%%%%%%%%%%%%%%%%%%%%%%%%%%%%%%%%%%%%%%%%%%%%%%%%%%%
	%
	%
	%       Subsection: Gagliardo-Sobolev-Slobodecki\u{\i} spaces
	%
	%
	%%%%%%%%%%%%%%%%%%%%%%%%%%%%%%%%%%%%%%%%%%%%%%%%%%%%%

	\subsection{A brief primer on Gagliardo-Sobolev-Slobodecki\u{\i} spaces}
	\label{sec:frac-sobolev-spaces}
	
	In this section, we provide a short summary of
	Gagliardo-Sobolev-Slobodecki\u{\i} spaces, which are necessary to study the
	intial boundary value problem~\eqref{eq:1} with functional analytical
	tools. For a deeper understanding of this theory, we refer the
	interested reader to~\cite{MR1411441,MR2527916} or~\cite{MR3306694}.\medskip
	
	For $1< p<\infty$, $0 < s< 1$, and an open subset $\Omega$ of $\R^{d}$,
	we write $W^{s,p}$ for the \emph{Gagliardo-Sobolev-Slobodecki\u{\i} space} of fractional order $s$, also known as the \emph{fractional Sobolev space} $W^{s,p}(\Omega)$ given by
	\begin{displaymath}
	W^{s,p}(\Omega) = \left\lbrace u \in L^{p} \Big|  [u]_{s,p}<\infty \right\rbrace
	\end{displaymath}
	 where
	\begin{equation}
	\label{eq:8}
	[u]_{s,p}:=\left(\int_{\Omega}\int_{\Omega}\frac{\abs{u(x)-u(y)}^{p}}{\abs{x-y}^{d+sp}}\dy\dx\right)^{1/p}
	\end{equation}
	denotes the \emph{$s$-Gagliardo semi-norm}. The space $W^{s,p}$ defines a Banach space if it is equipped with the norm 
	\begin{displaymath}
	\norm{u}_{W^{s,p}}:=\left(\norm{u}^{p}_{L^{p}}+[u]^{p}_{s,p}\right)^{1/p}.
	\end{displaymath}
	Further, let $W^{s,p}_{0}(\Omega)$, denoted by $W^{s,p}_{0}$, be the closure in $W^{s,p}$ of the set $C^{\infty}_{c}(\Omega)$ of test functions. By~\cite[Theorem~10.1.1]{MR1411441}, the space $W^{s,p}_{0}$
	admits, for $1<p<\infty$, the characterization
	\begin{displaymath}
	W^{s,p}_{0}=\left\lbrace u\in
	W^{s,p}(\R^{d})\,\bigg|
	\begin{array}[c]{l}
	\exists \text{ } \overline{u} : \R^{d}\to \R\text{
		s.t. }\overline{u}=u\text{ a.e.~on $\R^{d}$}\\
	\text{ and } \overline{u}=0\text{ quasi-everywhere on }\R^{d}\setminus\Omega
	\end{array}
	\right\rbrace,
	\end{displaymath}
	where $\overline{u}$ denotes a (quasi-continuous) representative of $u$. Therefore, the space $W^{s,p}_{0}$ incorporates \emph{homogeneous Dirichlet boundary conditions} in a weak sense.

	\subsection{Basics of nonlinear semigroup theory}
	\label{sec:pre-ns}
	
	We begin by reviewing some basic definitions and important results in nonlinear semigroup theory from the standard literature~\cite{MR0348562,MR2582280} and the
	monograph~\cite{CoulHau2016}.
	
	%%%%%%%%%%%%%%%%%%%%%%%%%%%%%%%%%%%%%%%%%%%%%%%%%%%%%
	%
	%
	%       sub-sub-section:  The general framework
	%
	%
	%%%%%%%%%%%%%%%%%%%%%%%%%%%%%%%%%%%%%%%%%%%%%%%%%%%%%
	
	\subsubsection{The general framework}
	\label{subsec:framework}
	
	Let $(\Sigma,\mu)$ be a measure space with a positive $\sigma$-finite
	measure $\mu$, and $M(\Sigma,\mu)$ be the set of $\mu$-a.e.~equivalence classes
	of measurable functions $u : \Sigma\to \R$. For $1\le q\le \infty$, we
	denote by $L^{q}_{\mu}$ the classical \emph{Lebesgue space}
	equipped with the standard $L^{q}$-norm
	\begin{displaymath}
	\norm{u}_{q}:=
	\begin{cases}	
	\bigg(\displaystyle\int_{\Sigma}\abs{u}^{q}\dmu\bigg)^{1/q} 
	&\quad\text{if $q<\infty$,}\\[7pt]
	\inf\Big\{k\in
	[0,\infty]\,\Big\vert\,\mu(\{\abs{u}>k\})=0\Big\}
	&\quad\text{if $q=\infty$.}
	\end{cases}
	\end{displaymath}			
	If $\Omega$ is an open subset of $\R^{d}$, $d\ge 1$, and $\mu$ is the classical $d$-dimensional Lebesgue measure restricted
	to the trace-$\sigma$-algebra $\mathcal{B}(\R^{d})\cap \Omega$
	 then we write $L^{q}$ instead of $L^{q}_{\mu}$.
	 
	Let $X\subseteq M(\Sigma,\mu)$ be a Banach space with norm
	$\norm{\cdot}_{X}$. The main object in this section is the 
	abstract Cauchy problem (in $X$)
	\begin{equation}
	\label{eq:9}
	\begin{cases}
	\tfrac{\td u}{\dt}(t)+Au(t)\ni g(t) & \quad\text{for a.e.~$t\in (0,T)$,}\\
	u(0)=u_{0}, &
	\end{cases}
	\end{equation}
	for given initial value $u_{0}\in \overline{D(A)}^{\mbox{}_{X}}$ and
	forcing term $g\in L^{1}(0,T;X)$. In~\eqref{eq:9}, $A$ denotes a (possibly) multi-valued operator
	$A : D(A)\to 2^{X}$ on $X$ with \emph{effective domain}
	$D(A):=\{u\in X\,\vert\,Au\neq \emptyset\}$,
	the closure of $D(A)$ in $X$ denoted by $\overline{D(A)}^{\mbox{}_{X}}$, and
	\emph{range} $\Rg(A):=\bigcup_{u\in D(A)}Au$. In this setting, it is
	standard to view an operator $A$ as a subset of $X\times X$, or
	\emph{relation} on $X$, and to identify $A$ with its graph
	\begin{displaymath}
	A:=\Big\{(u,v)\in X\times X\,\Big\vert\; v\in Au\Big\}.
	\end{displaymath}
	
	\begin{definition}
		An operator $A$ on $X$ is called 
		\emph{accretive} (in $X$) if
		\begin{equation}
		\label{eq:63}
		\norm{u-\hat{u}}_{X}\le \norm{u-\hat{u}+\lambda (v-\hat{v})}_{X}
		\end{equation}
		for every $(u,v)$, $(\hat{u},\hat{v})\in A$ and every
		$\lambda > 0$. Further, an operator $A$ on $X$ is called \emph{quasi accretive} if there is
		an $\omega\in \R$ such that $A+\omega I$ is accretive in
		$X$. 
	\end{definition}
	
	Clearly, if $A+\omega I$ is accretive in $X$ for some $\omega\in \R$
	then $A+\tilde{\omega} I$ is accretive for every
	$\tilde{\omega}\ge \omega$. Thus, there is no loss of generality in
	assuming that if $A$ is quasi accretive then there is an $\omega\ge 0$
	such that $A+\omega I$ is accretive in $X$.
	
	Equivalently, $A$ is accretive in $X$ if and only if, for every
	$\lambda>0$, the \emph{resolvent operator} of $A$, defined by
	$J_{\lambda}:=(I+\lambda A)^{-1}$, is a single-valued mapping
	from $\Rg(I+\lambda A)$ to $D(A)$ which is \emph{contractive} (also
	called \emph{non-expansive}) with respect to the norm of
	$X$. That is,
	\begin{displaymath}
	\norm{J_{\lambda}u-J_{\lambda}\hat{u}}_{X}\le \norm{u-\hat{u}}_{X}
	\end{displaymath}
	for all $u$, $\hat{u}\in \Rg(I+\lambda A)$ and $\lambda>0$. 
	
	The next definition is taken from~\cite{CoulHau2016}.
	\begin{definition}
	\label{def:q-bracket}
	For $1\le q<\infty$, we define the \emph{$q$-bracket} on
	$L^{q}_{\mu}$ to be the mapping
	$[\cdot,\cdot]_{q} : L^{q}_{\mu}\times L^{q}_{\mu}\to \R$
	defined by
	\begin{displaymath}
	[u,v]_{q}:=
	\displaystyle{\lim_{\lambda\to 0+}
		\frac{\frac{1}{q}\norm{u+\lambda v}_{q}^{q}-
			\frac{1}{q}\norm{u}_{q}^{q}}{\lambda}}
	\end{displaymath}
	for $u$, $v\in L^{q}_{\mu}$. We further define the bracket
	\begin{displaymath}
	[u,v]_{+}:=
	\displaystyle{\lim_{\lambda\to 0+}
		\frac{\norm{[u+\lambda v]^{+}}_{1}-
			\norm{[u]^{+}}_{1}}{\lambda}}
	\end{displaymath}
	for $u$, $v\in L^{1}_{\mu}$.
	\end{definition}

	Then an operator $A$ on
	$L^{q}_{\mu}$ is accretive in $L^{q}_{\mu}$ if and only if
	\begin{displaymath}
	[u-\hat{u},v-\hat{v}]_{q}\ge 0\qquad\text{for all $(u,v)$,
		$(\hat{u},\hat{v})\in A$.}
	\end{displaymath}
	The $q$-bracket $[\cdot,\cdot]_{q}$
	is upper semicontinuous (respectively, continuous if $1<q<\infty$) and  if $1<q<\infty$,
	\begin{equation}
	\label{eq:35}
	[u,v]_{q}=\int_{\Sigma} \abs{u}^{q-2}u\,v\dmu
	\end{equation}
	for every 
	$u$, $v\in L^{q}_{\mu}$. While for $q=1$, $[\cdot,\cdot]_{1}$ reduces to the classical
	\emph{brackets} $[\cdot,\cdot]$ on $L^{1}_{\mu}$ given by
	\begin{equation}
	\label{eq:67}
	[u,v]_{1}=\int_{\{u\neq 0\}}\text{sign}_{0}(u)\,v\dmu + \int_{\{u=0\}}\abs{v}\dmu
	\end{equation}
	for $u$, $v\in L^{1}_{\mu}$, where the \emph{restricted
		signum} $\sign_{0}$ is defined by
	\begin{displaymath}
	\text{sign}_0(s)=
	\begin{cases}
	1 & \text{if $s>0$,}\\
	0 & \text{if $s=0$,}\\
	-1 & \text{if $s<0$,}
	\end{cases}
	\end{displaymath}
	for $s\in \R$ (cf.~\cite[Section 2.2 \&
	Example~(2.8)]{BenilanNonlinearEvolutionEqns} or \cite[pp 102]{MR2582280}).
	\medskip
	
	Next, we introduce the following class of operators. 
	
	\begin{definition}
		An operator $A$ on $X$ is called \emph{$m$-accretive in $X$} if $A$ is accretive in $X$ and satisfies the so-called \emph{range condition}
		\begin{equation}
		\label{eq:range-condition}
		\Rg(I+\lambda A)=X\qquad
		\text{for some (or equivalently all) $\lambda>0$,}
		\end{equation}
		and an operator $A$ on $X$ is called \emph{quasi $m$-accretive in $X$} if there is an $\omega\ge 0$ such that $A+\omega I$ is $m$-accretive in $X$.
	\end{definition}

	By the classical theory of nonlinear evolution problems
	(cf.~\cite{BenilanNonlinearEvolutionEqns}, or
	alternatively,~\cite[Corollary~4.1]{MR2582280}), the condition
	\emph{`$A$ is quasi $m$-accretive in $X$'} ensures that for given
	$u_0\in \overline{D(A)}^{\mbox{}_{X}}$ and $g\in L^{1}(0,T;X)$, the
	Cauchy problem~\eqref{eq:9} admits a unique \emph{mild solution}, which is
	continuously dependent on $u_{0}$ and $g$.
	
	\begin{definition}\label{def:mild-solution}
		Suppose $g\in L^{1}(0,T;X)$ for some $T>0$. A \emph{mild solution} $u$ in $X$ of Cauchy problem~\eqref{eq:9} is a function $u\in C([0,T];X)$ such that for every $\varepsilon>0$ there is a partition $\sigma:0=t_{0}<\cdots<t_{N}\le T$ of the interval $[0,t_{N}]$ and a finite sequence $(g_{i})_{i=q}^{N}$ with the following properties: $T-t_{N} < \varepsilon$, $t_{i}-t_{i-1}<\varepsilon$ for every $i=1,\dots,N$,
		\begin{displaymath}
		\sum_{i=1}^{N}\int_{t_{i-1}}^{t_{i}}\norm{g(s)-g_{i}}_{X}<\varepsilon,
		\end{displaymath}
		there exists a step function
		$u_{\varepsilon,\sigma} : [0,T]\to X$ of the form
		\begin{equation}
		\label{eq:10}
		u_{\varepsilon,\sigma}(t)=u_{0}\,\mathds{1}_{\{t_{0}=0\}}(t)+\sum_{i=1}^{N}u_{\varepsilon,\sigma}(t_{i})\;
		\mathds{1}_{(t_{i-1},t_{i}]}(t),
		\end{equation} 
		where the values of $u_{\varepsilon,\sigma}$ on $(t_{i-1},t_{i}]$, denoted by $u_{i}$,
		recursively solve the finite difference equation
		\begin{displaymath}
		u_{i}+(t_{i}-t_{i-1})Au_{i}\ni (t_{i}-t_{i-1})g_{i} + u_{i-1}\qquad\text{for every $i=1,\dots,N$}
		\end{displaymath}
		and
		\begin{displaymath}
		\sup_{t\in [0,T]}\norm{u(t)-u_{\varepsilon,\sigma}(t)}_{X}\le\varepsilon.
		\end{displaymath}
	\end{definition}
	
	In the homogeneous case $g\equiv 0$, if $A$ is quasi $m$-accretive
	in $X$, then the Crandall-Liggett theorem
	(cf.~\cite[Theorem~I]{MR0287357}) states that for every element
	$u_{0}$ of $\overline{D(A)}^{\mbox{}_{X}}$, there is a unique mild
	solution $u$ of~\eqref{eq:9} in $X$ for every $T > 0$ and this solution $u$ can be given by the \emph{exponential formula}
	\begin{equation}
	\label{eq:36}
	u(t)=\lim_{n\to\infty}\left(I+\tfrac{t}{n}A\right)^{-n}u_{0}
	\end{equation}
	uniformly in $t$ on compact intervals. For every
	$u_0\in \overline{D(A)}^{\mbox{}_{X}}$, setting
	\begin{equation}
	\label{eq:semigroup}
	T_{t}u_{0}:=u(t),\qquad\text{for every $t\ge0$,}
	\end{equation}
	defines a (nonlinear) \emph{strongly continuous semigroup}
	$\{T_{t}\}_{t\geq 0}$ of \emph{($\omega$-)quasi con\-tractions}
	$T_{t} : \overline{D(A)}^{\mbox{}_{X}}\to
	\overline{D(A)}^{\mbox{}_{X}}$ with
	$\omega\in \R$. More precisely, the family $\{T_{t}\}_{t\geq 0}$
	satisfies the following three properties:
	\begin{itemize}
		\item \emph{semigroup property}
		\begin{displaymath}
		T_{s+t}=T_{t}\circ T_{s}\qquad\text{for every $s$, $t\ge 0$,}
		\end{displaymath}
		\item \emph{strong continuity}
		\begin{displaymath}
		\lim_{t\to0+}\norm{T_{t}u-u}_{X}=0\qquad
		\text{for every $u\in \overline{D(A)}^{\mbox{}_{X}}$,}
		\end{displaymath}
		\item \emph{exponential growth property in $X$} or
		\emph{($\omega$-)quasi contractivity in $X$}
		\begin{displaymath}
		\norm{T_{t}u-T_{t}v}_{X}\le e^{\omega\,t}\norm{u-v}_{X}
		\qquad\text{for all $u$,
			$v\in \overline{D(A)}^{\mbox{}_{X}}$, $t\ge0$.}
		\end{displaymath}
	\end{itemize}
	
	For the family $\{T_{t}\}_{t\ge 0}$ on
	$\overline{D(A)}^{\mbox{}_{X}}$, the operator
	\begin{displaymath}
	A_{\circ}:=\Bigg\{(u,v)\in X\times X\Bigg\vert\;\lim_{h\rightarrow
		0^{+}}\frac{T_{h}u-u}{h}=v\text{ in $X$}\Bigg\}
	\end{displaymath}
	is a well-defined mapping $A_{\circ} : D(A_{\circ})\to X$ with domain
	\begin{displaymath}
	D(A_{\circ}):=\Big\{u\in X\, \Big\vert\, \lim_{h\rightarrow
		0^{+}}\frac{T_{h}u-u}{h}\text{ exists
		in }X\Big\}
	\end{displaymath}
	called the \emph{infinitesimal generator}
	of $\{T_{t}\}_{t\ge 0}$. If $\{T_{t}\}_{t\ge 0}$ is $\omega$-quasi contractive in
	$X$, then $-A_{\circ}$ is $\omega$-quasi accretive in $X$. 
	
	Since mild solutions of Cauchy problem~\eqref{eq:9} are only the locally uniform (in time) limit of step functions~\eqref{eq:10} with values in $X$, it is important to know whether they are actually \emph{strong solutions} of~\eqref{eq:9} in $X$.
	
	\begin{definition}\label{def:strong-solution}
		Given $u_{0}\in X$ and $g\in L^{1}(0,T;X)$ for some $T>0$, a
		function $u \in C([0,T];X)$ is called a \emph{strong solution} in
		$X$ of Cauchy problem~\eqref{eq:9} if $u(0)=u_{0}$, $u$ belongs to
		$W^{1,1}_{loc}((0,T);X)$ and for a.e.~$0<t<T$, one has that
		$u(t)\in D(A)$ and $g(t)-\tfrac{\td u}{\dt}(t)\in Au(t)$.
	\end{definition}

    Since we typically take $A$ to be the closure of $(-\Delta_p)^s$ in $L^1\times L^1$, we also want to consider solutions with further regularity on $\varphi(u)$. Hence we consider \emph{distributional solutions} with the following definition.

    \begin{definition}
    \label{def:distributional-solution}
       For given $f$ satisfying \eqref{hyp:2}-\eqref{hyp:3} and $g\in L^{1}_{loc}((0,\infty);L^1_{loc})$, a function $u \in C([0,\infty);L^{1})$ is called a \emph{distributional} solution of initial value problem~\eqref{eq:1} if $u(0)=u_{0}$ in $L^{1}$, $\varphi(u)\in L^{p}_{loc}((0,\infty);W^{s,p})$ and for every test function $\xi \in C_c^{\infty}([0,\infty)\times\R^d)$, one has that
        \begin{align*}
	    & -\int_{\R^{d}}u\,\xi\,\dx\Bigg\vert_{t_{1}}^{t_{2}}
        -\int_{t_{1}}^{t_{2}}\int_{\R^d}u\,\xi_t\,\dx\dt+\int_{t_1}^{t_2}\int_{\R^d}(f(x,u)-g)\xi\dx\dt\\ 
        &\hspace{2cm} +\int_{t_{1}}^{t_{2}}\int_{\R^{2d}}\frac{(\varphi(u)(t,x)-\varphi(u)(t,y))^{p-1}
        (\xi(t,x)-\xi(t,y))}{|x-y|^{d+sp}}\td(x,y)\dt=0
	   \end{align*}
	 for all $0< t_{1}<t_{2}<\infty$.
	 
	 Furthermore, if $u$ is also differentiable with $u_t(t) \in L^1$ for a.e.~$t > 0$, then we call this a \emph{strong distributional} solution in $L^1$.
    \end{definition}
	
	If, for example, $X=L^{q}_{\mu}$ for $1<q<\infty$ and $g \equiv 0$, then $X$ is a
	uniformly convex Banach space and so the classical regularity
	theory of nonlinear semigroups (cf.~\cite[Theorem~4.6]{MR2582280})
	applies: let $A$ be a quasi $m$-accretive operator on $L^{q}_{\mu}$, then for every $u_{0}\in
	D(A)$ the mild solution $u$ of Cauchy problem~\eqref{eq:9} is a
	strong solution of~\eqref{eq:9} and $t\mapsto T_{t}u_{0}$ given
	by~\eqref{eq:semigroup} satisfies
	\begin{equation}
	\label{eq:14}
	\tfrac{\td }{\td t}_{+}T_{t}u_{0}=-A^{\circ}T_{t}u_{0}\qquad\text{for
		every $t>0$,}
	\end{equation}
	where $A^{\circ}$ denotes the \emph{minimal selection} of $A$ given by the operator
	\begin{displaymath}
	A^{\circ}:=\Big\{(x,y)\in A\, \Big\vert\,\norm{y}=\min\limits_{\hat{y}\in
		Ax}\norm{\hat{y}}\Big\}.
	\end{displaymath}
	Under additional
	geometric conditions on the Banach space $X$, one has that
	$-A_{\circ}\subseteq A^{\circ}$. Ignoring these details on $X$, we
	nevertheless say that a strongly continuous semigroup
	$\{T_{t}\}_{t\ge 0}$ of quasi contractions on
	$\overline{D(A)}^{\mbox{}_{X}}$ is \emph{generated by $-A$} if $A$ is
	quasi $m$-accretive in $X$ and $\{T_{t}\}_{t\ge 0}$ is the family
	induced by~\eqref{eq:semigroup}.
	
	If $X$ is a Hilbert space $H$ with inner product $(\cdot,\cdot)_{H}$,
	then an important class of $m$-accretive operators in $H$ is
	given by the \emph{subdifferential operator}
	\begin{equation}
	\label{eq:subdifferentialOperator}
	\partial \E:=\Big\{(u,v)\in H\times
	H\,\Big\vert\;  (v, \xi-u)_{H}\le
	\E(\xi)-\E(u)\text{ for all }\xi\in H\Big\}
	\end{equation}
	of a proper, convex, lower semicontinuous 
	functional $\E : H\to (-\infty,+\infty]$. In
	Hilbert spaces, accretivity is equivalent to \emph{monotonicity}; that is, an operator $A$ is monotone if
	\begin{displaymath}
	(u-\hat{u},v-\hat{v})_{H}\ge 0\qquad\text{for all $(u,v)$,
		$(\hat{u},\hat{v})\in A$}
	\end{displaymath}
	(cf.~\cite{MR0348562}, and see
	also~\cite{MR3465809,MR4041276}). For this class of operators $A=\partial \E$, the Cauchy problem~\eqref{eq:9} has the smoothing effect that every mild solution $u$ of~\eqref{eq:9} is strong. This result is due to Brezis~\cite{MR0283635} (see
	also~\cite{MR4041276}).
	
	It is well known that the \emph{fractional $p$-Laplacian} $(-\Delta_{p})^{s}$ defined in~\eqref{eq:11} equipped with homogeneous
	Dirichlet boundary conditions on an open set $\Omega$ can be realized as such a
	subdifferential operator $\partial \E$ in $L^{2}$. To be more precise, let $\Omega \subseteq \R^d$, $d \ge 1$, be an open domain. Define the energy functional $\E : L^{2}\to [0,\infty]$ for $1 < p < \infty$ and $s \in (0,1)$ by
	\begin{equation}
	\label{eq:energyFunctionalpLap}
	\E(u) = 
	\begin{cases}
	\frac{1}{2p}[u]_{s,p}^{p} & \text{ if } u \in W^{s,p}_{0}\cap L^2,\\
	\infty & \text{ if } u \in L^{2}\setminus W^{s,p}_{0},
	\end{cases}
	\end{equation}
	for every $u\in L^{2}$, where we have defined the $s$-Gagliardo semi-norm by~\eqref{eq:8} on $\R^N$ as in the definition of $W^{s,p}_{0}$. It is immediate that this energy functional is convex,  lower semicontinuous and proper with effecive domain $D(\E) = W^{s,p}_{0}\cap L^2$ (cf.~\cite{MR3491533}, ~\cite{CoulHau2016}). Then we have the following characterization of the subdifferential $\partial \E$ (see \cite{MR3491533}).
	\begin{proposition}[Characterization of $(-\Delta_p)^s$]
		\label{prop:characterizeFracPLap}
		For $1 < p < \infty$ and $0 < s < 1$, let $\E$ be given by~\eqref{eq:energyFunctionalpLap}. Then for every $u \in W^{s,p}_{0}\cap L^2$,
		\begin{displaymath}
		\partial\E(u) = \left\lbrace h \in L^{2}\Biggr|
		\scalebox{0.9}{$\begin{aligned}
			&\int_{\Omega}h(x)v(x)\dx = \int_{\mathbb{R}^{d}}\int_{\mathbb{R}^{d}}\frac{(u(x)-u(y))^{p-1}(v(x)-v(y))}{|x-y|^{d+sp}}\\
			&\hspace{3.5cm}\text{ for all } v \in W^{s,p}_{0}\cap L^2
			\end{aligned}$}\right\rbrace.
		\end{displaymath}
	\end{proposition}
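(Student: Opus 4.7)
The plan is to show that $\mathcal{E}$ is Gateaux differentiable on $W^{s,p}_{0}\cap L^{2}$ in directions from $W^{s,p}_{0}\cap L^{2}$, to identify the derivative with the integral expression in the statement, and then to invoke standard convex analysis to conclude that this derivative is in fact the (single) element of $\partial\mathcal{E}(u)$.

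First, fix $u,v\in W^{s,p}_{0}\cap L^{2}$ and set $U(x,y):=u(x)-u(y)$, $V(x,y):=v(x)-v(y)$, $d\nu(x,y):=|x-y|^{-(d+sp)}\,dx\,dy$. Consider
\[
g(t):=\mathcal{E}(u+tv)=\tfrac{1}{2p}\int_{\R^{2d}}|U+tV|^{p}\,d\nu.
\]
I would use the elementary bound $\big||a+tb|^{p}-|a|^{p}\big|\le C_{p}|t|(|a|+|b|)^{p-1}|b|$ for $|t|\le 1$, which together with H\"older's inequality on $(\R^{2d},\nu)$ and the finiteness of $[u]_{s,p}$, $[v]_{s,p}$ gives an integrable majorant of the difference quotient. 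Dominated convergence then yields
\[
g'(0)=\int_{\R^{2d}}\frac{|U|^{p-2}U\cdot V}{|x-y|^{d+sp}}\,dx\,dy,
\]
which is exactly the expression defining the candidate subgradient $h$ acting on $v$.

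For the inclusion $\supseteq$, convexity of $g$ implies $g(1)-g(0)\ge g'(0)$, that is, $\mathcal{E}(u+v)-\mathcal{E}(u)\ge(h,v)_{L^{2}}$. Replacing $v$ by $\xi-u$ for arbitrary $\xi\in W^{s,p}_{0}\cap L^{2}$ gives the subgradient inequality on that subspace, and it extends trivially to $\xi\in L^{2}\setminus W^{s,p}_{0}$ where $\mathcal{E}(\xi)=+\infty$; hence $h\in\partial\mathcal{E}(u)$. For the reverse inclusion $\subseteq$, take $h\in\partial\mathcal{E}(u)$ and $v\in W^{s,p}_{0}\cap L^{2}$, plug $\xi=u+tv$ with $t>0$, divide by $t$, and let $t\to 0^{+}$ to obtain $(h,v)_{L^{2}}\le g'(0)$; applying the same argument to $-v$ yields equality, so $h$ coincides on the dense subspace $W^{s,p}_{0}\cap L^{2}$ with the functional given by the integral. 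By continuity of that functional (on $W^{s,p}_{0}\cap L^{2}$, via H\"older) this identifies $h\in L^{2}$ uniquely.

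The main obstacle is the differentiation under the integral in Step~1: one has to produce a $\nu$-integrable majorant for the difference quotient $\tfrac{1}{t}(|U+tV|^{p}-|U|^{p})$ uniformly in small $|t|$. The pointwise estimate above reduces this to controlling $\int (|U|+|V|)^{p-1}|V|\,d\nu$, which follows from H\"older's inequality with exponents $p/(p-1)$ and $p$. A small additional care is needed for $1<p<2$, where $a\mapsto|a|^{p-2}a$ is only H\"older continuous; however, the combined quantity $|a|^{p-2}a\cdot b$ remains bounded by $|a|^{p-1}|b|$, so the same majorant works uniformly in $t$. Once this passage to the limit is justified, the remainder of the argument is routine convex analysis.
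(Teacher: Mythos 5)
Your argument is sound in structure, but note first that the paper itself gives no proof of this proposition: it is quoted from Maz\'on--Rossi--Toledo \cite{MR3491533}. So your self-contained route --- Gateaux differentiability of $\E$ at $u$ in directions $v\in W^{s,p}_{0}\cap L^{2}$, identification of $g'(0)$ with the double integral, then the two inclusions by convexity ($g(1)-g(0)\ge g'(0)$ for $\supseteq$, and $\xi=u+tv$, $t\to0^{+}$, applied to $\pm v$ for $\subseteq$) --- is exactly the standard proof that the citation stands in for, and the analytic core (the majorant $\bigl||a+tb|^{p}-|a|^{p}\bigr|\le p|t|(|a|+|b|)^{p-1}|b|$ plus H\"older on $(\R^{2d},\nu)$ and dominated convergence) is correct for all $1<p<\infty$; your worry about $1<p<2$ is indeed harmless since only $|a|^{p-1}|b|$ is ever needed.

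One point you should fix, though it is a bookkeeping issue rather than a structural one: with the normalization $\E(u)=\tfrac{1}{2p}[u]_{s,p}^{p}$ of \eqref{eq:energyFunctionalpLap}, your $g(t)=\tfrac{1}{2p}\int|U+tV|^{p}\,d\nu$ has
\begin{displaymath}
g'(0)=\tfrac12\int_{\R^{2d}}\frac{|U|^{p-2}U\,V}{|x-y|^{d+sp}}\,dx\,dy,
\end{displaymath}
not the full double integral you wrote; the factor $\tfrac12$ from $\tfrac{1}{2p}$ cannot be dropped. In other words, the subgradient of the energy as normalized in \eqref{eq:energyFunctionalpLap} is characterized by the identity with the prefactor $\tfrac12$, exactly as the paper itself writes later in \eqref{eq:variationalInhomogeneous}; the displayed formula in the proposition (and in \eqref{eq:11}) corresponds to the normalization $\tfrac1p[\,\cdot\,]_{s,p}^{p}$. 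Your proof silently inherits this inconsistency of the statement, so either carry the $\tfrac12$ through or work with $\tfrac1p[\,\cdot\,]_{s,p}^{p}$ and say so. Finally, the closing appeal to density of $W^{s,p}_{0}\cap L^{2}$ in $L^{2}$ is unnecessary: once you have $(h,v)_{L^{2}}=g'(0)$ for every admissible $v$, that identity \emph{is} the membership condition defining the right-hand set, and no further uniqueness argument is needed.
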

	We note that the case $p = 1$ can be characterized similarly with a formulation presented in \cite{MR3491533}, although we do not consider $p = 1$ in this article.

	%%%%%%%%%%%%%%%%%%%%%%%%%%%%%%%%%%%%%%%%%%%%%%%%%%%%%
	%
	%
	%       sub-sub-section:  Completely accretive operators 
	%
	%
	%%%%%%%%%%%%%%%%%%%%%%%%%%%%%%%%%%%%%%%%%%%%%%%%%%%%%

	\subsubsection{Completely accretive and $T$-accretive operators}
	\label{sec:completelyaccretive}
		
	The notion of \emph{completely accretive oper\-ators} was introduced in~\cite{MR1164641} by Crandall and B\'enilan and further developed in~\cite{CoulHau2016}. Following the same notation as in these two references, $\mathcal{J}_{0}$ denotes the set of all convex, lower semicontinuous functions $j : \R\to [0,+\infty]$ satisfying $j(0)=0$.
	
	\begin{definition}\label{def:complete-contraction}
		A mapping $S : D(S)\to M(\Sigma,\mu)$ with domain $D(S)\subseteq
		M(\Sigma,\mu)$ is called a \emph{complete contraction} if 
		\begin{displaymath}
		\int_{\Sigma}j(Su-S\hat{u})\dmu\le 
		\int_{\Sigma}j(u-\hat{u})\dmu
		\end{displaymath}
		for all $j\in \mathcal{J}_{0}$ and every $u$, $\hat{u}\in D(S)$. An operator $A$ on $M(\Sigma,\mu)$ is called \emph{completely
			accretive} if for every $\lambda>0$, the resolvent operator
		$J_{\lambda}$ of $A$ is a \emph{complete contraction}.
	\end{definition}
	
	It is well-known (see, e.g., \cite{MR3491533,CoulHau2016}) that the
	fractional $p$-Laplacian $(-\Delta_{p})^{s}$ equipped with
	Dirichlet boundary conditions is $m$-completely accretive in
	$L^{2}$. A more general class of operators is that of \emph{$T$-accretive} operators. In particular, choosing $j(\cdot)=\abs{[\,\cdot\,]^{+}}^{q}\in \mathcal{J}_{0}$ if
	$1\le q<\infty$ and $j(\cdot)=[[\,\cdot\,]^{+}-k]^{+}\in \mathcal{J}_{0}$
	for $k\ge 0$ large enough if $q=\infty$ shows that a complete
	contraction $S$ satisfies the following \emph{$T$-contractivity} property in $L^{q}_{\mu}$ for every
	$1\le q\le \infty$.
	\begin{definition}
		A mapping $S : D(S) \to L^{q}_{\mu}$ with domain
		$D(S)\subseteq L^{q}_{\mu}$, $1\le q\le \infty$, is called
		a \emph{$T$-contraction} if
		\begin{displaymath}
		\norm{[Su-S\hat{u}]^{+}}_{q}\le \norm{[u-\hat{u}]^{+}}_{q}
		\end{displaymath}
		for every $u$, $\hat{u}\in D(S)$.  We say that an operator $A$
		on $L^{q}_{\mu}$ is \emph{$T$-accretive} if, for every $\lambda>0$, the resolvent
		$J_{\lambda}$ of $A$ defines a $T$-contraction with domain
		$D(J_{\lambda})=Rg(I+\lambda A)$.
	\end{definition}
	By the previous remark, a completely accretive operator is $T$-accretive in $L^q_{\mu}$ for all $1 \le q \le \infty$. Furthermore, a $T$-accretive operator in $L^q_{\mu}$, $1 \le q \le \infty$, is accretive in $L^q_{\mu}$ and the resolvent is order-preserving in $L^q_{\mu}$. That is, denoting the usual order
	relation on $L^{q}_{\mu}$ by $\le$, if $S$ is $T$-contractive in $L^q_{\mu}$, $1 \le q \le \infty$, then $u \le \hat{u}$ implies that $S u \le S \hat{u}$ for $u$, $\hat{u}\in L^q$ (see
	\cite[Lemma 19.11]{BenilanNonlinearEvolutionEqns} for further properties).
	
	We can naturally extend these definitions to quasi $m$-completely accretive operators (and similarly for quasi $m$-$T$-accretive operators).
	\begin{definition}\label{def:completely-accretive}
		An operator $A$ on $M_{\mu}$ is called \emph{quasi completely accretive} if there is an $\omega > 0$
		such that for every $\lambda>0$, the resolvent operator
		$J_{\lambda}$ of $A+\omega I$ is a complete contraction. Moreover,
		for $1\le q<\infty$, an operator $A$ on $L^q_{\mu}$ is said to
		be \emph{quasi $m$-completely accretive} on $L^q_{\mu}$ if
		there is an $\omega > 0$ such that $A+\omega I$ is completely
		accretive and the range condition~\eqref{eq:range-condition} holds
		with $X = L^{q}_{\mu}$.
	\end{definition}
	
	%%%%%%%%%%%%%%%%%%%%%%%%%%%%%%%%%%%%%%%%%%%%%%%%%%%%%%%%
	%
	%
	%   sub-sub-section:  Accretive operators in $L^{1}$ with complete resolvent
	%
	%
	%%%%%%%%%%%%%%%%%%%%%%%%%%%%%%%%%%%%%%%%%%%%%%%%%%%%%%%%

	\subsubsection{$T$-accretive operators in $L^{1}$ with complete resolvent}
	\label{sec:accretiveWithCompleteResolvent}
	
	In this last part of Section~\ref{sec:pre-ns}, we introduce the class of
	operators $A$ which are merely $T$-accretive in $L^1_{\mu}$ but
	have a so-called \emph{complete resolvent}. This class of operators was
	introduced in~\cite{BenilanHDR} and further elaborated
	in~\cite{CoulHau2016}. It's worth mentioning that for a given completely accretive operator $A$ in $L^1$, the composed operator $A\varphi$ becomes $T$-accretive in $L^1$ with complete resolvent so long as $\varphi$ is a strictly increasing, continuous function on $\R$ (see~\cite{CoulHau2016}). Typical examples of this class of operators include the doubly-nonlinear operators $-\Delta_{p}\varphi$ and $(-\Delta_{p})^{s}\varphi$. Hence we need to introduce the notion \emph{complete} mappings.
	\begin{definition}
		\label{def:complete-mapping}
		Let $D(S)$ be a subset of $M_{\mu}$. A mapping\\
		$S : D(S)\to M(\Sigma,\mu)$ is called \emph{complete} if
		\begin{equation}
		\label{eq:4}
		\int_{\Sigma}j(Su)\dmu\le 
		\int_{\Sigma}j(u)\dmu
		\end{equation}
		for every $j\in \mathcal{J}_{0}$ and $u\in D(S)$.
	\end{definition}
	
	We now introduce the class of accretive operators in $L^{1}_{\mu}$ with complete resolvent (similarly for $T$-accretive operators with complete resolvent).
	
	\begin{definition}
		An operator $A$ on $L^{1}_{\mu}$ is called \emph{($m$-)accretive in $L^{1}_{\mu}$ with complete resolvent} if $A$ is ($m$-)accretive in $L^{1}_{\mu}$ and for every $\lambda>0$, the resolvent operator $J_{\lambda} : Rg(I+\lambda A)\to D(A)$ of $A$ is a complete mapping. For $\omega\in \R$, we call an operator $A$ on $L^1_{\mu}$ \emph{$\omega$-quasi ($m$-)accretive in 	$L^{1}_{\mu}$ with complete resolvent} (or simply \emph{quasi	($m$-)accretive in $L^{1}_{\mu}$ with complete resolvent}) if	$A+\omega I$ is ($m$-)accretive in $L^{1}_{\mu}$ with complete resolvent.
	\end{definition}
	
	The condition~\eqref{eq:4} on the resolvent provides a growth estimate on $u(t)$, allowing us to estimate $u(t)$ in $L^p_{\mu}$ by the norms of $u_0$ and $g$ in $L^p_{\mu}$. In particular, we prove such an estimate in Lemma~\ref{le:GrowthConditionCompleteResolvent}, extending~\cite[Proposition 2.4]{BenilanHDR}.
	
	\subsubsection{The sub-differential operator in $X$}
	\label{sec:subgradientSection}
	We rely on the $m$-accretivity of $(-\Delta_p)^s$ in $L^1$ to obtain mild and strong solutions to~\eqref{eq:1}. We also find that it is sufficient to work with the part in $L^{1\cap\infty}$ to establish existence of such solutions. Hence we consider operators restricted to $L^{1\cap\infty}$ and introduce a definition of sub\-differential operators from \cite{MR1164641} which we will apply in the case $X = L^1$. We will also use this to apply the results of~\cite{BenilanGariepy} and in particular to obtain strong solutions. We include functionals on a (possibly distinct) subspace $Y$ for completeness.
	
	\begin{definition}\label{def:sub-diff-in-X}
		Let $X$, $Y$ and $Z$ be linear subspaces of $M(\Sigma,\mu)$. Then for an energy functional $\E : Y\to (-\infty,\infty]$ with effective domain $D(\E):=\{u\in Y\,\vert\,\E(u)<\infty\}$, we define the \emph{part of $\E$ in $X$} by
		\begin{displaymath}
		\E_{\vert_{X}}(u) = \begin{cases}
		\E(u)& \text{for } u \in D(\E)\cap X,\\
		\infty& \text{otherwise.}\\
		\end{cases}
		\end{displaymath} 
		Further, we define the operator	$\partial_{X}\E$ in $X$ by
		\begin{displaymath}
		\partial_{X}\E=\Bigg\{(u,v)\in X\times X\,\Bigg\vert
		\begin{array}[c]{l}
		u\in D(\E)\; \text{ and }\displaystyle \int_{\Sigma}v(w-u)\dmu\le \E(w)-\E(u)\\
		\text{ for all } w\in X\text{ with }v(w-u)\in
		L^{1}_{\mu}
		\end{array}
		\Bigg\}
		\end{displaymath}
		and the \emph{part of $\partial_{X}\E$ in $Z$} by
		\begin{displaymath}
		(\partial_{X}\E)_{\vert_{Z}} = \Big\lbrace (u,h) \in Z\times Z\, \Big| (u,h) \in \partial_{X}\E
		\Big\rbrace.
		\end{displaymath}
	\end{definition}
	
	In the case $X = L^2_{\mu}$ and $Y$ a linear subspace of $M(\Sigma,\mu)$, this coincides with the previous definition of the subdifferential $\partial\E$ in $L^2_{\mu}$ given by~\eqref{eq:subdifferentialOperator}. One sees that for a functional $\E:Y\rightarrow (-\infty,\infty]$ and $\tilde{\E}$ given by $\E_{\vert_{L^2_{\mu}}}$, that $\partial_{L^2_{\mu}} \E = \partial \tilde{\E}$.
	
	In the case $X = L^1_{\mu}$, $Y = L^2_{\mu}$, we have the inclusion $\left(\partial\E\right)_{\vert_{L^{1\cap\infty}_{\mu}}} \subseteq \left(\partial_{L^1_{\mu}}\E\right)_{\vert_{L^{1\cap\infty}_{\mu}}}$. However, since we are primarily interested in $m$-accretivity, they will be largely interchangeable due to the $m$-accretivity properties of $\left(\partial\E\right)_{\vert_{L^{1\cap\infty}_{\mu}}}$ proved in Theorem~\ref{thm:nonlinearpLapmTAccretive}. For the class of operators $\partial_{X}\E$, B\'enilan and
	Crandall~\cite{MR1164641} found sufficient conditions implying that the closure $\overline{\partial_{X}\E}^{\mbox{}_{X}}$ of
	$\partial_{X}\E$ in $X$ is $m$-completely accretive. We note that if $\E$ is lower semicontinuous then $\overline{\partial_{L^2_{\mu}}\E}^{\mbox{}_{L^2_{\mu}}} = \partial_{L^2_{\mu}}\E$.
	
	\begin{theorem}[{\cite[Lemma~7.1 \& Theorem~7.4]{MR1164641}}]
		\label{thm:m-compl-accretivity-of-E-in-Lq}
		Let $X$	be either $L^{r}_{\mu}$, $1\le r\le \infty$, or
		$L^{1\cap \infty}_{\mu}$. Then the following statements hold.
		\begin{enumerate}[(1)]
			\item If a functional $\E : X\to (-\infty,\infty]$ satisfies
			\begin{equation}
			\label{eq:12}
			\left\{
			\begin{array}[c]{l}
			\hspace{1cm}\E(u+q(\hat{u}-u))+\E(\hat{u}+q(\hat{u}-u))\le
			\E(u)+\E(\hat{u})\\[5pt]
			\text{for all $u$, $\hat{u}\in X$ and $q\in C^{\infty}(\R)$ such that $q(0) = 0$ and}\\[5pt]
			\text{$q'$ has compact support, satisfying $0 \le q' \le 1$ on $\R$}
			\end{array}
			\right.
			\end{equation}
			then the operator
			$\partial_{X}\E$ is completely accretive.
			\item If $\E : X\to [0,\infty]$ satisfies~\eqref{eq:12},
			$(0,0)\in \partial_{X}\E$, and if $\E$ is lower semicontinuous
			for the topology of $X+L^{2}_{\mu}$, then the closure
			$\overline{\partial_{X}\E}^{\mbox{}_{X}}$ of $\partial_{X}\E$ in
			$X$ is $m$-completely accretive in $X$.
		\end{enumerate}
	\end{theorem}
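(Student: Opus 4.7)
For part~(1) I would deploy the standard B\'enilan--Crandall characterisation of complete accretivity: an operator $A$ on $M(\Sigma,\mu)$ is completely accretive if and only if the bracket condition $\int_{\Sigma}(v-\hat v)\,q(u-\hat u)\,\dmu\ge 0$ holds for all $(u,v),(\hat u,\hat v)\in A$ and every $q$ of the class appearing in~\eqref{eq:12}. To obtain this for $A=\partial_X\E$, I would plug the admissible test elements $w_1=u+q(\hat u-u)$ and $w_2=\hat u\pm q(\hat u-u)$ (with the sign dictated by~\eqref{eq:12}) into the defining subdifferential inequalities at $(u,v)$ and $(\hat u,\hat v)$ respectively; both $w_i$ lie in $X$ since $q$ is bounded, Lipschitz and vanishes at $0$. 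Summing the two inequalities produces on the left, after a reflection exchanging the roles of $(u,v)$ and $(\hat u,\hat v)$, the bracket $\int(v-\hat v)\,q(\hat u-u)\,\dmu$, and on the right the expression $\E(w_1)+\E(w_2)-\E(u)-\E(\hat u)$, which hypothesis~\eqref{eq:12} forces to be non-positive.

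For part~(2), complete accretivity of the closure $\overline{\partial_X\E}^{\mbox{}_{X}}$ is automatic from (1), since the bracket inequality just obtained is preserved under strong-$X$ convergence of pairs. The core remaining task is the range condition $\Rg(I+\lambda\,\overline{\partial_X\E}^{\mbox{}_{X}})=X$ for some $\lambda>0$. My plan is first to solve, for $f\in X\cap L^2_\mu$, the variational problem
\[
  \inf_{w\in L^2_\mu}\Bigl\{\tfrac{1}{2\lambda}\lVert w-f\rVert_{L^2_\mu}^{2}+\tilde\E(w)\Bigr\},
\]
where $\tilde\E$ extends $\E$ to $L^2_\mu$ by $+\infty$ outside $X$. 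Existence and uniqueness of the minimiser $u_\lambda$ follow by the direct method: the quadratic term gives coercivity and strict convexity; the hypothesis that $\E$ is lower semicontinuous for the topology of $X+L^2_\mu$ transfers to lower semicontinuity of $\tilde\E$ on $L^2_\mu$; and $(0,0)\in\partial_X\E$ together with $\E\ge 0$ ensures $\tilde\E$ is proper with global minimum at $0$. For general $f\in X$, I would approximate by $f_n\in X\cap L^2_\mu$ (truncation and restriction to sets of finite $\mu$-measure) and invoke the complete contractivity from part~(1) to conclude that the sequence $(u_{\lambda,n})$ is Cauchy in $X$; the limit pair $(u,(f-u)/\lambda)$ then lies in $\overline{\partial_X\E}^{\mbox{}_{X}}$ by construction.

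The principal obstacle is promoting the $L^2$ minimiser to a bona fide element of $\partial_X\E$ rather than only of the $L^2$-subdifferential $\partial_{L^2_\mu}\tilde\E$. The definition of $\partial_X\E$ tests against every $w\in X$ with $v(w-u)\in L^1_\mu$, a set which need not be contained in $L^2_\mu$ when $X=L^1_\mu$ or $L^\infty_\mu$. I would circumvent this by truncating a general test element $w\in X$ as $w_n=u+q_n(w-u)$, with $q_n$ a sequence from the class of~\eqref{eq:12} approximating the identity on $[-n,n]$; these $w_n$ lie in $L^{1\cap\infty}_\mu$ and are therefore valid $L^2$-tests, while hypothesis~\eqref{eq:12} combined with Fatou's lemma and the $j$-contractivity from part~(1) allows the subdifferential inequality to pass from $w_n$ to $w$ in the limit, which is precisely the closure step demanded in the statement.
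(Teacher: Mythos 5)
A preliminary remark: the paper does not prove this theorem at all — it is imported verbatim from B\'enilan--Crandall \cite[Lemma~7.1 \& Theorem~7.4]{MR1164641} — so your proposal can only be compared with the argument in that reference. Your part~(1) is essentially that argument: test the subdifferential inequality at $(u,v)$ with $u+q(\hat u-u)$ and at $(\hat u,\hat v)$ with $\hat u-q(\hat u-u)$ (the second sign in \eqref{eq:12} should indeed be a minus; as printed it is a typo), sum, and invoke \eqref{eq:12}. The pairing condition in the definition of $\partial_X\E$ is automatically met because $q(\hat u-u)\in L^{1\cap\infty}_{\mu}$, so $v\,q(\hat u-u)\in L^{1}_{\mu}$ for $v\in X$, and the class of test functions in \eqref{eq:12} contains the class used in the B\'enilan--Crandall bracket characterisation of complete accretivity, so that characterisation applies. (Passing the bracket inequality to the closure, which you call automatic, does need a short argument about convergence of $\int q(u_n-\hat u_n)(v_n-\hat v_n)\dmu$, but this is routine for $X=L^{r}_{\mu}$, $r<\infty$, and $L^{1\cap\infty}_{\mu}$.)

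In part~(2) there is a genuine gap at the existence step. The claim that lower semicontinuity of $\E$ for the topology of $X+L^{2}_{\mu}$ ``transfers'' to lower semicontinuity of the extension $\tilde{\E}$ (equal to $+\infty$ off $X$) on $L^{2}_{\mu}$ is false in general: if $w\in L^{2}_{\mu}\setminus X$ — which happens, e.g., for $X=L^{1}_{\mu}$ over an infinite measure space — lower semicontinuity of $\tilde{\E}$ at $w$ would force $\E(w_n)\to\infty$ for every sequence $w_n\in X\cap L^{2}_{\mu}$ with $w_n\to w$ in $L^{2}_{\mu}$, and nothing in the hypotheses yields this; taking $\E\equiv 0$ on $L^{1}_{\mu}(\R)$ and $w_n$ the truncations of a fixed $w\in L^{2}\setminus L^{1}$ gives an explicit failure. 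Hence the direct method for your variational problem does not get off the ground as written. The cited proof circumvents this by using the a priori bound that complete accretivity together with $(0,0)\in\partial_X\E$ imposes on any resolvent solution, namely $\int_{\Sigma}j(u_\lambda)\dmu\le\int_{\Sigma}j(f)\dmu$ for all $j\in\mathcal{J}_{0}$; this keeps the relevant approximating elements in an order-bounded subset of $X\cap L^{2}_{\mu}$, where $L^{2}_{\mu}$-convergence does imply convergence in $X+L^{2}_{\mu}$ towards a limit already known to lie in $X$ — which is precisely why the hypothesis is phrased with the $X+L^{2}_{\mu}$ topology rather than with $L^{2}_{\mu}$. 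Your final truncation step, upgrading the $L^{2}$-subdifferential inequality to the $\partial_X\E$-inequality via $w_n=u+q_n(w-u)$, is sound: \eqref{eq:12} with $\hat u=w$ plus lower semicontinuity along $w-q_n(w-u)\to u$ gives $\limsup_n\E(w_n)\le\E(w)$, and dominated convergence handles the left-hand side exactly because $v(w-u)\in L^{1}_{\mu}$ is the admissibility condition; but until the existence step is repaired, the range condition, and with it $m$-complete accretivity, remains unproved.
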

	
	Note that if $\E(0) = 0$ and $\E(u) \ge 0$ for all $u \in X$ with $\partial_{X}\E$ completely accretive, then the condition $(0,0) \in \partial \E$ of Theorem~\ref{thm:m-compl-accretivity-of-E-in-Lq} is satisfied.
	
	%%%%%%%%%%%%%%%%%%%%%%%%%%%%%%%%%%%%%%%%%%%%%%%%%%%%%
	%
	%
	%        Subsection: 
	%
	%
	%%%%%%%%%%%%%%%%%%%%%%%%%%%%%%%%%%%%%%%%%%%%%%%%%%%%%
	\subsection{The doubly nonlinear operator $(-\Delta_{p})^{s}\varphi$}
	\label{subsec:doubly-nonlinear-operator}
	Throughout this paper we focus on the composed operator $(-\Delta_p)^s\varphi = (-\Delta_p)^s\circ\varphi$. Hence, we introduce the composition operator on $L^1$ of the form $A\circ\varphi$ for $A$ an operator on $L^{1}_{\mu}$ and $\varphi$ a function on $\R$.
	\begin{definition}
		\label{def:compositionOperator}
		For an operator $A$ on $L^{1}_{\mu}$ and a function $\varphi$ on $\R$, we define the composed operator $A\circ\varphi$ in $L^{1}_{\mu}$ as a graph by
		\begin{displaymath}
		A\varphi = \{(u,v) \in L^1_{\mu} \times L^1_{\mu} : (\varphi(u),v) \in A\}.
		\end{displaymath}
		and interchangeably as a (possibly multi-valued) operator on $L^1_{\mu}$.
	\end{definition}
	
	 We may also extend the domain of an operator defined on $L^{1\cap\infty}_{\mu}$ in the following manner introduced by~\cite{BenilanGariepy}.
	\begin{definition}
		\label{def:extendedDomain}
		For an operator $A$ in $L^{1\cap\infty}_{\mu}\times L^{1\cap\infty}_{\mu}$, we extend the domain $D(A)$ by the set
		\begin{displaymath}
		\tilde{D}(A) := \left\lbrace u \in L^{1\cap\infty}_{\mu}\left|
		\begin{aligned}
		& \exists\, (u_{n},h_n)_{n\ge 1} \subseteq A \text{ such that}\\
		& u_{n} \rightarrow u \text{ in } L^{1}_{\mu} \text{ and }\\
		&(u_n,h_n)_{n \ge 1} \text{ is bounded in } L^{\infty}_{\mu}\times L^1_{\mu}.
		\end{aligned}\right.
		\right\rbrace 
		\end{displaymath}
        and for a strictly increasing $\varphi\in C(\R)$, we define
     \begin{displaymath}
        \hat{D}(A\varphi):=\Big\{
        u\in L^1_{\mu}\,\Big\vert\, 
        \varphi(u)\in \tilde{D}(A)\Big\}.
    \end{displaymath}
	\end{definition}
   
	Then, for an operator $A$ in $L^{1\cap\infty}_{\mu}\times L^{1\cap\infty}_{\mu}$, one has that 
    \begin{displaymath}
        D(A) \subseteq \tilde{D}(A) \subseteq L^{1}_{\mu}\qquad \text{and}\qquad
        D(A\varphi)\subseteq \hat{D}(A\varphi).
    \end{displaymath}
    
    Now, for $\Omega \subset \R^d$ and $\varphi(u_0) \in \tilde{D}\left(\left(\partial_{L^1}\E \right)\vert_{L^{1\cap\infty}}\right)$ we have the following theorem for existence of strong solutions from~\cite{BenilanGariepy}. 
    Note that here $\beta$ corresponds to $\varphi^{-1}$ in our setting and $v$ to $\varphi(u)$. 
    The statement in~\cite{BenilanGariepy} also uses a slightly different implementation of the sub-differential, closer to $\partial_{L^{\infty}}\E$ (giving potentially a larger operator), however this does not affect the proof. 
    A similar result for the homogeneous evolution problem can be found in \cite{CoulHau2016}.
	
	\begin{theorem}[{\cite[Theorem 4.1]{BenilanGariepy}}, Existence of strong distributional solutions]
		\label{th:BGStrongExistence}
		Let $\Omega$ be an open domain in $\R^d$, $d \ge 1$, of finite Lebesgue measure, and $T > 0$. Suppose $\E:L^{2} \rightarrow [0,\infty]$ is a lower semicontinuous function satisfying $\E(0) = 0$ and~\eqref{eq:12} for $X = L^{2}$. Further, let $\beta \in AC_{\loc}(\mathbb{R})$ be nondecreasing satisfying $\beta(\mathbb{R}) = \mathbb{R}$. Then, for every $v_{0} \in \tilde{D}(\left(\partial_{L^1}\E \right)_{\vert_{L^{1\cap\infty}}})$ and $f \in BV((0,T);L^{1})\cap L^{1}(0,T;L^{1})\cap L^{1}(0,T;L^{\infty})$, there exists $v \in L^{\infty}((0,T)\times \Omega)$ such that $u := \beta(v) \in W^{1,\infty}(0,T;L^{1})$ is the unique strong distributional solution to
		\begin{equation}
		\begin{cases}
		u'(t)+\left(\partial_{L^1}\E \right)_{\vert_{L^{1\cap\infty}}}v(t) \in f(t)& \text{ for a.e.~} t\in (0,T),\\
		u(0) = \beta(v_{0}).&
		\end{cases}
		\end{equation}
	\end{theorem}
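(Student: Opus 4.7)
The strategy is to combine an implicit time-discretization with the complete-accretivity machinery of Section~\ref{sec:pre-ns}, and then upgrade the resulting mild solution using the BV-in-time regularity of $f$ together with the $L^\infty$ information encoded in $\tilde{D}$.

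First I would set $A:=(\partial_{L^1}\E)_{\vert_{L^{1\cap\infty}}}$ and study the composed graph $B:=A\beta^{-1}$ on $L^1$ (with $\beta^{-1}$ interpreted as the maximal monotone inverse of $\beta$). By Theorem~\ref{thm:m-compl-accretivity-of-E-in-Lq}, $\partial_{L^1}\E$ is completely accretive, since $\E(0)=0$, $\E\ge0$ and \eqref{eq:12} holds; the composition remarks in Section~\ref{sec:accretiveWithCompleteResolvent} then imply that $B$ is $T$-accretive on $L^1$ with complete resolvent. For the range condition at step $h>0$ and given source $w\in L^1\cap L^\infty$, I would solve the variational problem
\begin{displaymath}
\min_{v}\,\Bigl\{\,h\,\E(v)+\int_\Omega \Psi(v)\,\dmu-\int_\Omega w\,v\,\dmu\,\Bigr\},\qquad \Psi(\sigma):=\int_0^\sigma \beta(\tau)\,\td\tau,
\end{displaymath}
whose Euler--Lagrange inclusion is precisely $\beta(v)+hAv\ni w$; convexity of $\Psi$ (from $\beta$ nondecreasing), lower semicontinuity of $\E$, and the coercivity provided by $\beta(\R)=\R$ give a minimiser, yielding $u:=\beta(v)\in \Rg(I+hB)$. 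Iterating along an $\varepsilon$-partition as in Definition~\ref{def:mild-solution} produces step functions $u_\varepsilon,v_\varepsilon$.

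Next I would extract uniform estimates. The complete contractivity of $J_h^B$, together with $f\in L^1(0,T;L^\infty)$ and the $L^\infty$-bounded approximating sequence for $v_0$ supplied by membership in $\tilde{D}(A)$, propagates an $L^\infty$-bound through the scheme to give $\norm{v_\varepsilon}_{L^\infty((0,T)\times\Omega)}\le C$. For time regularity I would use the BV-in-time assumption on $f$ via B\'enilan's translation argument: shifting time by $\tau$, subtracting the shifted and original discrete equations, and invoking $T$-accretivity of $B$ yields
\begin{displaymath}
\norm{u_\varepsilon(t+\tau)-u_\varepsilon(t)}_{1}\le \norm{u_\varepsilon(\tau)-u_\varepsilon(0)}_{1}+\int_0^t\norm{f(s+\tau)-f(s)}_{1}\,\ds,
\end{displaymath}
and the $BV(0,T;L^1)$ hypothesis controls the right-hand side uniformly in $\varepsilon$, so $u_\varepsilon$ is uniformly Lipschitz $[0,T]\to L^1$. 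Crandall--Liggett then gives $u_\varepsilon\to u\in W^{1,\infty}(0,T;L^1)$ uniformly, where $u$ is the mild solution.

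The main obstacle will be to identify $v\in L^\infty((0,T)\times\Omega)$ with $u=\beta(v)$ and to verify $(v(t),f(t)-u_t(t))\in A$ for a.e.~$t$; here one passes through a nonlinearity without access to strong compactness for $v_\varepsilon$. The uniform $L^\infty$ bound furnishes a weak-$*$ cluster point $v$, and a Minty-type argument exploiting maximal monotonicity of the graph $\beta$ and the $L^1$-convergence $\beta(v_\varepsilon)=u_\varepsilon\to u$ then forces $\beta(v)=u$ a.e.\ pointwise. Pairing the discrete inclusion against $v_i$ and telescoping via monotonicity of $\beta$ produces the a priori bound $\int_0^T \E(v_\varepsilon)\,\dt\le C$, hence $v\in L^p_{\loc}(0,T;W^{s,p}_0)$; combining this with the lower semicontinuity of $\E$ and passing to the limit in the discrete subdifferential inequality yields $(v(t),f(t)-u_t(t))\in\partial_{L^1}\E$ for a.e.~$t$, which unpacked against test functions is the strong distributional formulation of Definition~\ref{def:distributional-solution}. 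Uniqueness is immediate from $T$-accretivity of $B$ in $L^1$.
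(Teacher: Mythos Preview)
The paper does not prove this statement at all: Theorem~\ref{th:BGStrongExistence} is quoted from \cite[Theorem~4.1]{BenilanGariepy} and used as a black box in Section~\ref{sec:strongSols} (in particular in the proof of Theorem~\ref{th:MildToStrongLipschitzFracPLap}). There is consequently no ``paper's own proof'' to compare your proposal against.

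As a sketch of the argument in \cite{BenilanGariepy}, your plan is along the right lines---implicit time discretisation, propagation of $L^\infty$ bounds via complete accretivity and the data furnished by $\tilde{D}(A)$, a translation estimate using $f\in BV(0,T;L^1)$ for Lipschitz-in-time control, and a monotonicity/Minty passage to the limit---and you correctly flag the identification $u=\beta(v)$ with $(v,f-u_t)\in A$ as the crux. Two points are glossed over, however. First, your variational resolution of the implicit step minimises over $v\in L^2$, but the operator is $A=(\partial_{L^1}\E)_{\vert_{L^{1\cap\infty}}}$, so you must show the minimiser lands in $L^{1\cap\infty}$; B\'enilan--Gariepy avoid this by regularising $\beta$ via its Yosida approximation $\beta_\lambda$ (which is Lipschitz), solving the regularised problem, and sending $\lambda\to0$ with uniform $L^\infty$ control from complete accretivity. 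Second, the Minty argument you invoke for $\beta(v)=u$ needs more than weak-$*$ convergence of $v_\varepsilon$ plus $L^1$ convergence of $\beta(v_\varepsilon)$: for a merely nondecreasing $\beta$ one cannot recover $v$ from $\beta(v)$ pointwise, and the actual proof uses the energy bound $\int_0^T\E(v_\varepsilon)\,\dt\le C$ together with the strong--weak closedness of maximal monotone graphs in duality, pairing the discrete inclusion against $v_\varepsilon$ itself and exploiting the lower semicontinuity of $\E$ simultaneously with that of $\int\Psi(v_\varepsilon)$. Your outline names the right objects but would need these two mechanisms spelled out to close.
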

	
	We also require the following chain rule.
	
	\begin{theorem}[{\cite[Theorem 1.1]{BenilanGariepy}}]
		Let $(\Sigma,\mu)$ be a measure space.\\ If $w \in W^{1,1}((0,T);L^{1}_{\mu})$, $p \in L_{\loc}^{1}(\R)$ and
		\begin{displaymath}
		u = \int_{0}^{w}p(r)\dr \in BV(0,T;L^{1}_{\mu})\cap L^{1}((0,T);L^{1}_{\mu})
		\end{displaymath}
		then $u \in W^{1,1}((0,T);L^{1}_{\mu})$ and for a.e.~$t \in (0,T)$
		\begin{displaymath}
		\frac{\td u}{\td t}(t) = p(w(t))\frac{\td w}{\td t}(t)
		\end{displaymath}
		for $\mu$-a.e.~$x \in \Sigma$.
	\end{theorem}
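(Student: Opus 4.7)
\emph{Proof proposal.} The plan is to reduce the Bochner chain rule to the scalar chain rule for absolutely continuous functions, applied $\mu$-fiberwise, and then reassemble via Fubini. First, using the standard identification $W^{1,1}(0,T;L^1_\mu)\subset L^1((0,T)\times\Sigma)$, I would pass to a representative such that, for $\mu$-a.e.~$x\in\Sigma$, $t\mapsto w(t,x)$ is absolutely continuous on $[0,T]$ with classical derivative equal to $\frac{\td w}{\td t}(t)(x)$ for a.e.~$t$. Choosing a Borel representative of $p\in L^1_\loc(\R)$, the primitive $P(y):=\int_0^y p(r)\,\dr$ is continuous and locally AC with $P'=p$ a.e., so the hypothesis $u\in L^1((0,T);L^1_\mu)$ makes $u(t,x)=P(w(t,x))$ a genuine element of $L^1((0,T)\times\Sigma)$. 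A refining sequence of partitions of $[0,T]$, combined with Fubini and monotone convergence, then converts the Bochner-$BV$ hypothesis into the fiberwise bound
\[
\int_\Sigma V_{[0,T]}\bigl(u(\cdot,x)\bigr)\,\dmu(x)\le\mathrm{Var}_{BV(0,T;L^1_\mu)}(u)<\infty,
\]
so $t\mapsto u(t,x)$ is continuous and of bounded variation for $\mu$-a.e.~$x$.

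Next I would run the pointwise chain rule on each such fiber. The range of $w(\cdot,x)$ is a compact interval on which $P$ is AC; both $P$ and $w(\cdot,x)$ enjoy Luzin's~(N) property, and this property is preserved under composition, so $u(\cdot,x)=P\circ w(\cdot,x)$ is a continuous $BV$ function with Luzin~(N). The classical characterisation---a continuous $BV$ function with Luzin~(N) is absolutely continuous---upgrades $u(\cdot,x)$ to $AC([0,T])$, and the Serrin-Varberg chain rule for AC compositions delivers
\[
\frac{\td}{\td t}u(t,x)=p\bigl(w(t,x)\bigr)\,\frac{\td w}{\td t}(t,x)\qquad\text{for a.e.~}t\in(0,T).
\]
The right-hand side is unambiguous after one sets it to zero where $\frac{\td w}{\td t}(t,x)=0$: since $w(\cdot,x)$ is AC, it cannot transport a subset of $[0,T]$ of positive Lebesgue measure into a Lebesgue-null subset of $\R$ at nonzero speed, so the ambiguity of $p$ on any null set does not affect the product $p(w)\,w_t$.

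Finally, to return to the Bochner framework, I would define $h(t,x):=p(w(t,x))\,\frac{\td w}{\td t}(t,x)$, jointly measurable once a Borel representative of $p$ is fixed, and use the AC identity $\int_0^T|u'(t,x)|\,\td t=V_{[0,T]}(u(\cdot,x))$ together with the fiberwise bound above to obtain $h\in L^1(0,T;L^1_\mu)$. The fiberwise identity $u(t,x)-u(s,x)=\int_s^t h(\tau,x)\,\dtau$ holds for $\mu$-a.e.~$x$; pairing it with arbitrary $\phi\in L^\infty_\mu$ and invoking Fubini promotes it to $u(t)-u(s)=\int_s^t h(\tau)\,\dtau$ in $L^1_\mu$, which identifies $u\in W^{1,1}(0,T;L^1_\mu)$ with $\frac{\td u}{\td t}=h$ in the Bochner sense. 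The principal obstacle is the fiberwise $BV$-to-$AC$ upgrade: because $p$ is merely $L^1_\loc$, the primitive $P$ need not be Lipschitz, so the chain rule cannot be obtained from a Lipschitz composition argument; it is precisely the combination of the $BV\cap L^1$ hypothesis on $u$ with the preservation of Luzin~(N) under composition of AC functions that closes the gap.
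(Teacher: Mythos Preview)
The paper does not give a proof of this statement: it is quoted verbatim as \cite[Theorem~1.1]{BenilanGariepy} and used as a black box, so there is no ``paper's own proof'' to compare against. What can be assessed is whether your argument stands on its own, and it does. The fiberwise reduction---extracting an AC representative of $w(\cdot,x)$ from the Bochner hypothesis, converting Bochner-$BV$ of $u$ into $\mu$-integrable pointwise variation via a refining partition sequence and monotone convergence, then invoking Banach--Zarecki (continuous $+$ BV $+$ Luzin (N) $\Rightarrow$ AC) on each fiber---is exactly the natural route, and it is in fact the strategy used in the original B\'enilan--Gariepy paper. Your handling of the ambiguity of $p(w)$ on the null set where $P'\neq p$ via the Serrin--Varberg convention is also correct: the relevant fact is that an AC function maps the intersection of $\{w'\neq 0\}$ with the preimage of any Lebesgue-null set to a null set, which is precisely what underlies the Serrin--Varberg chain rule. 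One small caveat: the passage from Bochner-$W^{1,1}$ to fiberwise AC, and the Fubini step in the reassembly, tacitly use $\sigma$-finiteness of $\mu$, which the theorem statement does not explicitly assume; this is harmless in the applications of the present paper but worth flagging if you intend the proof to be fully self-contained.
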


	With the above preliminaries we can now focus on proving our main results.
%%%%%%%%%%%%%%%%%%%%%%%%%%%%%%%%%%%%%%%%%%%%%%%%%%%%%
%
%
%			Well-posedness of mild solutions
%
%
%%%%%%%%%%%%%%%%%%%%%%%%%%%%%%%%%%%%%%%%%%%%%%%%%%%%%

\section[Well-posedness \& comparison principle]{Well-posedness and a comparison principle}
\label{sec:well-posedness}

We now apply the nonlinear semigroup theory summarized in the previous section to the doubly nonlinear operator $(-\Delta_p)^{s}\varphi$ to prove Theorem~\ref{thm:1}. To do this we prove two key results for operators of the form $\overline{(\partial\E)_{\vert_{L^{1\cap\infty}_{\mu}}}\varphi}^{\mbox{}_{L^1}}+F$, which might be of independent interest.

\medskip

The first result gives us $T$-accretivity with complete resolvent for operators $A\varphi$ with a Lipschitz pertur\-bation. This follows from~\cite[Proposition 2.17 and Proposition 2.19]{CoulHau2016}. A comparable result can also be found for operators defined on finite measure spaces $(\Sigma,\mu)$ in~\cite[pg. 24]{BenilanGariepy} (see also~\cite[Lemma 7.1]{MR1164641}).

\begin{theorem}
\label{thm:nonlinearpLapmTAccretive}
	Suppose $(\Sigma,\mu)$ is a $\sigma$-finite measure space, $\E:L^2_{\mu} \rightarrow [0,\infty]$ is convex, lower semicontinuous with $\E(0) = 0$ and satisfying~\eqref{eq:12}. Let $\varphi\in C(\R)$ be strictly increasing with $\varphi(0) = 0$ and satisfying
	\begin{equation}
	\label{eq:Yosidaqbracket2}
	[\beta_\lambda(u), (\partial\E)_{\vert_{L^{1\cap\infty}_{\mu}}}u]_1 \ge 0
	\end{equation}
	and
	\begin{equation}
	\label{eq:Yosidaqbracket3}
	[\beta_\lambda(u), (\partial\E) _{\vert_{L^{1\cap\infty}_{\mu}}}u]_2 \ge 0
	\end{equation}
	for every $\lambda > 0$ and $u \in D((\partial\E)_{\vert_{L^{1\cap\infty}_{\mu}}})$, where $\beta = \varphi^{-1}$ and $\beta_\lambda$ is the Yosida approximation of $\beta$. Suppose $F:L^1_{\mu} \rightarrow L^1_{\mu}$ satisfies the Lipschitz property
	\begin{equation}
	\label{eq:LipschitzProperty}
	|F(u)-F(\hat{u})| \le \omega|u-\hat{u}| \quad \text{ on } \Sigma
	\end{equation}
	for all $u$, $\hat{u} \in L^1_{\mu}$ with constant $\omega \ge 0$ and satisfies $F(0) = 0$. Then $\overline{(\partial\E)_{\vert_{L^{1\cap\infty}_{\mu}}}\varphi}^{\mbox{}_{L^1_{\mu}}}+F$ is $\omega$-quasi $m$-$T$-accretive in $L^1_{\mu}$ with complete resolvent.
\end{theorem}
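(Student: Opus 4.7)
The plan is to split the claim into two self-contained modules, matching the two cited propositions from~\cite{CoulHau2016}: first dispose of the unperturbed composed operator $B := \overline{(\partial\E)_{\vert_{L^{1\cap\infty}_{\mu}}}\varphi}^{\mbox{}_{L^1_{\mu}}}$, and then treat the Lipschitz perturbation $F$ as a secondary step. For the first module I would start from Theorem~\ref{thm:m-compl-accretivity-of-E-in-Lq}: the hypotheses on $\E$ (namely $\E \ge 0$, $\E(0)=0$, lower semicontinuity and~\eqref{eq:12}) ensure that $(0,0)\in \partial_{L^{1\cap\infty}_{\mu}}\E$ and that $\partial_{L^{1\cap\infty}_{\mu}}\E$, as well as its part $(\partial\E)_{\vert_{L^{1\cap\infty}_{\mu}}}$, is completely accretive; by part~(2) of that theorem the closure is $m$-completely accretive.

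Next I would pre-compose with $\varphi$ via a Yosida-approximation argument. Setting $\beta := \varphi^{-1}$ and $\beta_\lambda$ its Yosida approximation, I would, for $\mu>0$ and right-hand side $h\in L^1_{\mu}\cap L^{\infty}_{\mu}$, solve the regularised resolvent equation
\begin{displaymath}
\beta_\lambda(v_\lambda) + \mu\,(\partial\E)_{\vert_{L^{1\cap\infty}_{\mu}}} v_\lambda \ni h
\end{displaymath}
using the $m$-accretivity in $L^2_{\mu}$ (so that the sum with the Lipschitz monotone $\beta_\lambda$ remains $m$-accretive). The crucial role of hypotheses~\eqref{eq:Yosidaqbracket2} and~\eqref{eq:Yosidaqbracket3} is to supply, upon testing with $\beta_\lambda(v_\lambda)$, uniform $L^1$- and $L^2$-estimates on $\beta_\lambda(v_\lambda)$; combined with the complete accretivity this yields uniform $L^p$-bounds for all $p\in[1,\infty]$. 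I would then pass to the limit $\lambda\to 0^+$ to obtain $v\in L^{1\cap\infty}_{\mu}$ with $u=\beta(v)\in L^{1}_{\mu}$ solving $u+\mu(\partial\E)_{\vert_{L^{1\cap\infty}_{\mu}}}\varphi(u)\ni h$, which places $(u,(h-u)/\mu)\in (\partial\E)_{\vert_{L^{1\cap\infty}_{\mu}}}\varphi$. Taking the closure in $L^1_{\mu}\times L^1_{\mu}$ extends the range to all of $L^1_{\mu}$. $T$-accretivity of the resolvent and the completeness estimate~\eqref{eq:4} follow from the complete accretivity of $(\partial\E)_{\vert_{L^{1\cap\infty}_{\mu}}}$: for any $j\in \mathcal{J}_0$ one tests the resolvent equation with the appropriate truncation of $\varphi(u)-\varphi(\hat u)$ (or of $\varphi(u)$) and uses strict monotonicity of $\varphi$ together with the sign preservation of $(u-\hat u)$ and $(\varphi(u)-\varphi(\hat u))$. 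This yields that $B$ is $m$-$T$-accretive in $L^1_{\mu}$ with complete resolvent, which is exactly the conclusion of Proposition 2.17 in~\cite{CoulHau2016}.

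For the perturbation step, I would invoke Proposition 2.19 of~\cite{CoulHau2016}. Because $F$ is $\omega$-Lipschitz with $F(0)=0$, it maps $L^1_{\mu}$ into itself, $F+\omega I$ is order-preserving in $L^1_{\mu}$, and the bracket estimate $[u-\hat u, F(u)-F(\hat u)]_1 \ge -\omega\|u-\hat u\|_1$ gives $\omega$-quasi-accretivity. Adding a globally Lipschitz perturbation preserves both the range condition (since the resolvent of $B+\omega_0 I + F$ can be obtained via a Banach contraction argument for $\mu$ small enough and extended by the standard Crandall--Pazy trick) and the completeness of the resolvent (since completeness is characterised by the truncation inequalities~\eqref{eq:4}, and $F$ satisfies the pointwise estimate $|F(u)|\le \omega|u|$). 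Thus $B+F$ is $\omega$-quasi $m$-$T$-accretive in $L^1_{\mu}$ with complete resolvent.

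The main obstacle is the composition step: simultaneously controlling the non-Lipschitz nonlinearity $\varphi$ (which in general need not be invertible in a smooth sense) and preserving both the $T$-contractivity and completeness of the resolvent through the Yosida passage to the limit. The conditions~\eqref{eq:Yosidaqbracket2}--\eqref{eq:Yosidaqbracket3} are tailor-made for this; without them one cannot guarantee the uniform integrability of $\beta_\lambda(v_\lambda)$ needed to identify the weak limit and to conclude that $u=\beta(v)$ lies in $L^1_{\mu}$. The rest of the argument is a careful bookkeeping of the estimates inherited from Theorem~\ref{thm:m-compl-accretivity-of-E-in-Lq} and the elementary Lipschitz perturbation calculus.
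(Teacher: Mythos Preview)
Your proposal is correct and follows essentially the same route as the paper: both arguments use Theorem~\ref{thm:m-compl-accretivity-of-E-in-Lq} to get complete accretivity of $(\partial\E)_{\vert_{L^{1\cap\infty}_{\mu}}}$, invoke \cite[Proposition~2.17]{CoulHau2016} for $T$-accretivity with complete resolvent of the composition with $\varphi$, handle the Lipschitz perturbation via \cite[p.~31 and Proposition~2.12]{CoulHau2016}, and obtain the range condition for the closure through \cite[Proposition~2.19]{CoulHau2016} (whose Yosida-approximation mechanism you have correctly sketched). The only cosmetic difference is ordering---the paper adds $F$ before applying Proposition~2.19, whereas you close first and perturb afterwards---but both orderings are valid.
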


\begin{proof}
	Since $\E$ is convex and attains its global minimum at $0$, we have that $(0,0) \in \partial\E$. Then by the complete accretivity property, $\partial\E$ has complete resolvent. Hence $\partial \E$ is $m$-completely accretive in $L^2$ with complete resolvent so that $(\partial\E)_{\vert_{L^{1\cap\infty}_{\mu}}}$ is also completely accretive. Since $\varphi$ is injective, we have by~\cite[Proposition 2.17]{CoulHau2016} that $(\partial\E) _{\vert_{L^{1\cap\infty}_{\mu}}}\varphi$ is $T$-accretive in $L^1_{\mu}$ with complete resolvent. So by~\cite[p.31]{CoulHau2016} and~\cite[Proposition 2.12]{CoulHau2016}, the operator $(\partial\E) _{\vert_{L^{1\cap\infty}_{\mu}}}\varphi+F$ is $\omega$-quasi $T$-accretive in $L^1$ with complete resolvent. Then we can apply~\cite[Proposition 2.19]{CoulHau2016} to $\partial\E$ to obtain the range condition for the closure and hence $\omega$-quasi  $m$-$T$-accretivity.
\end{proof}

We now apply the above theorem to the doubly nonlinear operator $(-\Delta_p)^s \varphi$.
\begin{corollary}
	\label{cor:doublynonlinearAccretivity}
	Let $\Omega$ be an open domain in $\R^d$, $d \ge 1$. Suppose $F:L^1 \rightarrow L^1$ satisfies the Lipschitz property~\eqref{eq:LipschitzProperty} for all $u$, $\hat{u} \in L^1$ with constant $\omega \ge 0$ and satisfies $F(0) = 0$. Let $\E$ be the energy functional given by~\eqref{eq:energyFunctionalpLap} associated to the $s$-Gagliardo semi-norm. Then the following statements hold.
    \begin{enumerate}
        \item The sub-differential operator $\partial\E$ in $L^2$ of $\E$ is $m$-completely accretive on $L^2$.
        \item The part $(\partial\E)_{\vert L^{1\cap\infty}}$ of $\partial\E$ in $L^{1\cap\infty}\times L^{1\cap\infty}$ satisfies \eqref{eq:Yosidaqbracket2} and~\eqref{eq:Yosidaqbracket3}.
        \item The operator $\overline{(\partial\E)_{\vert_{L^{1\cap\infty}}}\varphi}^{\mbox{}_{L^1}}+F$ is $\omega$-quasi $m$-$T$-accretive in $L^1$ with complete resolvent.
    \end{enumerate}
\end{corollary}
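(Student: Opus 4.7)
My plan is to establish the three statements in order, with~(3) being the payoff of a direct invocation of Theorem~\ref{thm:nonlinearpLapmTAccretive} once~(1) and~(2) are in place.

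For~(1), I would apply Theorem~\ref{thm:m-compl-accretivity-of-E-in-Lq}(2) to the functional $\E$ of~\eqref{eq:energyFunctionalpLap}. The requirements $\E \ge 0$, $\E(0) = 0$, and $(0,0) \in \partial\E$ are immediate since $0$ globally minimizes $\E$. Lower semicontinuity of $\E$ on $L^2$ (and hence on $L^{1\cap\infty}+L^2$ as required) follows from Fatou's lemma applied to the Gagliardo double integral. The technical step is the convexity-type inequality~\eqref{eq:12}: for $q \in C^{\infty}(\R)$ with $q(0)=0$ and $0 \le q' \le 1$, substituting into the Gagliardo double integral reduces~\eqref{eq:12} to a pointwise inequality on the integrand of the form $|a+\sigma|^p + |b-\sigma|^p \le |a|^p + |b|^p$ (with $a := u(x)-u(y)$, $b := \hat u(x)-\hat u(y)$, $\sigma := q(\hat u(x)-u(x))-q(\hat u(y)-u(y))$), valid because $q$ is $1$-Lipschitz and non-decreasing so that $\sigma$ lies between $0$ and $b-a$ with matching sign. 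This inequality is standard in the fractional $p$-Laplacian literature (cf.~\cite{MR3491533,CoulHau2016}), and integrating against the symmetric kernel $|x-y|^{-d-sp}$ yields~\eqref{eq:12}.

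For~(2), having complete accretivity of $\partial\E$ from~(1), I would use the well-known characterization of complete accretivity (cf.~\cite{MR1164641}): for every $(u,h) \in \partial\E$ and every non-decreasing, Lipschitz $T : \R \to \R$ with $T(0)=0$, one has $\int T(u)\,h \, d\mu \ge 0$. Because $\beta := \varphi^{-1}$ is non-decreasing, continuous, and satisfies $\beta(0)=0$, its Yosida approximation $\beta_\lambda$ is non-decreasing, globally Lipschitz, and vanishes at $0$ for every $\lambda>0$. Taking $T = \beta_\lambda$ and $(u,h) \in (\partial\E)_{\vert_{L^{1\cap\infty}}}$ gives $\int \beta_\lambda(u)\,h \, d\mu \ge 0$, which by~\eqref{eq:35} is exactly $[\beta_\lambda(u),h]_2 \ge 0$, i.e.~\eqref{eq:Yosidaqbracket3}. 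The corresponding $L^1$ inequality~\eqref{eq:Yosidaqbracket2} follows from the same non-negativity via the $L^1$-bracket formula~\eqref{eq:67}, using that $\beta_\lambda(u) \in L^{1\cap\infty}$ whenever $u \in L^{1\cap\infty}$.

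For~(3), all hypotheses of Theorem~\ref{thm:nonlinearpLapmTAccretive} are now at hand: $\E$ is convex, lower semicontinuous, nonnegative, vanishes at $0$, and satisfies~\eqref{eq:12} by~(1); $\varphi$ satisfies the bracket conditions~\eqref{eq:Yosidaqbracket2}--\eqref{eq:Yosidaqbracket3} by~(2); and $F$ meets~\eqref{eq:LipschitzProperty} with $F(0)=0$ by hypothesis. The theorem immediately yields that $\overline{(\partial\E)_{\vert_{L^{1\cap\infty}}}\varphi}^{\mbox{}_{L^1}} + F$ is $\omega$-quasi $m$-$T$-accretive in $L^1$ with complete resolvent. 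I expect the main obstacle of the program to be the pointwise inequality driving~\eqref{eq:12}; beyond that, everything reduces to the abstract nonlinear-semigroup machinery of Theorem~\ref{thm:nonlinearpLapmTAccretive} together with careful bookkeeping for the Yosida regularization across the $L^1$ and $L^2$ brackets.
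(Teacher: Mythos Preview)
Your proposal is correct and follows essentially the same architecture as the paper: verify~\eqref{eq:12} for $\E$ via a pointwise convexity inequality on the Gagliardo integrand to get~(1), check the Yosida bracket conditions for~(2), and then invoke Theorem~\ref{thm:nonlinearpLapmTAccretive} for~(3). The only notable difference is in~(2): the paper verifies~\eqref{eq:Yosidaqbracket2}--\eqref{eq:Yosidaqbracket3} by appealing directly to the explicit weak formulation of $\partial\E$ in Proposition~\ref{prop:characterizeFracPLap} (testing against the monotone function $\beta_\lambda(u)$ and observing that the double integral is nonnegative), whereas you deduce the same inequalities abstractly from the complete accretivity of $\partial\E$ established in~(1) via the B\'enilan--Crandall characterization. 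Your route is slightly more economical in that it avoids re-opening the integral representation, but both are short and equivalent in strength.
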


\begin{proof}
	We first show that $\E$ satisfies~\eqref{eq:12} and so, the sub-differential operator $\partial\E$ of $\E$ on $L^2$ is $m$-completely accretive on $L^2$ owing to Theorem~\ref{thm:m-compl-accretivity-of-E-in-Lq}. Obviously, it is sufficient to show that $\E$ satisfies~\eqref{eq:12} for every $v$ and $\hat{v}\in D(\E)$. For given $x$, $y\in \R^{d}$, set $a = v(x)-v(y)$ and $b = \hat{v}(x)-\hat{v}(y)$, and let $q \in C^1(\R)$ satisfying $q(0) = 0$ and $0 \le q' \le 1$. Since $0 \le q' \le 1$, there is a $k \in [0,1]$ such that $q(b-a) = k(b-a)$. Then, by the convexity of $|\cdot|^p$, one has that 
	\begin{displaymath}
	\left|ka+(1-k)b\right|^p+\left|(1-k)a+kb\right|^p \le |a|^p+|b|^p
	\end{displaymath}
	holds and so, we have
  \begin{displaymath}
      \left|b-q(b-a))\right|^p+\left|a+q(b-a)\right|^p \le |a|^p+|b|^p
  \end{displaymath}
or, equivalently,
\begin{align*}
  &\left|\hat{v}(x)-\hat{v}(y)-q(\hat{v}(x)-\hat{v}(y)-
  v(x)-v(y)))\right|^p\\
  &\qquad\qquad +\left|v(x)-v(y)+q(\hat{v}(x)-\hat{v}(y)-v(x)-v(y))\right|^p\\ 
  &\qquad\le |v(x)-v(y)|^p+|\hat{v}(x)-\hat{v}(y)|^p.  
\end{align*}
Therefore~\eqref{eq:12} follows from integrating over $\R^{2d}$ with respect to $|x-y|^{-d-sp}\dx\dy$. 

Further, since for every $\lambda>0$ and $q\ge 1$, $u\mapsto\left(\beta_\lambda(\cdot)\right)^{q-1}$ is monotone increasing, differentiable with a bounded derivative, it follows from the characterization of the sub-differential operator $\partial\E$ given in Proposition~\ref{prop:characterizeFracPLap} that he part $(\partial\E)_{\vert L^{1\cap\infty}}$ of $\partial\E$ in $L^{1\cap\infty}\times L^{1\cap\infty}$ satisfies \eqref{eq:Yosidaqbracket2} and~\eqref{eq:Yosidaqbracket3}.
Hence we may apply Theorem~\ref{thm:nonlinearpLapmTAccretive}, to conclude that the operator $\overline{(\partial\E)_{\vert_{L^{1\cap\infty}}}\varphi}^{\mbox{}_{L^1}}+F$ is $\omega$-quasi $m$-$T$-accretive in $L^1$ with complete resolvent. 
\end{proof}

In order to apply our regularity results to all initial data in $L^1$, we use the following density result for the composition operator $(\partial\E)_{\vert_{L^{1\cap\infty}_{\mu}}}\varphi$. This generalizes the classic density	result for sub-differential operators (cf.~\cite{MR0348562} or \cite[Propos\-ition~1.6]{MR2582280}) and in particular generalizes an idea from~\cite[p.48]{MR2582280}. We note that~\eqref{eq:Yosidaqbracket2} and~\eqref{eq:Yosidaqbracket3} are satisfied for the fractional $p$-Laplacian.

\begin{theorem}[{Density of $D((\partial\E) _{\vert_{L^{1\cap\infty}_{\mu}}}\varphi)$ in $L^1_{\mu}$}]
	\label{prop:L1densityofE}
	Let $(\Sigma,\mu)$ be a $\sigma$-finite meas\-ure space, $\E : M(\Sigma,\mu) \rightarrow [0,\infty]$ a proper, convex functional satisfying $\E(0)=0$, and suppose the restriction $\E_{\vert L^2_{\mu}}$ of $\E$ on $L^2_{\mu}$ is lower semi\-continuous on $L^2_{\mu}$ and the restriction $\E_{\vert L^{1\cap\infty}_{\mu}}$ of $\E$ on $L^{1\cap \infty}_{\mu}$ satisfies~\eqref{eq:12}.  Further, let $\varphi\in C(\R)$ be a strictly increasing function such that $\varphi(\R) = \R$, $\varphi(0) = 0$ and the Yosida approximation $\beta_\lambda$ of $\varphi^{-1}$ satisfies~\eqref{eq:Yosidaqbracket2} and~\eqref{eq:Yosidaqbracket3} for every $\lambda > 0$. Then the following statements hold. 
    \begin{enumerate}
        \item The domain $D(\partial \E_{\vert_{L^{1\cap\infty}_{\mu}}}\varphi)$ of the composed operator $\partial \E_{\vert_{L^{1\cap\infty}_{\mu}}}\varphi$ is dense in the closure $\overline{D(\E_{\vert L^{1\cap\infty}_{\mu}} \varphi)}^{\mbox{}_{L^1_{\mu}}}$ of $D(\E_{\vert L^{1\cap \infty}_{\mu}}\varphi)$ with respect to the $L^1_{\mu}$-norm topology.
        \item If the set $D(\E_{\vert L^{1\cap \infty}_{\mu}} \varphi)$ is dense in $L^{1}_{\mu}$, then $D((\partial\E) _{\vert_{L^{1\cap\infty}_{\mu}}}\varphi)$ is dense in $L^{1}_{\mu}$.
    \end{enumerate}
\end{theorem}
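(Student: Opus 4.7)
Statement (2) is an immediate consequence of (1): if $D(\E_{\vert L^{1\cap\infty}_\mu}\varphi)$ is dense in $L^1_\mu$, its $L^1_\mu$-closure is all of $L^1_\mu$, and (1) transports that density to $D((\partial\E)_{\vert L^{1\cap\infty}_\mu}\varphi)$. For (1), by a standard diagonal extraction against an $L^1_\mu$-dense sequence in $D(\E_{\vert L^{1\cap\infty}_\mu}\varphi)$, it suffices to show that every $u\in D(\E_{\vert L^{1\cap\infty}_\mu}\varphi)$ is the $L^1_\mu$-limit of elements of $D((\partial\E)_{\vert L^{1\cap\infty}_\mu}\varphi)$.

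Fix such $u$ and set $v:=\varphi(u)\in L^{1\cap\infty}_\mu\subset L^2_\mu$ with $\E(v)<\infty$; note $0\in\partial\E(0)$ since $\E\ge 0$ and $\E(0)=0$. Under the given hypotheses, Theorem~\ref{thm:m-compl-accretivity-of-E-in-Lq} yields the $m$-complete accretivity of $\partial\E$ on $L^2_\mu$. Let $v_\lambda:=(I+\lambda\partial\E)^{-1}v$ denote its $L^2_\mu$-resolvent. The Moreau--Yosida estimate $\|v_\lambda-v\|_2^2\le 2\lambda\E(v)$ produces $v_\lambda\to v$ in $L^2_\mu$ as $\lambda\to 0^+$, while complete contractivity of $J_\lambda^{\partial\E}$ with $J_\lambda^{\partial\E}0=0$ yields $\|v_\lambda\|_q\le\|v\|_q$ for every $q\in[1,\infty]$. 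In particular $v_\lambda\in L^{1\cap\infty}_\mu$, the canonical selection $w_\lambda:=(v-v_\lambda)/\lambda\in\partial\E(v_\lambda)$ lies in $L^1_\mu\cap L^\infty_\mu$, and $v_\lambda\in D((\partial\E)_{\vert L^{1\cap\infty}_\mu})$. Setting $u_\lambda:=\varphi^{-1}(v_\lambda)$, continuity of $\beta:=\varphi^{-1}$ together with $v_\lambda\to v$ in $L^2_\mu$ gives pointwise a.e.~convergence $u_\lambda\to u$ (along a subsequence) with uniform bound $|u_\lambda|\le M$ depending only on $\|v\|_\infty$.

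The crucial step is to show simultaneously that $u_\lambda\in L^1_\mu$ (so that $u_\lambda\in D((\partial\E)_{\vert L^{1\cap\infty}_\mu}\varphi)$) and that $u_\lambda\to u$ in $L^1_\mu$, using the bracket hypotheses~\eqref{eq:Yosidaqbracket2}--\eqref{eq:Yosidaqbracket3}. Testing the resolvent identity $v_\lambda+\lambda w_\lambda=v$ against the Yosida approximation $\beta_\eta(v_\lambda)$ of $\beta$ and invoking~\eqref{eq:Yosidaqbracket3} gives $\int v_\lambda\beta_\eta(v_\lambda)\dmu\le\int v\,\beta_\eta(v_\lambda)\dmu$. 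Passing $\eta\to 0^+$ via the dominated convergence theorem (using $\beta_\eta\to\beta$ pointwise with $|\beta_\eta|\le|\beta|$ on a bounded range) and invoking the Legendre-conjugate convex pair $j(s)=\int_0^s\varphi(r)\dr$, $j^*(t)=\int_0^t\beta(r)\dr$ through the Young equality $v_\lambda\beta(v_\lambda)=j(v_\lambda)+j^*(u_\lambda)$ on conjugate pairs combined with Young's inequality $v\,\beta(v_\lambda)\le j(v)+j^*(u_\lambda)$ yields $\int j(v_\lambda)\dmu\le\int j(v)\dmu<\infty$. The super-linearity of $j$ (forced by strict monotonicity of $\varphi$ together with $\varphi(\R)=\R$) and the de la Vall\'ee Poussin criterion then deliver uniform integrability of $\{v_\lambda\}_\lambda$ in $L^1_\mu$, and Vitali's convergence theorem upgrades a.e.~convergence to $v_\lambda\to v$ in $L^1_\mu$. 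A parallel test against $\mathrm{sgn}_0(\beta_\eta(v_\lambda))$ leveraging~\eqref{eq:Yosidaqbracket2} at the level of the $[\cdot,\cdot]_1$-bracket propagates these bounds through the nonlinear composition to $\{u_\lambda\}_\lambda$, yielding $\|u_\lambda\|_1\le C\|u\|_1$ and uniform integrability, whence Vitali applied once more gives $u_\lambda\to u$ in $L^1_\mu$.

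The principal technical obstacle will be the rigorous transfer of the uniform $L^1$-bound and uniform integrability from $\{v_\lambda\}$ to $\{u_\lambda=\beta(v_\lambda)\}$ when $\beta=\varphi^{-1}$ fails to be Lipschitz (as occurs for $\varphi(r)=r^m$ with $m>1$ where $\beta(s)=s^{1/m}$ has infinite slope at $0$). The bracket hypotheses~\eqref{eq:Yosidaqbracket2}--\eqref{eq:Yosidaqbracket3}, which encode the compatibility between the Yosida regularizations $\beta_\eta$ of $\beta$ and the subgradient $\partial\E$, are precisely what permit these nonlinear test functions to be passed through $\partial\E$ with the favourable signs needed to produce the a priori integral estimates on $\{u_\lambda\}_\lambda$ required to close the argument.
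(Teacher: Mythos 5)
Your route is genuinely different from the paper's, and it breaks down at exactly the step you flag as crucial. You regularize in the $v$-variable, setting $v_\lambda=(I+\lambda\,\partial\E)^{-1}\varphi(u)$ and $u_\lambda=\varphi^{-1}(v_\lambda)$, and then need both $u_\lambda\in L^1_\mu$ (without which $u_\lambda\notin D\big((\partial\E)_{\vert_{L^{1\cap\infty}_{\mu}}}\varphi\big)$, since the composed operator is a subset of $L^1_\mu\times L^1_\mu$) and $u_\lambda\to u$ in $L^1_\mu$. The tests you propose cannot deliver this: pairing the identity $v_\lambda+\lambda w_\lambda=\varphi(u)$ with the bounded functions $\beta_\eta(v_\lambda)$ or $\sign_0(\beta_\eta(v_\lambda))$ only controls integrals of $v_\lambda$ against bounded weights, never $\int|\beta(v_\lambda)|\dmu$; in your Fenchel--Young step the arguments of $j$ and $j^*$ are interchanged (the correct identity is $v_\lambda\beta(v_\lambda)=j(u_\lambda)+j^*(v_\lambda)$ with $j=\int_0^{\cdot}\varphi$, $j^*=\int_0^{\cdot}\beta$), and cancelling $\int j(u_\lambda)\dmu$ presupposes the very finiteness you are trying to prove, so at best you obtain $\int j^*(v_\lambda)\dmu\le\int j^*(v)\dmu$, which again says nothing about $\norm{\beta(v_\lambda)}_1$. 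Complete contractivity gives $\int j(v_\lambda)\dmu\le\int j(\varphi(u))\dmu$ only for \emph{convex} $j$, whereas $\beta(|\cdot|)$ is typically concave (e.g.\ $\varphi(r)=r^m$, $m>1$). The claim in fact fails in settings the theorem must cover: for $\partial\E=(-\Delta)^s$ on $\R^d$ and $u$ a compactly supported bump, $v_\lambda$ has polynomial tails of order $|x|^{-(d+2s)}$, so $\varphi^{-1}(v_\lambda)=v_\lambda^{1/m}\notin L^1(\R^d)$ for $m$ large; moreover, on an infinite ($\sigma$-finite) measure space the uniform bound $|u_\lambda|\le M$ plus a.e.\ convergence does not yield $L^1$-convergence (Vitali also requires tightness), so even your final limiting step is not closed.

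The paper sidesteps all of this by regularizing the \emph{composed} operator in the $u$-variable: by \cite[Lemma A.3.1]{CoulHau2016} one solves $u_{\lambda}+\lambda\big(\varepsilon\varphi(u_\lambda)+\partial\E_{\vert_{L^{1\cap\infty}_{\mu}}}\varphi(u_\lambda)\big)\ni u$, so that $u_\lambda\in D\big(\partial\E_{\vert_{L^{1\cap\infty}_{\mu}}}\varphi\big)\subseteq L^1_\mu$ holds by construction. Pairing with $\varphi(u_\lambda)-\varphi(u)$ gives $0\le(u_\lambda-u,\varphi(u_\lambda)-\varphi(u))_{L^2_\mu}\le\lambda\,\E(\varphi(u))+\lambda\varepsilon\,C(u)$, hence a.e.\ convergence $u_\lambda\to u$ by strict monotonicity of $\varphi$; the $L^1$-convergence then follows from the $T$-accretivity (order preservation) and $L^1$-contractivity of the resolvent of $\varepsilon\varphi+A\varphi$, via the sandwich $-u_{-,\lambda}\le u_\lambda\le u_{+,\lambda}$ between the resolvents of $u^{+}$ and $-u^{-}$ and dominated convergence. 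If you wish to keep your Yosida-in-$v$ construction, you would have to prove precisely the missing $L^1$-bounds for $\varphi^{-1}(v_\lambda)$, which is what the composed-resolvent construction is designed to provide; as written, the argument has a genuine gap.
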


\begin{proof}[Proof of Theorem~\ref{prop:L1densityofE}]
	%Set $A = ($. Then 
     Under the hypotheses of this theorem, one can apply \cite[Lemma A.3.1]{CoulHau2016} to
     $A=\partial\E_{\vert_{L^{1\cap\infty}_{\mu}}}$ and obtains that for every $\lambda>0$,
     every $\varepsilon > 0$ sufficiently small, and every $u \in D(\E_{\vert L^{1\cap
     \infty}_{\mu}}\circ \varphi)$, there is a unique $u_\lambda \in D(\partial
     \E_{\vert_{L^{1\cap\infty}_{\mu}}}\circ \varphi)$ satisfying
    \begin{displaymath}
     	u_{\lambda}+\lambda\left(\varepsilon \varphi(u_\lambda)+ 
    \partial\E_{\vert_{L^{1\cap\infty}_{\mu}}}\varphi (u_\lambda)\right) \ni u,        
    \end{displaymath}
    or equivalently, there exists $v_{\lambda}\in \partial\E_{\vert_{L^{1\cap\infty}_{\mu}}}\varphi (u_\lambda)$ such that
    \begin{equation}
        \label{eq:rangeProblemAphi}
	   u_{\lambda}+\lambda\left(\varepsilon \varphi(u_\lambda)+ 
        v_\lambda \right)= u
	\end{equation}
    Multiplying \eqref{eq:rangeProblemAphi} by $\varphi(u_\lambda)-\varphi(u)$ gives
	\begin{displaymath}
	\begin{split}
	\big (u_\lambda-u,\varphi(u_\lambda)-\varphi(u) \big)_{L^{2}_{\mu}} =& -\lambda\varepsilon \big(\varphi(u_\lambda),\varphi(u_\lambda)-\varphi(u) \big)_{L^{2}_{\mu}}\\
	&\hspace{2cm} -\lambda\big( v_\lambda,\varphi(u_\lambda)-\varphi(u)\big)_{L^{2}_{\mu}}.
	\end{split}
	\end{displaymath}
	The first term on the right-hand side can be estimates by
	\begin{displaymath}
	\begin{split}
	-\big(\varphi(u_\lambda),\varphi(u_\lambda)-\varphi(u) \big)_{L^{2}_{\mu}}& \le \norm{\varphi(u_\lambda)}_\infty\norm{\varphi(u)}_{1}\\
	& \le \sup_{[-\norm{u}_\infty,\norm{u}_\infty]}\norm{\varphi}_\infty\norm{\varphi(u)}_{1},
	\end{split}
	\end{displaymath}
	and since $v_{\lambda}\in \partial\E_{\vert_{L^{1\cap\infty}_{\mu}}}\varphi (u_\lambda)$ and noting that $\E(\varphi(u_\lambda)) \ge 0$, it follows that
	\begin{displaymath}
	\begin{split}
	-\big( v_{\lambda},\varphi(u_\lambda)-\varphi(u)\big)_{L^{2}_{\mu}}& =  -\big( (\partial\E)\varphi(u_\lambda),\varphi(u_\lambda)-\varphi(u)\big)_{L^{2}_{\mu}}\\
	& \le -\left(\E(\varphi(u_\lambda))-\E(\varphi(u))\right)\\
	& \le \E(\varphi(u))
	\end{split}
	\end{displaymath}
	for all $\lambda > 0$. Thus and since $\varphi$ is increasing, we have shown that
	\begin{displaymath}
	0\le (u_{\lambda}-u,\varphi(u_{\lambda})-\varphi(u))_{L^{2}_{\mu}}\le 
	\lambda\, \E(\varphi(u))+\lambda\varepsilon\sup_{[-\norm{u}_\infty,\norm{u}_\infty]}\norm{\varphi}_{\infty}\norm{\varphi(u)}_{1}
	\end{displaymath}
	for all $\lambda>0$, from where we can conclude that
	\begin{displaymath}
	\lim_{\lambda\rightarrow 0^{+}}\int_{\Sigma}(u_{\lambda}-u)(\varphi(u_{\lambda})-\varphi(u))\dmu = 0.
	\end{displaymath}
	Since
	\begin{displaymath}
	f_{\lambda}(x):=(u_{\lambda}(x)-u(x))(\varphi(u_{\lambda}(x)-\varphi(u(x))) \ge 0 \quad \text{$\mu$-a.e.~on $\Sigma$,}
	\end{displaymath}
	the letter limit means that $f_{\lambda} \rightarrow 0$ in
	$L^{1}_{\mu}$. After possibly passing to a subsequence, we
	know that
	\begin{displaymath}
	\lim_{\lambda\rightarrow 0^{+}}f_{\lambda}(x) = 0 \qquad \text{for $\mu$-a.e.~$x \in
    \Sigma$,}
	\end{displaymath}
	which due to the strict monotonicity of $\varphi$ implies that
	\begin{equation}
        \label{eq:pointwise-mu-conv}
	\lim_{\lambda\rightarrow 0^{+}}u_{\lambda}(x) = u(x) \qquad\text{for $\mu$-a.e.~$x \in
    \Sigma$.}
	\end{equation}

    We recall from \cite[Lemma 2.2.1]{CoulHau2016} that if $u_{\lambda}\ge 0$ for all $\lambda>0$, then the $\mu$-pointwise limit \eqref{eq:pointwise-mu-conv} together with the fact that $u_{\lambda}$ satisfies
	\begin{equation}
       \label{eq:L1-increasingness}
	\norm{u_\lambda}_1 \le \norm{u}_1\qquad\text{for all $\lambda>0$,}
	\end{equation}
    implies that $u_{\lambda} \rightarrow u$ in $L^{1}_{\mu}$.

    Now, let $u_{+,\lambda}:=J_\lambda^{\varepsilon\varphi_1+A\varphi}(u^{+})$, where
    $u^{+}=u\vee 0$ is the positive part of $u$. Since $\varepsilon\varphi_1+A\varphi$ is
    $T$-accretive in $L^1_{\mu}$ (cf. \cite[Proposition 2.3.6]{CoulHau2016}), one has that    
    $u_{+,\lambda}\ge 0$ for all $\lambda>0$. Moreover, by the above argument, $u_{+,\lambda}$
    satisfies \eqref{eq:pointwise-mu-conv} and \eqref{eq:L1-increasingness}. Therefore one has
    that $u_{+,\lambda} \rightarrow u^{+}$ in $L^{1}_{\mu}$. Next, let $u^{-}=(-u)\vee 0$ be
    the negative part of $u$ and set $u_{-,\lambda}:=
    J_\lambda^{\varepsilon\varphi_1+A\varphi}(-u^{-})$. Then, one also has that $u_{-,\lambda}$
    satisfies \eqref{eq:pointwise-mu-conv} and so, in particular, $-u_{-,\lambda}$ satisfies
    \eqref{eq:pointwise-mu-conv}. Since $-u_{-,\lambda}$ is positive and satisfies 
    \eqref{eq:L1-increasingness}, it follows that $-u_{-,\lambda} \rightarrow -u^{-}$ in
    $L^{1}_{\mu}$. Moreover, for $u_{\lambda}:=J_\lambda^{\varepsilon\varphi_1+A\varphi}u$, one
    has that
    \begin{displaymath}
     -u_{-,\lambda}\le u_{\lambda}\le u_{+,\lambda}\qquad
     \text{for every $\lambda>0$.}      
    \end{displaymath}    
    From this sandwich inequality and since $u_{+,\lambda} \rightarrow u^{+}$ in $L^{1}_{\mu}$
    and $-u_{-,\lambda} \rightarrow -u^{-}$ in $L^{1}_{\mu}$, one can extract from every zero sequence $(\lambda_{n})_{n\ge 0}$ a subsequence $(\lambda_{k_{n}})_{n\ge 1}$ and finds a positive function $g\in L^{1}_{\mu}$ such that $\abs{u_{\lambda_{k_{n}}}}\le g$ $\mu$-a.e. on $\Sigma$ for all
    $n\ge 1$. Thus, by Lebesgue's dominated convergence theorem, it follows that $u_{\lambda_{k_{n}}}\to u$ in $L^{1}_{\mu}$ as $n\to\infty$ and, thereby we have shown that
    the domain $D(\partial \E_{\vert_{L^{1\cap\infty}_{\mu}}}\circ \varphi)$ lies dense in the closure $\overline{D(\E_{\vert L^{1\cap\infty}_{\mu}}\circ \varphi)}^{\mbox{}_{L^1_{\mu}}}$
    of $D(\E_{\vert L^{1\cap \infty}_{\mu}}\circ \varphi)$ with respect to the $L^1_{\mu}$-norm topology. Thus, if the domain $D(\E_{\vert L^{1\cap \infty}_{\mu}}\circ \varphi)$ lies dense in $L^1_{\mu}$ then $D(\partial \E_{\vert_{L^{1\cap\infty}_{\mu}}}\circ \varphi)$ lies dense in $L^1_{\mu}$.
\end{proof}

We can now apply the previous results to the operator $\partial
     \E_{\vert_{L^{1\cap\infty}_{\mu}}}\circ \varphi$ in the case of the fractional $p$-Laplacian $\partial \E=(-\Delta_{p})^s$ equipped with homogeneous Dirichlet boundary conditions, obtaining $m$-$T$-accretivity in $L^1$ and well-posedness of mild solut\-ions.

\begin{proof}[Proof of Theorem~\ref{thm:1}]
	 Let $\E$ be given by~\eqref{eq:energyFunctionalpLap} and $\varphi$, $f$
  satisfy~\eqref{hyp:1} and~\eqref{hyp:2}-\eqref{hyp:3}, respectively. Then, it follows from Corollary~\ref{cor:doublynonlinearAccretivity} that $\partial_{L^2}\E$ is $m$-completely accretive and $(\partial\E)_{\vert L^{1\cap\infty}}$ satisfies \eqref{eq:Yosidaqbracket2} and~\eqref{eq:Yosidaqbracket3} with respect to the Yosida approximation $\beta_\lambda$ of $\varphi^{-1}$. Further, under the hypothesis $\varphi\in W^{1,\infty}_{loc}(\R)$, one has that
  \begin{displaymath}
      [\varphi(\xi)]_{s,p}\le \norm{\varphi'}_{L^{\infty}(-\norm{\xi}_{\infty},\norm{\xi}_{\infty})}\,[\xi]_{s,p}
  \end{displaymath}
 for every $\xi\in C^{\infty}_{c}(\Omega)$, and if $\phi \in W^{1,q}_{\loc}(\R)$ for $q > 1/(1-s)$, then H\"older's inequality yields that
\begin{align*}
    &\int_{\R^d}\int_{\R^d}\frac{|\phi(\xi(x))-\phi(\xi(y))|^p}{|x-y|^{d+sp}}\dy\dx\\
    & \qquad = \int_{\R^d}\int_{\R^d}
    \frac{|\int_{\xi(y)}^{\xi(x)}\phi'(r)\dr|^p}{|x-y|^{d+sp}}\dy\dx\\
    & \qquad\le \int_{\R^d}\int_{\R^d}\frac{\left|\left(\int_{\xi(y)}^{\xi(x)}|\phi'(r)|^q\dr\right)^{\frac{p}{q}}\right||\xi(x)-\xi(y)|^{\frac{p}{q'}}}{|x-y|^{d+sp}}\dy\dx\\
    & \qquad \le \norm{\phi'}_{L^q(-\norm{\xi}_{\infty},\norm{\xi}_{\infty})}^p
    \int_{\R^d}\int_{\R^d}\frac{|\xi(x)-\xi(y)|^{\frac{p}{q'}}}{|x-y|^{d+sp}}\dy\dx\\
    & \qquad= \norm{\phi'}_{L^q(-\norm{\xi}_{\infty},\norm{\xi}_{\infty})}^p\,[\xi]_{sq',\frac{p}{q'}}
\end{align*}
for every $\xi\in C^{\infty}_{c}(\Omega)$. In the last estimate, we note that $q>1/(1-s)$ is equivalent to $0<sq'<1$ and hence, $[\xi]_{sq',\frac{p}{q'}}$ is finite. Thus, under both each condition $\varphi\in W^{1,\infty}_{loc}(\R)$ or $\phi \in W^{1,q}_{\loc}(\R)$ for $q > 1/(1-s)$, one has that the set $C^{\infty}_{c}(\Omega)$ is contained in $D(\E_{\vert L^{1\cap\infty}}\varphi)$ and dense in $L^1$.  Thus Theorem~\ref{prop:L1densityofE} implies that under those conditions on $\varphi$, the domain $D((\partial\E)_{\vert_{L^{1\cap\infty}}}\varphi)$ is dense in $L^1$.
  
 Since by Corollary~\ref{cor:doublynonlinearAccretivity}, the operator $\overline{(\partial\E)_{\vert_{L^{1\cap\infty}}}\varphi}^{\mbox{}_{L^1}}+F$ is $m$-$T$
  accretive in $L^1$ with complete resolvent, it follows from standard semigroup theory (e.g.~\cite[Corollary 4.2]{MR2582280}) that for every $u_0 \in \overline{D(\E_{\vert L^{1\cap \infty}}\circ\varphi)}^{\mbox{}_{L^1}}$, there exists a unique mild solution $u$ to problem~\eqref{eq:1}. Moreover this mild solutions satisfies growth estimate~\eqref{eq:theorem1.1estimate1} (see also Lemma~\ref{le:GrowthConditionCompleteResolvent}) and~\eqref{eq:theorem1.1estimate2} for $\nu = 1$. The case $\nu = +$ follows in the same way, applying the $T$-contractivity condition of the resolvent~\cite{BenilanNonlinearEvolutionEqns}. This completes the proof of this theorem.
  %In the case $\varphi \in 
  %\text{AC}_{\loc}(\R)$, $C_c^\infty(\Omega) %\subseteq D(\E_{\vert L^{1\cap 
  %\infty}}\circ\varphi)$ and so density follows %from the density of $C_c^\infty(\Omega)$ in %$L^1$.
\end{proof}

%%%%%%%%%%%%%%%%%%%%%%%%%%%%%%%%%%%%%%%%%%%%%%%%%%%%%
%
%
%                          De Georgi type Iteration to L^\infty
%
%
%%%%%%%%%%%%%%%%%%%%%%%%%%%%%%%%%%%%%%%%%%%%%%%%%%%%%

\section{Extrapolation toward $L^{\infty}$}
\label{sec:extrapolation-to-infty}

As in the previous section, we again state these results for Lebesgue spaces $L^{q}_{\mu}$ with $1 \le q \le \infty$. We note that the following
theorem generalizes~\cite[Theorem~2.1]{MR2529737} and~\cite[Theorem~1]{MR2680400}. While these two theorems
in~\cite{MR2529737,MR2680400} are restricted to derive
$L^{q}_{\mu}-L^{\infty}_{\mu}$ regularity estimates of solutions of parabolic
diffusion problems with homogeneous forcing terms $g\equiv 0$ and
without Lipschitz perturbations (see also~\cite{MR2819280}), the following results can also treat evolution problems involving
Lipschitz continuous non\-linearities and $L^{\infty}_{\mu}$-bounded forcing terms. In particular we consider mild solutions $u$ to
\begin{equation}
\label{eq:section3equation}
\begin{cases}
\begin{alignedat}{2}
u'(t)+Au(t)& = g(t)\quad&& \text{for a.e.~} t\in (0,T),\\
u(0)& = u_{0},\quad&&
\end{alignedat}
\end{cases}
\end{equation}
where $A$ is quasi $m$-accretive in $L^{q_0}_{\mu}$ with complete resolvent for some $1 \le q_0 < \infty$.

For $\lambda\ge 0$, we define the signed truncator $G_{\lambda}(s):=[\abs{s}-\lambda]^{+}\sign(s)$ for every $s\in
\R$ and we set $0<T\le \infty$. Note that $q-1 \in [0,\infty)$ so we are using the notation $\norm{\cdot}_{q-1}$ to denote the usual Lebesgue integral even when this is not a norm. When $q-1 = 0$ this is given by
\begin{displaymath}
\norm{u}_{0} = \int_{\Sigma}\text{sign}_{0}\left(\left|u\right|\right)\dmu.
\end{displaymath}
The condition~\eqref{hyp:31} for the forcing term can always be satisfied by choosing $\rho = \infty$ and $\psi = \infty$.
\begin{theorem}
  \label{thm:LqLinfty-deGiorgi}
  For $1\le q< r\le \infty$, $1\le \sigma<r$, $q \le \rho \le \infty$
  and $1 < \psi \le \infty$ satisfying
  \begin{equation}
  \label{hyp:31}
  \begin{cases}
    \frac{1}{\rho} < \left(1-\frac{1}{\psi}\right)\left(1-\frac{\sigma}{r}\right)& \text{if } \sigma \ge q,\\
    \frac{1}{\rho} \le \left(1-\frac{1}{\psi}\right)\left(\frac{\sigma}{q}-\frac{\sigma}{r}\right)& \text{if } \sigma < q,
  \end{cases}
  \end{equation}
  and
  \begin{equation}
  \label{hyp:32}
    \frac{1}{\rho} \le \frac{1}{q}-\frac{\sigma}{r}\left(1-\frac{1}{\psi}\right)
  \end{equation}
  let $g \in L^{\psi}(0,T;L^{\rho}_{\mu})\cap L^{1}(0,T;L^{q}_{\mu})$ and
  $u_{0} \in L^{q}_{\mu}$. Suppose $u\in C([0,T];L^{q}_{\mu})$ satisfies
  $u(0) = u_{0}$ and for some $L>0$, $\omega\ge 0$ and every
  $\lambda\ge 0$, the ``level set energy inequality''
  \begin{equation}
    \label{eq:15}
    \begin{split}
     &\norm{G_{\lambda}(e^{-\omega  t_{2}}u(t_{2}))}_{q}^{q}+L\int_{t_{1}}^{t_{2}}e^{\omega (\sigma-q)s}\norm{G_{\lambda}(e^{-\omega s}u(s))}_{r}^{\sigma}\ds\\
     &\hspace{2cm}\le \norm{G_{\lambda}(e^{-\omega t_{1}}u(t_{1}))}_{q}^{q}+ q\lambda\,\omega\int_{t_{1}}^{t_{2}}\norm{G_{\lambda}(e^{-\omega s}u(s))}_{q-1}^{q-1}\ds\\
  	&\hspace{2cm}\quad +q\int_{t_{1}}^{t_{2}}e^{-\omega s}\left|[G_{\lambda}(e^{-\omega s}u),g(s)]_{q}\right|\ds
    \end{split}
  \end{equation}
  holds for all $0\le t_{1}<t_{2}\le T$. Further, assume $t\mapsto G_{\lambda}(e^{-\omega
  	t}u(t)) $ satisfies the following growth estimate in the $L^{q}$-norm,
  \begin{equation}
    \label{eq:27}
    \begin{split}
    \norm{G_{\lambda}(e^{-\omega t}u(t))}_{q} \le & \lnorm{G_{\lambda}(e^{-\omega s}u(s))}_{q}\\
    &+\int_{s}^{t}e^{-\omega \tau}\norm{g(\tau)\mathds{1}_{\set{e^{-\omega \tau}|u(\tau)| > \lambda}}}_{q}\dtau
    \end{split}
  \end{equation}
 for all $0\le s<t\le T$. Then there exists $C > 0$ depending on $\sigma$, $q$, $r$, $\rho$, $\psi$ and $L$ such that
 \begin{equation}
 \label{eqn:lqlinfty}
 \begin{split}
 \norm{u(t)}_{\infty}&\le C\max\Biggr(e^{\omega\beta_1 t}\left(\tfrac{1}{t}+
 \omega\right)^{\frac{1}{\sigma(1-\frac{q}{r})}}\left(\norm{u_0}_q+\norm{g}_{L^1(0,t;L^q)}\right)^{\gamma},\\
 &\hspace{1.6cm} e^{\omega\beta_2 t}\,\norm{g}_{L^{\psi}(0,T;L^{\rho})}^{\eta}\left(\norm{u_0}_q+\norm{g}_{L^1(0,t;L^q)}\right)^{\gamma_{\psi}}\Biggr)
 \end{split}
 \end{equation}
 for all $t \in (0,T]$ with the exponents
 \begin{equation}
 \label{eq:thm31exponents}
 \begin{split}
 \gamma& = \frac{\frac{1}{\sigma}-\frac{1}{r}}{\frac{1}{q}-\frac{1}{r}},\quad
 \gamma_{\psi} = \tfrac{\left(1-\frac{1}{\psi}\right)\left(\frac{1}{\sigma}-\frac{1}{r}\right)-\frac{1}{\rho\sigma}}{\left(1-\frac{1}{\psi}\right)\left(\frac{1}{q}-\frac{1}{r}\right)-\frac{1}{\rho\sigma}+\frac{1}{q\sigma}},\quad
 \eta = \tfrac{\frac{1}{q\sigma}}{ \left(1-\frac{1}{\psi}\right)\left(\frac{1}{q}-\frac{1}{r}\right)-\frac{1}{\rho\sigma}+\frac{1}{q\sigma}},\quad \\
 \beta_1& = \begin{cases}
 \frac{\frac{1}{\sigma}-\frac{1}{q}}{\frac{1}{q}-\frac{1}{r}}& \text{if } \sigma < q,\\
 0& \text{if } \sigma \ge q,
 \end{cases}\quad
 \beta_2 = \begin{cases}
 \eta(q-\sigma)(1-\frac{1}{\psi})& \text{if } \sigma < q,\\
 0& \text{if } \sigma \ge q.
 \end{cases}
 \end{split}
 \end{equation}
\end{theorem}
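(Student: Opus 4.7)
The strategy is a De~Giorgi-type iteration in the spirit of~\cite{MR2680400}, using the level-set energy inequality~\eqref{eq:15} to pass from $L^q$ to $L^\infty$. I introduce truncation levels $\lambda_n = k(1-2^{-n})$ rising toward a target $k>0$ to be chosen, times $t_n = t(1-2^{-n-1})$ increasing toward $t$, and set $v_n(s):= G_{\lambda_n}(e^{-\omega s}u(s))$. The natural energy for the iteration is
\[
U_n := \sup_{s \in [t_n,t]} \norm{v_n(s)}_q^q + L \int_{t_n}^t e^{\omega(\sigma-q)r} \norm{v_n(r)}_r^\sigma \dr.
\]
It suffices to show $U_n \to 0$ for an appropriate choice of $k$, since then $\norm{G_k(e^{-\omega t}u(t))}_q = 0$, whence $\norm{u(t)}_\infty \le k\, e^{\omega t}$; tracking the admissible $k$ produces~\eqref{eqn:lqlinfty}.

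The core is a recursion $U_{n+1} \le C\, b^n\, k^{-a}\, U_n^{1+\beta}$ with $a,\beta>0$. Applying~\eqref{eq:15} at level $\lambda_{n+1}$ from an arbitrary $\tau \in [t_n,t_{n+1}]$ to $s\in [t_{n+1},t]$, I take the supremum over $s$ on the left of the $\sup\norm{\cdot}_q^q$-piece and add the integral on $[t_{n+1},t]$; since $\int_{t_{n+1}}^t \le \int_\tau^t$, both pieces of $U_{n+1}$ are dominated by the right-hand side evaluated over $[\tau,t]$. Averaging $\tau$ over an interval of length $t\cdot 2^{-n-2}$ yields
\[
U_{n+1} \le \tfrac{C\,2^n}{t} \int_{t_n}^{t_{n+1}} \norm{v_{n+1}(\tau)}_q^q \dtau + (\text{forcing and $\omega$-terms}).
\]
To improve integrability, I use the pointwise inclusion $\{|v_{n+1}|>0\} \subseteq \{|v_n| > k\, 2^{-(n+1)}\}$ to exchange $\norm{v_{n+1}}_q^q$ for an $L^{p_0}$-norm of $v_n$ (any $p_0>q$), gaining a factor $(k\, 2^{-(n+1)})^{q-p_0}$. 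The optimal Gagliardo--Nirenberg exponent is $p_0 = q + \sigma(1-q/r)$, at which the interpolation $\norm{v}_{p_0} \le \norm{v}_q^\theta \norm{v}_r^{1-\theta}$ combined with integration in time gives $\int_{t_n}^{t_{n+1}} \norm{v_n}_{p_0}^{p_0} \le L^{-1} U_n^{1+(p_0-\sigma)/q}$, which closes the recursion with $\beta = (p_0-\sigma)/q > 0$.

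The remaining terms are handled analogously. The bracket contribution $q\int e^{-\omega r} |[v_{n+1}(r),g(r)]_q|\dr$ is bounded by $|[v_{n+1},g]_q| \le \norm{v_{n+1}}_q^{q-1} \norm{g\,\mathds{1}_{\{|v_{n+1}|>0\}}}_q$ and then split by H\"older in time against $g \in L^\psi(0,T;L^\rho)$; this is precisely where hypotheses~\eqref{hyp:31} and~\eqref{hyp:32} are used to balance the exponents and keep the $U_n^{1+\beta}$ structure intact, while the growth estimate~\eqref{eq:27} controls $\norm{v_{n+1}(\tau)}_q$ at the initial time $\tau$ of each subinterval by $\norm{u_0}_q + \norm{g}_{L^1(0,t;L^q)}$. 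The $\omega$-perturbation $q\lambda_{n+1}\omega\int \norm{v_{n+1}}_{q-1}^{q-1}$ is dispatched by the same level-set trick; when $\sigma < q$ an additional factor of $(1/t+\omega)^{\beta_1}$ must be absorbed, which is the origin of the dichotomy $\beta_1,\beta_2 \neq 0$ only in that case. Once the full recursion $U_{n+1} \le C b^n U_n^{1+\beta}$ is in hand, a standard sequence lemma gives $U_n \to 0$ provided $U_0$ sits below an explicit threshold depending on $k$; inverting for $k$ produces the maximum of two expressions in~\eqref{eqn:lqlinfty}, coming respectively from the initial-data/$\omega$-contribution and the $L^\psi L^\rho$-contribution. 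The main obstacle is the combinatorial bookkeeping of exponents needed to match the claimed $\gamma$, $\gamma_\psi$, $\eta$, $\beta_1$, $\beta_2$ in~\eqref{eq:thm31exponents}: the underlying De~Giorgi framework is classical, but the quasi-contraction shift $\omega>0$ together with the two-parameter forcing space $L^\psi(0,T;L^\rho) \cap L^1(0,T;L^q)$ make the exponent balance delicate and force the case split $\sigma \gtreqless q$.
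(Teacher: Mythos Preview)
Your proposal is correct and follows essentially the same De~Giorgi iteration as the paper: increasing truncation levels $\lambda_k=\lambda(1-2^{-k})$ and times $t_k=t(1-2^{-k})$, the same energy $U_k$, the level-set inclusion to trade $\|v_{k}\|_q^q$ for a higher Lebesgue norm of $v_{k-1}$, interpolation between $L^q$ and $L^r$ to close in powers of $U_{k-1}$, and a geometric recursion lemma to force $U_k\to 0$ once $\lambda$ is chosen above an explicit threshold. One point worth flagging: the recursion that actually emerges is not a single-term $U_{k+1}\le Cb^kU_k^{1+\beta}$ but a three-term sum $U_{k+1}\le b^k\sum_{i=1}^3 c_iU_k^{1+\delta_i}$, with the $c_i$ carrying the distinct contributions from $(1/t+\omega)$, from $\|g\|_{L^\psi L^\rho}^q$, and from $\|g\|_{L^\psi L^\rho}$; the paper records a short multi-term variant of the Lady\v{z}enskaja lemma (Lemma~\ref{lem:ladyMod}) to handle this, and the three resulting lower bounds on $\lambda$ are what produce the $\max$ in~\eqref{eqn:lqlinfty} after simplification.
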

%\delta = \left(1-\tfrac{1}{\psi}\right)\left(1-\tfrac{\sigma}{r}\right)-\tfrac{1}{\rho},

Our proof of Theorem~\ref{thm:LqLinfty-deGiorgi} is based on a
DeGiorgi iteration inspired by \cite{MR2529737} and \cite{MR2680400}. For this, we modify \cite[Chapter~2.5, Lemma~5.6]{MR0241822} to prove convergence of the following recurrence relation.
\begin{lemma}
	\label{lem:ladyMod}
	Let $b \ge 1$, $0 < f < 1$ and $M \in \N\setminus\set{0}$. Suppose a sequence $(y_{k})_{k\ge 0}$ in $[0,\infty)$ satisfies the recursion relation
	\begin{displaymath}
	y_{k+1}\le b^{k}\,\sum_{i=1}^{M}c_{i}\,y_{k}^{1+\delta_{i}}\qquad\text{for all $k \in \N$}
	\end{displaymath}
	where $c_i$, $\delta_i$ are positive constants for all $i \in \set{1,...,M}$. 
    Choose $\delta_m = \min_{i \in \set{1,...,M}}\delta_i$ and $C = \min_{i\in\set{1,...,M}}\left(c_{i}^{-\frac{1}{\delta_{i}}}\right)$.
    If 
    \begin{displaymath}
        y_{0}\le \frac{C}{M} b^{-\frac{1}{\delta_m^{2}}},
    \end{displaymath}
    then
	\begin{equation}
	\label{eq:recurrenceEstimate3.2}
	y_{k} \le \frac{C}{M} b^{-\frac{1}{\delta_m^2}}b^{-\frac{k}{\delta_m}}\qquad\text{for all $k \in \N$.}
	\end{equation}
	In particular, if $b > 1$ then $y_{k}\to 0$ as $k\to \infty$.
\end{lemma}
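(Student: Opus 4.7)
The strategy is to proceed by induction on $k$, generalising the classical single-term Ladyzhenskaya--Solonnikov--Ural'tseva recursion lemma (Lemma~5.6 of Chapter~2.5 in~\cite{MR0241822}) from one nonlinear term to a sum of $M$ nonlinear terms. The base case $k=0$ is precisely the smallness hypothesis $y_{0}\le \frac{C}{M}b^{-1/\delta_m^{2}}$.

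For the inductive step, assuming $y_{k}\le \frac{C}{M}b^{-1/\delta_m^{2}-k/\delta_m}$, I would substitute this bound into the recursion $y_{k+1}\le b^{k}\sum_{i=1}^{M}c_{i}y_{k}^{1+\delta_{i}}$ and control each summand. The defining inequality $C\le c_{i}^{-1/\delta_{i}}$, rewritten as $c_{i}C^{\delta_{i}}\le 1$, extracts a clean factor of $C$ from $c_i y_k^{1+\delta_i}$ and leaves a power of $b$ whose exponent is $k-(1+\delta_{i})\bigl(\tfrac{1}{\delta_m^{2}}+\tfrac{k}{\delta_m}\bigr)$. A short algebraic manipulation, combined with $\delta_{i}\ge\delta_{m}$ and $b\ge 1$, shows that this exponent is dominated by the target value $-\tfrac{1}{\delta_m^{2}}-\tfrac{k+1}{\delta_m}$, so the desired geometric decay in $b$ is recovered summand by summand.

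The principal obstacle, and the reason for the explicit factor $1/M$ in the hypothesis, is compressing the $M$-term sum back into a prefactor of $C/M$. The key reduction is that the smallness of $y_{0}$ forces $y_{k}\le 1$ throughout the induction, so $y_{k}^{\delta_{i}}\le y_{k}^{\delta_{m}}$ uniformly in $i$, and one can therefore dominate
\begin{displaymath}
\sum_{i=1}^{M}c_{i}y_{k}^{1+\delta_{i}}\le \Bigl(\sum_{i=1}^{M}c_{i}\Bigr)\,y_{k}^{1+\delta_{m}}.
\end{displaymath}
This collapses the multi-term recursion to an effective single-exponent one with exponent $\delta_{m}$, to which the classical one-term argument applies; the factor $1/M$ in the initial bound is exactly what is needed to absorb the combined constant $\sum_i c_i$ against $c_i C^{\delta_i}\le 1$ when closing the induction.

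Finally, the concluding assertion $y_{k}\to 0$ for $b>1$ is immediate from the established bound~\eqref{eq:recurrenceEstimate3.2}, since $b^{-k/\delta_m}\to 0$ as $k\to\infty$.
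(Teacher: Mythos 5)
Your induction framework and the exponent bookkeeping are sound and coincide with the paper's: under the induction hypothesis the power of $b$ in the $i$-th summand is $k-(1+\delta_i)\bigl(\tfrac{1}{\delta_m^{2}}+\tfrac{k}{\delta_m}\bigr)\le -\tfrac{1}{\delta_m^{2}}-\tfrac{k+1}{\delta_m}$, exactly as you say. The gap lies in how you compress the $M$-term sum. First, the claim that the smallness of $y_{0}$ forces $y_{k}\le 1$ is unjustified: $C=\min_i c_i^{-1/\delta_i}$ can be arbitrarily large (take the $c_i$ small), so $y_{0}\le \tfrac{C}{M}b^{-1/\delta_m^{2}}$ does not imply $y_{0}\le 1$ (e.g.\ when $b=1$), and hence the domination $y_k^{\delta_i}\le y_k^{\delta_m}$ on which your collapse $\sum_i c_i y_k^{1+\delta_i}\le\bigl(\sum_i c_i\bigr)y_k^{1+\delta_m}$ rests is not available. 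Second, even granting $y_k\le 1$, closing the induction for the collapsed single-exponent recursion requires
\begin{displaymath}
\Bigl(\sum_{i=1}^{M}c_i\Bigr)\Bigl(\frac{C}{M}\Bigr)^{\delta_m}\le 1,
\end{displaymath}
which does not follow from the defining property $c_iC^{\delta_i}\le 1$: since $\delta_m\le\delta_i$, if $C<1$ then $C^{\delta_m}>C^{\delta_i}$ and $c_iC^{\delta_m}$ may exceed $1$. For instance $M=2$, $c_1=4$, $\delta_1=2$, $c_2=1$, $\delta_2=1$ gives $C=\tfrac12$, $\delta_m=1$ and $(c_1+c_2)(C/M)^{\delta_m}=\tfrac54>1$, so the assertion that ``the factor $1/M$ is exactly what is needed to absorb $\sum_i c_i$'' is not a proof, and the induction as you describe it does not close.

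The paper's argument never passes through $y_k\le 1$; it keeps each exponent $\delta_i$, estimating $c_i y_k^{1+\delta_i}\le \tfrac{C}{M}\,c_i\bigl(\tfrac{C}{M}\bigr)^{\delta_i}\bigl(b^{-1/\delta_m^{2}-k/\delta_m}\bigr)^{1+\delta_m}$, so that the induction closes once $\sum_i c_i(C/M)^{\delta_i}\le 1$, each term being handled by $c_iC^{\delta_i}\le 1$ together with the factor $M^{-\delta_i}$ supplied by the $1/M$ in the hypothesis. In the example above this quantity is $4\cdot\tfrac1{16}+1\cdot\tfrac14=\tfrac12\le 1$, while your collapsed version gives $\tfrac54$: replacing all $\delta_i$ by the minimal one genuinely loses the inequality you need. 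Note also that your per-summand bound only yields each term $\le\tfrac{C}{M}b^{-1/\delta_m^{2}-(k+1)/\delta_m}$, and summing $M$ such terms loses exactly the factor $M$; the quantity you must actually control to close the induction is $\sum_i c_i(C/M)^{\delta_i}$, and your detour does not deliver it.
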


\begin{proof}
	Estimate~\eqref{eq:recurrenceEstimate3.2} follows via induction with
	\begin{displaymath}
	\begin{split}
	y_{k+1}& \le b^{k}\,\sum_{i=1}^{M}c_{i}\,y_{k}^{1+\delta_{i}}\\
	& \le \frac{C b^{k}}{M}\left(b^{-\frac{1}{\delta_m^2}}b^{-\frac{k}{\delta_m}}\right)^{1+\delta_m}\,\sum_{i=1}^{M}c_{i}\,\left(\frac{C}{M}\right)^{\delta_{i}}\\
	& \le \frac{C}{M} b^{-\frac{1}{\delta_{m}^{2}}}b^{-\frac{k+1}{\delta_{m}}}.
	\end{split}
	\end{displaymath}
\end{proof}

\begin{proof}[Proof of Theorem~\ref{thm:LqLinfty-deGiorgi}]
  By~\eqref{eq:15},
  one sees that $G_{\lambda}(u)\in L^{\sigma}(0,T;L^{r}_{\mu})$ for every
  $\lambda\ge 0$.
  Let $\lambda\ge 0$ and $t\in (0,T]$ and for
  every integer $k\ge 0$, set
  \begin{displaymath}
    t_{k}=t(1-2^{-k}),\quad \lambda_{k}=\lambda(1-2^{-k}), \quad G_{k}(\cdot)=G_{\lambda_{k}}(\cdot),
  \end{displaymath}
  and
  \begin{displaymath}
    U_{k}=\sup_{\hat{s}\in [t_{k},t]}\norm{G_{k}(e^{-\omega
    \hat{s}}u(\hat{s}))}_{q}^{q}+ L\int_{t_{k}}^{t}e^{\omega (\sigma-q)s}\norm{G_{k}(e^{-\omega s}u(s))}_{r}^{\sigma}
      \ds.
  \end{displaymath}
  Then the aim is to choose $\lambda\ge 0$ such that
  $U_{k}\to 0$ as $k\to \infty$.
  By the continuity of
  $t\mapsto \norm{G_{k}(e^{-\omega t}\,u(t))}_{q}^{q}$, there is an
  $s_{k}\in (t_{k-1},t_{k})$ satisfying
  \begin{equation}
    \label{eq:29}
    \norm{G_{k}(e^{-\omega
    		s_{k}}\,u(s_{k}))}_{q}^{q}=\tfrac{2^{k}}{t} \int_{t_{k-1}}^{t_{k}}\norm{G_{k}( e^{-\omega s}\,u(s))}_{q}^{q}\ds.
  \end{equation}
  Further, note that
\begin{displaymath}
    \mathds{1}_{\{\abs{e^{-\omega s}u}>\lambda_{k}\}}
    \le \mathds{1}_{\{\abs{e^{-\omega s}u}>\lambda_{k-1}\}}
    \left(\frac{2^{k}\,[\abs{e^{-\omega s}u}-\lambda_{k-1}]^{+}}{\lambda}\right)^{\ell}
  \end{displaymath}
  for every $\ell\ge 0$. We can then estimate
  \begin{equation}
  \label{eq:31}
  |G_{k}(e^{-\omega s}u(s))|^{q} \le \left(\frac{2^{k}}{\lambda}\right)^{\ell}|G_{k-1}(e^{-\omega s}u(s))|^{q+\ell}
  \end{equation}
  on $[t_{k-1},t]$ for $q\ge 0$ and $\ell \ge 0$. We now aim to obtain a recurrence relation for $U_{k}$ of the form in Lemma~\ref{lem:ladyMod}. Taking a supremum over $[t_{k},t]$ in \eqref{eq:15} we can bound $U_{k}$ by
  \begin{equation}
  \label{eq:UkEst}
  \begin{split}
  U_{k}& \le 2\norm{G_{k}(e^{-\omega
  		t_{k}} u(t_{k}))}_{q}^{q} +2q\lambda_{k}\,\omega\int_{t_{k}}^{t}\norm{G_{k}(e^{-\omega s}u(s))}_{q-1}^{q-1}\ds\\
  & \quad + 2q\int_{t_{k}}^{t}e^{-\omega s}|[G_{k}(e^{-\omega s}u)g(s)]_q|\ds.
  \end{split}
  \end{equation}
  Estimating the first term by Lemma~\ref{le:GrowthConditionCompleteResolvent} and choosing $s_k$ according to~\eqref{eq:29},
  \begin{align*}
  &\norm{G_{k}(e^{-\omega
  		t_{k}} u(t_{k}))}_{q}^{q}\\
  		&\quad \le \left(\norm{G_{k}(e^{-\omega
  		s_{k}} u(s_{k}))}_{q}+\int_{s_k}^{t_k}e^{-\omega \tau}\norm{g(\tau)\mathds{1}_{\set{e^{-\omega \tau}|u(\tau)| > \lambda_k}}}_{q}\dtau\right)^{q}\\
  		&\quad \le \tfrac{2^{k+q}}{t} \int_{t_{k-1}}^{t_{k}}\norm{G_{k}( e^{-\omega s}\,u(s))}_{q}^{q}\ds\\
  		&\qquad+2^q\left(\int_{s_k}^{t_k}e^{-\omega \tau}\norm{g(\tau)\mathds{1}_{\set{e^{-\omega \tau}|u(\tau)| > \lambda_k}}}_{q}\dtau\right)^{q}.
  \end{align*}
  Separating the $g$ in the second term here by H\"older's inequality, we have
  \begin{displaymath}
  \begin{split}
  &\int_{s_k}^{t_k}e^{-\omega \tau}\norm{g(\tau)\mathds{1}_{\set{e^{-\omega \tau}|u(\tau)| > \lambda_k}}}_{q}\dtau\\
  &\quad \le \left(\int_{s_k}^{t_k}\norm{e^{-\omega \tau}g(\tau)}_{\rho}^{\psi}\dtau\right)^{\frac{1}{\psi}}\left(\int_{s_k}^{t_k}\norm{\mathds{1}_{\set{e^{-\omega \tau}|u(\tau)| > \lambda_k}}}_{\rho_q'}^{\psi'}\dtau\right)^{\frac{1}{\psi'}}
  \end{split}
  \end{displaymath}
  where we choose $\frac{1}{\rho}+\frac{1}{\rho_q'} = \frac{1}{q}$ and $\frac{1}{\psi}+\frac{1}{\psi'} = 1$. We can then estimate $U_k$, extending the time integrals $(t_{k-1},t)$, with
  \begin{displaymath}
  \begin{split}
  U_{k} &\le \tfrac{2^{k+q+1}}{t} \int_{t_{k-1}}^{t}\norm{G_{k}( e^{-\omega s}\,u(s))}_{q}^{q}\ds\\
  &\quad+2^{q+1}\norm{g}_{L^{\psi}(t_{k-1},t;L^{\rho}_{\mu})}^{q}\left(\int_{t_{k-1}}^{t}\norm{\mathds{1}_{\set{e^{-\omega \tau}|u(\tau)| > \lambda_k}}}_{\rho_q'}^{\psi'}\dtau\right)^{\frac{q}{\psi'}}\\
  &\quad +2q\lambda_{k}\,\omega\int_{t_{k-1}}^{t}\norm{G_{k}(e^{-\omega s}u(s))}_{q-1}^{q-1}\ds\\
  &\quad+2q\norm{g}_{L^{\psi}(t_{k-1},t;L^{\rho}_{\mu})}\left(\int_{t_{k-1}}^{t}\norm{G_{k}(e^{-\omega s}u(s))}_{(q-1)\rho'}^{(q-1)\psi'}\ds\right)^{\frac{1}{\psi'}}.
  \end{split}
  \end{displaymath}
  where we choose $\frac{1}{\rho}+\frac{1}{\rho'} = 1$.
  
  We apply~\eqref{eq:31} to each $G_k$ term, as well as $\mathds{1}_{\set{e^{-\omega \tau}|u(\tau)| > \lambda_k}}$, with $\ell=\varepsilon_1$, $1+\varepsilon_1$, $q+\varepsilon_2-\rho_q'$ and $q+\varepsilon_3-(q-1)\rho'$. 
  The positive constants $\varepsilon_1$, $\varepsilon_2$ and $\varepsilon_3$ will later be chosen such that $\ell \ge 0$ in each case and an appropriate recurrence relation may be obtained. 
  Note that the requirement $\ell \ge 0$ will be satisfied as a result of assumption \eqref{hyp:31}. Then noting that $\lambda_k < \lambda$, there exists $C > 0$ depending on $q$ such that
  \begin{equation}
    \label{eq:30}
  \begin{split}
    \frac{U_{k}}{C} & \le
    \frac{2^{k(1+\varepsilon_1)}}{\lambda^{\varepsilon_1}}\left(\tfrac{1}{t}+
      \omega\right) \int_{t_{k-1}}^{t}
    \norm{G_{k-1}(e^{-\omega s}u(s))}^{q+\varepsilon_1}_{q+\varepsilon_1}\; \ds\\
    &+\left(\tfrac{2^k}{\lambda}\right)^{\frac{q(q+\varepsilon_2)}{\rho_q'}}\norm{g}_{L^{\psi}(0,t;L^{\rho_q}_{\mu})}^{q}\left(\int_{t_{k-1}}^{t}\norm{G_{k-1}(e^{-\omega s}u(s))}_{q+\varepsilon_2}^{(q+\varepsilon_2)\frac{\psi'}{\rho_q'}}\dtau\right)^{\frac{q}{\psi'}}\\
    & +\norm{g}_{L^{\psi}(0,t;L^{\rho}_{\mu})}\left(\tfrac{2^k}{\lambda}\right)^{\frac{q+\varepsilon_3}{\rho'}-(q-1)}\left(\int_{t_{k-1}}^{t}\norm{G_{k-1}(e^{-\omega s}u(s))}_{q+\varepsilon_3}
    ^{(q+\varepsilon_3)\frac{\psi'}{\rho'}}\ds\right)^{\frac{1}{\psi'}}.
  \end{split}
\end{equation}
Now it remains to recover $U_{k-1}$ from integrals of the form
\begin{equation}
\label{eq:generalGkNorm}
\int_{t_{k-1}}^{t}\norm{G_{k-1}(e^{-\omega s}u(s))}_{q+\varepsilon}
^{(q+\varepsilon)M}\ds
\end{equation}
where $\varepsilon > 0$ and $M > 0$. 
In particular, we set $q_{\varepsilon} := q+\varepsilon$ and choose $\varepsilon$ as follows. 
To obtain $U_{k-1}$ from~\eqref{eq:generalGkNorm} we will apply H\"older's inequality, so choose $\varepsilon > 0$ and $\theta \in [0,1]$ such that
  \begin{displaymath}
  \frac{1}{q_{\varepsilon}} = \frac{\theta}{q}+\frac{1-\theta}{r}\quad \text{and}\quad (1-\theta)q_{\varepsilon}M = \sigma.
  \end{displaymath}
  In particular, we choose
  \begin{equation}
  \varepsilon  = \begin{cases}
  \frac{\sigma}{M}\left(1-\frac{q}{r}\right)& \text{if } r < \infty,\\
  \frac{\sigma}{M}& \text{if } r = \infty,
  \end{cases}
  \end{equation}
  and
  \begin{equation}
  \theta = \begin{cases}
  1-\frac{1}{1+q\left(\frac{M}{\sigma}-\frac{1}{r}\right)}& \text{if } r < \infty,\\
  \frac{Mq}{\sigma+Mq}& \text{if } r = \infty,
  \end{cases}
  \end{equation}
  satisfying $\theta < 1$ and $\varepsilon > 0$  given that $M > 0$. 
  The condition $\theta \ge 0$ requires that $M \ge \frac{\sigma}{r}$. 
  Since we take $M = 1$, $\frac{\psi'}{\rho_q'}$ and $\frac{\psi'}{\rho'}$ in the case of~\eqref{eq:30}, this is satisfied by assumptions~\eqref{hyp:31} and~\eqref{hyp:32}. 
  Then applying standard $L^{p}$ interpolation with $\theta$,
  \begin{align*}
  &\int_{t_{k-1}}^{t}
  \norm{G_{k-1}(e^{-\omega s}u(s))}_{q_{\varepsilon}}^{q_{\varepsilon}M}\; \ds\\
  &\hspace{2cm} \le
  \int_{t_{k-1}}^{t}e^{-\omega (\sigma-q)s}\left(\norm{G_{k-1}(e^{-\omega
  		s}u(s))}^{q}_{q}\right)^{\frac{\theta
  		q_{\varepsilon}M}{q}}\times\\
  &\hspace{3cm} e^{\omega (\sigma-q)s}\norm{G_{k-1}(e^{-\omega s}u(s))}^{(1-\theta) q_{\varepsilon}M}_{r}\; \ds\\
  &\hspace{2cm} \le \frac{1}{L}\sup_{\hat{s}\in
  	[t_{k-1},t]}e^{-\omega (\sigma-q)\hat{s}}\left(\norm{G_{k-1}(e^{-\omega
  	\hat{s}}u(\hat{s}))}_{q}^{q}\,\right)^{\frac{\theta
  	q_{\varepsilon}M}{q}}\times\\
  &\hspace{3cm} L\int_{t_{k-1}}^{t}e^{\omega (\sigma-q)s} \norm{G_{k-1}(e^{-\omega s}u(s))}^{\sigma}_{r}\;
  \ds.
  \end{align*}
  We estimate $e^{-\omega(\sigma-q)s}$ on $[t_{k-1},t]$ according to the sign of $\sigma-q$ so that
  \begin{displaymath}
  \sup_{s\in
  	[t_{k-1},t]}e^{-\omega(\sigma-q)s} = \begin{cases}
  e^{-\omega(\sigma-q)t}& \text{if } \sigma \le q,\\
  1& \text{if } \sigma > q.
  \end{cases}
  \end{displaymath}
  Hence, applying Young's inequality such that both terms have the same exponent and evaluating, we have
  \begin{displaymath}
  \int_{t_{k-1}}^{t}\norm{G_{k-1}(e^{-\omega s}u(s))}_{q_{\varepsilon}}^{q_{\varepsilon}M}\; \ds 
  \le \frac{1}{L}e^{\omega t(q-\sigma)^{+} }U_{k-1}^{M-\frac{\sigma}{r}+1}
  \end{displaymath}
  where $(q-\sigma)^{+} = \max(0,q-\sigma)$.
  
  To apply Lemma \ref{lem:ladyMod}, the exponents of $U_{k-1}$ corresponding to~\eqref{eq:30} must be of the form $1+\delta$ with $\delta > 0$. Hence we require
  \begin{displaymath}
  \begin{split}
  &\frac{q}{\rho_q'}+\frac{q}{\psi'}\left(1-\frac{\sigma}{r}\right) > 1\quad \text{ and }\quad \frac{1}{\rho'}+\frac{1}{\psi'}\left(1-\frac{\sigma}{r}\right) > 1
  \end{split}
  \end{displaymath}
  which follow from~\eqref{hyp:31}. Rewriting~\eqref{eq:30} as a recurrence relation for $U_{k+1}$, we introduce the following constants
  \begin{displaymath}
  \begin{split}
  c_{1}& = \left(\tfrac{1}{\lambda}\right)^{\varepsilon_1}\left(\tfrac{1}{t}+
  \omega\right)e^{\omega t(q-\sigma)^{+} },\quad
  c_{2} = \left(\tfrac{1}{\lambda}\right)^{\frac{q(q+\varepsilon_2)}{\rho_q'}}\norm{g}_{L^{\psi}(0,t;L^{\rho_q}_{\mu})}^{q}e^{\frac{q\omega t}{\psi'}(q-\sigma)^{+} },\\
  c_3& = \left(\tfrac{1}{\lambda}\right)^{\frac{q+\varepsilon_3}{\rho'}-q+1}\norm{g}_{L^{\psi}(0,t;L^{\rho}_{\mu})}e^{\frac{\omega t}{\psi'}(q-\sigma)^{+} },\quad b = \max\left(2^{1+\varepsilon_1},2^{\frac{q(q+\varepsilon_2)}{\rho_q'}}, 2^{\frac{q+\varepsilon_3}{\rho'}-q+1}\right),
  \end{split}
  \end{displaymath}
  and exponents
  \begin{displaymath}
  \delta_1  = 1-\frac{\sigma}{r},\quad 
  \delta_2  = \frac{q}{\psi'}\left(1-\frac{\sigma}{r}\right)+\frac{q}{\rho_q'}-1,\quad
  \delta_3  = \frac{1}{\psi'}\left(1-\frac{\sigma}{r}\right)+\frac{1}{\rho'}-1.
  \end{displaymath}
  Then we obtain
  \begin{displaymath}
	U_{k+1}\le b^{k+1}\sum_{i=1}^{3}Cc_{i}\, U_{k}^{1+\delta_{i}}
  \end{displaymath}
    for some $C > 0$ depending on $q$, $L$ and $\psi$. Then setting
 \begin{displaymath}
	\delta_m := \min\left(\delta_1, \delta_2, \delta_3\right) = \delta_3,
	\end{displaymath}
    in order to apply Lemma~\ref{lem:ladyMod}, we require that
  \begin{equation}
    \label{eq:37}
    U_{0}\le \frac{1}{3\, b^{\frac{1}{\delta_m^2}}}\min_{i \in \set{1,2,3}}\frac{1}{(Cc_{i})^{\frac{1}{\delta_{i}}}}.
  \end{equation}
  We estimate $U_0$ by~\eqref{eq:UkEst} and~\eqref{eq:27}, so that
  \begin{align*}
    U_{0}&\le 2\left(\norm{u_{0}}_{q}^{q}+q\int_{0}^{t}\norm{e^{-\omega s}u(s)}_{q}^{q-1}e^{-\omega s}\norm{g(s)}_{q}\ds\right)\\
    & \le 2\left(\norm{u_{0}}_{q}^{q}+q\left(\norm{u_{0}}_{q}+\int_{0}^{t}e^{-\omega r}\norm{g(r)}_{q}\dr\right)^{q-1}\int_{0}^{t}e^{-\omega s}\norm{g(s)}_{q}\ds)\right)\\
    & \le 2(1+q)\left(\norm{u_0}_q+\int_0^t e^{-\omega s}\norm{g(s)}_{q}\ds\right)^{q}.
  \end{align*}
  As the previous estimates were for arbitrary $\lambda \ge 0$, relabelling $C > 0$ to include $b$, we want to find $\lambda$ such that 
  \begin{equation}
   \label{eq:39}
   c_i \le \frac{C}{\left(\norm{u_0}_q+\norm{g}_{L^1(0,t;L^q)}\right)^{q\delta_i}}
  \end{equation}
  for $i \in \set{1,2,3}$.
  Set
  \begin{displaymath}
  \begin{split}
  \beta_1& = \begin{cases}
  \frac{q-\sigma}{\varepsilon_1}& \text{if } \sigma \le q,\\
  0& \text{if } \sigma > q,
  \end{cases}\\
  \kappa_1& = \begin{cases}
  \frac{(q-\sigma)\rho_q'}{\psi' q(q+\varepsilon_2)}& \text{if } \sigma \le q,\\
  0& \text{if } \sigma > q,
  \end{cases}\\
  \kappa_2& = \begin{cases}
  \frac{q-\sigma}{\psi'\left(\frac{q+\varepsilon_3}{\rho'}-q+1\right)}& \text{if } \sigma \le q,\\
  0& \text{if } \sigma > q.
  \end{cases}
  \end{split}
  \end{displaymath}
  Then~\eqref{eq:39} holds if
  \begin{displaymath}
  \begin{split}
    \lambda&\ge C e^{\omega \beta_1 t}\left(\left(\tfrac{2^q}{t}+
    	q\omega\right)\left(\norm{u_0}_q+\int_0^t e^{-\omega s}\norm{g(s)}_{q}\ds\right)^{q\delta_1}\right)^{\frac{1}{\varepsilon_1}},\\
  \lambda& \ge C e^{\omega \kappa_1 t}\left(\norm{g}_{L^{\psi}(0,t;L^{\rho_q}_{\mu})}^q\left(\norm{u_0}_q+\int_0^t e^{-\omega s}\norm{g(s)}_{q}\ds\right)^{q\delta_2}\right)^{\frac{\rho_q'}{q(q+\varepsilon_2)}},\text{ and}\\
  \lambda& \ge C e^{\omega \kappa_2 t}\left(\norm{g}_{L^{\psi}(0,t;L^{\rho}_{\mu})}\left(\norm{u_0}_q+\int_0^t e^{-\omega s}\norm{g(s)}_{q}\ds\right)^{q\delta_3}\right)^{\frac{1}{\frac{q+\varepsilon_3}{\rho'}-(q-1)}}.
  \end{split}
  \end{displaymath}
  for some $C > 0$ depending on $q,\sigma,r,\rho,\psi$ and $L$.
  So taking $\lambda$ as the maximum of these estimates, we have by Fatou's Lemma,
	\begin{displaymath}
	 0= \liminf_{k\to \infty}U_{k}\ge \sup_{\hat{s}\in [t,t]}\norm{G_{\lambda}(e^{-\omega
 		\hat{s}}u(\hat{s}))}_{q}^{q}\,+ \int_{t}^{t}e^{\omega(\sigma-q)t}\norm{G_{\lambda}(e^{-\omega s}u(s))}_{r}^{\sigma} \ds.
\end{displaymath}
Noting that $t$ was chosen arbitrarily, this implies that 
\begin{displaymath}
	\norm{u(t)}_{\infty}\le \lambda\quad \text{for all } t \in (0,T].
\end{displaymath}
Evaluating constants and simplifying, we obtain~\eqref{eqn:lqlinfty}.
\end{proof}

The following lemma shows that the growth condition on  $G_{\lambda}(e^{-\omega t}u(t))$ given by~\eqref{eq:27} holds for operators with complete resolvent. In the case $\lambda = 0$ this reduces to the standard growth estimate for accretive operators with complete resolvent.

\begin{lemma}
	\label{le:GrowthConditionCompleteResolvent}
	Let $1 \le q_{0} < \infty$ and suppose $A$ is $\omega$-quasi $m$-accretive in $L^{q_{0}}_{\mu}$ with complete resolvent for some $\omega \ge 0$. Let $1 \le q \le \infty$ such that $g \in L^{1}(0,T;L^{q}_{\mu}\cap L^{q+\varepsilon}_{\mu})$ for some $\varepsilon > 0$ and $u_0 \in \overline{D(A)}^{\mbox{}_{L^{q_{0}}_{\mu}}}\cap L^{q}_{\mu}$. Denote by $u(t)$ the mild solution to~\eqref{eq:section3equation}. Then we have the growth estimate
	\begin{equation}
	\label{eq:growthconditionGlambda}
	\begin{split}
	\norm{G_{\lambda}(e^{-\omega t}u(t))}_{q} \le & \lnorm{G_{\lambda}(e^{-\omega s}u(s))}_{q}\\
	&+\int_{s}^{t}e^{-\omega \tau}\norm{g(\tau)\mathds{1}_{\set{e^{-\omega \tau}|u(\tau)| > \lambda}}}_{q}\dtau
	\end{split}
	\end{equation}
	for all $0 \le s \le t \le T$ and $\lambda \ge 0$.
\end{lemma}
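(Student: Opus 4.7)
The plan is to prove the estimate first for strong solutions by means of a Kato-type chain-rule computation, and then extend it to mild solutions by approximation. Introduce the rescaling $v(t):=e^{-\omega t}u(t)$ and set $\Phi(t):=\tfrac{1}{q}\norm{G_{\lambda}(v(t))}_{q}^{q}$. The goal is to establish the differential inequality
\begin{displaymath}
\Phi'(t)\le e^{-\omega t}\norm{G_{\lambda}(v(t))}_{q}^{q-1}\,\norm{g(t)\mathds{1}_{\{\abs{v(t)}>\lambda\}}}_{q},
\end{displaymath}
from which, after dividing by $\norm{G_{\lambda}(v)}_{q}^{q-1}$ and integrating over $[s,t]$, the desired estimate~\eqref{eq:growthconditionGlambda} follows.

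For a strong solution $u$ with $u_{t}(t) = g(t)-h(t)$ and $h(t)\in Au(t)$, the chain rule together with $v_{t} = -\omega v+e^{-\omega t}u_{t}$ yields
\begin{displaymath}
\Phi'(t) = -\omega\int_{\Sigma}\abs{G_{\lambda}(v)}^{q-1}\abs{v}\dmu + e^{-\omega t}\int_{\Sigma}\abs{G_{\lambda}(v)}^{q-1}\sign(u)(g-h)\dmu.
\end{displaymath}
The decisive algebraic input is the sub-gradient inequality $\int_{\Sigma} J_{t}'(u)\,h'\dmu \ge 0$ for every $h'\in (A+\omega I)u$, where $J_{t}(r):=\tfrac{1}{q}([e^{-\omega t}\abs{r}-\lambda]^{+})^{q}\in \mathcal{J}_0$. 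This inequality follows from the complete resolvent property of $A+\omega I$: for any $\kappa>0$ one has $u = J_{\kappa}^{A+\omega I}(u+\kappa h')$, so Definition~\ref{def:complete-mapping} gives $\int_{\Sigma} J_{t}(u)\dmu\le \int_{\Sigma} J_{t}(u+\kappa h')\dmu$, and dividing by $\kappa$ and letting $\kappa\to 0^{+}$ produces the claim via dominated convergence. Substituting with the splitting $h'=h+\omega u$ and using $\abs{v}=e^{-\omega t}\abs{u}$ precisely cancels the $-\omega\int\abs{G_{\lambda}(v)}^{q-1}\abs{v}\dmu$ contribution in $\Phi'$, leaving only the $g$-term, which is automatically supported on $\{\abs{v}>\lambda\}$; H\"older's inequality then yields the target inequality.

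To pass to an arbitrary mild solution I would approximate $(u_{0},g)$ by $(u_{0}^{n},g^{n})$ with $u_{0}^{n}\in D(A)$ and $g^{n}\in W^{1,1}(0,T;L^{q_{0}}_{\mu})$, so that each corresponding mild solution $u^n$ is actually a strong solution; apply the argument above to obtain~\eqref{eq:growthconditionGlambda} for each $u^{n}$; and then use continuous dependence of mild solutions in $C([0,T];L^{q_{0}}_{\mu})$ to pass to the limit, noting that the hypothesis $g\in L^{1}(0,T;L^{q}_{\mu}\cap L^{q+\varepsilon}_{\mu})$ supplies the integrability needed for $g^{n}\to g$ in $L^{1}(0,T;L^{q}_{\mu})$ (after mollification in time and truncation in space). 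The main obstacle is the handling of the indicator $\mathds{1}_{\{\abs{v(\tau)}>\lambda\}}$, which is not continuous in $v$ for the $L^{q_0}$-topology; I would resolve this by first proving the estimate with the slightly enlarged set $\{\abs{v}>\lambda-\delta\}$ (for which upper semi\-continuity along a.e.~convergent subsequences and Fatou's lemma suffice) and then letting $\delta\to 0^{+}$ at the end. The boundary case $q=\infty$ then follows by taking $q\to\infty$ in the finite-$q$ estimate, while the case $q=1$ is handled by working directly with the (non-smooth) subdifferential of $r\mapsto [\abs{r}-\lambda]^{+}$ in the computation above.
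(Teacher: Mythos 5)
Your strong-solution computation is sound in spirit, and the way you extract the inequality $\int_{\Sigma}j'(u)\,h'\dmu\ge 0$ for $h'\in (A+\omega I)u$ from the complete resolvent property (via $u=J_{\kappa}^{A+\omega I}(u+\kappa h')$ and $\kappa\to 0^{+}$) is exactly the right use of that hypothesis. The genuine gap is the reduction of the mild case to the strong case. You assert that after replacing $(u_{0},g)$ by $u_{0}^{n}\in D(A)$ and $g^{n}\in W^{1,1}(0,T;L^{q_{0}}_{\mu})$ the corresponding mild solutions are strong, but the lemma assumes nothing beyond $A$ being $\omega$-quasi $m$-accretive in $L^{q_{0}}_{\mu}$ with complete resolvent, and for $q_{0}=1$ --- precisely the case in which the lemma is applied later, with $A$ the closure of $(-\Delta_{p})^{s}\varphi+F$ in $L^{1}$ --- mild solutions of \eqref{eq:section3equation} need not be strong for any class of data: $L^{1}_{\mu}$ is not reflexive and lacks the Radon--Nikodym property, so the classical ``mild implies strong'' theorems (uniformly convex $X$, or subdifferentials in Hilbert space) do not apply. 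Indeed, showing that mild solutions of \eqref{eq:1} are strong is a separate and much harder result in this paper (Theorem~\ref{th:MildToStrongLipschitzFracPLap}, via \cite{BenilanGariepy}), requiring extra structural hypotheses. So the approximation step has no justification, and the chain-rule argument never reaches the solutions the lemma is actually about. There is also a secondary, patchable, integrability issue in the strong-solution step itself: when $q>q_{0}$ you only know $h(t)\in Au(t)\subseteq L^{q_{0}}_{\mu}$, so $j(u+\kappa h)$ and $\abs{G_{\lambda}(v)}^{q-1}h$ need not be integrable, and the monotone/dominated convergence behind $\int_{\Sigma}j'(u)h\dmu\ge 0$ requires first working with Lipschitz truncations of $j\in\mathcal{J}_{0}$.

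The way to avoid all of this --- and what the paper does --- is to run your one-step estimate directly on the implicit Euler scheme that defines the mild solution, so that no differentiability is ever needed. Writing $J_{\frac{h}{1-h\omega}}^{A+\omega I}w=(1-h\omega)J_{h}^{A}w$ and applying the complete-mapping property of $J^{A+\omega I}$ with $j(\cdot)=\abs{G_{\lambda}(\alpha\,\cdot)}^{q}$ gives $\lnorm{G_{\lambda}\bigl(\alpha(1-h\omega)J_{h}^{A}w\bigr)}_{q}\le\lnorm{G_{\lambda}(\alpha w)}_{q}$; applied to each step $v_{n}=J_{h}^{A}(v_{n-1}+hg_{n-1})$ of the discretization with the appropriate scaling factors $\alpha=e^{-\omega t}(1-h\omega)^{-(N-n)}$, and using that $G_{\lambda}$ is $1$-Lipschitz and vanishes below level $\lambda$ to split off the forcing term with the indicator $\mathds{1}_{\{\,\cdot\,>\lambda\}}$, one obtains a telescoping chain of inequalities whose sum converges, as $N\to\infty$ and by the very definition of mild solution, to \eqref{eq:growthconditionGlambda}. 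Your $\delta$-enlargement trick for the indicator and the limit $q\to\infty$ are reasonable devices, but they are only needed because of the detour through strong solutions; at the discrete level they are unnecessary.
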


\begin{proof}
	For $u \in D(J_h^A)$, we can rewrite the resolvent operator in the following way,
	\begin{displaymath}
	J_{\frac{h}{1-h\omega}}^{A+\omega I}u = (1-h\omega)J_{h}^{A}u.
	\end{displaymath}
	Then for $A+\omega I$ having complete resolvent, consider $\alpha \in \R$ and take $j(\cdot) = |G_{\lambda}(\alpha\cdot)|^{q}$ in the complete resolvent property~\eqref{eq:4} with the resolvent operator $J_\frac{h}{1-h\omega}^{A+\omega I}$ to obtain the estimate
	\begin{equation}
	\label{eq:4.4resolventestimate}
	\begin{split}
	\int_{\Sigma}\left|G_{\lambda}(\alpha v)\right|^{q}\dmu& \ge \int_{\Sigma}\left|G_{\lambda}\left(\alpha J_{\frac{h}{1-h\omega}}^{A+\omega I}v\right)\right|^{q}\dmu\\
	& = \lnorm{G_{\lambda}\left(\alpha(1-h\omega)J_{h}^{A}v\right)}_{q}^{q}.
	\end{split}
	\end{equation}
	Given $s < t$, take a partition $(s_{n})_{n\in \set{0,1,...,N}}$ of $[s,t]$ given by $s_{n} = s+\frac{n(t-s)}{N}$. Let
	\begin{equation}
	\label{eq:gnDef}
	g_{n}:= \frac{N}{t-s}\int_{s_{n}}^{s_{n+1}}g(\tau)\dtau.
	\end{equation}
	Then let $(v_{n})_{n\in \set{0,1,...,N}}$ be the solution to the discrete problem
	\begin{displaymath}
	\begin{cases}
    \begin{alignedat}{1}
	v_{n}+\frac{t-s}{N}Av_{n}& = v_{n-1}+\frac{t-s}{N}g_{n-1}\quad \text{ for } n = 1,...,N,\\
	v_{0}& = u(s).
    \end{alignedat}
	\end{cases}
	\end{displaymath}
	We can apply the resolvent estimate~\eqref{eq:4.4resolventestimate} to $v_{n}$, taking $h = \frac{t-s}{N}$. Further, let $S_{n} = \{x \in \Sigma: \frac{e^{-\omega t}}{(1-h\omega)^{N-n}}\left|v_{n}+h g_{n}\right| > \lambda\}$ so that we may separate terms.
	\begin{displaymath}
	\begin{split}
	&\lnorm{G_{\lambda}\left(\frac{e^{-\omega t}}{(1-h\omega)^{N-n}}v_{n}\right)}_{q} \le \lnorm{G_{\lambda}\left(\frac{e^{-\omega t}}{(1-h\omega)^{N-n+1}}\left(v_{n-1}+hg_{n-1}\right)\right)}_{q}\\
	&\hspace{0.6cm}\le \lnorm{G_{\lambda}\left(\frac{e^{-\omega t}}{(1-h\omega)^{N-n+1}}v_{n-1}\right)}_{q}+\frac{e^{-\omega t}h}{(1-h\omega)^{N-n+1}}\lnorm{g_{n-1}\mathds{1}_{S_{n-1}}}_{q}.
	\end{split}
	\end{displaymath}
	Repeating this, we have
	\begin{displaymath}
	\begin{split}
	\norm{G_{\lambda}(e^{-\omega t}v_{N})}_{q}& \le \lnorm{G_{\lambda}\left(\frac{e^{-\omega t}}{\left(1-\tfrac{\omega(t-s)}{N}\right)^{N}}v_{0}\right)}_{q}\\
	&\quad+\frac{t-s}{N}\sum_{n=0}^{N-1}\frac{e^{-\omega t}}{\left(1-\tfrac{\omega(t-s)}{N}\right)^{N-n}}\lnorm{g_{n}\mathds{1}_{S_n}}_{q}
	\end{split}
	\end{displaymath}
	which converges to~\eqref{eq:growthconditionGlambda} as $N \rightarrow \infty$ by the definition of mild solution and the projection~\eqref{eq:gnDef}.
\end{proof}

The following proposition introduces the pointwise estimate~\eqref{eq:5Glambda1} for operators with complete resolvent which we will use as the condition for applying Theorem \ref{thm:LqLinfty-deGiorgi} to the doubly nonlinear problem~\eqref{eq:1} (see Section~\ref{sec:applicationToFractionalPLaplacian}). In particular, this provides~\eqref{eq:15}.

\begin{proposition}
	\label{lem:discreteToSmoothEstimateProof}
	For $1\le q_{0}< \infty$ and $\omega\ge 0$, let $A$ be an
	$\omega$-quasi $m$-accretive operator on $L^{q_{0}}_{\mu}$ with complete resolvent. Suppose there are $q_0 \le q <r\le \infty$, $1\le \sigma<r$ and $C > 0$ such that $A$ satisfies the one-parameter Sobolev type inequality
	\begin{equation}
	\label{eq:5Glambda1}
	\norm{G_{\lambda}(u)}_{r}^{\sigma}\le C
	\; [G_{\lambda}(u),v+\omega(G_{\lambda}(u)+\lambda \mathds{1})]_{q}
	\end{equation}
	for every $(u,v)\in A$ and $\lambda\ge 0$. Let $g \in L^{1}(0,T;L^{q_0}_{\mu})\cap L^{1}(0,T;L^{q+\varepsilon}_{\mu})$ for some $\varepsilon > 0$ and $u\in C([0,T];L^{q_0}_{\mu})\cap L^{1}(0,T;L^{1\cap\infty}_{\mu})$ be the mild solution to~\eqref{eq:section3equation} where $u_{0} \in \overline{D(A)}^{\mbox{}_{L^{q_0}_{\mu}}}\cap L^{1\cap\infty}_{\mu}$. Then for every $\lambda\ge 0$, $u$ satisfies the
	``level set energy inequality''
	\begin{equation}
	\label{eq:151}
	\begin{split}
	& \norm{G_{\lambda}(e^{-\omega
			t_{2}}\,u(t_{2}))}_{q}^{q}+\frac{q}{C}\int_{t_{1}}^{t_{2}}e^{\omega(\sigma-q)s}\norm{G_{\lambda}(e^{-\omega s}\,u(s))}_{r}^{\sigma}
	\ds\\
	&\hspace{2cm}\le
	\norm{G_{\lambda}(e^{-\omega
			t_{1}}\,u(t_{1}))}_{q}^{q}+\lambda\,\omega\int_{t_{1}}^{t_{2}}\norm{G_{\lambda}(e^{-\omega s}\,u(s))}_{q-1}^{q-1}\ds\\
	&\hspace{2cm}\quad+q\int_{t_{1}}^{t_{2}}e^{-\omega s}\left|[G_{\lambda}(e^{-\omega s}u(s)),g]_{q}\right|\ds
	\end{split}
	\end{equation}
	for all $0\le t_{1}<t_{2}\le T$.
\end{proposition}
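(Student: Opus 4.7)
The strategy is to derive~\eqref{eq:151} at the level of the implicit Euler scheme that defines the mild solution $u$ and then pass to the limit. By Definition~\ref{def:mild-solution}, for every $\varepsilon>0$ there is a partition $0=t_0<t_1<\cdots<t_N\le T$ with mesh $h_i=t_i-t_{i-1}<\varepsilon$, a piecewise-constant approximation $(g_i)$ of $g$ in $L^1(0,T;L^{q_0}_\mu)$, and pairs $(u_i,v_i)\in A$ satisfying $u_i+h_iv_i=u_{i-1}+h_ig_i$, whose step-function interpolant $u_{\varepsilon}$ converges uniformly to $u$ in $L^{q_0}_\mu$. Lemma~\ref{le:GrowthConditionCompleteResolvent} combined with the complete-resolvent property and the hypothesis $u\in L^1(0,T;L^{1\cap\infty}_\mu)$ provides uniform $L^q\cap L^\infty$-bounds on $(u_i)$.

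Introducing the weighted variable $w_i:=e^{-\omega t_i}u_i$, one has $w_i-e^{-\omega h_i}w_{i-1}=h_ie^{-\omega t_i}(g_i-v_i)$. The integrated convexity inequality for $j(s)=\tfrac{1}{q}([\abs{s}-\lambda]^{+})^q$ (whose subgradient reproduces the $q$-bracket against $G_\lambda$) gives
\begin{displaymath}
\tfrac{1}{q}\norm{G_\lambda(w_i)}_q^q-\tfrac{1}{q}\norm{G_\lambda(e^{-\omega h_i}w_{i-1})}_q^q\le h_ie^{-\omega t_i}\bigl([G_\lambda(w_i),g_i]_q-[G_\lambda(w_i),v_i]_q\bigr).
\end{displaymath}
Applying the hypothesis~\eqref{eq:5Glambda1} to $(u_i,v_i)$ at the shifted level $\tilde\lambda_i:=e^{\omega t_i}\lambda$ and rescaling via $G_{\tilde\lambda_i}(u_i)=e^{\omega t_i}G_\lambda(w_i)$ (together with $[\alpha\cdot,\cdot]_q=\alpha^{q-1}[\cdot,\cdot]_q$ for $\alpha>0$) yields
\begin{displaymath}
[G_\lambda(w_i),v_i]_q\ge\tfrac{1}{C}e^{\omega(\sigma-q+1)t_i}\norm{G_\lambda(w_i)}_r^\sigma-\omega e^{\omega t_i}\bigl(\norm{G_\lambda(w_i)}_q^q+\lambda[G_\lambda(w_i),\mathds{1}]_q\bigr).
\end{displaymath}
After substituting this lower bound and invoking $\abs{[G_\lambda(w_i),\mathds{1}]_q}\le\norm{G_\lambda(w_i)}_{q-1}^{q-1}$, the spurious $\omega\norm{G_\lambda(w_i)}_q^q$ contribution is absorbed via a first-order Taylor expansion of $h\mapsto\norm{G_\lambda(e^{-\omega h}w_{i-1})}_q^q$, whose derivative at $0$ equals $-q\omega\bigl(\norm{G_\lambda(w_{i-1})}_q^q+\lambda\norm{G_\lambda(w_{i-1})}_{q-1}^{q-1}\bigr)$, leaving only an $O(h_i^2)$ remainder.

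Summing over $i$ with $t_{i-1},t_i\in[t_1,t_2]$ telescopes the $\norm{G_\lambda(w_i)}_q^q$-differences and produces the discrete analog of~\eqref{eq:151} up to a global $O(\varepsilon)$ error (since $\sum_i h_i^2\le\varepsilon T$). Passing to the limit $\varepsilon\to 0$, the uniform $L^{q_0}$-convergence $u_{\varepsilon}\to u$ combined with the $L^q\cap L^\infty$-bound on $(u_i)$ and the continuity of the $q$-bracket on bounded subsets of $L^{1\cap\infty}_\mu$ lets the $L^q$, $L^{q-1}$, and bracket terms pass to the integral form; the absolute value on the forcing bracket emerges naturally from $[G_\lambda(w_i),g_i]_q\le\abs{[G_\lambda(w_i),g_i]_q}$ combined with the $L^1$-approximation $g_i\to g$. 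The most delicate step is the passage to the limit in the dissipative $L^r$-term $\tfrac{q}{C}\int_{t_1}^{t_2}e^{\omega(\sigma-q)s}\norm{G_\lambda(e^{-\omega s}u(s))}_r^\sigma\ds$: since $u_{\varepsilon}\to u$ holds only in $L^{q_0}$, one extracts via the uniform $L^{1\cap\infty}$-bound a $\mu$-a.e.~pointwise convergent subsequence and invokes Fatou's lemma in both space and time, using the lower semicontinuity of the $L^r$-norm along such sequences to recover the stated lower bound in the limit.
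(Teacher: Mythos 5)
Your proposal follows essentially the same route as the paper: approximate $u$ by the implicit Euler scheme, derive a discrete level-set inequality from a convexity/bracket inequality for the truncated functional, apply hypothesis~\eqref{eq:5Glambda1} to the discrete pairs at the rescaled level $e^{\omega t_i}\lambda$, and pass to the limit via Fatou, lower semicontinuity and dominated convergence. The one genuine difference is how you implement the discrete differentiation of $t\mapsto\norm{G_\lambda(e^{-\omega t}u(t))}_q^q$: you compare $G_\lambda(w_i)$ with $G_\lambda(e^{-\omega h_i}w_{i-1})$ through convexity of $j(s)=\tfrac1q([\abs{s}-\lambda]^{+})^{q}$, whereas the paper telescopes the truncated norms directly, writes $G_\lambda(e^{-\omega s_n}v_n)-G_\lambda(e^{-\omega s_{n-1}}v_{n-1})=c_n(e^{-\omega s_n}v_n-e^{-\omega s_{n-1}}v_{n-1})$ and must prove by a pointwise sign analysis that the remainder $R_n\le 0$; your variant bypasses that case analysis, and in fact convexity of $h\mapsto\norm{G_\lambda(e^{-\omega h}w_{i-1})}_q^q$ gives you the inequality you need with no remainder term at all. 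One imprecision to repair: the claim that the spurious $\omega\norm{G_\lambda(w_i)}_q^q$ is absorbed ``leaving only an $O(h_i^2)$ remainder'' is not literally correct, because the Sobolev bound produces this term at index $i$ while your expansion produces the compensating term at index $i-1$; the mismatch $\omega\sum_i h_i\bigl(\norm{G_\lambda(w_i)}_q^q-\norm{G_\lambda(w_{i-1})}_q^q\bigr)$ (and the analogous $\lambda$-terms) only disappears after summation---for instance take uniform steps, as the paper does, so that it telescopes to $O(\max_i h_i)$, or note that both Riemann sums converge to the same integral in the limit. With that repair, and with the same care as the paper in the limit passage for the $\norm{\cdot}_{q-1}^{q-1}$ and forcing-bracket terms (your uniform $L^{1\cap\infty}_\mu$ bounds plus interpolation and a dominated-convergence argument for $g_i\to g$ suffice, exactly as in the paper), the argument is sound and yields~\eqref{eq:151}.
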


\begin{proof}
	Let $\set{s_{n}}_{n \in \set{0,1,...,N}}$ be the discretization of the interval $[t_{1},t_{2}]$ given by $s_{n} := t_{1}+\frac{n(t_{2}-t_{1})}{N}$. Then for all $n\in \set{0,...,N-1}$ set 
	\begin{displaymath}
	g_{N}(s):= \frac{N}{t_{2}-t_{1}}\int_{s_{n}}^{s_{n+1}}g(\tau)\dtau\quad \text{for } s \in [s_{n},s_{n+1})
	\end{displaymath}
	which will converge to $g$ in $L^{1}(0,T;L^{q_0}_{\mu})\cap L^{1}(0,T;L^{q}_{\mu})$ as $N \rightarrow \infty$. Let $\set{v_{n}}_{n \in \set{0,1,...,N}}$ be the associated family of solutions to the time discretized Cauchy problem satisfying
	\begin{equation}
	\label{eq:resolventEquationForv}
	v_{n+1} = J_{\frac{t_{2}-t_{1}}{N}}^{A}\left(v_{n}+\frac{t_{2}-t_{1}}{N}g_{N}(s_{n})\right)
	\end{equation}
	for all $n \in \set{0,...,N-1}$ with $v_{0} = u(t_{1})$. Note that by the complete resolvent property of $A$ with $u_{0} \in L^{q}_{\mu}$ and $g \in L^{1}(0,T;L^{q}_{\mu})$, $v_{n} \in L^{q}_{\mu}$ for all $n \in \set{0,...,N}$. We first obtain a discrete version of the integral estimate~\eqref{eq:151} by discretizing with a telescoping sum and applying a product rule. For $q \ge 1$ we use the following property of $q$-brackets,
	\begin{equation}
	\label{eq:qbracketProperty}
	[u,v]_{q} \le \frac{1}{q}\norm{u+v}_{q}^{q}-\frac{1}{q}\norm{u}_{q}^{q}
	\end{equation}
	for every $u,v \in L^{q}_{\mu}$. Here we apply~\eqref{eq:qbracketProperty} to the following telescoping sum, taking $u = G_{\lambda}(e^{-\omega s_{n}}v_{n})$ and $v = G_{\lambda}(e^{-\omega s_{n-1}}v_{n-1})-G_{\lambda}(e^{-\omega s_{n}}v_{n})$.
	\begin{align*}
	&\norm{G_{\lambda}(e^{-\omega t_{2}}v_{N})}_{q}^{q}-\norm{G_{\lambda}(e^{-\omega t_{1}}u(t_{1}))}_{q}^{q}\\
	&\hspace{2.4cm} = \sum_{n=1}^{N}\norm{G_{\lambda}(e^{-\omega s_{n}}v_{n})}_{q}^{q}-\norm{G_{\lambda}(e^{-\omega s_{n-1}}v_{n-1})}_{q}^{q}\\
	&\hspace{2.4cm} \le \sum_{n=1}^{N}q[G_{\lambda}(e^{-\omega s_{n}}v_{n}),G_{\lambda}(e^{-\omega s_{n}}v_{n})-G_{\lambda}(e^{-\omega s_{n-1}}v_{n-1})]_{q}.
	\end{align*}
	Noting that $G_{\lambda}$ is a Lipschitz continuous function, we can differentiate almost everywhere on $\mathbb{R}$. Here we define
	\begin{displaymath}
	G_{\lambda}'(s) = \begin{cases}
	1 & \text{ if } |s| > \lambda,\\
	0 & \text{ if } |s| \le \lambda,
	\end{cases}
	\end{displaymath}
	and
	\begin{displaymath}
	c_{n} = \int_{0}^{1}G_{\lambda}'(\theta e^{-\omega s_{n}}v_{n}+(1-\theta)e^{-\omega s_{n-1}}v_{n-1})d\theta
	\end{displaymath}
	so that we can rewrite this difference as an integral of the derivative with
	\begin{displaymath}
	G_{\lambda}(e^{-\omega s_{n}}v_{n})-G_{\lambda}(e^{-\omega s_{n-1}}v_{n-1}) = c_{n}\left(e^{-\omega s_{n}}v_{n}-e^{-\omega s_{n-1}}v_{n-1}\right).
	\end{displaymath}
	Then returning to the estimate in the discrete setting,
	\allowdisplaybreaks
	\begin{align*}
	&\norm{G_{\lambda}(e^{-\omega t_{2}}v_{N})}_{q}^{q}-\norm{G_{\lambda}(e^{-\omega t_{1}}u(t_{1}))}_{q}^{q}\\
	&\hspace{1.1cm} \le q\sum_{n=1}^{N}[G_{\lambda}(e^{-\omega s_{n}}v_{n}),c_{n}\left(e^{-\omega s_{n}}v_{n}-e^{-\omega s_{n-1}}v_{n-1}\right)]_{q}\\
	&\hspace{1.1cm} \le q\sum_{n=1}^{N}[G_{\lambda}(e^{-\omega s_{n}}v_{n}),c_{n}e^{-\omega s_{n-1}}(v_{n}-v_{n-1})]_{q}\\
	&\hspace{1.1cm}\quad+q\sum_{n=1}^{N}[G_{\lambda}(e^{-\omega s_{n}}v_{n}),c_{n}(e^{-\omega s_{n}}-e^{-\omega s_{n-1}})v_{n}]_{q}.
	\end{align*}
	Defining
	\begin{displaymath}
	R_{n} = [G_{\lambda}(e^{-\omega s_{n}}v_{n}),(c_{n}-1)(v_{n}-v_{n-1})]_{q},
	\end{displaymath}
	we can rewrite the previous estimate as
	\begin{displaymath}
	\begin{split}
	&\norm{G_{\lambda}(e^{-\omega t_{2}}v_{N})}_{q}^{q}-\norm{G_{\lambda}(e^{-\omega t_{1}}u(t_{1}))}_{q}^{q}\\
	&\hspace{0.7cm} \le \frac{q(t_{2}-t_{1})}{N}\sum_{n=1}^{N}e^{-\omega s_{n-1}}[G_{\lambda}(e^{-\omega s_{n}}v_{n}),-Av_{n}+g_{N}(s_{n-1})]_{q}\\
	&\hspace{0.7cm}\quad+q\sum_{n=1}^{N}e^{-\omega s_{n-1}}R_n + q\sum_{n=1}^{N}[G_{\lambda}(e^{-\omega s_{n}}v_{n}),e^{-\omega s_{n}}v_{n}c_{n}]_{q}\left(1-e^{\frac{\omega (t_{2}-t_{1})}{N}}\right).
	\end{split}
	\end{displaymath}
	We now consider the values of $v_{n}$ and $v_{n-1}$ for almost every $x \in \Sigma$ to show that $R_{n}$ is non-positive. Note that $c_n \le 1$ and in particular,
	\begin{align*}
	c_{n}(x) \in \begin{cases}
	\set{0}& \text{ if } |\theta e^{-\omega s_{n}}v_{n}+(1-\theta)e^{-\omega s_{n-1}}v_{n-1}| \le \lambda \text{ for a.e.~} \theta \in (0,1),\\
	\set{1}& \text{ if } |\theta e^{-\omega s_{n}}v_{n}+(1-\theta)e^{-\omega s_{n-1}}v_{n-1}| > \lambda \text{ for a.e.~} \theta \in (0,1),\\
	(0,1)& \text{ otherwise}.
	\end{cases}
	\end{align*}
	Since $|e^{-\omega s_{n}}v_{n}(x)| \le \lambda$ implies that $G_{\lambda}(e^{-\omega s_{n}}v_{n}(x)) = 0$ and so does not contribute to $R_n$, we consider only $x \in \Sigma$ such that $|e^{-\omega s_{n}}v_{n}(x)| > \lambda$. Then there will be some subinterval of $\theta \in (0,1)$ such that 
	\begin{displaymath}
	|\theta e^{-\omega s_{n}}v_{n}(x)+(1-\theta)e^{-\omega s_{n-1}}v_{n-1}(x)| > \lambda
	\end{displaymath}
	implying that $c_{n}(x) > 0$. If $c_{n}=1$ then $c_{n}(x)-1 = 0$ and so this will not contribute to $R_{n}$. Hence we only consider $x$ such that $c_{n}(x) \in (0,1)$. Since $|e^{-\omega s_{n}}v_{n}(x)| > \lambda$, this implies that either
	\begin{displaymath}
	|e^{-\omega s_{n-1}}v_{n-1}(x)| \le \lambda
	\end{displaymath}
	or
	\begin{displaymath}
	\sign(v_{n-1}(x)) = -\sign(v_{n}(x)).
	\end{displaymath}
	For the first case, $G_{\lambda}(e^{-\omega s_{n-1}}v_{n-1}(x)) = 0$ so
	\begin{align*}
	&(G_{\lambda}(e^{-\omega s_{n}}v_{n}(x)))^{q-1}(v_{n}(x)-v_{n-1}(x))\\
	&\quad = \left((G_{\lambda}(e^{-\omega s_{n}}v_{n}(x)))^{q-1}-(G_{\lambda}(e^{-\omega s_{n-1}}v_{n-1}(x)))^{q-1}\right)(v_{n}(x)-v_{n-1}(x))\\
	&\quad \ge 0.
	\end{align*}
	Note that for $q = 1$, $(G_{\lambda}(v_{n}(x)))^{q-1} = \sign(G_{\lambda}(v_{n}(x)))$. For the second case, $\sign(v_{n}(x)-v_{n-1}(x)) = \sign(v_{n}(x))$ so
	\begin{displaymath}
	(G_{\lambda}(e^{-\omega s_{n}}v_{n}(x)))^{q-1}(v_{n}(x)-v_{n-1}(x)) \ge 0.
	\end{displaymath}
	Putting this together we have that $R_{n} \le 0$. Returning to the discrete estimate,
	\begin{displaymath}
	\begin{split}
	\norm{G_{\lambda}(e^{-\omega  t_{2}}v_{N})}_{q}^{q}&-\norm{G_{\lambda}(e^{-\omega t_{1}}u(t_{1}))}_{q}^{q}\\
	& \le \frac{q(t_{2}-t_{1})}{N}\sum_{n=1}^{N}e^{-\omega s_{n-1}}[G_{\lambda}(e^{-\omega s_{n}}v_{n}),-Av_{n}+g_{N}(s_{n})]_{q}\\
	&\quad+\sum_{n=1}^{N}\norm{G_{\lambda}(e^{-\omega s_{n}}v_{n})}_{q}^{q}\left(1-e^{\frac{q\omega (t_{2}-t_{1})}{N}}\right).
	\end{split}
	\end{displaymath}
	Note that by~\eqref{eq:5Glambda1},
	\allowdisplaybreaks
	\begin{align*}
	e^{-\omega s_{n-1}}&[G_{\lambda}(e^{-\omega s_{n}}v_{n}),Av_{n}]_{q} = e^{-\omega s_{n-1}}e^{-(q-1)\omega s_{n}}[G_{\lambda e^{\omega s_{n}}}(v_{n}),Av_{n}]_{q}\\
	& = e^{\frac{\omega (t_{2}-t_{1})}{N}}e^{-q\omega s_{n}}[G_{\lambda e^{\omega s_{n}}}(v_{n}),Av_{n}]_{q}\\
	& \ge e^{\frac{\omega (t_{2}-t_{1})}{N}}e^{-q\omega s_{n}}\frac{1}{C}\norm{G_{\lambda e^{\omega s_{n}}}(v_{n})}_{r}^{\sigma}\\
	&\quad-e^{\frac{\omega (t_{2}-t_{1})}{N}}e^{-q\omega s_{n}}\omega\left(\norm{G_{\lambda e^{\omega s_{n}}}(v_{n})}_{q}^{q}+\lambda e^{\omega s_{n}}\norm{G_{\lambda e^{\omega s_{n}}}(v_{n})}_{q-1}^{q-1}\right)\\
	& = e^{\frac{\omega (t_{2}-t_{1})}{N}}\frac{e^{\omega s_{n}(\sigma-q)}}{C}\norm{G_{\lambda }(e^{-\omega s_{n}}v_{n})}_{r}^{\sigma}\\
	&\quad-e^{\frac{\omega (t_{2}-t_{1})}{N}}\omega\left(\norm{G_{\lambda }(e^{-\omega s_{n}}v_{n})}_{q}^{q}+\lambda \norm{G_{\lambda }(e^{-\omega s_{n}}v_{n})}_{q-1}^{q-1}\right).
	\end{align*}
	Hence we have
	\begin{displaymath}
	\begin{split}
	e^{-\omega s_{n-1}}&[G_{\lambda}(e^{-\omega s_{n}}v_{n}),Av_{n}]_{q}\\
	& \ge \frac{e^{\omega s_{n}(\sigma-q)}}{C}\norm{G_{\lambda }(e^{-\omega s_{n}}v_{n})}_{r}^{\sigma}\\
	&\quad-e^{\frac{\omega (t_{2}-t_{1})}{N}}\omega\left(\norm{G_{\lambda }(e^{-\omega s_{n}}v_{n})}_{q}^{q}+\lambda \norm{G_{\lambda }(e^{-\omega s_{n}}v_{n})}_{q-1}^{q-1}\right).
	\end{split}
	\end{displaymath}
	
	We now aim to take $N \rightarrow \infty$, first converting the discrete sums to integrals. Let $U_{N}$ be a stepwise solution to~\eqref{eq:resolventEquationForv} such that
	\begin{displaymath}
	U_{N}(s) = v_{0}\mathds{1}_{\set{0}}(s)+\sum_{n=1}^{N}v_{n}\mathds{1}_{(s_{n-1},s_{n}]}(s)
	\end{displaymath}
	for every $s \in [t_{1},t_{2}]$. We have
	\allowdisplaybreaks
	\begin{align*}
	&\norm{G_{\lambda}(e^{-\omega  t_{2}}v_{N})}_{q}^{q}-\norm{G_{\lambda}(e^{-\omega t_{1}}u(t_{1}))}_{q}^{q}\\
	&\quad \le -\frac{q(t_{2}-t_{1})}{N}\sum_{n=1}^{N}\left(\frac{e^{\omega s_{n}(\sigma-q)}}{C}\norm{G_{\lambda}(e^{-\omega s_{n}}v_{n})}_{r}^{\sigma}\right.\\
	&\qquad+e^{\frac{\omega (t_{2}-t_{1})}{N}}\omega[G_{\lambda}(e^{-\omega s_{n}}v_{n}),G_{\lambda}(e^{-\omega s_{n}}v_{n})-\lambda \sign(G_{\lambda}(e^{-\omega s_{n}}v_{n}))]_{q}\\
	&\qquad+ e^{-\omega s_{n-1}}[G_{\lambda}(e^{-\omega s_{n}}v_{n}),g_{N}(s_{n})]_{q}\Biggr)\\
	&\qquad+\sum_{n=1}^{N}\norm{G_{\lambda}(e^{-\omega s_{n}}v_{n})}_{q}^{q}\left(1-e^{\frac{q\omega (t_{2}-t_{1})}{N}}\right)\\
	&\quad \le -\frac{q}{C}\int_{t_{1}}^{t_{2}}e^{\omega (\sigma-q)\left(s-\sign(\sigma-q)\frac{t_{2}-t_{1}}{N}\right)}\lnorm{G_{\lambda}(e^{-\omega(s+\frac{t_{2}-t_{1}}{N})}U_{N})}_{r}^{\sigma}\ds\\
	&\qquad+\left|\frac{e^{\frac{q\omega (t_{2}-t_{1})}{N}}-1}{\frac{t_{2}-t_{1}}{N}}-q\omega e^{\frac{\omega(t_{2}-t_{1})}{N}}\right|\int_{t_{1}}^{t_{2}}\norm{G_{\lambda}(e^{-\omega s}U_{N})}_{q}^{q}\ds\\
	&\qquad+q\omega\lambda e^{\frac{\omega(t_{2}-t_{1})}{N}}\int_{t_{1}}^{t_{2}}\norm{G_{\lambda}(e^{-\omega s}U_{N})}_{q-1}^{q-1}\ds\\
	&\qquad+q\int_{t_{1}}^{t_{2}}\int_{\Sigma}|G_{\lambda}(e^{-\omega s}U_{N})|^{q-1}|g_{N}|\dmu \ds.
	\end{align*}
	We now prove convergence of each term in the estimate in order to obtain the continuous version~\eqref{eq:151}. Noting that $U_{N}(s) \rightarrow u(s)$ in $C([0,T];L^{q_{0}}_{\mu})$ and since $\norm{\cdot}_{q}^{q}$ and $\norm{\cdot}_{r}^{\sigma}$ are lower semicontinuous on $L^{q_{0}}_{\mu}$, we have that
	\begin{displaymath}
	\liminf\limits_{N\rightarrow\infty}\norm{G_{\lambda}(e^{-\omega  t_{2}}v_{N})}_{q}^{q} \ge \norm{G_{\lambda}(e^{-\omega  t_{2}}u(t_{2}))}_{q}^{q}
	\end{displaymath}
	and applying Fatou's lemma,
	\begin{displaymath}
	\begin{split}
	\liminf_{N\rightarrow\infty}\int_{t_{1}}^{t_{2}}&e^{\omega (\sigma-q)s}\lnorm{G_{\lambda}(e^{-\omega (s+\frac{t_{2}-t_{1}}{N})}U_{N}(s))}_{r}^{\sigma}\ds\\
	&\quad \ge \int_{t_{1}}^{t_{2}}e^{\omega (\sigma-q)s}\norm{G_{\lambda}(e^{-\omega s}u(s))}_{r}^{\sigma}\ds.
	\end{split}
	\end{displaymath}
	Next, note that by the complete resolvent property of $A$ and Lemma \ref{le:GrowthConditionCompleteResolvent} we can estimate $U_{N}$ and $u$ in $L^{q}_{\mu}$ uniformly on $[0,T]$. Hence let $M$ bound both $\norm{U_{N}}_{q}$ and $\norm{u}_{q}$.
	\begin{displaymath}
	\begin{split}
	\left|\frac{1-e^{-\frac{q\omega t}{N}}}{\frac{t}{N}}-q\omega \right|\int_{t_{1}}^{t_{2}}\norm{G_{\lambda}(e^{-\omega s}U_{N}(s))}_{q}^{q}\ds& \le \left|\frac{1-e^{-\frac{q\omega t}{N}}}{\frac{t}{N}}-q\omega \right|\int_{t_{1}}^{t_{2}}M^{q}\ds\\
	& \rightarrow 0
	\end{split}
	\end{displaymath}
	as $N \rightarrow 0$. For the next term we prove uniform convergence of $G_{\lambda}(e^{-\omega s}U_{N})$ to $G_{\lambda}(e^{-\omega s}u)$ in $C([0,T];L^{q-1}_{\mu})$	when $\lambda > 0$. For this fix $s \in [0,T]$ and let
	\begin{displaymath}
	f_{N} = G_{\lambda}(e^{-\omega s}U_{N}(s))-G_{\lambda}(e^{-\omega s}u(s))
	\end{displaymath}
	so that $f_{N} \rightarrow 0$ in $L^{q_0}$. We note that
	\begin{displaymath}
	\begin{split}
	\norm{f_{N}}_{q}& \le 2M
	\end{split}
	\end{displaymath}
	and by Chebyshev's inequality
	\begin{displaymath}
	\begin{split}
	\mu(\{x\in\Sigma:|f_{N}| > 0\})& \le  \mu(\{x\in\Sigma:e^{-\omega s}|U_{N}| \ge \lambda \text{ or } e^{-\omega s}|u| \ge \lambda\})\\
	& \le \frac{1}{\lambda^{q}}\left(\norm{e^{-\omega s}U_{N}(s)}_{q}+\norm{e^{-\omega s}u(s)}_{q}\right)\\
	& \le \frac{2M}{\lambda^{q}}.
	\end{split}
	\end{displaymath}
	Here we consider cases for $q$. For $q-1 \ge q_{0}$, apply H\"older's inequality with $\theta$ chosen to satisfy
	\begin{displaymath}
	\frac{\theta}{q_{0}}+\frac{1-\theta}{q} = \frac{1}{q-1}
	\end{displaymath}
	to obtain
	\begin{align*}
	\lim\limits_{N\rightarrow\infty}\norm{f_{N}}_{q-1}& \le \lim\limits_{N\rightarrow\infty}\left(\norm{f_{N}}_{q_0}^{\theta}\norm{f_{N}}_{q}^{1-\theta}\right) = 0.
	\end{align*}
	For $q-1 < q_{0}$, we apply Jensen's inequality noting that $|\cdot|^{\frac{q_0}{q-1}}$ is convex. Let
	\begin{displaymath}
	\Sigma_{N} = \{x\in\Sigma:|f_{N}| > 0\}
	\end{displaymath}
	then we can again estimate $f_{N}$ with
	\begin{displaymath}
	\norm{f_{N}}_{q-1} \le \mu(\Sigma_{N})^{\frac{q_0}{q-1}-1}\norm{f_{N}}_{q_{0}}.
	\end{displaymath}
	Note that for $q-1 < 1$, we have
	\begin{align*}
	&\left|\norm{G_{\lambda}(e^{-\omega s}U_{N})}_{q-1}^{q-1}-\norm{G_{\lambda}(e^{-\omega s}u)}_{q-1}^{q-1}\right|\\
	&\qquad \le \int_{\Sigma}\left|\left|G_{\lambda}(e^{-\omega s}U_{N})\right|^{q-1}-\left|G_{\lambda}(e^{-\omega s}u)\right|^{q-1}\right|\dmu\\
	&\qquad\le \int_{\Sigma}\left|f_{N}\right|^{q-1}\dmu
	\end{align*}
	so that
	\begin{displaymath}
	\lim\limits_{N\rightarrow\infty}\norm{G_{\lambda}(e^{-\omega s}U_{N}(s))}_{q-1}^{q-1} = \norm{G_{\lambda}(e^{-\omega s}u(s))}_{q-1}^{q-1}.
	\end{displaymath}
	For the last term, note that $g_{N} \rightarrow g$ in $L^{1}(0,T;L^{q}_{\mu})$ as $N \rightarrow \infty$. So by a corollary of Riesz-Fischer, there exists a subsequence $N_k$ and a function $h \in L^{1}(0,T;L^{q}_{\mu})$ such that $|g_{N_k}(x)| \le h(x)$ for all $k$ and a.e.~$x \in \Sigma$. Similarly, interpolating between $q_0$ and $q+\varepsilon$,
	\begin{displaymath}
	\begin{split}
	\norm{G_{\lambda}(e^{-\omega s}U_{N}(s))-G_{\lambda}(e^{-\omega s}u(s))}_{q}&\le \norm{G_{\lambda}(e^{-\omega s}U_{N}(s))-G_{\lambda}(e^{-\omega s}u(s))}_{q_0}^{\theta}\times\\
	&\qquad\norm{G_{\lambda}(e^{-\omega s}U_{N}(s))-G_{\lambda}(e^{-\omega s}u(s))}_{q+\varepsilon}^{1-\theta}
	\end{split}
	\end{displaymath}
	for some $\theta \in (0,1]$. Then with $U_{N}$ and $u$ bounded in $L^{q+\varepsilon}_{\mu}$, $G_{\lambda}(e^{-\omega s}U_{N}(s))\rightarrow G_{\lambda}(e^{-\omega s}u(s))$ in $L^{q_0}_{\mu}$ as $N \rightarrow \infty$ uniformly for all $t \in [0,T]$. Hence taking another subsequence we have a dominant $H_{\lambda} \in L^{\infty}(0,T;L^{q}_{\mu})$. So we can estimate the integrand pointwise
	\begin{displaymath}
	\left|G_{\lambda}(e^{-\omega s}U_{N_{k}}(s))^{q-1}g_{N}\right| \le H_{\lambda}^{q-1}h\quad \text{a.e.~on } \Sigma\times[0,T).
	\end{displaymath}
	Moreover this dominant is in $L^{1}(0,T;L^{1}_{\mu})$ with
	\begin{displaymath}
	\int_{t_1}^{t_2}\int_{\Sigma}H_{\lambda}^{q-1}h\dmu \ds \le \norm{H_{\lambda}}_{L^{\infty}(0,T;L^{q}_{\mu})}\norm{h}_{L^{1}(0,T;L^{q}_{\mu})}.
	\end{displaymath}
	Hence we apply dominated convergence to obtain the continuous estimate\\~\eqref{eq:151}.
\end{proof}

We now show that this pointwise estimate~\eqref{eq:5Glambda1} for Proposition \ref{lem:discreteToSmoothEstimateProof} implies a similar estimate when adding a Lipschitz perturbation.
\begin{lemma}\label{le:Lipschitz-perturbation}
	For $1 \le q < \infty$, let $A$ be an operator on $L^{q}_{\mu}$
	and suppose there are $1 \le r \le \infty$, $\sigma > 0$,
	$\omega \in \mathbb{R}$, $\lambda \ge 0$ and $C > 0$ such that~\eqref{eq:5Glambda1} is satisfied for all $(u,v) \in A$. Let $F:L^{q}_{\mu}\to L^{q}_{\mu}$ be Lipschitz continuous with Lipschitz constant $\omega' \ge 0$ and satisfying $F(0) = 0$. Then, the operator $A+F$ in $L^{q}_{\mu}$ satisfies
	\begin{equation}
	\label{eq:pointwiseEstimateLipschitz}
	\norm{G_{\lambda}(u)}_{r}^{\sigma}\le C
	\; [G_{\lambda}(u),v+(\omega+\omega')(G_{\lambda}(u)+\lambda \sign(u))]_{q}.
	\end{equation}
\end{lemma}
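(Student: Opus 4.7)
\medskip

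\noindent\textbf{Plan for the proof of Lemma~\ref{le:Lipschitz-perturbation}.}

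Fix $(u,w)\in A+F$, so that $w=v+F(u)$ for some $v\in Au$. The strategy is to apply the hypothesis \eqref{eq:5Glambda1} to the element $(u,v)\in A$ and then absorb the extra contribution produced by the perturbation $F(u)$ into a multiple of the bracket $[G_{\lambda}(u),\,G_{\lambda}(u)+\lambda\sign(u)]_{q}$, the relevant ``coercive'' quantity appearing on the right-hand side of the target inequality \eqref{eq:pointwiseEstimateLipschitz}. In view of (conditional) linearity of the bracket $[\,\cdot\,,\,\cdot\,]_{q}$ in its second argument (linearity when $1<q<\infty$, subadditivity when $q=1$), it suffices to establish the single pointwise/integral bound
\begin{equation}\label{eq:plan-key}
\bigl\lvert[G_{\lambda}(u),F(u)]_{q}\bigr\rvert\;\le\;\omega'\,\bigl[G_{\lambda}(u),\,G_{\lambda}(u)+\lambda\sign(u)\bigr]_{q}.
\end{equation}

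The main step is to verify \eqref{eq:plan-key} via a direct computation, which exploits that both integrands are supported on the level set $\{x\in\Sigma : |u(x)|>\lambda\}$, where $\sign(G_{\lambda}(u))=\sign(u)$. Indeed, for $1<q<\infty$ formula \eqref{eq:35} yields
\begin{displaymath}
[G_{\lambda}(u),F(u)]_{q}=\int_{\{|u|>\lambda\}}\sign(u)\,|G_{\lambda}(u)|^{q-1}\,F(u)\,\dmu,
\end{displaymath}
whereas a direct expansion gives
\begin{displaymath}
\bigl[G_{\lambda}(u),\,G_{\lambda}(u)+\lambda\sign(u)\bigr]_{q}=\int_{\{|u|>\lambda\}}|G_{\lambda}(u)|^{q-1}\bigl(|G_{\lambda}(u)|+\lambda\bigr)\dmu.
\end{displaymath}
Using the Lipschitz property of $F$ together with $F(0)=0$, one has $|F(u)|\le\omega'|u|=\omega'(|G_{\lambda}(u)|+\lambda)$ on $\{|u|>\lambda\}$, and \eqref{eq:plan-key} follows at once. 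For $q=1$, the same argument goes through by replacing \eqref{eq:35} with the bracket formula \eqref{eq:67}, noting that the second integral in \eqref{eq:67} vanishes on $\{G_{\lambda}(u)=0\}\supseteq\{|u|\le\lambda\}$ because $F(u)$ and $G_{\lambda}(u)+\lambda\sign(u)$ simultaneously vanish there (recalling the convention $\sign(0)=0$).

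To conclude, combine \eqref{eq:plan-key} with the assumed estimate \eqref{eq:5Glambda1} for $A$: writing
\begin{displaymath}
v+\omega\bigl(G_{\lambda}(u)+\lambda\mathds{1}\bigr)=\bigl(v+F(u)\bigr)+(\omega+\omega')\bigl(G_{\lambda}(u)+\lambda\sign(u)\bigr)-R(u),
\end{displaymath}
where $R(u):=F(u)+\omega'\bigl(G_{\lambda}(u)+\lambda\sign(u)\bigr)+\omega\lambda\bigl(\sign(u)-\mathds{1}\bigr)$, linearity/subadditivity of the bracket and \eqref{eq:plan-key} show that $[G_{\lambda}(u),R(u)]_{q}\ge 0$ (the final term involving $\sign(u)-\mathds{1}$ contributes non-negatively since it is supported where $u<0$ and multiplied by $|G_{\lambda}(u)|^{q-2}G_{\lambda}(u)<0$). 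Consequently,
\begin{displaymath}
\norm{G_{\lambda}(u)}_{r}^{\sigma}\le C\,[G_{\lambda}(u),\,v+\omega(G_{\lambda}(u)+\lambda\mathds{1})]_{q}\le C\,[G_{\lambda}(u),\,w+(\omega+\omega')(G_{\lambda}(u)+\lambda\sign(u))]_{q},
\end{displaymath}
which is \eqref{eq:pointwiseEstimateLipschitz}. The main obstacle is the careful bookkeeping in the $q=1$ case, where $[\,\cdot\,,\,\cdot\,]_{1}$ is only subadditive (not linear) in the second argument and one must separately control the contribution from $\{G_{\lambda}(u)=0\}$; once it is noted that $F(u)$ vanishes together with $G_{\lambda}(u)+\lambda\sign(u)$ on the relevant set, the same estimate carries through.
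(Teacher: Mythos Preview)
Your argument is essentially the same as the paper's: both split the bracket by linearity, use the Lipschitz bound $|F(u)|\le\omega'|u|$, and exploit that $u=G_\lambda(u)+\lambda\sign(u)$ on $\{|u|>\lambda\}$ to show $[G_\lambda(u),F(u)+\omega'(G_\lambda(u)+\lambda\sign(u))]_q\ge 0$. Your treatment is in fact more explicit than the paper's about the passage from $\lambda\mathds{1}$ in the hypothesis to $\lambda\sign(u)$ in the conclusion.

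One small correction in your $q=1$ discussion: on $\{0<|u|\le\lambda\}$ neither $F(u)$ nor $G_\lambda(u)+\lambda\sign(u)=\lambda\sign(u)$ need vanish, so the second integral in \eqref{eq:67} does not disappear as you claim. The fix is immediate, though: on that set $|F(u)|\le\omega'|u|\le\omega'\lambda=\omega'|G_\lambda(u)+\lambda\sign(u)|$, so the contribution from $\{G_\lambda(u)=0\}$ to $[G_\lambda(u),F(u)]_1$ is still dominated by the corresponding contribution on the right of \eqref{eq:plan-key}, and the inequality goes through. (The paper itself simply invokes linearity of the bracket, which strictly speaking holds only for $q>1$; in the applications $q=m+1\ge 2$.)
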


\begin{proof}
	Let $\hat{v} = v+F(u)$. Then, since $[\cdot,\cdot]_{q}$ is
	linear in the second term,
	\begin{equation}
	\label{eq:lem47eq1}
	\begin{split}
	[G_{\lambda}(u),\hat{v}+&(\omega+\omega')(G_{\lambda}(u)+\lambda \sign(u))]_{q}\\
	& = [G_{\lambda}(u),v+\omega(G_{\lambda}(u)+\lambda \sign(u))]_{q}\\
	&\quad+[G_{\lambda}(u),F(u)+\omega'(G_{\lambda}(u)+\lambda \sign(u))]_{q}.
	\end{split}
	\end{equation}
	By the Lipschitz condition, 
	\begin{displaymath}
	-\omega'|u| \le F(u) \le \omega'|u|.
	\end{displaymath}
	Hence
	\begin{displaymath}
	\begin{split}
	[G_{\lambda}(u),F(u)]_{q}& \ge -\omega'[G_{\lambda}(u),u]_{q}\\
	& = -\omega'[G_{\lambda}(u),G_{\lambda}(u)+\lambda\sign(u)]_{q}.
	\end{split}
	\end{displaymath}
	So applying this and the initial assumption to~\eqref{eq:lem47eq1}, we have~\eqref{eq:pointwiseEstimateLipschitz}.
\end{proof}

We now extend the $L^q_{\mu}-L^\infty_{\mu}$ regularity of Theorem~\ref{thm:LqLinfty-deGiorgi} to obtain $L^{\ell}_{\mu}-L^{\infty}_{\mu}$ regularity as in Theorem~\ref{thm:LellLinfty}. 
For this we consider~\eqref{eqn:lqlinfty} applied to $\tilde{u}_0 = u(\tfrac{t}{2})$ and $\tilde{g}(s) = g(s+\tfrac{t}{2})$. 
The following theorem is an extension of~\cite[Chapter 6]{CoulHau2016} and in particular with $r = \infty$ gives the desired regularity result.
\begin{theorem}
	\label{thm:CH5.2}
	For $1 \le \ell < q < r \le \infty$ and $T > 0$, let $g \in L^{1}(0,T;L^{\ell}_{\mu}\cap L^{q}_{\mu})$ and $u\in L^{\infty}(0,T;L^{\ell}_{\mu}\cap L^{r}_{\mu})$ satisfying the exponential growth property~\eqref{eq:growthconditionGlambda} for some $\omega \ge 0$. Suppose there exist increasing functions $c_1(t)$, $c_2(t)$  with $c_1(t) > 0$ and $c_2(t) \ge 0$ for all $t \in [0,T]$ and exponents $\alpha \ge 0$, $0 < \gamma^* \le \gamma < \infty$ such that
	\begin{equation}
	\label{eq:LqLrestimateAssumption}
	\begin{split}
	\norm{u(t)}_{r}& \le \max\left\{c_1(\tfrac{t}{2})\left(\tfrac{2}{t}+\omega\right)^{\alpha}\left(\norm{u(\tfrac{t}{2})}_{q}+\norm{g}_{L^{1}(\frac{t}{2},t;L^{q}_{\mu})}\right)^{\gamma},\right.\\
	&\hspace{3.5cm}\left.c_2(\tfrac{t}{2})\left(\norm{u(\tfrac{t}{2})}_{q}+\norm{g}_{L^{1}(\frac{t}{2},t;L^{q}_{\mu})}\right)^{\gamma^*}\right\}
	\end{split}
	\end{equation}
	for every $t \in (0,T]$. Define
	\begin{displaymath}
	\theta := 1-\gamma\left(\frac{\frac{1}{\ell}-\frac{1}{q}}{\frac{1}{\ell}-\frac{1}{r}}\right)
	\end{displaymath}
	and suppose that $\theta > 0$. Then one has the $L^{\ell}_{\mu}-L^{r}_{\mu}$ estimate
	\begin{equation}
	\label{eq:LlLrGeneralEstimate}
	\begin{split}
	\norm{u(t)}_{r}& \le 2^{\gamma}\left(2^{\frac{\alpha}{\gamma\theta}}+\sup_{s\in (0,t]}N(s)^{\theta}\right)^{\frac{\gamma}{\theta}}\max \left\{c_1(t)^{\frac{1}{\theta}}\left(\tfrac{1}{t}+\omega\right)^{\frac{\alpha}{\theta}},c_2(t)^{\frac{1}{\theta}}\right\}\times\\
	&\hspace{3cm}\times\left(e^{\omega t}\norm{u_0}_\ell+\int_0^t e^{\omega(t-\tau)}\norm{g(\tau)}_\ell \dtau\right)^{\frac{\theta_\ell \gamma}{\theta}}
	\end{split}
	\end{equation}
	for all $t \in (0,T]$ where
	\begin{displaymath}
    \begin{split}
    N(t)& := \sup\limits_{s \in (0,t]}\frac{M(\tfrac{s}{2})\norm{g}_{L^1(\frac{s}{2},s;L^q_{\mu})}+c_2(\tfrac{s}{2})^{\frac{1}{\gamma}}}
	{M(s)^{\frac{1}{\theta}}\left(e^{\omega s}\norm{u_0}_{\ell}+\int_{0}^{s}e^{\omega(s-\tau)}\norm{g(\tau)}_{\ell}\dtau\right)^{\frac{\theta_\ell}{\theta}}},\\
    M(t)& := \max \left\{c_1(t)^{\frac{1}{\gamma}}\left(\tfrac{1}{t}+\omega\right)^{\frac{\alpha}{\gamma}},c_2(t)^{\frac{1}{\gamma}}\right\},
    \end{split}
	\end{displaymath}
	and
	\begin{displaymath}
	\theta_{\ell} := 
	\frac{\frac{1}{q}-\frac{1}{r}}{\frac{1}{\ell}-\frac{1}{r}}.
	\end{displaymath}
\end{theorem}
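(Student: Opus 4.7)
The plan is to combine the interpolation inequality for Lebesgue norms, the $L^{\ell}$-growth estimate provided by~\eqref{eq:growthconditionGlambda} with $\lambda = 0$, and a bootstrap argument that exploits the sub-linear self-reference exponent produced by the standing hypothesis $\theta > 0$.

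First, since $\ell \le q \le r$ and the interpolation exponent is precisely $\theta_{\ell} = (1/q-1/r)/(1/\ell-1/r)$, log-convexity of $L^p$-norms yields $\norm{u(s)}_{q} \le \norm{u(s)}_{\ell}^{\theta_{\ell}}\,\norm{u(s)}_{r}^{1-\theta_{\ell}}$ for every $s \in (0,T]$. On the other hand, taking $\lambda = 0$ in~\eqref{eq:growthconditionGlambda} (so that $G_{0}(v) = v$) directly gives $\norm{u(s)}_{\ell} \le E(s) := e^{\omega s}\norm{u_0}_{\ell}+\int_{0}^{s}e^{\omega(s-\tau)}\norm{g(\tau)}_{\ell}\dtau$. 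I will use these two facts to dominate the $L^q$ term on the right-hand side of~\eqref{eq:LqLrestimateAssumption} by $\norm{u(s)}_{r}$ raised to a power strictly less than $1$.

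The core step is the following. Setting $\Phi(t) := \sup_{s \in (0,t]}\norm{u(s)}_{r}$ and substituting the interpolation estimate into~\eqref{eq:LqLrestimateAssumption} applied at $t/2$, I expect to obtain, after elementary bounds of the form $(a+b)^{\gamma} \le 2^{\gamma}(a^{\gamma}+b^{\gamma})$ and $X^{\gamma^{*}} \le 1+X^{\gamma}$ (used to absorb the $c_{2}$-branch with its smaller exponent $\gamma^{*} \le \gamma$ into a constant term) an inequality of the shape
\begin{displaymath}
\norm{u(t)}_{r} \le K(t)\,\Phi(t)^{1-\theta}+L(t),
\end{displaymath}
in which the self-reference exponent is exactly $\gamma(1-\theta_{\ell}) = 1-\theta < 1$ by the very definition of $\theta$, and where $K(t)$ is of order $M(t)^{\gamma}E(t)^{\theta_{\ell}\gamma}$ while $L(t)$ collects the $M(t/2)\norm{g}_{L^{1}(t/2,t;L^{q}_{\mu})}$ and $c_{2}(t/2)^{1/\gamma}$ contributions, which are precisely the summands appearing in the numerator of $N(t)$. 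Taking the supremum over $\tau \in (0,t]$ on both sides, using monotonicity of $M$, $E$, $c_{1}$ and $c_{2}$, converts this into the self-bootstrap inequality $\Phi(t) \le K(t)\Phi(t)^{1-\theta}+L(t)$.

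The bootstrap is then closed by the elementary dichotomy: if $X \le AX^{1-\theta}+B$ with $A, B \ge 0$ and $0 < \theta \le 1$, then either $B$ dominates, giving $X \le 2B$, or the self term dominates, in which case $X^{\theta} \le 2A$ and hence $X \le (2A)^{1/\theta}$; in either case $X \le (2A)^{1/\theta}+2B$. Raising $K(t)$ to the power $1/\theta$ produces the prefactor $M(t)^{\gamma/\theta}E(t)^{\theta_{\ell}\gamma/\theta}$ asserted in~\eqref{eq:LlLrGeneralEstimate}, while $L(t)^{1/\theta}$ is handled by pulling the factor $M(\tau)^{1/\theta}E(\tau)^{\theta_{\ell}/\theta}$ out of the supremum, which is exactly the normalisation in the definition of $N(t)$. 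Reorganising the resulting constants then produces~\eqref{eq:LlLrGeneralEstimate}. The main obstacle I anticipate is the careful bookkeeping forced by the two-branch maximum in~\eqref{eq:LqLrestimateAssumption} with distinct exponents $\gamma$ and $\gamma^{*}$: one has to verify that the branch carrying the smaller exponent $\gamma^{*}$ does not corrupt the bootstrap structure, which is exactly why the definition of $N(t)$ includes the separate summand $c_{2}(s/2)^{1/\gamma}$ — normalising $c_{2}$ to the exponent $\gamma$ — in its numerator rather than simply $c_{2}(s/2)$.
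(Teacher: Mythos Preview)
Your overall strategy---interpolation, growth estimate, and a sub-linear self-bootstrap---matches the paper's, but there is a genuine gap in how you close the bootstrap. You set $\Phi(t)=\sup_{s\in(0,t]}\norm{u(s)}_{r}$, derive $\norm{u(t)}_{r}\le K(t)\Phi(t)^{1-\theta}+L(t)$, and then assert that taking the supremum over $s\in(0,t]$ gives $\Phi(t)\le K(t)\Phi(t)^{1-\theta}+L(t)$ ``using monotonicity of $M$''. This step fails: the coefficient $K(s)$ is essentially $M(s/2)^{\gamma}E(s/2)^{\theta_{\ell}\gamma}$, and $M(s)=\max\{c_{1}(s)^{1/\gamma}(1/s+\omega)^{\alpha/\gamma},c_{2}(s)^{1/\gamma}\}$ is \emph{not} monotone; in fact $M(s)\to\infty$ as $s\to0^{+}$ whenever $\alpha>0$. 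Hence $\sup_{s\in(0,t]}K(s)=\infty$ and your bootstrap inequality is vacuous.

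The paper avoids this by normalising \emph{before} taking the supremum: it works with
\[
K_{u}(s)=\frac{\norm{u(s)}_{r}^{1/\gamma}}{M(s)^{1/\theta}\,E(s)^{\theta_{\ell}/\theta}},
\]
which is bounded on $(0,T]$ because $M(s)^{-1/\theta}\to0$ as $s\to0^{+}$ absorbs the potential growth of $\norm{u(s)}_{r}$. After this normalisation the bound $M(s/2)\le 2^{\alpha/\gamma}M(s)$ (valid since $c_{1},c_{2}$ are increasing and $1/s+\omega$ is decreasing) turns the self-referential coefficient into the \emph{constant} $2^{\alpha/(\gamma\theta)}$, and the remainder becomes exactly $N(t)$. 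The supremum over $(0,t]$ is then harmless, and your dichotomy (or the paper's slightly sharper device of solving $D-2^{\alpha/(\gamma\theta)}D^{1-\theta}=N(t)$ for $D$) closes the argument. In short: the normalisation by $M(s)^{1/\theta}E(s)^{\theta_{\ell}/\theta}$ is not cosmetic---it is the mechanism that tames the blow-up at $s=0$ and makes the bootstrap legitimate.
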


\begin{proof}
	We first note that since $\gamma^* \le \gamma$, we can estimate
	\begin{displaymath}
	\begin{split}
	\left(\norm{u(\tfrac{t}{2})}_{q}+\norm{g}_{L^{1}(\frac{t}{2},t;L^{q}_{\mu})}\right)^{\gamma^*}& \le \max\left\{1,\left(\norm{u(\tfrac{t}{2})}_{q}+\norm{g}_{L^{1}(\frac{t}{2},t;L^{q}_{\mu})}\right)^{\gamma}\right\}
	\end{split}
	\end{displaymath}
    for $t \in [0,T]$.
	Hence, taking~\eqref{eq:LqLrestimateAssumption} to the power $\frac{1}{\gamma}$, we have
	\begin{displaymath}
	\begin{split}
	\norm{u(t)}_{r}^{\frac{1}{\gamma}}& \le \max\left\{c_1(\tfrac{t}{2})^{\frac{1}{\gamma}}\left(\tfrac{2}{t}+\omega\right)^{\frac{\alpha}{\gamma}}\left(\norm{u(\tfrac{t}{2})}_{q}+\norm{g}_{L^{1}(\frac{t}{2},t;L^{q}_{\mu})}\right),\right.\\
	&\hspace{2.5cm}\left.c_2(\tfrac{t}{2})^{\frac{1}{\gamma}}\left(\norm{u(\tfrac{t}{2})}_{q}+\norm{g}_{L^{1}(\frac{t}{2},t;L^{q}_{\mu})}\right),c_2(\tfrac{t}{2})^{\frac{1}{\gamma}}\right\}.
	\end{split}
	\end{displaymath}
	We then apply standard interpolation on the $L^{q}_{\mu}$ norm with exponent $\theta_\ell$ and  the growth estimate~\eqref{eq:growthconditionGlambda} on $\norm{u(\tfrac{t}{2})}_\ell$ to obtain
	\begin{displaymath}
	\begin{split}
	\norm{u(t)}_{q}
	& \le \left(e^{\omega t}\norm{u_{0}}_{\ell}+\int_{0}^{t}e^{\omega(t-\tau)}\norm{g(\tau)}_{\ell}\td\tau\right)^{\theta_{\ell}}\norm{u(t)}_{r}^{1-\theta_\ell}
	\end{split}
	\end{displaymath}
    for all $t \in [0,T]$.
	By choice of $\theta$, $\theta_\ell$ we have the relation
	\begin{displaymath}
	\gamma(1-\theta_{\ell}) = 1-\theta.
	\end{displaymath}
	Then for all $t \in (0,T]$,
	\begin{equation}
	\label{eq:LlLrIntermediateEstimate}
	\begin{split}
	\norm{u(t)}_{r}^{\frac{1}{\gamma}}& \le M(\tfrac{t}{2})\left(e^{\frac{\omega t}{2}}\norm{u_{0}}_{\ell}+\int_{0}^{\tfrac{t}{2}}e^{\omega(\tfrac{t}{2}-\tau)}\norm{g(\tau)}_{\ell}\right)^{\theta_{\ell}}\norm{u(\tfrac{t}{2})}_{r}^{\frac{1-\theta}{\gamma}}\\
	&\quad+M(\tfrac{t}{2})\norm{g}_{L^{1}(\frac{t}{2},t;L^{q}_{\mu})}+c_2(\tfrac{t}{2})^{\frac{1}{\gamma}}.
	\end{split}
	\end{equation}

    We aim to produce comparable terms on either side of this equation. Since $c_1$ and $c_2$ are increasing, $M(\tfrac{t}{2}) \le 2^{\frac{\alpha}{\gamma}}M(t)$. Furthermore, 
    \begin{displaymath}
        e^{\frac{\omega t}{2}}\norm{u_{0}}_{\ell}+\int_{0}^{\tfrac{t}{2}}e^{\omega(\tfrac{t}{2}-\tau)}\norm{g(\tau)}_{\ell} \le e^{-\frac{\omega t}{2}}\left(e^{\omega t}\norm{u_{0}}_{\ell}+\int_{0}^{t}e^{\omega(t-\tau)}\norm{g(\tau)}_{\ell}\right).
    \end{displaymath}
    Hence we define
    \begin{equation}
    \label{eq:Kudef}
	K_{u}(t) := \frac{M(t)^{-\frac{1}{\theta}}\norm{u(t)}_{r}^{\frac{1}{\gamma}}}{\left(e^{\omega t}\norm{u_{0}}_{\ell}+\int_{0}^{t}e^{\omega(t-\tau)}\norm{g(\tau)}_{\ell}\dtau\right)^{\frac{\theta_{\ell}}{\theta}}}\quad \text{for } t \in [0,T]
	\end{equation}
    in order to estimate and rearrange~\eqref{eq:LlLrIntermediateEstimate} into a relation involving $K_u$ and $N$. Note that if the denominator of \eqref{eq:Kudef} is zero, we can add some arbitrarily small $\varepsilon > 0$ to the denominator of both $K_u$ and $F$, taking $\varepsilon \rightarrow 0$ in the final estimate. Now we fix $t \in (0,T]$ so that after rearranging we may take a supremum over $(0,t]$ to obtain
	\begin{displaymath}
	\sup_{s\in (0,t]}K_{u}(s) \le 2^{\frac{\alpha}{\gamma\theta}}\sup_{s\in (0,\frac{t}{2}]}K_{u}(s)^{1-\theta}+N(t).
	\end{displaymath}

    We now aim to split the forcing term $N(t)$ so as to incorporate this into each $K_u$ term. For this we define $D(t) \ge (2^{\frac{\alpha}{\gamma\theta}})^{\frac{1}{\theta}}$ for $t \in [0,T]$ such that
	\begin{equation}
    \label{eq:Dtdef}
	D(t)-2^{\frac{\alpha}{\gamma\theta}}D(t)^{1-\theta} = N(t).
	\end{equation}
	Noting that $\theta \in (0,1)$, this is possible as the function $f(x) = x-cx^\alpha$ for $c \ge 0$ and $\alpha \in (0,1)$ is continuous, satisfies $f(c^{\frac{1}{1-\alpha}}) = 0$ and is strictly increasing for $x > (\alpha c)^{\frac{1}{1-\alpha}}$ (in particular for $x \ge c^{\frac{1}{1-\alpha}}$). Further, we can estimate \eqref{eq:Dtdef} by
    \begin{align*}
        N(t)& = \left(D(t)^{\theta}\right)^{\frac{1-\theta}{\theta}}\left(D(t)^{\theta}-2^{\frac{\alpha}{\gamma\theta}}\right)\\
        & \ge \left(D(t)^{\theta}-2^{\frac{\alpha}{\gamma\theta}}\right)^{\frac{1}{\theta}}
    \end{align*}
    so that
    \begin{equation}
        \label{eq:Dtestimate}
        D(t) \le \left(2^{\frac{\alpha}{\gamma\theta}}+N(t)^{\theta}\right)^{\frac{1}{\theta}}.
    \end{equation}
    Then,
	\begin{displaymath}
	\sup_{s\in (0,t]}K_{u}(s)-D(t) \le 2^{\frac{\alpha}{\gamma\theta}}\left(\left(\sup_{s\in \left(0,\tfrac{t}{2}\right]}K_{u}(s)\right)^{1-\theta}-D(t)^{1-\theta}\right).
	\end{displaymath}
	Noting that $K_{u}(s)$ is bounded for all $s \in [0,T]$, either
	\begin{displaymath}
	\sup_{s\in (0, t]}K_{u}(s) \le D(t)
	\end{displaymath}
	or we can extend to a supremum over $(0,t]$ and combine terms, obtaining
	\begin{displaymath}
	\begin{split}
	\sup_{s\in(0,t]}K_{u}(s)-D(t)& \le 2^{\frac{\alpha}{\gamma\theta}}\left(\sup_{s\in (0,t]}K_{u}(s)-D(t)\right)^{1-\theta}\\
	\left(\sup_{s\in(0,t]}K_{u}(s)-D(t)\right)^{\theta}& \le 2^{\frac{\alpha}{\gamma\theta}}.
	\end{split}
	\end{displaymath}
	In either case, we have the uniform bound,
	\begin{displaymath}
	K_{u}(s) \le \left(2^{\frac{\alpha}{\gamma\theta}}\right)^{\frac{1}{\theta}}+D(t).
	\end{displaymath}
    for all $s\in (0,t]$. Applying \eqref{eq:Dtestimate},
	\begin{align*}
	K_{u}(t)& \le \left(2^{\frac{\alpha}{\gamma\theta}}\right)^{\frac{1}{\theta}}+\left(2^{\frac{\alpha}{\gamma\theta}}+N(t)^{\theta}\right)^{\frac{1}{\theta}}\\
	& \le 2\left(2^{\frac{\alpha}{\gamma\theta}}+N(t)^{\theta}\right)^{\frac{1}{\theta}}.
	\end{align*}
	Rewriting this as an estimate on $\norm{u(t)}_{r}$ we obtain~\eqref{eq:LlLrGeneralEstimate}.
\end{proof}

\section{$L^{1}-L^{\infty}$ regularization}
\label{sec:applicationToFractionalPLaplacian}

By Theorem~\ref{thm:nonlinearpLapmTAccretive} and the proof of Theorem~\ref{thm:1} we know that $\overline{(-\Delta_{p})^{s}\varphi}^{\mbox{}_{L^{1}}}+F$ is m-accretive in $L^1$ with complete resolvent where $F$ is the Nemytskii operator of $f(\cdot,u)$ satisfying~\eqref{hyp:2}-\eqref{hyp:3}. 
Hence to apply Theorem~\ref{thm:LqLinfty-deGiorgi} we first prove the pointwise estimate~\eqref{eq:5Glambda1} for the operator $\overline{(-\Delta_{p})^{s}\varphi}^{\mbox{}_{L^{1}}}$ in $L^{1}$, giving Proposition~\ref{lem:discreteToSmoothEstimateProof} and thereby outlining the proof of the $L^{m+1}-L^\infty$ estimate of Theorem~\ref{thm:DeGeorgiFractionalPLaplacian}.
We then apply Theorem \ref{thm:CH5.2}, proving the $L^{\ell}-L^\infty$ estimate of Theorem \ref{thm:LellLinfty}. 

The following lemma allows us to estimate the $q$-bracket $[G_\lambda (u),(-\Delta_p)^s(u^m)]_{m+1}$. 
In particular, the restriction $m \ge 1$ in this lemma results in the same restriction in Theorem~\ref{thm:LellLinfty}. 
Recall that we use the notation $r^m = |r|^{m-1}r$ for powers and that $G_{\lambda}(r) =[\abs{r}-\lambda]^{+}\sign(r)$ for $r \in \R$.

\begin{lemma}
	\label{le:GlambdaPointwiseEstimateCases}
	Let $1 < p < \infty$, $m \ge 1$ and $\lambda \ge 0$. Given $a$, $b \in \mathbb{R}$ define $a_{\lambda} = G_{\lambda}(a)$, $b_{\lambda} = G_{\lambda}(b)$. Then,
	\begin{align*}
	(a^{m}-b^{m})^{p-1}(a_{\lambda}^{m}-b_{\lambda}^{m}) \ge |a_{\lambda}^{m}-b_{\lambda}^{m}|^{p}.
	\end{align*}
\end{lemma}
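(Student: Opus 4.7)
My plan is to reduce the claim to two elementary properties of the maps $t\mapsto t^m$ and $t\mapsto [G_\lambda(t)]^m$: namely that the differences $a^m - b^m$ and $a_\lambda^m - b_\lambda^m$ share the same sign, and that the $\lambda$-truncated difference is controlled pointwise by the untruncated one,
\[
\bigl|a_\lambda^m - b_\lambda^m\bigr| \le \bigl|a^m - b^m\bigr|.
\]
Once these are in hand, using the signed-power convention $r^{p-1}=|r|^{p-2}r$, one simply writes $(a^m-b^m)^{p-1}(a_\lambda^m-b_\lambda^m) = |a^m-b^m|^{p-1}|a_\lambda^m-b_\lambda^m|$ (same-sign step) and bounds below by $|a_\lambda^m-b_\lambda^m|^{p-1}|a_\lambda^m-b_\lambda^m|=|a_\lambda^m-b_\lambda^m|^{p}$ (contraction step), which is precisely the desired inequality.

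To establish the two properties, I would introduce the auxiliary functions $\phi(t):=t^m=|t|^{m-1}t$ and $\psi(t):=[G_\lambda(t)]^m$ on $\mathbb{R}$. Both are non-decreasing: $\phi$ because $m>0$, and $\psi$ because $G_\lambda$ is a non-decreasing truncation. Since $m\ge 1$, both are locally absolutely continuous with
\[
\phi'(t)=m|t|^{m-1},\qquad \psi'(t)=m\,|G_\lambda(t)|^{m-1}\,\mathbf{1}_{\{|t|>\lambda\}}\quad\text{a.e.}
\]
Because $|G_\lambda(t)|\le |t|$ and $s\mapsto s^{m-1}$ is non-decreasing on $[0,\infty)$ for $m\ge 1$, one has $0\le \psi'(t)\le \phi'(t)$ almost everywhere. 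Integrating from $b$ to $a$ (WLOG $a\ge b$) yields $0\le \psi(a)-\psi(b)\le \phi(a)-\phi(b)$, which gives both same-sign and the contraction property simultaneously.

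The only real subtlety is that this argument crucially uses $m\ge 1$, since monotonicity of $s\mapsto s^{m-1}$ on $[0,\infty)$ is reversed for $0<m<1$; this is exactly the restriction flagged by the authors in their discussion following Corollary~\ref{cor:simpleLlLinf}. Apart from that, the proof is a short computation and I expect no further obstacle beyond the slightly careful case handling at $|t|=\lambda$, where $\psi$ may fail to be $C^1$ when $m=1$ but remains Lipschitz, so the fundamental theorem of calculus in the absolutely continuous setting still applies.
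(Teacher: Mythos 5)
Your proof is correct and follows essentially the same route as the paper: both arguments reduce the claim to the same-sign property together with the contraction $|a_{\lambda}^{m}-b_{\lambda}^{m}|\le |a^{m}-b^{m}|$, and both rest on the monotonicity of $s\mapsto s^{m-1}$ on $[0,\infty)$ for $m\ge 1$, which is exactly why the restriction $m\ge 1$ appears. The only difference is organizational: the paper checks the contraction by a case analysis on the positions of $a$, $b$ relative to $\pm\lambda$ (using the comparison $\int_{b}^{a}m|x-\lambda|^{m-1}\,\mathrm{d}x \le \int_{b}^{a}m|x|^{m-1}\,\mathrm{d}x$ in the main case), whereas you subsume all cases into the single a.e.\ inequality $0\le \psi'\le \phi'$ for the locally Lipschitz functions $\psi=[G_{\lambda}(\cdot)]^{m}$ and $\phi=(\cdot)^{m}$ and then integrate, which is a slightly more unified way of running the same argument.
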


\begin{proof}
	If $a_{\lambda}-b_{\lambda} = 0$, both sides are $0$. For $a_{\lambda}-b_{\lambda} \neq 0$,
	\begin{align*}
	\sign(a_{\lambda}^{m}-b_{\lambda}^{m})& = \sign(a^{m}-b^{m})\\
	|a^{m}-b^{m}|^{p-2}(a^{m}-b^{m})(a_{\lambda}^{m}-b_{\lambda}^{m})& = |a^{m}-b^{m}|^{p-1}|a_{\lambda}^{m}-b_{\lambda}^{m}|
	\end{align*}
	so we only need to prove that $|a_{\lambda}^{m}-b_{\lambda}^{m}| \le |a^{m}-b^{m}|$. We take cases, assuming without loss of generality that $|a| \ge |b|$. First suppose that $|a| \le \lambda$ or $|b| \le \lambda$ so that $a_\lambda = b_\lambda = 0$ and the inequality is clear. Next, if $|b| \le \lambda$ and $|a| > \lambda$, then
	\begin{displaymath}
	|a_{\lambda}^m-b_\lambda^m| = (|a|-\lambda)^m \le (|a|-|b|)^m \le |a|^m-|b|^m \le |a^m-b^m|.
	\end{displaymath}
	
	Hence we consider cases for $a$, $b$ corresponding to $|a| > \lambda$ and $|b| > \lambda$. Suppose that $a > \lambda$ and $b > \lambda$. Then noting that for $m \ge 1$, $|x|^{m-1}$ is non-decreasing on the set $[0,\infty)$,
	\begin{align*}
	a^{m}-b^{m}& = \int_{b}^{a}\frac{\td }{\td x}x^{m}\dx\\
	& \ge \int_{b}^{a}m|x-\lambda|^{m-1}\dx\\
	& = (a-\lambda)^{m}-(b-\lambda)^{m}.
	\end{align*}
	Similarly for $a < -\lambda$, $b < - \lambda$, noting that $|x|^{m-1}$ is non-increasing on $(-\infty,0]$,
	\begin{displaymath}
	a^{m}-b^{m} \ge (a+\lambda)^{m}-(b+\lambda)^{m}.
	\end{displaymath}
	Finally, suppose that $a > \lambda$ and $b < -\lambda$ (similarly for $a < -\lambda$ and $b > \lambda$). Then
	\begin{align*}
	|a_{\lambda}^{m}-b_{\lambda}^{m}|& = (a-\lambda)^{m}-(b+\lambda)^{m}\\
	& \le a^{m}-b^{m}\\
	& = |a^{m}-b^{m}|.
	\end{align*}
\end{proof}

We can now derive~\eqref{eq:5Glambda1}, the pointwise estimate for Proposition~\ref{lem:discreteToSmoothEstimateProof}, giving the Sobolev-type inequality required for Theorem \ref{thm:LqLinfty-deGiorgi} in the case of~\eqref{eq:1}. Recall the notation of $q$-brackets from Section~\ref{sec:pre-ns}. In this case with $q = m+1$ for $m \ge 1$, the $q$-bracket is given by
\begin{displaymath}
[u,v]_{m+1} = \int_\Omega \abs{u}^{m-1}u\,v\dmu
\end{displaymath}
for every $u$, $v\in L^{m+1}$.
\begin{lemma}
	\label{lem:42}
	Let $\Omega$ be an open set in $\mathbb{R}^{d}$, $d \ge 1$. For $1 < p < \infty$, $0 < s < 1$, define $p_{\mbox{}_{\! s}}$ according to the Sobolev embedding~\eqref{eq:6} and let $\varphi(r) = r^{m}$ for $r \in \mathbb{R}$ where $m \ge 1$. Then $(-\Delta_{p})^{s}\varphi$ satisfies the one-parameter Sobolev type inequality
	\begin{equation}
	\label{eq:5Glambda2}
	\norm{G_{\lambda}(u)}_{mp_{\mbox{}_{\! s}}}^{mp}\le C_d
	\; [G_{\lambda}(u),(-\Delta_p)^{s}\varphi(u)]_{m+1}
	\end{equation}
	for all $\varphi(u) \in W^{s,p}_{0}$ and $\lambda \ge 0$. In particular, this is~\eqref{eq:5Glambda1} with $q = m+1$, $\omega=0$, $\sigma = mp$, $r = mp_s$ and $C = C_d$.
\end{lemma}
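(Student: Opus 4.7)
The plan is to test the bilinear form defining $(-\Delta_p)^s\varphi(u)$ against the function $G_\lambda(u)^m$ (using the paper's convention $r^m=|r|^{m-1}r$), apply the pointwise inequality from Lemma~\ref{le:GlambdaPointwiseEstimateCases} inside the double integral, and then invoke the fractional Sobolev embedding~\eqref{eq:6}. Unfolding the definition of the $(m+1)$-bracket from~\eqref{eq:35}, the right-hand side of~\eqref{eq:5Glambda2} equals $\int_\Omega G_\lambda(u)^m\,(-\Delta_p)^s\varphi(u)\dmu$, so it is precisely the $L^2$-pairing of $(-\Delta_p)^s\varphi(u)$ with the candidate test function $v=G_\lambda(u)^m$.

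Before invoking Proposition~\ref{prop:characterizeFracPLap}, I will verify that $v=G_\lambda(u)^m$ is admissible, i.e.\ lies in $W^{s,p}_0\cap L^2$ whenever $\varphi(u)=u^m\in W^{s,p}_0$. The pointwise bound $|G_\lambda(a)^m-G_\lambda(b)^m|\le|a^m-b^m|$ extracted from the proof of Lemma~\ref{le:GlambdaPointwiseEstimateCases} gives $[G_\lambda(u)^m]_{s,p}\le[u^m]_{s,p}<\infty$, the identity $G_\lambda(0)=0$ preserves the vanishing condition on $\R^d\setminus\Omega$ (via the quasi-continuity characterization of $W^{s,p}_0$), and $|G_\lambda(u)^m|\le|u|^m$ takes care of the $L^2$-requirement. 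If necessary for unbounded $u$, this is made fully rigorous by first truncating $u$ in $L^\infty$ and passing to the limit via Fatou in the double integral.

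With admissibility secured, testing against $v=G_\lambda(u)^m$ in Proposition~\ref{prop:characterizeFracPLap} rewrites the $(m+1)$-bracket as
\begin{displaymath}
\int_{\R^{2d}}\frac{(u^m(x)-u^m(y))^{p-1}(G_\lambda(u(x))^m-G_\lambda(u(y))^m)}{|x-y|^{d+sp}}\dx\dy.
\end{displaymath}
Lemma~\ref{le:GlambdaPointwiseEstimateCases} bounds the numerator from below pointwise by $|G_\lambda(u(x))^m-G_\lambda(u(y))^m|^p$, so the integral is at least $[G_\lambda(u)^m]_{s,p}^p$. Applying the embedding~\eqref{eq:6} to $G_\lambda(u)^m\in W^{s,p}_0$ extended by zero to $\R^d$ yields $\norm{G_\lambda(u)^m}_{p_s}^p\le C_d\,[G_\lambda(u)^m]_{s,p}^p$, and rewriting $\norm{G_\lambda(u)^m}_{p_s}^p=\norm{G_\lambda(u)}_{mp_s}^{mp}$ produces~\eqref{eq:5Glambda2}.

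The hard part will be the admissibility check of the second paragraph, in particular the borderline case $p=d/s$ where $p_s$ is chosen freely and the embedding constant $C_d$ has to be tracked, and ensuring the test-function step works when $u$ is only known to lie in the effective domain of $(-\Delta_p)^s\varphi$ rather than in $L^\infty$. A standard approximation by $L^\infty$-truncations, combined with the monotonicity built into Lemma~\ref{le:GlambdaPointwiseEstimateCases}, should resolve both issues; the pointwise algebraic inequality of that lemma is the substantive step and is already in hand.
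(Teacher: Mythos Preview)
Your proposal is correct and follows essentially the same route as the paper: unfold the $(m+1)$-bracket as the pairing with $G_\lambda(u)^m$, use the characterization of $(-\Delta_p)^s$ (Proposition~\ref{prop:characterizeFracPLap}) to obtain the double integral, apply Lemma~\ref{le:GlambdaPointwiseEstimateCases} pointwise to bound it below by $[G_\lambda(u)^m]_{s,p}^p$, and finish with the Sobolev embedding~\eqref{eq:6}. Your admissibility check for the test function $G_\lambda(u)^m\in W^{s,p}_0\cap L^2$ is more explicit than what the paper writes down, but the underlying argument is the same.
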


\begin{proof}
	Let $u^m \in W^{s,p}_{0}$. By Lemma~\ref{le:GlambdaPointwiseEstimateCases} one sees that
	\begin{displaymath}
	\begin{split}
	&[G_{\lambda}u,(-\Delta_{p})^{s}(u^{m})]_{m+1}\\
	&\qquad =
	\int_{\mathbb{R}^{d}}\int_{\mathbb{R}^{d}}\frac{\abs{((u(t,x))^{m}-(u(t,y))^{m}}^{p-2}((u(t,x))^{m}-(u(t,y))^{m})}{|x-y|^{d+sp}}\times\\
	&\hspace{5cm} ((G_{\lambda}u(t,x))^{m}-(G_{\lambda}u(t,y))^{m})\dx\dy\\
	&\qquad \ge
	\int_{\mathbb{R}^{d}}\int_{\mathbb{R}^{d}}\frac{|(G_{\lambda}u(t,x))^{m}-(G_{\lambda}u(t,y))^{m}|^{p}}{|x-y|^{d+sp}}
	\dx\dy
	\end{split}
	\end{displaymath}
	Hence we can estimate by a semi-norm,
	\begin{displaymath}
	\begin{split}
	[G_{\lambda}u,(-\Delta_{p})^{s}(u^{m})]_{m+1}
	& \ge [(G_{\lambda}u(t,x))^{m}]_{s,p}^{p}\\
	& \ge
	\frac{1}{C_{d}}\,\norm{(G_{\lambda}(u))^{m}}_{p_{\mbox{}_{\! s}}}^{
		p}\\
	& =
	\frac{1}{C_{d}}\,\norm{G_{\lambda}(u)}_{mp_{\mbox{}_{\! s}}}^{m
		p}
	\end{split}
	\end{displaymath}
	by the classical Sobolev inequality for Gagliardo semi-norms (cf.~\cite{MR2527916})
	\begin{displaymath}
	\norm{u}_{p_{\mbox{}_{s}}}\le C_{d}\,[u]_{s,p}
	\end{displaymath}
	where $p_{\mbox{}_{s}}$ is given by~\eqref{eq:6}.
\end{proof}

With these preliminaries we can now apply Proposition~\ref{lem:discreteToSmoothEstimateProof} and Theorem~\ref{thm:LqLinfty-deGiorgi} to prove the $L^{m+1}-L^{\infty}$ regularisation effect for the doubly nonlinear nonlocal problem~\eqref{eq:1}.

\begin{theorem}
	\label{thm:DeGeorgiFractionalPLaplacian}
	Let $\Omega$ be an open set in $\mathbb{R}^{d}$, $d \ge 1$. Suppose $p > 1$, $0 < s< 1$, and $m \ge 1$ such that 
	\begin{equation}
	\tag{\ref{hyp:16}}
	m(p-1)+(m+1)\frac{sp}{d} > 1.
	\end{equation}
	Let $q_{s} = p_s$ if $p \neq \frac{d}{s}$ and $q_{s} > \max(p,1+\frac{1}{m})$ if $p=\frac{d}{s}$.	Let $T > 0$ and $g \in L^{1}(0,T;L^{1})\cap L^{\psi}(0,T;L^{\rho})\cap L^{1}(0,T;L^{m+1+\varepsilon})$ for some $\varepsilon > 0$ where $\rho \ge m+1$ and $\psi > 1$ satisfy
	\begin{equation}
	\tag{\ref{eq:psirhohyp1}}
    \begin{cases}
	\frac{1}{\rho} < \left(1-\frac{1}{\psi}\right)\left(1-\frac{p}{q_{s}}\right)& \text{if } m(p-1) \ge 1,\\
    \frac{1}{\rho} \le \left(1-\frac{1}{\psi}\right)p\left(\frac{m}{m+1}-\frac{1}{q_{s}}\right)& \text{if } m(p-1) < 1.
    \end{cases}
	\end{equation}
	Let $u(t)$ be the mild solution to~\eqref{eq:1} for $u_{0} \in L^1 \cap L^{m+1}$. Then one has that
	\begin{equation}
	\label{eq:2}
	\begin{split}
	\norm{u(t)}_{\infty}&\le C\max\Biggr\{e^{\beta_{1}\omega t}\left(\tfrac{1}{t}+\omega\right)^{\alpha}\left(\norm{u_0}_{m+1}+\norm{g}_{L^1(0,t;L^{m+1})}\right)^{\gamma},\\
	&\hspace{2.7cm} e^{\beta_{2} \omega t}\norm{g}_{L^{\psi}(0,t;L^{\rho})}^{\eta}\left(\norm{u_0}_{m+1}+\norm{g}_{L^1(0,t;L^{m+1})}\right)^{\gamma_{\psi}}\Biggr\}
	\end{split}
	\end{equation}
	for all $0 < t \le T$ where we have the exponents
	\allowdisplaybreaks
	\begin{equation}
	\label{eq:46}
	\begin{split}
    \gamma& = \frac{\frac{1}{p}-\frac{1}{q_{s}}}{\frac{m}{m+1}-\frac{1}{q_{s}}},\quad
	\gamma_{\psi} = \frac{\left(1-\frac{1}{\psi}\right)\left(\frac{1}{p}-\frac{1}{q_{s}}\right)-\frac{1}{\rho p}}{\left(1-\frac{1}{\psi}\right)\left(\frac{m}{m+1}-\frac{1}{q_{s}}\right)-\frac{1}{\rho p}+\frac{1}{(m+1)p}},\\
	\alpha& = \frac{1}{mp(1-\frac{m+1}{mq_{s}})},\quad 
    \eta = \frac{1}{mp \left(1-\frac{1}{\psi}\right)\left(1-\frac{m+1}{mq_{s}}\right)+1-\frac{m+1}{\rho}},
	\end{split}
	\end{equation}
	and
	\begin{equation}
	\begin{split}
	\beta_{1}& = \begin{cases}
	\frac{\frac{1}{mp}-\frac{1}{m+1}}{\frac{1}{m+1}-\frac{1}{mq_{s}}}& \text{if } m(p-1) < 1,\\
	0& \text{if } m(p-1) \ge 1,
	\end{cases}\\
	\beta_{2}& = \begin{cases}
	\eta(m+1-mp)\left(1-\frac{1}{\psi}\right)& \text{if } m(p-1) < 1,\\
	0& \text{if } m(p-1) \ge 1.
	\end{cases}
	\end{split}
	\end{equation}
\end{theorem}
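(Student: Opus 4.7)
The plan is to reduce the statement to an application of the abstract De Giorgi iteration Theorem~\ref{thm:LqLinfty-deGiorgi} with the parameter choices $q=m+1$, $\sigma=mp$ and $r=mq_{s}$ (interpreting $r=\infty$ when $p>d/s$). With these choices, a direct computation shows that the exponents $\gamma$, $\gamma_{\psi}$, $\alpha$, $\eta$, $\beta_{1}$, $\beta_{2}$ produced by~\eqref{eq:thm31exponents} coincide with those displayed in~\eqref{eq:46}, and the dichotomy $\sigma\gtrless q$ translates precisely to $m(p-1)\gtrless 1$. Likewise, with this substitution, assumption~\eqref{eq:psirhohyp1} is exactly~\eqref{hyp:31}, while~\eqref{hyp:32} is secured using $\rho\ge m+1$ together with the freedom to enlarge $q_{s}$ in the borderline case $p=d/s$.

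To supply the structural inequality~\eqref{eq:15} required by Theorem~\ref{thm:LqLinfty-deGiorgi}, the first step is to observe that Lemma~\ref{lem:42} provides the one-parameter Sobolev-type pointwise estimate~\eqref{eq:5Glambda1} for the unperturbed operator $A_{0}:=(-\Delta_{p})^{s}\varphi$ with the above $q$, $\sigma$, $r$, constant $C=C_{d}$ and $\omega=0$. Next, to absorb the Lipschitz perturbation $F$ (the Nemytskii operator associated with $f$, which is Lipschitz with constant $\omega$ by~\eqref{hyp:2}--\eqref{hyp:3}), I invoke Lemma~\ref{le:Lipschitz-perturbation}, which upgrades~\eqref{eq:5Glambda1} to the same inequality for $A_{0}+F$ at the price of replacing $\omega$ by $\omega$. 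Then Proposition~\ref{lem:discreteToSmoothEstimateProof} lifts this pointwise bound to the level-set energy inequality~\eqref{eq:15} for the mild solution of~\eqref{eq:1}, using $L^{1}$-$m$-accretivity with complete resolvent of $\overline{(-\Delta_{p})^{s}\varphi}^{\mbox{}_{L^{1}}}+F$ established in Corollary~\ref{cor:doublynonlinearAccretivity}. The growth condition~\eqref{eq:27} is supplied by Lemma~\ref{le:GrowthConditionCompleteResolvent}, again using that the resolvent is complete.

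With~\eqref{eq:15} and~\eqref{eq:27} in hand for all $\lambda\ge 0$, Theorem~\ref{thm:LqLinfty-deGiorgi} applied with $(q,\sigma,r)=(m+1,mp,mq_{s})$ yields~\eqref{eq:2} directly after bookkeeping of the exponents and absorbing the constants depending only on $m$, $p$, $s$, $d$, $q_{s}$, $\rho$ and $\psi$. The hypothesis~\eqref{hyp:16} guarantees that $\sigma<r$, which is precisely the nondegeneracy required for the iteration to converge; indeed $mp<mq_{s}$ reduces via the Sobolev exponent $p_{s}=dp/(d-sp)$ to $(m+1)sp/d>1-m(p-1)$.

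The main obstacle will be justifying that Proposition~\ref{lem:discreteToSmoothEstimateProof} genuinely applies to the mild solution coming from Theorem~\ref{thm:1}: its statement demands enough integrability on the forcing term (the extra $L^{1}(0,T;L^{m+1+\varepsilon})$ assumption enters exactly here to validate the dominated-convergence step in the proof of that Proposition) and enough regularity on $u$ to evaluate the $(m+1)$-bracket. The latter I would handle by a standard approximation procedure: approximate $u_{0}$ in $L^{m+1}$ by elements of $D((-\Delta_{p})^{s}\varphi)$ and $g$ by smooth truncations, derive~\eqref{eq:15} for the resulting strong solutions (where $\varphi(u(t))\in W^{s,p}_{0}$ for a.e.\ $t$ so all brackets are literal integrals), pass to the limit in $L^{1}$ using lower semicontinuity of the $L^{q}$-norms appearing in~\eqref{eq:15}, and conclude the pointwise-in-$\lambda$ inequality for the original mild solution. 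Handling the borderline case $p=d/s$ also requires a small extra argument: here $p_{s}$ is not defined by a single formula and one must choose $q_{s}$ large enough so that both~\eqref{hyp:31} and~\eqref{hyp:32} hold and the constant $C_{d}$ in Lemma~\ref{lem:42} is finite, which is possible under the stated constraint $q_{s}>\max(p,1+1/m)$.
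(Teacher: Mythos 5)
Your proposal follows essentially the same route as the paper's own proof: Lemma~\ref{lem:42} together with Lemma~\ref{le:Lipschitz-perturbation} gives the one-parameter Sobolev inequality \eqref{eq:5Glambda1} for $(-\Delta_p)^s\varphi+F$ with $q=m+1$, $\sigma=mp$, $r=mq_s$, Proposition~\ref{lem:discreteToSmoothEstimateProof} (with Lemma~\ref{le:GrowthConditionCompleteResolvent} supplying \eqref{eq:27}) yields the level-set inequality \eqref{eq:15}, and Theorem~\ref{thm:LqLinfty-deGiorgi} then gives \eqref{eq:2}, with the same identification of \eqref{eq:psirhohyp1} with \eqref{hyp:31}, of the dichotomy $\sigma\gtrless q$ with $m(p-1)\gtrless1$, and of \eqref{hyp:16} with $q<r$. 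The only substantive difference is that you assert \eqref{hyp:32} is ``secured'' by $\rho\ge m+1$ and the choice of $q_s$, which is not actually justified when $p<\frac{d}{s}$ and $\psi$ is large (though the paper's proof likewise invokes Theorem~\ref{thm:LqLinfty-deGiorgi} without verifying \eqref{hyp:32}), while your closing approximation argument for $u_0\in L^1\cap L^{m+1}$ and the role of the $L^1(0,T;L^{m+1+\varepsilon})$ hypothesis is a sensible elaboration of steps the paper leaves implicit.
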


\begin{proof}
	Let $F$ be the Nemytskii operator of $f(\cdot,u)$ and $\E$ the energy functional \eqref{eq:energyFunctionalpLap}. 
    Then by Theorem~\ref{thm:1} and the proof thereof, $\overline{(\partial\E)_{\vert_{L^{1\cap\infty}}}\varphi}^{\mbox{}_{L^1}}+F$ is $\omega$-quasi $m$-accretive in $L^1$ and a mild solution to~\eqref{eq:1} exists in $L^1$ for all $u_0 \in L^{1}$. 
    By Lemma~\ref{lem:42} and Proposition~\ref{le:Lipschitz-perturbation} we have that $(-\Delta_p)^s \varphi+F$ satisfies the one-parameter Sobolev inequality~\eqref{eq:5Glambda1} with $q = m+1$, $\sigma = mp$, $r = mq_{s}$, $C = C_d$ and $\omega$ the Lipschitz constant of $f$. 
    Note that for $q_{s}$ we have chosen $\tilde{p}$ in the Sobolev embedding~\eqref{eq:6}. 
    Then we can apply Proposition~\ref{lem:discreteToSmoothEstimateProof} with $q_0 = 1$ to obtain an estimate of the form~\eqref{eq:15} with $L$ given by $\frac{m+1}{C_d}$. 
    To satisfy the conditions on $q$, $\sigma$, $r$ we first note that all are in $[1,\infty]$ with $q$ and $\sigma$ finite given that $p < \infty$. 
    Then $\sigma < r$ is equivalent to $p < q_{s}$. 
    This is clear when $p \neq \frac{d}{s}$. 
    However in the case $p = \frac{d}{s}$ we must choose $q_{s} > p$. 
    For $q < r$, we require that $m+1 < mq_{s}$. 
    When $p < \frac{d}{s}$, this implies that $1+\frac{1}{m} < (\frac{1}{p}-\frac{s}{d})^{-1}$, equivalent to~\eqref{hyp:16}, and when $p = \frac{d}{s}$, we choose $q_{s} > 1+\frac{1}{m}$. 
    In the case $p > \frac{d}{s}$ the inequality is clear. 
    We now apply Theorem~\ref{thm:LqLinfty-deGiorgi} to obtain~\eqref{eq:2}. For this we also need $m+1 \le \rho \le \infty$ and $1 < \psi \le \infty$ to satisfy~\eqref{hyp:31}, hence requiring~\eqref{eq:psirhohyp1}.
\end{proof}

We now extend this to the $L^{\ell}-L^{\infty}$ regularisation estimate of Theorem~\ref{thm:LellLinfty} for the doubly nonlinear nonlocal problem~\eqref{eq:1} by applying Theorem~\ref{thm:CH5.2}.

\begin{proof}[Proof of Theorem~\ref{thm:LellLinfty}]
	We apply Theorem~\ref{thm:DeGeorgiFractionalPLaplacian} to $\tilde{u}(s) = u(s+\frac{t}{2})$ and $\tilde{g}(s) = g(s+\tfrac{t}{2})$ to obtain~\eqref{eq:LqLrestimateAssumption} for all $t \in (0,T]$ with
	\begin{displaymath}
	c_1(t) = C e^{\beta_1 \omega t},\quad
	c_2(t) = C e^{\beta_2 \omega t}\norm{g}_{L^\psi(0,t;L^\rho)}^\eta,\quad
	\gamma = \frac{\frac{1}{p}-\frac{1}{q_{s}}}{\frac{m}{m+1}-\frac{1}{q_{s}}}.
	\end{displaymath}
	We now aim to apply Theorem~\ref{thm:CH5.2}, taking $q$ and $r$ in this theorem to be $m+1$ and $\infty$, respectively. By Corollary~\ref{cor:doublynonlinearAccretivity} we have that the operator $\overline{(\partial\E)_{\vert_{L^{1\cap\infty}}} \varphi}^{\mbox{}_{L^1}}+F$ is $\omega$-quasi $m$-accretive with complete resolvent. Hence by Lemma~\ref{le:GrowthConditionCompleteResolvent}, $u$ satisfies the exponential growth property~\eqref{eq:growthconditionGlambda}. We also require that $\gamma_{\psi} \le \gamma$, or equivalently,~\eqref{eq:psirhohyp2}. For the condition $\theta > 0$ of Theorem~\ref{thm:CH5.2}, we require that
	\begin{displaymath}
	1-\gamma\left(1-\frac{\ell}{m+1}\right) > 0.
	\end{displaymath}
	In particular, this is~\eqref{eq:45}. Hence we may apply Theorem~\ref{thm:CH5.2} to obtain~\eqref{eq:43}.
\end{proof}

%%%%%%%%%%%%%%%%%%%%%%%%%%%%%%%%%%%%%%%%%%%%%%%%%%%%%	
%%%%%%%%%%%%%%%%%%%%%%%%%%%%%%%%%%%%%%%%%%%%%%%%%%%%%		
%%%%%%%%%%%%%%%%%%%%%%%%%%%%%%%%%%%%%%%%%%%%%%%%%%%%%
%
%
%       Section: Strong solutions
%
%
%%%%%%%%%%%%%%%%%%%%%%%%%%%%%%%%%%%%%%%%%%%%%%%%%%%%%	
%%%%%%%%%%%%%%%%%%%%%%%%%%%%%%%%%%%%%%%%%%%%%%%%%%%%%	
%%%%%%%%%%%%%%%%%%%%%%%%%%%%%%%%%%%%%%%%%%%%%%%%%%%%%

\section{Strong solutions}
\label{sec:strongSols}

We now consider strong solutions, applying Theorem \ref{th:BGStrongExistence} to show that for $\varphi$ strictly increasing such that $\varphi^{-1} \in AC_{\loc}(\mathbb{R})$, mild solutions to~\eqref{eq:1} are in fact strong distributional solutions. 
Moreover we use the $L^{1}-L^{\infty}$ regularity estimate proved in Section \ref{sec:applicationToFractionalPLaplacian} to obtain the derivative estimates of Theorem~\ref{th:MildToStrongLipschitzFracPLap} in the case $m \ge 1$.

\medskip

\begin{proof}[Proof of Theorem~\ref{th:MildToStrongLipschitzFracPLap}]
    By Theorem~\ref{thm:1}, for every $u_{0}\in L^1$, there is a unique mild solution to Cauchy problem~\eqref{eq:1}. Now, let $u_{0}\in L^{1}$ such that $\varphi(u_{0})\in \tilde{D}((-\Delta_p)^s_{\vert_{L^{1\cap\infty}}})$. 
    Then, there exists a sequence $(v_n,w_n)\in (-\Delta_p)^s$ for $n \in \N$ such that $v_n \rightarrow \phi(u_0)$ in $L^1$ as $n \rightarrow \infty$ and $(w_n)_{n \in \N}$ is bounded in $L^1$. 
    Since $(v_n)_{n \in \N}$ is bounded in $L^{\infty}$ and $\Omega$ has finite measure, $\phi^{-1}(v_n) \in L^{1\cap\infty}$ uniformly for $n \in \N$.
    Let $(u_n)_{n \in \N}$ be the mild solutions with initial data $\phi^{-1}(v_n)$.
    By \cite[Lemma 7.8]{BenilanNonlinearEvolutionEqns} each mild solution $u_n$ is Lipschitz in time on $[0,T]$ with Lipschitz constant
    \begin{displaymath}
    L_n = e^{\omega t}\norm{g(0+)-w_n}_1+V(g,t+)+\omega\int_0^t e^{\omega(t-\tau)}V(g,\tau+)\dtau
    \end{displaymath}
    where
    \begin{displaymath}
        V(g,t+) = \limsup\limits_{h\rightarrow 0^{+}}\int_0^t \frac{\norm{g(\tau+h)-g(\tau)}_1}{h}\dtau.
    \end{displaymath}
    Since $w_n$ is bounded in $L^1$, $L_n$ is uniformly bounded by some $L > 0$ for all $n \in \N$.
    Then for $t \in [0,T]$ and $h > 0$, using the comparison estimate~\eqref{eq:theorem1.1estimate2},
    \begin{align*}
        \norm{u(t+h)-u(t)}_1& \le \norm{u_n(t+h)-u_n(t)}_1+(1+e^{\omega h})\norm{u(t)-u_n(t)}_1\\
        & \le Lh+2e^{\omega(t+h)}\norm{u_0-u_n(0)}_1
    \end{align*}
    for all $n \in \N$.
    Since $v_n$ converges to $\phi(u_0)$ in $L^1$, we have pointwise convergence of $u_n(0)$ to $u_0$ almost everywhere in $\Omega$. Hence, using the uniform bound for $v_n$ in $L^{\infty}$, we can take the limit supremum as $n \rightarrow \infty$ and apply Fatou's lemma to obtain
    \begin{displaymath}
        \norm{u(t+h)-u(t)}_1 \le Lh
    \end{displaymath}
    for all $t \in [0,T]$ and $h > 0$. So by the Lipschitz continuity of $F$ we have that $F(u) \in BV((0,T);L^{1})$.
    Moreover by Lemma~\ref{le:GrowthConditionCompleteResolvent}, since $\tilde{D}((-\Delta_p)^s_{\vert_{L^{1\cap\infty}}}) \subset L^\infty$, $u\in L^{\infty}([0,T];L^{\infty})$ and so $F(u) \in L^{1}(0,T;L^{\infty})$.
    Applying Theorem \ref{th:BGStrongExistence} with forcing term $\tilde{g} = -F(u)+g$ we have that $u \in W^{1,\infty}(0,T;L^{1})$ is a strong distributional solution to~\eqref{eq:1}. 
    The chain rule~\eqref{eq:ChainRuleForPhiFracPLap} follows from the proof of \cite[Theorem 4.1]{BenilanGariepy}.
    
    We have $u \in C([0,T];L^q)$ for $1 \le q < \infty$ due to the regularity of mild solutions in $L^1$. 
    Multiply the doubly nonlinear problem~\eqref{eq:1} by $\frac{\td }{\td t}\varphi(u)$ to obtain
	\begin{equation}
	\varphi'(u)\left|\frac{\td u}{\td t}\right|^{2}+\frac{\td }{\td t}\E(\varphi(u(t)))+\left(F(u(t))-g(t)\right)\frac{\td u}{\td t}(t)\varphi'(u(t)) = 0
	\end{equation}
	giving~\eqref{eq:DerivativeOfPsiPhiUFracPLap}.
\end{proof}

\medskip

We now introduce a lemma to obtain estimate~\eqref{eq:W12estimate} of Theorem~\ref{thm:derivativeEstimates} for a more general class of sub-differential operators. In particular, we consider the problem
\begin{equation}
\label{eq:61}
\begin{cases}
\begin{alignedat}{2}
u_{t}(t)+A\varphi(u(t))+f(\cdot,u(t))&=g(\cdot,t)\quad && 
\text{in $\Omega\times(0,T),$}\\
u(t)&=0\quad && \text{in $\R^{d}\setminus\Omega\times
	(0,T),$}\\
u(0)&=u_{0}\quad && \text{on $\Omega$,}
\end{alignedat}
\end{cases}
\end{equation}
where $A$ is the sub-differential in $L^1$ of a proper, lower semicontinuous convex functional $\E:L^2 \rightarrow (-\infty,\infty]$. 
Here we use the notation $\Phi(r) := \int_{0}^{r}\varphi(s)\ds$ for $r \in \R$ and $\varphi \in C(\R)$.

\begin{lemma}
	Let $A:=\left(\partial_{L^1}\E\right)_{\vert_{L^{1\cap\infty}}}$ be the sub-differential of a proper, lower semi\-continuous, convex functional $\E:L^2 \rightarrow (-\infty,\infty]$ satisfying~\eqref{eq:12} and $\E(0) = 0$. Let $\varphi\in C(\mathbb{R})$ be a strictly increasing function satisfying $\varphi(\R) = \R$ such that $\varphi^{-1} \in AC_{\loc}(\mathbb{R})$ and $\varphi(0) = 0$, and suppose that $f(\cdot,u)$ satisfies \eqref{hyp:2}-\eqref{hyp:3}. Then for every $\varphi(u_0) \in \tilde{D}(A)$ and $g \in BV(0,T;L^{1})\cap L^{1}(0,T;L^{\infty})$ the unique mild solution $u$ of~\eqref{eq:61} is strong in $L^1$ and for all $k > -1$, satisfies
	\begin{equation}
	\label{eq:W12estimateGeneral}
	\begin{split}
	\frac{1}{2}\int_{0}^{t}s^{k+2}&\int_{\Omega}\varphi'(u(s))\left|\frac{\td u}{\td s}\right|^{2}\dmu \ds+t^{k+2}\E(\varphi(u(t)))\\
	& \le (k+2)(k+1)\int_{0}^{t}s^{k}\int_{\Omega}\Phi(u(s))\dmu \ds\\
	&\quad+(k+2)\int_{0}^{t}s^{k+1}\int_{\Omega}\omega u\varphi(u)+g\varphi(u)\dmu \ds\\
	&\quad+\int_{0}^{t}\left((k+2)^2+\omega^{2}s^2\right)s^{k}\int_{\Omega} |u|^{2}\varphi'(u)\dmu \ds\\
	&\quad +\frac{1}{2}\int_{0}^{t}s^{k+2}\int_{\Omega}|g|^{2}\phi'(u)\dmu \ds.
	\end{split}
	\end{equation}
\end{lemma}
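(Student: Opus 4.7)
\emph{Strong existence and energy identity.} I will first upgrade the mild solution to a strong distributional one by repeating the approximation argument from the proof of Theorem~\ref{th:MildToStrongLipschitzFracPLap}, but with the abstract operator $A=\left(\partial_{L^1}\E\right)_{\vert_{L^{1\cap\infty}}}$ in place of $(-\Delta_p)^s_{\vert L^{1\cap\infty}}$. Since $\varphi(u_0)\in \tilde D(A)$, I pick an approximating sequence $(v_n,w_n)\in A$ bounded in $L^\infty\times L^1$ with $v_n\to \varphi(u_0)$ in $L^1$ and consider the mild solutions $u_n$ with initial data $\varphi^{-1}(v_n)$. B\'enilan's Lipschitz-in-time estimate \cite[Lemma~7.8]{BenilanNonlinearEvolutionEqns} combined with the $L^1$ comparison bound \eqref{eq:theorem1.1estimate2} and Fatou's lemma yields $u\in W^{1,\infty}(0,T;L^1)$, and the Lipschitz structure of $F$ propagates to $F(u)\in BV(0,T;L^1)\cap L^1(0,T;L^\infty)$, so Theorem~\ref{th:BGStrongExistence} produces a strong distributional solution together with the chain rule $\tfrac{\td}{\td s}\varphi(u)=\varphi'(u)u_s$ in $L^2$ for a.e.~$s$. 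Pairing the equation with $\tfrac{\td}{\td s}\varphi(u)$ and invoking the convex-functional chain rule $\tfrac{\td}{\td s}\E(\varphi(u))=\langle A\varphi(u),\tfrac{\td}{\td s}\varphi(u)\rangle$ yields
\begin{displaymath}
\int_\Omega \varphi'(u)|u_s|^2\,\dmu+\frac{\td}{\td s}\E(\varphi(u(s)))=\int_\Omega (g-F(u))\varphi'(u)u_s\,\dmu.
\end{displaymath}
Multiplying by $s^{k+2}$, integrating over $(0,t)$ and integrating by parts in time---the $s\to 0^+$ boundary term vanishes because $k>-1$ and $\E(\varphi(u(\cdot)))$ is bounded near $0$---produces
\begin{displaymath}
t^{k+2}\E(\varphi(u(t)))+\int_0^t\! s^{k+2}\!\!\int_\Omega \varphi'(u)|u_s|^2\,\dmu\,\ds=(k+2)\!\int_0^t\! s^{k+1}\E(\varphi(u))\,\ds+\int_0^t\! s^{k+2}\!\!\int_\Omega (g-F(u))\varphi'(u)u_s\,\dmu\,\ds.
\end{displaymath}

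\emph{Bounding the $\int s^{k+1}\E(\varphi(u))$ integral.} To control the first integral on the right, I test the equation against $\varphi(u)$ and apply the subgradient inequality $\langle A\varphi(u),\varphi(u)\rangle\ge \E(\varphi(u))-\E(0)=\E(\varphi(u))$ to obtain $\E(\varphi(u(s)))\le \int_\Omega (g-F(u)-u_s)\varphi(u)\,\dmu$. Rewriting $u_s\varphi(u)=\tfrac{\td}{\td s}\Phi(u)$, multiplying by $(k+2)s^{k+1}$, integrating, and performing a second integration by parts in time---discarding the non-positive boundary contribution $-(k+2)t^{k+1}\!\int \Phi(u(t))$ since $\Phi\ge 0$ (by $\varphi$ strictly increasing with $\varphi(0)=0$)---converts the $u_s$-piece into $(k+2)(k+1)\!\int_0^t\! s^k\!\int_\Omega \Phi(u)\,\dmu\,\ds$. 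The Lipschitz bound $|F(u)|\le\omega|u|$ together with the sign compatibility $u\varphi(u)\ge 0$ turns $-F(u)\varphi(u)$ into $\omega u\varphi(u)$, producing the first two groups on the right-hand side of \eqref{eq:W12estimateGeneral}.

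\emph{Young's inequalities and main obstacle.} For the residual $\int_0^t s^{k+2}\!\int_\Omega (g-F(u))\varphi'(u)u_s\,\dmu\,\ds$ I split the $g$ and $F$ contributions and apply Young's inequality weighted by $\sqrt{\varphi'(u)}$, tuning the parameters so that the cumulative coefficient on $\int s^{k+2}\int\varphi'(u)|u_s|^2$ absorbed into the left is exactly $\tfrac12$: the $g$-part contributes $\tfrac12\int s^{k+2}|g|^2\varphi'(u)$, and the $F$-part, via $|F(u)|\le \omega|u|$, contributes $\omega^2 s^{k+2}\int |u|^2\varphi'(u)$. The additional $(k+2)^2 s^k\int |u|^2\varphi'(u)$ contribution arises from a further Young step when reorganising the $(k+2)\!\int s^{k+1}\!\int u_s\varphi(u)$ expression of Step~2, namely
\begin{displaymath}
(k+2)s^{k+1}|u_s||\varphi(u)|\le \tfrac12 s^{k+2}\varphi'(u)u_s^2+\tfrac{(k+2)^2}{2}s^k \tfrac{\varphi(u)^2}{\varphi'(u)},
\end{displaymath}
together with the Cauchy--Schwarz pointwise control of $\varphi(u)^2/\varphi'(u)$ by a multiple of $|u|^2\varphi'(u)$ obtained from $\varphi(u)=\int_0^u \varphi'(r)\,\td r$ and the $L^\infty$-bound on $u$. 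The principal obstacle is precisely this bookkeeping: distributing the $\varphi'(u)|u_s|^2$ mass across overlapping Young applications so that exactly $\tfrac12$ remains on the left-hand side while each right-hand side term appears with its stated weight in $s$, keeping the $|u|^2\varphi'(u)$ contributions at the two orders $(k+2)^2 s^k$ and $\omega^2 s^{k+2}$ cleanly separated.
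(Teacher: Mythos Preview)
Your overall architecture coincides with the paper's: multiply the equation by $s^{k+2}\partial_s\varphi(u)$, integrate and use the chain rule for $\E$; then control $(k+2)\int s^{k+1}\E(\varphi(u))$ via the subgradient inequality tested against $\varphi(u)$; integrate by parts in time using $u_s\varphi(u)=\partial_s\Phi(u)$; finally apply Lipschitz and Young. Up to the point where you produce the $\Phi$ and $\omega u\varphi(u)+g\varphi(u)$ terms, your argument is essentially the paper's.

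The gap is in your last paragraph. You use the term $-(k+2)\int s^{k+1}\!\int u_s\varphi(u)$ \emph{twice}: once via integration by parts to manufacture $(k+2)(k+1)\int s^k\!\int\Phi(u)$, and then again via Young to manufacture $(k+2)^2\int s^k\!\int |u|^2\varphi'(u)$. You cannot do both. Worse, even the Young route you sketch is broken: it yields $\varphi(u)^2/\varphi'(u)$, and your claimed Cauchy--Schwarz control $\varphi(u)^2/\varphi'(u)\le C|u|^2\varphi'(u)$ is equivalent to $|\varphi(u)|\le C|u|\varphi'(u)$, which is false for a merely strictly increasing $\varphi$ (let $\varphi'$ dip near zero at a point where $\varphi$ is of order one). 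No $L^\infty$ bound on $u$ rescues this with a universal constant.

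The paper avoids this by keeping a term of the form $u\,\partial_s\varphi(u)=u\varphi'(u)u_s$ (note: $u\,v_s$, \emph{not} $u_s\,v$) through the integration-by-parts step, alongside the $\Phi$ terms. After the Lipschitz bound on $F$ one then has a single cross term $(k+2+\omega s)s^{k+1}|u|\varphi'(u)|u_s|$, and Young applied to the factorisation $\bigl(|u|\sqrt{\varphi'(u)}\bigr)\cdot\bigl(\sqrt{\varphi'(u)}|u_s|\bigr)$ produces $|u|^2\varphi'(u)$ directly, with no reference to $\varphi(u)^2/\varphi'(u)$. That is the missing idea: the $|u|^2\varphi'(u)$ term must come from a $u\,v_s$ contribution, not from $u_s\,v$.
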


\begin{proof}
	Multiplying~\eqref{eq:61} by $s^{k+2}\frac{\td }{\td t}\varphi(u)$ for $k > -1$, we can estimate $u$ in\\ $W^{1,2}_{\loc}((0,T];L^{2})$, as in \cite{BenilanGariepy}, by
	\begin{equation}
	\label{eq:67_1}
	\begin{split}
	\int_{0}^{t}s^{k+2}&\int_{\Omega}\varphi'(u(s))\left|\frac{\td u}{\td s}\right|^{2}\dmu \ds+t^{k+2}\E(\varphi(u(t)))\\
	& = (k+2)\int_{0}^{t}s^{k+1}\E(\varphi(u))\ds\\
	&\quad+\int_{0}^{t}s^{k+2}\int_{\Omega} \left(g(s)-F(u)\right)\frac{\td v}{\td s}\dmu \ds.
	\end{split}
	\end{equation}
	Estimating $\int_{0}^{t}s^{k+1}\E(v)\ds$, we note that $A$ is the subgradient of $\E$, so
	\begin{displaymath}
	\braket{A(v) - A(0),0-v} \le \E(0) - \E(v)
	\end{displaymath}
	for $0 < t \le T$. Then
	\begin{equation}
	\label{eq:PsiLambdaEstimate}
	\begin{split}
	\E(v)& \le -\int_{\Omega}\frac{\td u}{\td t}v\dmu-\int_{\Omega}F(u)v\dmu+\int_{\Omega}gv\dmu
	\end{split}
	\end{equation}
	for $0 < t \le T$. Since $\varphi$ is increasing,
	\begin{displaymath}
	\Phi(r) \le \varphi(r)r \quad \text{ for all $r \in \mathbb{R}$.}
	\end{displaymath}
	Multiply~\eqref{eq:PsiLambdaEstimate} by $s^{k+1}$ and integrate over $(0,t)$ to obtain
	\begin{equation}
	\label{eq:IntegralEstimatePsiULambda}
	\begin{split}
	\int_{0}^{t}s^{k+1}&\E(v)\ds+t^{k+1}\int_{\Omega}\Phi(u(t))\dmu\\
	& \le (k+1)\int_{0}^{t}s^{k}\int_{\Omega}\Phi(u(s))\dmu \ds+\int_0^t s^{k+1}\int_{\Omega}u\frac{\td v}{\td s}\ds\\
	&\quad+\int_{0}^{t}s^{k+1}\int_{\Omega}\left(g(s)-F(u)\right)v\dmu \ds.
	\end{split}
	\end{equation}
	Since $\varphi(0) = 0$ and $\varphi$ is non-decreasing, $\Phi(r) \ge 0$ for all $r \in \mathbb{R}$ and $\varepsilon > 0$. Hence
	\begin{displaymath}
	\int_{\Omega}\Phi(u(t))\dmu \ge 0
	\end{displaymath}
	for all $0 < t \le T$. Then returning to~\eqref{eq:67_1} we can estimate $\int_{0}^{t}s^{k+1}\E(v)\ds$, giving
	\begin{displaymath}
	\begin{split}
	\int_{0}^{t}&s^{k+2}\int_{\Omega}\varphi'(u(s))\left|\frac{\td u}{\td s}\right|^{2}\dmu \ds+t^{k+2}\E(\varphi(u(t)))\\
	& = (k+2)(k+1)\int_{0}^{t}s^{k}\int_{\Omega}\Phi(u(s))\dmu \ds+(k+2)\int_0^t s^{k+1}\int_{\Omega}u\frac{\td v}{\td s}\ds\\
	&\quad+(k+2)\int_{0}^{t}s^{k+1}\int_{\Omega}\left(g(s)-F(u)\right)v\dmu \ds\\
	&\quad+\int_{0}^{t}s^{k+2}\int_{\Omega} \left(g(s)-F(u)\right)\frac{\td v}{\td s}\dmu \ds.
	\end{split}
	\end{displaymath}
	Further, applying the Lipschitz property of $F$,
	\begin{displaymath}
	\begin{split}
	\int_{0}^{t}s^{k+2}&\int_{\Omega}\varphi'(u(s))\left|\frac{\td u}{\td s}\right|^{2}\dmu \ds+t^{k+2}\E(\varphi(u(t)))\\
	&\le (k+2)(k+1)\int_{0}^{t}s^{k}\int_{\Omega}\Phi(u(s))\dmu \ds\\
	&\quad+\int_{0}^{t}\left(k+2+\omega s\right)s^{k+1}\int_{\Omega} |u|\varphi'(u)\left|\frac{\td u}{\td s}\right|\dmu \ds\\
	&\quad+(k+2)\int_{0}^{t}s^{k+1}\int_{\Omega}\omega u\varphi(u)+g\varphi(u)\dmu \ds\\
	&\quad + \int_{0}^{t}s^{k+2}\int_{\Omega}g\varphi'(u)\frac{\td u}{\td s}\dmu \ds.
	\end{split}
	\end{displaymath}
	Applying Young's inequality, we combine $\left|\frac{\td u}{\td s}\right|$ terms to obtain~\eqref{eq:W12estimateGeneral}.
\end{proof}

For the proof of Theorem~\ref{thm:derivativeEstimates}, we first prove~\eqref{eq:lipschitzContinuity} before applying~\eqref{eq:W12estimateGeneral} to the porous medium case with $\varphi(r) = r^{m}$ to obtain~\eqref{eq:W12estimate}.

\begin{proof}[Proof of Theorem~\ref{thm:derivativeEstimates}]
	The estimate~\eqref{eq:lipschitzContinuity} follows as an application of the regularity results for homogeneous operators presented in~\cite{BenilanRegularizingMR648452}. 
    In particular, we note that $(-\Delta_p)^s \cdot^m$ is homogeneous of order $\alpha = m(p-1)$, so we apply~\cite[Theorem 4]{BenilanRegularizingMR648452} with forcing term $\tilde{f}(t) = -F(u(t))+g(t)$. 
    Using the Lipschitz property of $F$, we have for all $t > 0$ and $u_0 \in L^1$,
    \begin{align*}
    \norm{u(t(1+\xi))-u(t)}_{1}& \le  |1-(1+\xi)^{\frac{1}{1-\alpha}}|\left(2\norm{u_0}_1+\int_0^t \omega\norm{u(\tau)}_1+\norm{g(\tau)}_1 \dtau\right)\\
    &+|1+\xi-(1+\xi)^{\frac{1}{1-\alpha}}|\left(\omega\int_{0}^t \norm{u(\tau(1+\xi)}_1+\norm{g(\tau(1+\xi))}_1\dtau\right)\\
    &+(1+\xi)^{\frac{1}{1-\alpha}}\int_0^t\norm{g(\tau(1+\xi)-g(\tau)}_1\dtau\\
    &+(1+\xi)^{\frac{1}{1-\alpha}}\omega\int_0^t\norm{u(\tau(1+\xi)-u(\tau)}_1\dtau.
    \end{align*}
    Applying Gr\"onwall's inequality,
    \begin{align*}
    \norm{u(t(1+\xi))-u(t)}_{1}& \le  \left(|1-(1+\xi)^{\frac{1}{1-\alpha}}|\left(2\norm{u_0}_1+\int_0^t \omega\norm{u(\tau)}_1+\norm{g(\tau)}_1 \dtau\right)\right.\\
    &+|1+\xi-(1+\xi)^{\frac{1}{1-\alpha}}|\left(\omega\int_{0}^t \norm{u(\tau(1+\xi)}_1+\norm{g(\tau(1+\xi))}_1\dtau\right)\\
    &+\left.(1+\xi)^{\frac{1}{1-\alpha}}\int_0^t\norm{g(\tau(1+\xi)-g(\tau)}_1\dtau\right)e^{(1+\xi)^{\frac{1}{1-\alpha}}\omega t}.
    \end{align*}
    We can divide through by $\xi$, taking the $\limsup$ as $\xi\rightarrow 0^{+}$,
    \begin{align*}
    \limsup_{\xi\rightarrow 0^+}\frac{\norm{u(t(1+\xi))-u(t)}_{1}}{\xi}& \le \frac{e^{\omega t}(1+\alpha)}{|1-\alpha|}\left(\int_0^t \omega\norm{u(\tau)}_1+\norm{g(\tau)}_1 \dtau\right)\\
    &\quad+e^{\omega t}\left(\frac{2\norm{u_0}_1}{|1-\alpha|}+V(t,g)\right)
    \end{align*}
    where $V(t,g)$ is defined by~\eqref{eq:VLipdef} and here $\alpha = m(p-1)$.
	So by the growth estimate on $u$ in $L^1$ given by Theorem~\ref{thm:1}, we can divide through again by $t$ to obtain~\eqref{eq:lipschitzContinuity}.
	
	To obtain~\eqref{eq:W12estimate}, let $A = \left(\partial_{L^1}\E\right)_{\vert_{L^{1\cap\infty}}}$ for $\E$ given by~\eqref{eq:energyFunctionalpLap}. 
    Now consider $u$ a solution to~\eqref{eq:1} with initial data satisfying $\varphi(u_{0}) \in \tilde{D}(A)$ so that we have estimate~\eqref{eq:W12estimateGeneral}. 
    Then for $\varphi(s) = s^{m}$ where $m \ge 1$, we estimate further by
	\begin{displaymath}
	\begin{split}
	\frac{1}{2}\int_{0}^{t}&s^{k+2}\int_{\Omega}\varphi'(u(s))\left|\frac{\td u}{\td s}\right|^{2}\dmu \ds+t^{k+2}\E(\varphi(u(t)))\\
	& \le \int_{0}^{t}\left((k+2)(k+1)+(k+2)^2 m+(k+2)\omega s+\omega^{2}ms^{2}\right)s^{k}\norm{u}_{m+1}^{m+1} \ds\\
	&\quad+(k+2)\int_{0}^{t}s^{k+1}\int_{\Omega}gu^m \dmu \ds+m\int_{0}^{t}s^{k+2}\int_{\Omega}|g|^{2}u^{m-1}\dmu \ds.
	\end{split}
	\end{displaymath}
	We estimate the terms involving $g$ by Young's inequality, giving
	\begin{displaymath}
	\begin{split}
	\frac{1}{2}\int_{0}^{t}s^{k+2}&\int_{\Omega}\varphi'(u(s))\left|\frac{\td u}{\td s}\right|^{2}\dmu \ds+t^{k+2}\E(\varphi(u(t)))\\
	& \le \int_{0}^{t}\left((k+2)(k+1)+(k+2)^2m\right)s^{k}\norm{u}_{m+1}^{m+1} \ds\\
	&\quad+\int_{0}^{t}\left((k+2)(\omega+1) s+m(\omega^{2}+1)s^{2}\right)s^{k}\norm{u}_{m+1}^{m+1} \ds\\
	&\quad+\int_{0}^{t}\left(k+2+m s\right)s^{k+1}\norm{g}_{m+1}^{m+1} \ds.
	\end{split}
	\end{displaymath}
	
	We estimate $\norm{u}_{m+1}$ by $\norm{u}_1$ by applying Theorem~\ref{thm:LellLinfty} to $\norm{u}_{\infty}$ and the standard growth estimate to $\norm{u}_1$,
	\begin{displaymath}
	\begin{split}
    	\norm{u(t)}_{m+1}^{m+1}& \le \norm{u(t)}_1 \norm{u(t)}_{\infty}^{m}\\
    	& \le C\max \left(e^{\omega\beta_1 t}\left(\tfrac{1}{t}+\omega\right)^{\alpha}, e^{\omega \beta_{2}t}\norm{g}_{L^{\psi}(0,t;L^{\rho})}^{\eta}\right)^{\frac{m}{\theta}}\left(1+N(t)^{\gamma m}\right)\times\\
    	&\quad\left(e^{\omega t}\norm{u_0}_1+\int_0^t e^{\omega(t-\tau)}\norm{g(\tau)}_1 \dtau\right)^{\frac{\gamma m}{(m+1)\theta}+1}
	\end{split}
	\end{displaymath}
	where variables are given by~\eqref{eq:thm12MFs} and \eqref{eq:2exponent} with $\ell = 1$. 
    Note that we require~\eqref{eq:651} to satisfy condition~\eqref{eq:45} of Theorem~\ref{thm:LellLinfty}. Then for $k=\frac{\alpha m}{\theta}$ we define $\tilde{\alpha} = \frac{\alpha m}{\theta}+2$, we have~\eqref{eq:W12estimate}.
	
	Currently we only have this estimate for $\varphi(u_{0}) \in \tilde{D}(A)$. To prove that this holds for all $u_{0} \in L^{1}$, fix $u_{0} \in L^{1}$ and consider a sequence $(u_{0,n})_{n\in\mathbb{N}}$ where $u_{0,n} \in L^{\infty}$ for all $n \in \mathbb{N}$ and $u_{0,n} \rightarrow u_{0}$ in $L^{1}$ as $n \rightarrow \infty$. By semigroup theory, $u_{0}$ generates a mild solution $u$ satisfying $u(0) = u_{0}$. Define $\beta = \varphi^{-1}$ and $w_{n} = \int_{0}^{\varphi(u_{n})}((\beta)'(r))^{1/2}\dr$ so that $w_{n}'(t) = \sqrt{\varphi'(u_{n}(t))}\frac{\td u_{n}}{\td t}(t)$ for almost every $t \in (0,T)$. As in \cite{BenilanGariepy}, we have that $\varphi(u_{n})$ is bounded in $L^{\infty}(0,T;L^{\infty})$ and $w$ is bounded in $W^{1,2}(0,T;L^{2})$. Moreover, taking a subsequence $(n_{k})_{k \ge 1}$ with $n_{k} \rightarrow \infty$ for weak convergences and relabelling, we have
	\begin{align*}
	w_{k}& \rightarrow w \quad \text{in } C([0,T];L^{2}),\\
	w_{k}'& \rightharpoonup w' \quad \text{in } L^{2}((0,T);L^{2}),\text{ and}\\
	v_{k}& \rightharpoonup v \quad \text{in } L^{\infty}((0,T)\times\Omega).
	\end{align*}
	Then by a standard localisation argument and the continuity of $\sqrt{\varphi'}$, we can apply lower semicontinuity of the $L^2$ norm and $\E$ to obtain~\eqref{eq:W12estimate} for $u_0 \in L^1$.
\end{proof}

%%%%%%%%%%%%%%%%%%%%%%%%%%%%%%%%%%%%%%%%%%%%%%%%%%%%%	
%%%%%%%%%%%%%%%%%%%%%%%%%%%%%%%%%%%%%%%%%%%%%%%%%%%%%		
%%%%%%%%%%%%%%%%%%%%%%%%%%%%%%%%%%%%%%%%%%%%%%%%%%%%%
%
%
%       Section: Global H\"older
%
%
%%%%%%%%%%%%%%%%%%%%%%%%%%%%%%%%%%%%%%%%%%%%%%%%%%%%%	
%%%%%%%%%%%%%%%%%%%%%%%%%%%%%%%%%%%%%%%%%%%%%%%%%%%%%	
%%%%%%%%%%%%%%%%%%%%%%%%%%%%%%%%%%%%%%%%%%%%%%%%%%%%%

\section{H\"older regularity}
\label{sec:holderRegularity}

This section is dedicated to the parabolic H\"older regularity of mild
solutions to the initial boundary value problem
\begin{equation}
\label{eq:6.1}
\begin{cases}
\begin{alignedat}{2}
u_{t}(t)+(-\Delta_{p})^{s}u^m(t)+f(\cdot,u(t))& = g(\cdot,t)\quad && \text{ in } \Omega\times(0,T),\\
u(t)& = 0\quad && \text{ in }
\mathbb{R}^{d}\setminus\Omega\times(0,T),\\
u(0)& = u_{0}\quad && \text{ on } \Omega,
\end{alignedat}
\end{cases}
\end{equation}
for $1<p<\infty$ with $p\neq 1+\frac{1}{m}$, $0<s<1$ and where $\Omega$ is an open, bounded domain in $\R^d$, $d \ge 2$. For global H\"older regularity we consider only the case $m = 1$. For the local H\"older result, we apply the following local elliptic H\"older regularity result from~\cite{MR3861716}.
\begin{theorem}[{\cite[Theorem~1.4]{MR3861716}}]
	\label{thm:localEllipticHolder}
	Let $\Omega \subset \R^d$ be a bounded and open set. Assume $2 \le p < \infty$, $0 < s < 1$ and $q \ge 1$ satisfies $q > \frac{d}{sp}$. We define the exponent
	\begin{equation}
	\label{eq:localTheta}
	\Theta(d,s,p,q):= \min\left(\frac{1}{p-1}\left(sp-\frac{d}{q}\right),1\right).
	\end{equation}
	Let $u \in W_{\loc}^{s,p}(\Omega)\cap L_{\loc}^{\infty}(\Omega)\cap L_{sp}^{p-1}(\R^d)$ be a local weak solution of
	\begin{displaymath}
	(-\Delta_p)^s u = h \quad \text{in } \Omega,
	\end{displaymath}
	where $h \in L^q_{\loc}(\Omega)$. Then $u \in C_{\loc}^{\delta}(\Omega)$ for every $0 < \delta < \Theta$.
\end{theorem}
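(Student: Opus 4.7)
The plan is to prove Theorem~\ref{thm:localEllipticHolder} by the nonlocal analogue of the classical De Giorgi iteration, combined with an oscillation-decay argument on dyadic balls. The starting point is a nonlocal Caccioppoli inequality for the truncated solution. Fix a ball $B_r=B_r(x_0)\subset\subset\Omega$, a level $k\in\R$, and a cutoff $\eta\in C_c^\infty(B_{3r/4})$ with $\eta\equiv 1$ on $B_{r/2}$ and $|\nabla\eta|\le C/r$. Testing the weak formulation of $(-\Delta_p)^s u=h$ with $\eta^p(u-k)_+$ and exploiting the Lindqvist-type algebraic inequality for the kernel $|a-b|^{p-2}(a-b)$ (which is where the hypothesis $p\ge 2$ is used) together with Young's inequality, one obtains
\[
[\eta(u-k)_+]_{s,p}^p \le \frac{C}{r^{sp}}\!\int_{B_r}\!(u-k)_+^p\,dx + C\,\mathrm{Tail}(u_+;x_0,r)^{p-1}\!\int_{B_r}\!(u-k)_+\,dx + C\,r^{d(1-1/q)}\|h\|_{L^q(B_r)}\|(u-k)_+\|_{L^\infty(B_r)},
\]
where the nonlocal tail $\mathrm{Tail}(v;x_0,r)^{p-1}:=r^{sp}\int_{\R^d\setminus B_r}|v(y)|^{p-1}|y-x_0|^{-d-sp}\,dy$ is finite thanks to the standing assumption $u\in L^{p-1}_{sp}(\R^d)\cap L^\infty_{\loc}$.

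Next I would prove a De Giorgi--type $L^p\to L^\infty$ lemma for truncations. Coupling the Caccioppoli estimate with the fractional Sobolev embedding on balls (which, under $q>d/(sp)$, renders the right-hand-side term involving $h$ subcritical) gives a recursive inequality on the shrinking excesses
\[
Y_n := |B_{r_n}|^{-1}\!\int_{B_{r_n}}\!(u-k_n)_+^p\,dx,\qquad r_n=\tfrac{r}{2}+\tfrac{r}{2^{n+1}},\quad k_n=k-\tfrac{M}{2^{n+1}},
\]
of the form $Y_{n+1}\le C b^{n}Y_n^{1+\alpha}$. Invoking (the nonlocal analogue of) Lemma~\ref{lem:ladyMod}, if the initial excess $Y_0$ is small enough then $Y_n\to 0$, which translates into the one-sided bound $u\le k+M$ on $B_{r/2}$ with $M\lesssim r^{(sp-d/q)/(p-1)}\|h\|_{L^q}^{1/(p-1)}+\text{(tail)}$.

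Applying this De Giorgi lemma to $u$ and to $-u$ and combining with a density dichotomy on super-level sets at the median of $u$ on $B_r$, one derives the key contraction
\[
\osc(u,B_{r/2}) \le \theta\,\osc(u,B_r) + C\,r^{\bar\beta},\qquad \theta\in(0,1),\quad \bar\beta:=\tfrac{sp-d/q}{p-1},
\]
valid for every ball $B_r\subset\subset\Omega$ of sufficiently small radius. A standard geometric iteration on dyadic balls centered at any $x_0\in\Omega'\subset\subset\Omega$ then yields $\osc(u,B_\rho(x_0))\le C\rho^\delta$ for every $\delta<\min(\bar\beta,1)=\Theta(d,s,p,q)$, which is precisely Campanato's characterization of $u\in C^\delta_{\loc}(\Omega)$.

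The principal obstacle is the propagation of the nonlocal tail through the iteration. Unlike the local $p$-Laplace case, the Caccioppoli estimate on $B_r$ depends genuinely on $u|_{\R^d\setminus B_r}$ via $\mathrm{Tail}(u_+-k;x_0,r)$, and this tail does not automatically shrink when one shifts $u\mapsto u-k_n$ or restricts to smaller balls. Throughout both the De Giorgi iteration and the oscillation-decay step, one must track the tails quantitatively and absorb them into the subcritical error $Cr^{\bar\beta}$ using only the a priori information $u\in L^\infty_{\loc}\cap L^{p-1}_{sp}(\R^d)$. It is precisely the balance between the gain from the fractional Sobolev embedding (which produces the factor $1/(p-1)$ on the scaling of $h$) and the loss from these tails that pins the final H\"older exponent at $\Theta(d,s,p,q)=\min\bigl(\tfrac{sp-d/q}{p-1},1\bigr)$.
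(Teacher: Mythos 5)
There is nothing to match your argument against inside the paper itself: Theorem~\ref{thm:localEllipticHolder} is not proved there, it is quoted verbatim from Brasco--Lindgren--Schikorra \cite[Theorem~1.4]{MR3861716}, and the proof in that reference is \emph{not} a De Giorgi iteration. It proceeds by discrete differentiation of the equation (finite differences of fractional order), improved Besov--Nikolskii estimates for $(-\Delta_p)^s$, and a Moser-type scheme that converts gains of integrability into gains of differentiability, finally perturbing to accommodate $h\in L^q_{\loc}$; the restriction $p\ge 2$ enters through the algebraic inequalities needed in that differentiated scheme. This machinery is precisely what produces the almost-sharp exponent $\Theta(d,s,p,q)=\min\bigl(\tfrac{1}{p-1}(sp-\tfrac{d}{q}),1\bigr)$.

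The genuine gap in your proposal is the last step. The Caccioppoli inequality, the De Giorgi lemma for truncations, and the dichotomy argument are fine (this is essentially the Di Castro--Kuusi--Palatucci route, and the tail bookkeeping you flag can indeed be carried out), but they deliver an oscillation contraction
\begin{displaymath}
\osc(u,B_{r/2})\le \theta\,\osc(u,B_r)+C\,r^{\bar\beta},\qquad \bar\beta=\tfrac{sp-d/q}{p-1},
\end{displaymath}
in which $\theta\in(0,1)$ is a \emph{fixed} structural constant coming from the measure-theoretic dichotomy; it is typically close to $1$ and is not quantitatively tied to $s,p,d,q$. Iterating on dyadic balls then yields $\osc(u,B_\rho)\le C\rho^{\alpha}$ only for $\alpha=\min\bigl(\bar\beta,\log_2(1/\theta)\bigr)$, i.e.\ H\"older continuity for \emph{some} small unspecified exponent, not for every $\delta<\Theta$. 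Your assertion that the geometric iteration gives every $\delta<\min(\bar\beta,1)$ silently assumes $\theta$ can be taken arbitrarily small (or that the contraction can be run at exponent arbitrarily close to $\bar\beta$), which the De Giorgi scheme does not provide; this is exactly the obstruction that forced the authors of \cite{MR3861716} to abandon oscillation decay in favour of the differentiated Besov/Moser iteration. So as written the proposal proves local H\"older continuity with some exponent, but not the statement of the theorem.
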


\begin{proof}[Proof of Theorem~\ref{thm:localHolder}]
	By the standard growth estimate~\eqref{eq:theorem1.1estimate1}, $u(t) \in L^1$ for $t \in [0,T]$ and hence by the Lipschitz condition of $f$, $F(u(t)) \in L^1$ for all $t \in [0,T]$. 
    By the homogeneous regularizing effects of~\cite{BenilanRegularizingMR648452} as in Theorem~\ref{thm:derivativeEstimates}, we have that $u_t \in L^1$ and $u(t)$ is a strong solution. 
    Furthermore, $u(t) \in L_{\loc}^{\infty}(\Omega)$ for almost every $t \in (0,T)$ by~\cite[Theorem 3.2]{MR3861716}. 
    We now apply Theorem~\ref{thm:localEllipticHolder} with 
    \begin{displaymath}
        h  = g(t)-F(u(t))-u_t(t) \in L^1.
    \end{displaymath}
    Hence $u^m(t) \in C_{\loc}^{\delta}(\Omega)$ for almost every $t \in (0,T)$. Since $r^m$, $r \in \R$, is H\"older continuous for $m \ge 1$ with exponent $\frac{1}{m}$ we obtain H\"older continuity of $u(t)$ in this case by composition.
\end{proof}

Our proof of global H\"older regularity employs the following elliptic H\"older regularity result for the fractional $p$-Laplacian from \cite{IannizzottoHolderRegularity}.

\begin{theorem}[{\cite[Theorem~1.1]{IannizzottoHolderRegularity}}]
	\label{thm:IannizzottoHolder}
	Let $\Omega$ be a bounded domain in $\R^{d}$, $d\ge 2$, with a boundary $\partial\Omega$ of the class $C^{1,1}$, $p \in (1,\infty)$ and $0<s<1$.
	There exists $\alpha \in (0,s]$ and $C_{\Omega} > 0$ depending on $d,p,s,\Omega$ such that if $h \in L^{\infty}$, then the weak solution $u\in W^{s,p}_{0}$ of
	\begin{equation}\label{eq:5}
	\begin{cases}
    \begin{alignedat}{2}
	   (-\Delta_{p})^{s}u& =h\quad && \text{ in } \Omega,\\
	   u& = 0\quad && \text{ in } \R^{d}\setminus\Omega,
    \end{alignedat}
	\end{cases}
	\end{equation}
	belongs to $C^{\alpha}(\overline{\Omega})$ and satisfies
	\begin{equation}
	\label{eq:ellipticHolderEstimate}
	\norm{u}_{C^{\alpha}(\overline{\Omega})} \le C_{\Omega}\norm{h}_{L^{\infty}}^{\frac{1}{p-1}}
	\end{equation}
\end{theorem}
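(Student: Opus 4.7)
The plan is to prove the global H\"older estimate by decoupling the analysis into an \emph{interior} regularity piece and a \emph{boundary} regularity piece, and then gluing them through a standard covering argument. First I would normalize: by the $(p-1)$-homogeneity of $(-\Delta_p)^s$, if $u$ solves $(-\Delta_p)^s u = h$ then $\tilde u := u/\|h\|_\infty^{1/(p-1)}$ solves $(-\Delta_p)^s \tilde u = h/\|h\|_\infty$, so it suffices to assume $\|h\|_\infty \le 1$ and prove $\|u\|_{C^\alpha(\overline\Omega)} \le C_\Omega$; this automatically produces the exponent $1/(p-1)$ on $\|h\|_{\infty}$ in the final bound.

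For the interior part I would invoke the local H\"older regularity result already available in the literature (e.g.\ the theorem of Brasco--Lindgren--Schikorra quoted as Theorem~\ref{thm:localEllipticHolder}): for every compact $K \Subset \Omega$ there is $\delta_0 \in (0,1)$ and $C_K$ with $[u]_{C^{\delta_0}(K)} \le C_K(\|h\|_\infty^{1/(p-1)} + \|u\|_\infty)$. Since $u \in W^{s,p}_0 \subset L^{p-1}_{sp}(\R^d)$ and $h \in L^\infty$, one first upgrades $u$ to $L^\infty(\Omega)$ via a Moser/De Giorgi iteration (a Caccioppoli inequality together with the fractional Sobolev embedding~\eqref{eq:6} yields an $L^\infty$ bound $\|u\|_\infty \le C(\|h\|_\infty^{1/(p-1)} + \|u\|_{L^{p-1}_{sp}})$, and the latter term is controlled by $\|h\|_\infty^{1/(p-1)}$ via testing the equation with $u$ and using Poincar\'e). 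This gives $u$ bounded everywhere with the correct dependence.

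For the boundary part the key is to construct a \emph{barrier}. Using the $C^{1,1}$-regularity of $\partial\Omega$, the distance function $d_\Omega(x) := \mathrm{dist}(x,\partial\Omega)$ is $C^{1,1}$ in a uniform one-sided tubular neighbourhood of $\partial\Omega$. I would take $w(x) := C_\star\,\eta(x)\,d_\Omega(x)^s$ (cut off smoothly far from the boundary) as a candidate supersolution and verify by a direct, but delicate, computation that $(-\Delta_p)^s w \ge 1$ in a neighbourhood of $\partial\Omega$ provided $C_\star$ is chosen large. The half-space computation $(-\Delta_p)^s((x_d)_+^s) = 0$ serves as the model case and can be perturbed to $C^{1,1}$ domains by flattening the boundary; the symmetric function $-w$ gives a matching subsolution. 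A weak comparison principle for $(-\Delta_p)^s$ (strict monotonicity of the nonlinearity against test functions in $W^{s,p}_0$) then yields $|u(x)| \le C_\star\,d_\Omega(x)^s$ in a neighbourhood of $\partial\Omega$, hence $|u(x)-u(x_0)| \le C_\star|x-x_0|^s$ for every $x_0 \in \partial\Omega$.

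To upgrade the pointwise boundary decay to a true H\"older modulus of continuity on $\overline\Omega$ I would use a scaled iteration on balls $B_r(x_0)$ centred at $x_0 \in \partial\Omega$: on each such ball the rescaled function satisfies an equation of the same form with controlled right-hand side, and the barrier estimate improves by a geometric factor. Combining this boundary oscillation decay with the interior estimate via a standard covering/patching argument (splitting $|x-y|$ into cases according to whether both points are close to $\partial\Omega$, one is close and one is in the interior, or both are interior) produces a uniform exponent $\alpha := \min(\delta_0,s) \in (0,s]$ and the global bound~\eqref{eq:ellipticHolderEstimate}. The main obstacle is undoubtedly the explicit barrier computation: evaluating or estimating $(-\Delta_p)^s(d_\Omega^s)$ near the boundary requires a careful splitting of the singular integral into near-boundary and far-field parts, a local boundary-flattening diffeomorphism whose nonlocal perturbation must be controlled, and precise cancellation estimates exploiting the power $s$ in the exponent; everything else is comparatively standard once this barrier is in hand.
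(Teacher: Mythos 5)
This statement is not proved in the paper at all: it is quoted verbatim as Theorem~1.1 of the cited work of Iannizzotto, Mosconi and Squassina, so there is no internal proof to compare your argument against. Judged on its own terms, your outline is essentially a faithful reconstruction of the strategy of that reference: normalize by $(p-1)$-homogeneity, establish an $L^\infty$ bound, use interior H\"older regularity, build a barrier comparable to $d_\Omega(x)^s$ from the half-space model $(x_d)_+^s$ (whose $s$-$p$-harmonicity in the half-space is itself a nontrivial lemma of that paper), apply weak comparison to get boundary decay $|u|\le C\,d_\Omega^s$, and glue interior and boundary oscillation estimates by a covering argument. So the route is the right one, and the exponent $\alpha\in(0,s]$ and the $\|h\|_\infty^{1/(p-1)}$ scaling come out exactly as you describe.

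Two caveats. First, your interior ingredient is Theorem~\ref{thm:localEllipticHolder} (Brasco--Lindgren--Schikorra), which is stated only for $2\le p<\infty$, whereas the theorem you are proving covers all $p\in(1,\infty)$; for $1<p<2$ you would need a different interior H\"older result (e.g.\ Di Castro--Kuusi--Palatucci, or the interior estimates in Iannizzotto--Mosconi--Squassina themselves), so as written your interior step does not cover the full stated range. Second, you correctly identify that the entire technical weight sits in the barrier computation --- verifying that a truncated multiple of $d_\Omega^s$ is a supersolution near a $C^{1,1}$ boundary, with the boundary-flattening and tail estimates controlled uniformly --- but the proposal only names this step rather than carrying it out; without it the boundary decay $|u|\le C_\star d_\Omega^s$, and hence the global exponent, is not actually established. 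As a plan it is sound and matches the cited proof; as a proof it is incomplete precisely at that point.
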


For this we use a restriction to the set of continuous functions $u : \Omega\to \R$ vanishing on the boundary $\partial\Omega$, which we denote by $C_{0}(\Omega)$. Following the notation in Definition~\ref{def:sub-diff-in-X}, we denote by $\left(\partial\E\right)_{\vert_{C_{0}}}$ the restriction of $\partial\E$ to $C_{0}(\Omega)\times C_{0}(\Omega)$. We first prove accretivity and density results for this operator on $C_0(\Omega)$. The proof of density follows the idea in~\cite[Proposition 5.4]{MR3057169}.

\begin{proposition}[{Density of $D(\left(\partial\E\right)_{\vert_{C_{0}}})$ in $C_0(\Omega)$}]
	\label{prop:C0density}
	Let $\Omega$ be a bounded domain in $\R^d$, $d \ge 2$, with a boundary $\partial\Omega$ of the class $C^{1,1}$, $F$ the Nemytskii operator of $f$ on $C_0(\Omega)$ satisfying~\eqref{hyp:2}-\eqref{hyp:3}, $p \in (1,\infty)$ and $0 < s < 1$. Then $\left(\partial\E\right)_{\vert_{C_{0}}}+F$ is m-completely accretive in $C_0(\Omega)$. Furthermore, if $s < 1-\frac{1}{p}$ then the set $D(\left(\partial\E\right)_{\vert_{C_{0}}})$ is dense in $C_0(\Omega)$.
\end{proposition}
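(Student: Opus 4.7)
The proposition has two parts: $\omega$-quasi $m$-complete accretivity of $A := (\partial\E)_{\vert C_0} + F$ in $C_0(\Omega)$ (for all admissible $s,p$), and density of $D((\partial\E)_{\vert C_0})$ in $C_0(\Omega)$ under the sharper hypothesis $s < 1-1/p$. My strategy is to realize the resolvent of $(\partial\E)_{\vert C_0}$ through the elliptic Dirichlet problem for the fractional $p$-Laplacian, lifting its regularity to $C_0$ via the boundary H\"older theorem~\ref{thm:IannizzottoHolder}, and then to obtain density through a resolvent-approximation argument based on smooth compactly supported test functions.

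\emph{Step 1 ($m$-complete accretivity).} Complete accretivity of $(\partial\E)_{\vert C_0}$ is inherited from that of $\partial\E$ on $L^2$ established in Corollary~\ref{cor:doublynonlinearAccretivity}: any $(u,h)\in(\partial\E)_{\vert C_0}$ also lies in $\partial\E$, and the defining complete-contraction inequality for the resolvent passes to any restricted graph. For the range condition in $C_0$, fix $h\in C_0(\Omega)\subseteq L^\infty\cap L^2$ and $\lambda>0$. The $m$-accretivity of $\partial\E$ on $L^2$ produces a unique weak solution $u\in W^{s,p}_0\cap L^2$ of $u+\lambda(-\Delta_p)^s u=h$, and $T$-contractivity in $L^\infty$ (from complete accretivity, applied against the trivial pair $(0,0)\in\partial\E$) yields $\|u\|_\infty\le\|h\|_\infty$, so $(-\Delta_p)^s u=\lambda^{-1}(h-u)\in L^\infty$. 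Theorem~\ref{thm:IannizzottoHolder} then delivers $u\in C^{\alpha}(\overline{\Omega})$ with $u|_{\partial\Omega}=0$ (the quasi-continuous representative of a $W^{s,p}_0$-function that is also H\"older-continuous up to the boundary must vanish everywhere on $\partial\Omega$ by continuity), placing $u$ in $C_0(\Omega)$. The Lipschitz perturbation $F$ is then absorbed exactly as in Corollary~\ref{cor:doublynonlinearAccretivity}, giving the asserted $\omega$-quasi $m$-complete accretivity of $A$.

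\emph{Step 2 (density under $s<1-1/p$).} Set $J_\lambda:=(I+\lambda(\partial\E)_{\vert C_0})^{-1}$; by Step~1 this is a complete contraction on $C_0(\Omega)$ mapping into $D((\partial\E)_{\vert C_0})$. By the sup-norm contraction estimate
\[
\|J_\lambda u-u\|_\infty \le 2\|u-v\|_\infty + \|J_\lambda v-v\|_\infty,
\]
it suffices to verify $\|J_\lambda v-v\|_\infty\to 0$ as $\lambda\to 0^+$ on the sup-norm-dense subset $C_c^\infty(\Omega)\subset C_0(\Omega)$. Fix $v\in C_c^\infty(\Omega)$. The Lipschitz bound $|v(x)-v(y)|\le\|\nabla v\|_\infty |x-y|$ near the diagonal and the tail decay outside $\supp(v)$ make the principal value $(-\Delta_p)^s v(x)$ converge for every $x$, with $\|(-\Delta_p)^s v\|_\infty<\infty$; the decisive point is that the radial integrand $r^{p-1-(d+sp)}r^{d-1}$ is integrable on $(0,1)$ iff $p-1-sp>-1$, i.e.\ exactly when $s<1-1/p$. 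By the characterization of Proposition~\ref{prop:characterizeFracPLap}, $(v,(-\Delta_p)^s v)\in\partial\E$ in $L^2$, hence the $L^\infty$-extension of the resolvent (available through complete accretivity) returns $v$ when applied to $v+\lambda(-\Delta_p)^s v\in L^\infty$. Sup-norm contractivity then yields
\[
\|J_\lambda v-v\|_\infty \;=\; \bigl\|J_\lambda v - J_\lambda\bigl(v+\lambda(-\Delta_p)^s v\bigr)\bigr\|_\infty \;\le\; \lambda\,\|(-\Delta_p)^s v\|_\infty,
\]
which vanishes as $\lambda\to 0^+$.

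\emph{Main obstacle.} The nontrivial content is the density claim. The subtlety is that $(-\Delta_p)^s v$ does \emph{not} generically belong to $C_0(\Omega)$ for $v\in C_c^\infty(\Omega)$ (it typically fails to vanish on $\partial\Omega$), so one cannot include $C_c^\infty(\Omega)$ directly into $D((\partial\E)_{\vert C_0})$. The detour through the $L^\infty$-extension of the resolvent bypasses this: $v$ need only be a valid argument of $J_\lambda$ in $L^\infty$, and complete $L^\infty$-contractivity converts pointwise boundedness of $(-\Delta_p)^s v$ into sup-norm convergence of $J_\lambda v-v$. The threshold $s<1-1/p$ is precisely the integrability exponent that keeps $(-\Delta_p)^s v$ bounded for smooth compactly supported $v$.
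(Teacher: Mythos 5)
Your Step 1 is essentially the paper's own argument: solve the resolvent equation in $L^2$, use complete accretivity to get an $L^\infty$ bound, and invoke Theorem~\ref{thm:IannizzottoHolder} to place the solution in $C^{\alpha}(\overline{\Omega})\cap C_0(\Omega)$; the treatment of the Lipschitz perturbation is at the same level of detail as the paper's. Your Step 2, however, is a genuinely different and arguably leaner route to density. The paper fixes the test function $u\in C_c^\infty(\Omega)$, forms $f=u+(-\Delta_p)^su\in L^\infty$ (by the same $s<1-\tfrac1p$ integrability computation you use), approximates $f$ by $f_n\in C_c^\infty(\Omega)$, solves the corresponding problems in $C_0(\Omega)$, and then needs the uniform $C^\alpha$ bound from~\eqref{eq:ellipticHolderEstimate}, Arzel\`a--Ascoli, coercivity in $W^{s,p}$ and Minty's monotonicity trick to identify the uniform limit of the $u_n$ with $u$. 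You instead send $\lambda\to0^+$ in the resolvent and exploit only the $L^\infty$-nonexpansivity coming from complete accretivity, via the identity $J_\lambda\bigl(v+\lambda(-\Delta_p)^sv\bigr)=v$, which collapses the whole compactness-plus-Minty machinery into the one-line estimate $\norm{J_\lambda v-v}_\infty\le\lambda\norm{(-\Delta_p)^sv}_\infty$. What your route buys is brevity and the avoidance of weak-convergence arguments; what it requires (and what you should spell out, since Proposition~\ref{prop:characterizeFracPLap} alone does not literally state it) is the identification of the absolutely convergent pointwise integral $(-\Delta_p)^sv$ of a $C_c^\infty$ function with an element of the variational subdifferential $\partial\E(v)$ — a Fubini/symmetrization step that is legitimate precisely because $s<1-\tfrac1p$ makes the kernel integrable against $|v(x)-v(y)|^{p-1}$ near the diagonal — together with the (easy, via uniqueness in $L^2$) agreement of the $C_0$-resolvent with the $L^2$-resolvent on $C_0$ data. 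With those two points made explicit, your proof is correct and self-contained, and the threshold $s<1-\tfrac1p$ enters for exactly the same reason as in the paper.
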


\begin{proof}
	Since $\Omega$ has finite Lebesgue measure, one has that the subdifferential satisfies $\displaystyle{\left(\partial\E\right)_{\vert_{C_{0}}} \subseteq \partial\E}$. Then since $\partial\E+F$ is $\omega$-quasi $m$-completely accretive in $L^{2}$, for every $g\in C_{0}(\Omega)$ and $\lambda>0$, there is a unique $u_{\lambda}\in L^{2}$ satisfying
	\begin{equation}
	\label{eq:7_1}
	(1+\lambda\omega)u_{\lambda}+\lambda\big(\partial\E(u_{\lambda})+F(u_{\lambda})\big)=g
	\qquad\text{in $L^{2}$.}
	\end{equation}
	Moreover, by the complete accretivity condition, $u_\lambda \in L^\infty$. Hence $u_{\lambda}$ is a weak solution of the non-local Poisson problem~\eqref{eq:5} with
	\begin{displaymath}
	\begin{split}
	h& :=- F(u_{\lambda})+\frac{g-(1+\lambda\omega)u_{\lambda}}{\lambda}\\
	& \in L^{\infty},
	\end{split}
	\end{displaymath}
	and so Theorem~\ref{thm:IannizzottoHolder} yields that $u_\lambda \in
	C_{0}^{\alpha}(\Omega)$, satisfying 
	\begin{displaymath}
	(1+\lambda\omega)u_{\lambda}+\lambda\big((-\Delta_{p})^{s}_{\vert_{C_{0}}}u_{\lambda}+F(u_{\lambda})\big)=g
	\qquad\text{in $L^{2}$.}
	\end{displaymath}
	As $g\in C_{0}(\Omega)$ and $\lambda>0$ were arbitrary, 
	we have thereby shown that the shifted operator
	$(-\Delta_{p})^{s}_{\vert_{C_{0}}}u_{\lambda}+F(u_{\lambda})+\omega I_{C_{0}}$ satisfies the
	range condition~\eqref{eq:range-condition}.
	
	To prove the density result, fix $u \in C_c^\infty(\Omega)$. We first prove that  $u+(-\Delta_p)^s u \in L^\infty$ by splitting the domain to deal with local and nonlocal estimates. For $\varepsilon > 0$,
	\begin{displaymath}
	(-\Delta_p)^s u \le \int_{B_\varepsilon (x)}\frac{|u(x)-u(y)|^{p-1}}{|x-y|^{d+sp}}\dy+\int_{\R^d\setminus B_\varepsilon (x)}\frac{|u(x)-u(y)|^{p-1}}{|x-y|^{d+sp}}\dy.
	\end{displaymath}
	Estimating $|u(x)-u(y)|$ by the derivative $\sup_{s\in\Omega}|u'(s)||x-y|$ for the first term,
	\begin{displaymath}
	\begin{split}
	\int_{B_\varepsilon (x)}\frac{|u(x)-u(y)|^{p-1}}{|x-y|^{d+sp}}\dy& \le \int_{B_\varepsilon (x)}\frac{\sup_{s\in\Omega}|u'(s)|^{p-1}|x-y|^{p-1}}{|x-y|^{d+sp}}\dy\\
	& \le C_B \sup_{s\in\Omega}|u'(s)|^{p-1}\int_0^\varepsilon \frac{1}{r^{d+1-(1-s)p}}r^{d-1}\dr
	\end{split}
	\end{displaymath}
	where $C_B$ is a constant for integration over a $d$-dimensional ball. Then we have
	\begin{displaymath}
	\begin{split}
	\int_0^\varepsilon \frac{1}{r^{2-(1-s)p}}\dr& = C\left[r^{(1-s)p-1}\right]_0^\varepsilon
	\end{split}
	\end{displaymath}
	which is bounded for $s < 1-\frac{1}{p}$. For the nonlocal term,
	\begin{displaymath}
	\begin{split}
	\int_{\R^d\setminus B_\varepsilon (x)}\frac{|u(x)-u(y)|^{p-1}}{|x-y|^{d+sp}}\dy& \le C_B\norm{u}_\infty \int_\varepsilon^\infty \frac{1}{r^{d+sp}}r^{d-1}\dr\\
	& = C\norm{u}_\infty \left[r^{-sp}\right]_\infty^\varepsilon
	\end{split}
	\end{displaymath}
	for some $C > 0$. Since $sp > 0$, this is bounded and $(-\Delta_p)^s u \in L^\infty$. We now define $f:=A_1 u = u+(-\Delta_p)^s u \in L^\infty$ and approximate by $f_n \in C_c^\infty(\Omega) \subset C_0(\Omega)$ such that $f_n \rightarrow f$ in $L^{p_s^*}$ as $n \rightarrow \infty$ with $\norm{f_n}_\infty \le \norm{f}_\infty$ and where $\frac{1}{p_s^*}+\frac{1}{p_s} = 1$ and $p_s$ is given by~\eqref{eq:6}.
	
	We now solve $A_1 u_n = f_n \in C_0(\Omega)$ using the $m$-accretivity of $(-\Delta_p)^s_{\vert_{C_{0}}}$, finding that $u_n \in C_0(\Omega)$ and so $u_n \in D((-\Delta_p)^s_{\vert_{C_{0}}})$. Moreover, by the elliptic H\"older estimate~\eqref{eq:ellipticHolderEstimate} with $(f_n)_{n \in \N}$ bounded in $L^\infty$, $(u_n)_{n \in \N}$ is bounded in $C^\alpha(\Omega)$. Hence taking a subsequence and relabelling, we have convergence to a function in $C(\Omega)$.
	
	Next, we prove that this limit is $u$ by considering convergence in $W^{s,p}$. For $w \in D((-\Delta_p)^s_{\vert_{C_{0}}})$ we can define $\varphi_w(v):=(A_1 w,v)_{L^2}$ for all $v \in W^{s,p}$. Estimating by H\"older's inequality,
	\begin{displaymath}
	\begin{split}
	|\varphi_{u_n}(v)-\varphi_u(v)|& \le \int_\Omega |(f_n-f)v|\dmu\\
	& \le \norm{f_n-f}_{p_s^*} \norm{v}_{p_s}\\
	& \le C_d\norm{f_n-f}_{p_s^*}\norm{v}_{W^{s,p}}
	\end{split}
	\end{displaymath}
	by the standard Sobolev embedding~\eqref{eq:6}. Hence $\varphi_{u_n} \rightarrow \varphi_u$ in $(W^{s,p})'$ and so $(\varphi_{u_n})_{n\ge 1}$ is bounded in $(W^{s,p})'$. Then we also have that
	\begin{displaymath}
	\begin{split}
	(A_1 u_n,u_n)_{L^2}& = \norm{u_n}_2^2+[u_n]_{s,p}^p\\
	& \le C_u\left(1 + \norm{u_n}_{W^{s,p}}\right).
	\end{split}
	\end{displaymath}
	Now
	\begin{displaymath}
	\begin{split}
	\liminf_{\norm{v}_{W^{s,p}} \rightarrow \infty}\frac{(A_1 v,v)_{L^2}}{\norm{v}_{W^{s,p}}}& = \frac{\norm{v}_2+[v]_{s,p}^p}{\norm{u}_{W^{s,p}}}\\
	& \ge \norm{v}_{W^{s,p}}^{p-1}\\
	& \rightarrow \infty
	\end{split}
	\end{displaymath}
	for $p \in (1,\infty)$. Hence $(u_n)_{n\ge 1}$ is bounded in $W^{s,p}$ and passing to a subsequence we have that $u_n \rightharpoonup \tilde{u}$ in $W^{s,p}$. We extend $\varphi_w(v)$ to $w \in W^{s,p}$ by defining $\varphi_w(v) = \left(w+(-\Delta_p)^s w, v\right)_{L^2}$ for $w \in W^{s,p}\setminus D((-\Delta_p)^s_{\vert_{C_{0}}})$. By Minty's theorem,\\ \cite[Proposition~II.2.2]{MR1422252}, 
	\begin{displaymath}
	\braket{\varphi_v-\varphi_{u_n},v-u_n} \ge 0
	\end{displaymath}
	for all $v \in W^{s,p}$. We have the convergences $\varphi_{u_n} \rightarrow \varphi_u$ in $(W^{s,p})'$ and $u_n \rightharpoonup \tilde{u}$ in $W^{s,p}$ so that
	\begin{displaymath}
	\braket{\varphi_v-\varphi_{\tilde{u}},v-u} \ge 0
	\end{displaymath}
	for all $v \in W^{s,p}$. Applying Minty's theorem again, $\varphi_{\tilde{u}} = \varphi_u$ so that by the uniqueness provided by the accretivity of $(-\Delta_p)^s$ in $L^2$, $\tilde{u} = u$. Also, since $u_n \rightharpoonup u$ in $W^{s,p}$, we have that $u_{n} \rightarrow u$ in $C_0(\Omega)$, relabelling by appropriate subsequences. Then the density of $C_c^\infty(\Omega)$ in $C_0(\Omega)$ gives us the desired density result.
\end{proof}

We now prove H\"older continuity in the identity case, $m = 1$. In this theorem, the case $p=2$ is well-known. We note that this proof of parabolic regularity relies on the global H\"older regularity estimate of the elliptic problem which we believe is not optimal, in particular with the $L^\infty$ norm required in~\eqref{eq:ellipticHolderEstimate}, and that a stronger elliptic result would also improve this parabolic result.

\begin{proof}[Proof of Theorem~\ref{thm:globalHolder}]
	By Proposition~\ref{prop:C0density}, we have that $\left(\partial\E\right)_{\vert_{C_{0}}}+F$ is $\omega$-quasi m-completely accretivity in $C_0(\Omega)$ and $\overline{D(\left(\partial\E\right)_{\vert_{C_{0}}})}^{\mbox{}_{C_0}} = C_0(\Omega)$. The Crandall-Liggett theorem (see~\cite{MR0287357},	\cite{MR0331133}) says that $-(\partial_{C_{0}}\E+F)$ generates a strong continuous semigroup of $\omega$-quasi	contractions on $C_{0}(\Omega)$.  Further, since $\left(\partial\E\right)_{\vert_{C_{0}}}$ is homogeneous of order $p-1$, and since
	\begin{displaymath}
	\left(\partial\E\right)_{\vert_{C_{0}}}\subseteq \partial_{L^{q}}\E
	\end{displaymath}
	for all $1\le q\le \infty$, it follows
	from \cite{MR4200826} that for initial data $u_{0}\in C_{0}(\Omega)$ and $g\in C((0,T);C_{0}(\Omega))\cap BV(0,T;C_{0}(\Omega))$, the mild solution $u\in C([0,T];C_{0}(\Omega))$ of the initial boundary value problem~\eqref{eq:6.1} is a strong solution with $u\in W^{1,\infty}(\delta,T;C_{0}(\Omega))$. Moreover, $u \in C^{lip}([\delta,T];C_0(\Omega))$ for every $0<\delta<T$. In particular, $u$ is a weak solution of the non-local Poisson problem~\eqref{eq:5} with
	\begin{displaymath}
	h:=g(t)-F(u)-u_{t}(t)\in C((0,T);C_{0}(\Omega)).
	\end{displaymath}
	Hence, by the elliptic
	regularity result Theorem~\ref{thm:IannizzottoHolder}, we obtain
	that $u(t) \in C^{\alpha}(\overline{\Omega})$ for all $t \in (0,T)$ for some $\alpha \in (0,s]$.
\end{proof}

%%%%%%%%%%%%%%%%%%%%%%%%%%%%%%%%%%%%%%%%%%%%%%%%%%%%%
%
%
%			Finite time of extinction and parabolic comparison
%
%
%%%%%%%%%%%%%%%%%%%%%%%%%%%%%%%%%%%%%%%%%%%%%%%%%%%%%

\section{Finite time of extinction}
\label{sec:comparisonPrinciple}

In the porous medium case, $\varphi(u) = u^{m}$ with $0 < m < 1$, we can obtain extinction in finite time following the method presented for the fractional Laplacian case in \cite{KLkiahmholderregularitypaper}. We first prove a comparison principle for the doubly nonlinear problem~\eqref{eq:1}, extending~\cite[Theorem 4.1]{BenilanGariepy} to inhomogeneous boundary data on $\mathbb{R}^{d}$. By constructing an explicit supersolution and subsolution on $\R^N$ we can then prove extinction in finite time.

\subsection{A parabolic comparison principle}
\label{sec:parabolicComparisonPrinciple}
We first introduce the inhomog\-eneous fractional Sobolev space for $\Omega$ an open domain in $\R^d$, $d \ge 1$, and $b \in L_{\loc}^1(\R^d\setminus\Omega)$,
\begin{displaymath}
W^{s,p}_{b}(\Omega)=\Big\{u\in
W^{s,p}(\R^{d})\,\vert\,u=b\text{ a.e.~on }\R^{d}\setminus\Omega\Big\}
\end{displaymath}
and the fractional $p$-Laplacian for $u \in W^{s,p}_{b}(\Omega)$. I this setting we have the energy functional $\E: L^2(\Omega) \rightarrow (-\infty,\infty]$ defined by
\begin{displaymath}
\E(u) = \begin{cases}
\frac{1}{2p}[u]_{s,p}^{p}& \text{ if } u \in W_{b}^{s,p}(\Omega)\cap L^2(\Omega),\\
+\infty& \text{ otherwise,}
\end{cases}
\end{displaymath}
so that the fractional p-Laplacian is given by the sub-differential operator of $\E$ in $L^2$. 
In particular, we use the variational equation,
\begin{equation}
\label{eq:variationalInhomogeneous}
    \frac{1}{2}\int_{\mathbb{R}^{2d}}\frac{(u(x)-u(y))^{p-1}(v(x)-v(y))}{|x-y|^{d+sp}}\dy\dx = \int_{\Omega}h(x)v(x)\dx
\end{equation}
so that we have the characterization,
\begin{displaymath}
\partial\E(u) = \left\{
h \in L^{2}: \text{$u$, $h$  satisfy \eqref{eq:variationalInhomogeneous} for all } v \in L^{2} \text{ with } [v]_{s,p} < \infty
\right\}.
\end{displaymath}
For every $u \in W_{b}^{s,p}(\Omega)\cap L^2$ this is then unique, so we write $(-\Delta_{p})^{s}u = \partial\E(u)$.

In this setting we consider the following inhomogeneous evolution equation,
\begin{equation}
\label{eq:fractionalPLaplacianOnRN}
\begin{cases}
\begin{alignedat}{2}
    u_{t}+(-\Delta_{p})^{s}\varphi(u)+f(\cdot,u)& = g \quad && \text{ on } \Omega\times[0,T],\\
    u(t)& =h(t)\quad && \text{ on } \mathbb{R}^{d}\setminus\Omega\times[0,T],\\
    u(0)& = u_{0}\quad && \text{ on } \Omega.
\end{alignedat}
\end{cases}
\end{equation}
In particular, we have the comparison principle.

%\comment{Requires strong solutions. Perhaps could %state the Lemma for strong solutions then apply it for %mild? Or something like this...}
\begin{theorem}[Comparison principle for inhomogeneous boundary data]
    \label{thm:fractionalPLaplacianComparisonPrinciple}
	Let $\Omega$ be an open domain in $\R^d$, $d \ge 1$, $T > 0$, $f$ satisfy~\eqref{hyp:2}-\eqref{hyp:3} and $\varphi:\R \rightarrow \R$ be strictly increasing and satisfy $\varphi(0) = 0$.
    Suppose $u$ and $\hat{u}\in W^{1,1}((0,T);L^{1})$ are two strong distributional solutions in $L^1$ to the inhomogeneous Dirichlet problem~\eqref{eq:fractionalPLaplacianOnRN} with initial data $u_0$, $\hat{u}_0 \in L^1$, forcing terms $g$, $\hat{g} \in L^{1}((0,T);L^{1})$ and boundary data $h$, $\hat{h} \in L_{\loc}^1(\R^d\setminus\Omega)$, respectively.
    If $h(t) \le \hat{h}(t)$ a.e.~on $\mathbb{R}^{d}$ for a.e.~$t \in (0,T)$, then
	\begin{equation}
	\label{eq:UniquenessEstimateRN}
	\begin{split}
	\int_{\Omega}(u(t)-\hat{u}(t))^{+}\dmu& \le e^{\omega t}\int_{\Omega}(u_{0}-\hat{u}_{0})^{+}\dmu\\
	&\quad+\int_{0}^{t}e^{\omega(t-s)}\int_{\Omega}(g(s)-\hat{g}(s))\mathds{1}_{\set{u>\hat{u}}}\dmu \ds
	\end{split}
	\end{equation}
	for all $0 \le t \le T$.
\end{theorem}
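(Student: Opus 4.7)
The plan is to establish a Kato-type inequality by testing the difference of the two equations against a smooth approximation of $\text{sign}^+(u - \hat{u})$. Set $w := u - \hat{u}$ and $W := \varphi(u) - \varphi(\hat{u})$; since $\varphi$ is strictly increasing with $\varphi(0) = 0$, one has $\{w > 0\} = \{W > 0\}$ a.e., and $W(t) \le 0$ a.e.~outside $\Omega$ because $h(t) \le \hat{h}(t)$ there and $\varphi$ is monotone. I would fix a smooth, non-decreasing family $p_\varepsilon \colon \R \to [0,1]$ with $p_\varepsilon(0) = 0$ and $p_\varepsilon \to \mathds{1}_{(0,\infty)}$ pointwise as $\varepsilon \to 0^+$. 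For a.e.~$t \in (0,T)$, the function $p_\varepsilon(W(t))$ then vanishes outside $\Omega$; combined with $\varphi(u(t)), \varphi(\hat{u}(t)) \in W^{s,p}$ and the Lipschitz regularity of $p_\varepsilon$, this yields $p_\varepsilon(W(t)) \in W^{s,p}_0(\Omega)$, hence it is admissible as a test function in the variational formulation of $(-\Delta_p)^s$.

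Subtracting the two equations and pairing with $p_\varepsilon(W(t))$, the diffusion contribution takes the form
\begin{equation*}
\frac{1}{2}\int_{\R^{2d}} \frac{(\Phi_u(x,y) - \Phi_{\hat u}(x,y))(p_\varepsilon(W)(x) - p_\varepsilon(W)(y))}{|x-y|^{d+sp}} \dx \dy,
\end{equation*}
where $\Phi_v(x,y) := |\varphi(v)(x) - \varphi(v)(y)|^{p-2}(\varphi(v)(x) - \varphi(v)(y))$. Because $t \mapsto |t|^{p-2}t$ is strictly increasing on $\R$, the factor $\Phi_u(x,y) - \Phi_{\hat u}(x,y)$ carries the same sign as $W(x) - W(y)$; and since $p_\varepsilon$ is non-decreasing, so does $p_\varepsilon(W)(x) - p_\varepsilon(W)(y)$. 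The integrand is therefore pointwise non-negative, so the diffusion term may be discarded.

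Integrating over $[t_1, t_2] \subset [0,T]$ and passing $\varepsilon \to 0$ by dominated convergence (using $|p_\varepsilon(W)| \le 1$, $w_t \in L^1$ a.e., and $g, \hat g \in L^1((0,T); L^1)$), together with the chain rule for $s \mapsto s^+$ and the Lipschitz bound $|f(\cdot,u) - f(\cdot,\hat u)| \le \omega|w|$, yields the integrated Kato inequality
\begin{equation*}
\|w(t_2)^+\|_1 \le \|w(t_1)^+\|_1 + \omega\int_{t_1}^{t_2}\|w(s)^+\|_1 \ds + \int_{t_1}^{t_2}\int_\Omega (g - \hat g)\,\mathds{1}_{\{u > \hat u\}} \dx \ds.
\end{equation*}
A standard application of Gronwall's lemma with $t_1 = 0$ then produces \eqref{eq:UniquenessEstimateRN}.

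The main technical obstacle will lie in the time-regularity of $t \mapsto w(t)^+$: the definition of strong distributional solution only furnishes pointwise differentiability $u_t(t) \in L^1(\Omega)$ a.e.~rather than absolute continuity of $t \mapsto w(t)$ into $L^1(\Omega)$. I would address this by performing the entire estimate on the integrated distributional level, using test functions of the form $\chi(t) p_\varepsilon(W(\cdot,t))$ for $\chi \in C_c^\infty((t_1,t_2))$, then localising $\chi$ at the endpoints and invoking the chain rule for the Lipschitz map $s \mapsto s^+$ composed with the time primitive of $w_t$ only at the very last step. One must also verify that $p_\varepsilon(W(t))$ stays uniformly in $W^{s,p}_0(\Omega)$ in a manner compatible with dominated convergence in the nonlocal integrand, but this is automatic once $p_\varepsilon$ is chosen with uniformly bounded first derivative.
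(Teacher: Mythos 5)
Your proposal is correct and takes essentially the same route as the paper's proof: test the difference of the two equations against a smooth, non-decreasing approximation of the positive sign applied to $\varphi(u)-\varphi(\hat u)$ (which vanishes outside $\Omega$ by the ordering of the boundary data), show the nonlocal term has a favourable sign by monotonicity and discard it, then use the Lipschitz bound on $f$ and Gr\"onwall. Your pointwise observation that both factors of the nonlocal integrand share the sign of $W(x)-W(y)$ is simply a streamlined version of the paper's case-splitting over $\{u\ge\hat u\}$ and $\{u<\hat u\}$, and your extra care with the time regularity of $t\mapsto \|(u-\hat u)^{+}(t)\|_{1}$ is compatible with the $W^{1,1}((0,T);L^{1})$ hypothesis through which the paper justifies the same chain-rule step.
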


\begin{proof}
    Since $u$ and $\hat{u}$ are distributional solutions, we have that $\varphi(u(t)) \in W^{s,p}_{\varphi(h(t))}(\Omega)$ and $\varphi(\hat{u}(t)) \in W^{s,p}_{\varphi(\hat{h}(t))}(\Omega)$ for a.e.~$t \in (0,T)$.
	We first prove an estimate on the sign of $(-\Delta_{p})^{s}u-(-\Delta_{p})^{s}\hat{u}$ given that $u \le \hat{u}$ on $\R^d\setminus \Omega$. 
	We approximate the sign function by considering all $q \in C^{1}(\mathbb{R})$ satisfying $0 \le q \le 1$, $q(s) = 0$ for $s \le 0$ and $q'(s) > 0$ for $s > 0$ and prove that
	\begin{displaymath}
	\int_{\Omega}((-\Delta_{p})^{s}u-(-\Delta_{p})^{s}\hat{u})q(u-\hat{u})\dx \ge 0.
	\end{displaymath}
	By assumption, $q(u-\hat{u}) = 0$ on $\mathbb{R}^{d}\setminus \Omega$. So we have
	\begin{displaymath}
	\begin{split}
	\int_{\Omega}((-\Delta_{p})^{s}&u-(-\Delta_{p})^{s}\hat{u})q(u-\hat{u})\dx\\	&=\int_{\mathbb{R}^{d}}\int_{\mathbb{R}^{d}}\frac{(u(x)-u(y))^{p-1}-(\hat{u}(x)-\hat{u}(y))^{p-1}}{|x-y|^{d+ps}}\times\\
	&\quad\left(q(u(x)-\hat{u}(x))-q(u(y)-\hat{u}(y))\right)\dy\dx.
	\end{split}
	\end{displaymath}
	We split these integrals into $\set{u\ge \hat{u}}$ and $\set{u<\hat{u}}$ terms noting that $q(u(x)-\hat{u}(x)) = 0$ on $\set{u<\hat{u}}$. On $\set{u\ge\hat{u}}\times\set{u\ge\hat{u}}$, using the monotonicity of $q(s)$, we have
	\begin{displaymath}
	\begin{split}
	&\iint_{\set{u\ge\hat{u}}^{2}}\frac{(u(x)-u(y))^{p-1}-(\hat{u}(x)-\hat{u}(y))^{p-1}}{|x-y|^{d+ps}}\times\\
	&\hspace{2cm}\left(q(u(x)-\hat{u}(x))-q(u(y)-\hat{u}(y))\right)\dy\dx \ge 0.
	\end{split}
	\end{displaymath}
	On $\set{u<\hat{u}}\times\set{u<\hat{u}}$ we have
	\begin{displaymath}
	q(u(x)-\hat{u}(x))-q(u(y)-\hat{u}(y)) = 0.
	\end{displaymath}
	Then applying the symmetry of $x$ and $y$ to the remaining two terms, we have that
	\begin{align*}
	\int_{\Omega}((-\Delta_{p})^{s}&u-(-\Delta_{p})^{s}\hat{u})q(u-\hat{u})\dx\\
	& \ge 2\int_{\set{u\ge\hat{u}}}\int_{\set{u<\hat{u}}}\frac{(u(x)-u(y))^{p-1}-(\hat{u}(x)-\hat{u}(y))^{p-1}}{|x-y|^{d+ps}}\times\\
	&\quad q(u(x)-\hat{u}(x))\dy\dx\\
	& \ge 2\int_{\set{u\ge\hat{u}}}\int_{\set{u<\hat{u}}}\frac{(\hat{u}(x)-\hat{u}(y))^{p-1}-(\hat{u}(x)-\hat{u}(y))^{p-1}}{|x-y|^{d+ps}}\times\\
	&\quad q(u(x)-\hat{u}(x))\dy\dx\\
	& = 0.
	\end{align*}
	Letting $q$ converge to $[\sign_0]^{+}$ and noting that $\varphi(u) > \varphi(\hat{u})$ if and only if $u > \hat{u}$,
	\begin{align}
	\label{eq:FractionalPLaplacianSignEstimate}
	\int_{\Omega}((-\Delta_{p})^{s}\varphi(u)-(-\Delta_{p})^{s}\varphi(\hat{u}))\mathds{1}_{\set{u>\hat{u}}}\dmu& \ge 0.
	\end{align}
	By the chain rule,~\eqref{eq:fractionalPLaplacianOnRN} and~\eqref{eq:FractionalPLaplacianSignEstimate},
	\begin{align*}
	\frac{\td }{\td t}&\int_{\Omega}(u(t)-\hat{u}(t))^{+}\dmu = \int_{\Omega}(u'(t)-\hat{u}'(t))\mathds{1}_{\set{u>\hat{u}}}\dmu\\
	& = -\int_{\Omega}\left((-\Delta_{p})^{s}\varphi(u)(t)-(-\Delta_{p})^{s}\varphi(\hat{u})(t)\right)\mathds{1}_{\set{u>\hat{u}}}\dmu\\
	&\quad+\int_{\Omega}(F(u)-F(\hat{u}))\mathds{1}_{\set{u>\hat{u}}}\dmu+\int_{\Omega}(g(t)-\hat{g}(t))\mathds{1}_{\set{u>\hat{u}}}\dmu\\
	& \le \omega\int_{\Omega}(u(t)-\hat{u}(t))^{+}\dmu+ \int_{\Omega}(g(t)-\hat{g}(t))\mathds{1}_{\set{u>\hat{u}}}\dmu.
	\end{align*}
	Applying a Gr\"onwall inequality,
	\begin{displaymath}
	\begin{split}
	\int_{\Omega}(u(t)-\hat{u}(t))^{+}\dmu& \le e^{\omega t}\int_{\Omega}(u_{0}-\hat{u}_{0})^{+}\dmu\\
	&\quad+\int_{0}^{t}e^{\omega(t-s)}\int_{\Omega}(g(s)-\hat{g}(s))\mathds{1}_{\set{u>\hat{u}}}\dmu \ds.
	\end{split}
	\end{displaymath}
\end{proof}

\subsection{Proof of finite time of extinction}
We now suppose that $\Omega$ is bounded in order to construct a super-solution and a sub-solution which are truncated within a ball containing $\Omega$.

\begin{proof}[Proof of Theorem~\ref{thm:extinction}]
	Let $u$ be a strong distributional solution to~\eqref{eq:1}. We construct a super-solution and sub-solution on $\mathbb{R}^{d}\times \mathbb{R}_{+}$ to bound $u$ via separation of variables of the form $\mu(x)T(t)$ using the fundamental solution. In particular, we will choose $T$ to be a decreasing function such that $T(t^{*}) = 0$ for some $t^* > 0$.
 
    Choose $R > 0$ such that $\Omega \subset B_R(0)$. 
    In order to apply Theorem \ref{thm:fractionalPLaplacianComparisonPrinciple} we require that this super-solution $V(x,t)$ satisfies
	\begin{equation}
	\label{eq:supersolutionInequalityEqn}
	V_{t}+(-\Delta_{p})^{s}\varphi(V) \ge 0.
	\end{equation}
	Letting $\beta = \varphi^{-1}$ we set $V(x,t) = \beta(W(x,t))$ with $W(x,t) = \mu(x) T(t)$, where we choose
	\begin{displaymath}
	\mu(x) = \begin{cases}
	R^{-d-ps} & \text{ for } |x| \le R,\\
	|x|^{-d-ps} & \text{ for } |x| \in (R,3R),\\
	0 & \text{ for } |x| \ge 3R.
	\end{cases}
	\end{displaymath}
    Defining
    \begin{equation}
	\label{eq:tildeCforTstar}
	C_R: = \frac{\omega_{d-1}}{4^{d+ps}d}(3^d-2^d)\left(1-2^{-d-ps}\right)^{p-1}R^{-sp}
	\end{equation}
    with $\omega_{d}$ denoting the volume of a $d$-dimensional unit ball,
    and
    \begin{displaymath}
	t^* = \frac{1}{C_R}\int_{0}^{\norm{u_0}_\infty}\frac{1}{\left(\varphi(\tau)\right)^{p-1}}\td\tau,
	\end{displaymath}
    we set
    \begin{displaymath}
    T(t) = \begin{cases}
        R^{d+ps}\phi(\sigma(t^{*}-t))& \text{if $t < t^{*}$}\\
        0& \text{if $t \ge t^{*}$}
    \end{cases}
    \end{displaymath}
    where $\sigma(t)$ satisfies
    \begin{displaymath}
	\int_{0}^{\sigma(t)}\frac{1}{\left(\varphi(\tau)\right)^{p-1}}\td\tau = C_R t
	\end{displaymath}
    for $t \in [0,t^*]$.
    Note that $V(x,0) = \norm{u_0}_{\infty}$ and $V(x,t^{*}) = 0$ for $x \in \Omega$. Moreover, $V_t = -C_R\left(\phi(\sigma(t^{*}-t))\right)^{p-1}$.
    
    For $x \in \Omega$, we have
	\begin{align*}
	(-\Delta_{p})^{s}\mu(x)& \le \int_{\mathbb{R}^{d}\setminus B_{R}(0)}\frac{R^{-(d+ps)(p-1)}}{|x-y|^{d+ps}}\dy
	\end{align*}
	which is bounded since $ps > 0$ so that $(-\Delta_{p})^{s}\mu(x) \in L^{\infty}$ and so we can apply the singular integral form of the fractional p-Laplacian.
    Let $g = V_{t}+(-\Delta_{p})^{s}\varphi(V)$ on $\Omega\times(0,\infty)$. Note that $V \in W_{\loc}^{1,1}(0,\infty;L^1(\R^d))$.
    Then $V$ is a strong distributional solution to
	\begin{equation}
	\label{eq:fractionalPLaplacianForComparison}
	\begin{cases}
    \begin{alignedat}{2}
    	v_{t}+(-\Delta_{p})^{s}\varphi(v)& = g\quad &&\text{ on } \Omega\times(0,\infty),\\
    	v(t)& = V_{t}(t) \quad &&\text{ on }\mathbb{R}^{d}\setminus\Omega\times(0,\infty),\\
    	v(0)& = v_{0} \quad &&\text{ in } \mathbb{R}^{d},
    \end{alignedat}
	\end{cases}
	\end{equation}
    where $V(t) \ge u(t)$ on $\mathbb{R}^{d}\setminus \Omega$ and $V(0) \ge u_{0}$.
	Applying Theorem \ref{thm:fractionalPLaplacianComparisonPrinciple}, we have that
    \begin{displaymath}
    	\int_{\Omega}(u(t)-V(t))^{+}\dmu \le -\int_{0}^{t}\int_{\Omega}g(s)\mathds{1}_{\set{u>V}}\dmu \ds.
	\end{displaymath}
    To obtain that $u(t) \le V(t)$ almost everywhere on $\Omega\times(0,\infty)$, it therefore remains to prove that the right hand side is bounded by zero.

    For $x \in \Omega$, using the singular integral formulation,
	\begin{align*}
	(-\Delta_{p})^{s}\mu(x)& \ge \int_{B_{3R}(0)\setminus B_{2R}(0)}\frac{\left(R^{-d-ps}-|y|^{-d-ps}\right)^{p-1}}{|x-y|^{d+ps}}\dy\\
	& \ge \left(R^{-d-ps}(1-2^{-d-ps})\right)^{p-1}\int_{B_{3R}(0)\setminus B_{2R}(0)}\frac{1}{(4R)^{d+sp}}\dy\\
    & \ge \frac{\omega_{d-1}}{4^{d+ps}d}(3^d-2^d)R^{-sp}\left(R^{-d-ps}(1-2^{-d-ps})\right)^{p-1}.
	\end{align*} 

    Rewriting~\eqref{eq:supersolutionInequalityEqn} in terms of the separated variables $\mu(x)$ and $T(t)$,
    and applying the estimate on $(-\Delta_p)^s \mu$, it is sufficient for $T(t)$ to satisfy
	\begin{displaymath}
	\frac{\td }{\td t}\beta(R^{-d-ps} T(t))+C_R \left(R^{-d-ps}T(t)\right)^{p-1} \ge 0 \quad \text{ on } (0,t^{*}).
	\end{displaymath}
    In particular, we require that
	\begin{displaymath}
	\frac{\td \sigma}{\td t}= C_R\left(\varphi(\sigma)\right)^{p-1}\quad \text{ on } (0,t^{*}),
	\end{displaymath}
    which holds by definition of $\sigma$.
	Hence
	\begin{displaymath}
	V_{t}+(-\Delta_{p})^{s}\varphi(V) \ge 0
	\end{displaymath}
	for all $x \in \Omega$.	
 
    Similarly for $-u$ and $-V$ we have, with respect to the Lebesgue measure,
	\begin{displaymath}
	\begin{cases}
    \begin{alignedat}{2}
	(-V)_{t}+(-\Delta_{p})^{s}\varphi(-V)& \le 0\quad && \text{ on } \Omega\times(0,\infty),\\
	-V(0,\cdot)& \le u_{0}\quad && \text{ on } \mathbb{R}^{d},\\
	-V& \le u\quad && \text{ on } \mathbb{R}^{d}\setminus\Omega\times(0,\infty),
    \end{alignedat}
	\end{cases}
	\end{displaymath}
	hence we have that for almost every $x \in \Omega$ and all $t \ge 0$, $-V(t) \le u(t) \le V(t)$ and so $u(t) = 0$ for $t \ge t^{*}$.
\end{proof}

We extend this to mild solutions in Corollary~\ref{thm:extinctionrm} by approximation, applying Theorem~\ref{thm:1} and in particular, the growth estimate~\eqref{eq:theorem1.1estimate2}.

\section{Acknowledgements}

Daniel Hauer is supported by the Australian Research Council grant ARC Discovery Project No. DP200101065.

%%%%%%%%%%%%%%%%%%%%%%%%%%%%%%%%%%%%%%%%%%%%%%%%%%%
%
%                                   R E F E R E N C E S
%
%%%%%%%%%%%%%%%%%%%%%%%%%%%%%%%%%%%%%%%%%%%%%%%%%%%

%\bibliography{citations}

\end{document}